\newtheorem{theo}{Theorem}[subsubsection]
\newtheorem{theointro}{Theorem}
\newtheorem*{theo*}{Theorem}
\theoremstyle{definition}  
\newtheorem{defi}[theo]{Definition}
\newtheorem*{defi*}{Definition} 
\newtheorem{defis}[theo]{Definitions}                  
\newtheorem{lemm}[theo]{Lemma}
\newtheorem{prop}[theo]{Proposition}
\newtheorem{coro}[theo]{Corollary}
\newtheorem{propintro}[theointro]{Proposition}
\newtheorem*{prop*}{Proposition}
\theoremstyle{remark}
\newtheorem*{ex*}{Example}
\newtheorem{rema}[theo]{Remark}
\newtheorem*{rema*}{Remark}
\newtheorem*{Note*}{Notation}
\newtheorem{note}[theo]{Notation}
\newtheorem*{note*}{Notation}
\renewcommand\thesubsubsection{\ifnum\arabic{subsection}=0\relax{\thesection}\else{\ifnum\arabic{subsubsection}=0\relax{\thesubsection}\else{\thesubsection.\arabic{subsubsection}}\fi}\fi}
\newcommand{\overeg}[1]{\overset{\mbox{#1}}{=}}
\def\scr{\mathcal}                    
\def\p{\partial}
\def\Op{\operatorname{Op}}
\def\Ind{\operatorname{Ind}}
\def\BHS{\operatorname{BHS}}
\def\N{\mathbb N}
\def\diag{\shorthandoff{;:!?}
\xymatrix}
\def\0{\mathbf 0}
\def\_{\underline}
\def\t{\mathfrak t}
\def\u{\underline{u}}
\def\v{\underline{v}}
\def\P{\mathcal P}
\def\L{\scr L}
\def\Z{\mathbb Z}
\def\O{\scr O}
\def\F{\mathbb F}
\def\B{\mathcal B}
\def\M{\mathcal M}
\def\ı{\mathcal N}
\def\r{{\_r}}
\def\q{{\_q}}
\def\C{\scr C}
\def\∑{\mathfrak S}
\def\to{\rightarrow}
\def\inj{\hookrightarrow}
\def\surj{\twoheadrightarrow}
\def\val#1{\left\vert#1\right\vert}
\def\Comp{\operatorname{Comp}}
\def\undercount#1#2{\underset{#1}{\underbrace{#2}}}
\author{Sacha Ikonicoff}
\date{\today}
\title{\vspace{-52pt}Divided power algebras over an operad}
\begin{document}
\maketitle
\thispagestyle{empty}
\begin{abstract}
The purpose of this paper is to give a characterisation of divided power algebras over a reduced operad. Such a characterisation is given in terms of polynomial operations, following the classical example of divided power algebras.

We describe these polynomial operations in two different ways: one way uses invariant elements under the action of the symmetric group, the other coinvariant elements. Our results are then applied to the case of level algebras, which are (non-associative) commutative algebras satisfying the exchange law.
\end{abstract}
\tableofcontents
\section{Introduction and acknowledgements}
\subsection{Introduction}
The study of what has been called ``divided power'' algebra structures dates back to the 1950s. This term can be found in Norbert Roby's work (e.g. \cite{NR}) and in Henry Cartan's presentation for his seminar (in \cite{HC}). What was then called divided power algebra would now be called, with the language of operads, a $\Gamma(Com)$-algebra, or a $Com$-algebra with divided powers (a new proof of this fact is given in Section \ref{secCOM}). In \cite{HC}, Henri Cartan shows that the homotopy of a simplicial commutative algebra is endowed with a structure of (graded) $\Gamma(Com)$-algebra. More recently, Benoît Fresse proved in \cite{BF} a more general result: the homotopy of a simplicial algebra over an operad $\P$ admits a structure of (graded) $\Gamma(\P)$-algebra. Unfortunately, only very little is known about the structure of $\Gamma(\P)$-algebras, given an arbitrary reduced operad $\P$. In $\cite{BF}$, Benoît Fresse describes the structure of a $\Gamma(\P)$-algebra when $\P$ is one of the classical operads $As, Com, Lie$ and, in characteristic 2, also when $\P$ is the operad $Pois$. Ioannis Dokas made explicit some non-trivial structures associated to $\Gamma(PreLie)$-algebras in \cite{ID}. More recently, a complete explicit definition of the structure of a $\Gamma(PreLie)$-algebras in terms of brace operations was obtained by Andrea Cesaro (in \cite{AC}).

The main goal of this article is to give a characterisation of divided power algebras over a reduced operad $\P$ in terms of polynomial operations bound by a list of relations, following the example of the classic definition of divided power algebras (in \cite{NR}). We shall first prove the following result:
\begin{theointro}[\Cref{theoinv}]\label{theoinvintro}
Let $\P$ be a reduced operad. The category of $\Gamma(\P)$-algebras is isomorphic to the category $\beta'(\P)$ of vector spaces $A$ endowed with a family of polynomial operations $\beta_{x,\r}:A^{\times p}\to A$ where $\r=(r_1,\dots,r_p)$ is a finite list of integers and $x\in \P(r_1+\dots+r_p)$ is stable under the action of $\∑_{r_1}\times\dots\times \∑_{r_p}\subset\∑_{r_1+\dots+r_p}$, and which satisfy some 8 relations (see \Cref{defbeta'}).
\end{theointro}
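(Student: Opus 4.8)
The plan is to construct the isomorphism of categories by exhibiting two mutually inverse functors and to reduce everything to a normal-form statement for symmetric invariants. Recall that a $\Gamma(\P)$-algebra structure on a vector space $A$ is the data of a map $\rho\colon\Gamma(\P)(A)\to A$ satisfying the unit and associativity axioms of an algebra over the monad $\Gamma(\P)$, where $\Gamma(\P)(A)=\bigoplus_{n\ge 1}\bigl(\P(n)\otimes A^{\otimes n}\bigr)^{\∑_n}$. The bridge between $\rho$ and a family of polynomial operations is provided by the \emph{monomial invariants}: given $\r=(r_1,\dots,r_p)$ with $n=r_1+\dots+r_p$, an element $x\in\P(n)$ stable under $\∑_{r_1}\times\dots\times\∑_{r_p}$, and $a_1,\dots,a_p\in A$, one forms
\[
\xi_{x,\r}(a_1,\dots,a_p)=\sum_{\sigma}\sigma\cdot\bigl(x\otimes a_1^{\otimes r_1}\otimes\dots\otimes a_p^{\otimes r_p}\bigr),
\]
the sum running over a set of representatives of $\∑_n/(\∑_{r_1}\times\dots\times\∑_{r_p})$. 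Stability of $x$ makes this independent of the representatives, so $\xi_{x,\r}(a_1,\dots,a_p)\in\bigl(\P(n)\otimes A^{\otimes n}\bigr)^{\∑_n}$. I would then define the functor from $\Gamma(\P)$-algebras to $\beta'(\P)$ by $\beta_{x,\r}(a_1,\dots,a_p):=\rho\bigl(\xi_{x,\r}(a_1,\dots,a_p)\bigr)$, and the candidate inverse by prescribing $\rho$ on each $\xi_{x,\r}$ to be the corresponding $\beta_{x,\r}$.

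The technical heart is a normal-form lemma: the elements $\xi_{x,\r}(a_1,\dots,a_p)$ span $\bigl(\P(n)\otimes A^{\otimes n}\bigr)^{\∑_n}$ for all $A$, and, when $A$ is free on a basis, a suitable subfamily indexed by $\∑_n$-orbits forms a basis. First I would establish this by analysing $A^{\otimes n}$ as a $\∑_n$-representation through multisets of basis vectors: the $\∑_n$-orbit sums of tensors $x\otimes a_{i_1}\otimes\dots\otimes a_{i_n}$, i.e. the transfers from Young subgroups, exhaust the invariants. This lemma guarantees simultaneously that the forward functor loses no information (so $\rho$ is recovered from the $\beta$'s) and that the prescription defining the inverse functor determines $\rho$ on all of $\Gamma(\P)(A)$.

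Next I would verify that the $\beta_{x,\r}$ attached to a genuine $\rho$ satisfy the eight relations, each of which tracks one structural feature of $\xi_{x,\r}$ and $\rho$. Expanding $\xi_{x,\r}$ under $a_i\mapsto a_i+a_i'$ (its cross-effect, or polynomial, decomposition) yields the additivity relation splitting $\r$ into subsums; the substitution $a_i\mapsto\lambda a_i$ gives the homogeneity relation with factor $\lambda^{r_i}$; permuting the $a_i$ together with the residual $\∑_n$-action relating $x$ to $\sigma\cdot x$ gives the symmetry and relabelling relations; merging two equal arguments or deleting an $r_i=0$ entry gives the degeneracy relations; and the operad unit $1\in\P(1)$ gives the normalisation $\beta_{1,(1)}=\mathrm{id}$. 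The remaining, and decisive, relation comes from the associativity axiom for $\rho$, read on $\Gamma(\P)(\Gamma(\P)(A))\to\Gamma(\P)(A)\to A$: unwinding it through the operadic composition product expresses $\beta$ of an operadic composite of elements of $\P$ as an iterate of $\beta$ operations applied to $\beta$'s, this being the analogue of the classical rule $\gamma_n\circ\gamma_m=\tfrac{(nm)!}{n!\,(m!)^n}\gamma_{nm}$.

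Finally, for the inverse functor I must check that the relations force $\rho$ to be both well defined and a monad-algebra structure. Well-definedness amounts to saying that every linear relation among the $\xi_{x,\r}$ is respected by the $\beta$'s; by the normal-form lemma such relations are generated by the additivity, homogeneity, symmetry and degeneracy relations, so this is exactly what those four relations encode. The unit and associativity axioms for $\rho$ then follow from the normalisation and composition relations. The hard part will be the composition relation: transcribing monad associativity into a single clean identity requires simultaneously managing the operadic composition, the symmetric-group bookkeeping of the \emph{nested} invariants in $\Gamma(\P)(\Gamma(\P)(A))$, and the multiplicities in $\r$ as repeated arguments are distributed across the composite. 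Pinning down the multinomial coefficients produced by this distribution, and checking that the resulting relation is at once necessary and sufficient, is where I expect the genuine difficulty — and the bulk of the computation — to lie.
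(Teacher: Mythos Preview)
Your proposal is essentially the paper's own proof. The monomial invariant $\xi_{x,\r}$ is exactly what the paper uses to define the forward functor $F_\P$, and your ``normal-form lemma'' is the paper's \Cref{decompsum}, which decomposes $\Gamma(\M,A)$ as a direct sum of copies of $\M(n)^{\∑_\r}$ indexed by strictly increasing tuples from a chosen totally ordered basis of $A$; the verification that the inverse functor is well defined and basis-independent (your ``linear relations among the $\xi_{x,\r}$ are respected'') is carried out exactly as you outline, by reducing arbitrary arguments to basis arguments via the symmetry, degeneracy, homogeneity, and additivity relations (Lemmas \ref{lemmperm}--\ref{lemmfin}), and the composition relation is indeed where the real computation sits, relying on the explicit formula for $\tilde\mu$ worked out in \Cref{A}. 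One organisational point the paper makes explicit and you leave implicit: the argument is first run for an arbitrary $\∑$-module $\M$ (yielding the equivalence $\Gamma(\M)_{mod}\cong\beta(\M)$ with only relations \ref{relperm}--\ref{rellin}), and only then are the unit and composition relations \ref{relunit}, \ref{relcomp} added to upgrade this to the operadic statement.
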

We shall also prove the following result:
\begin{theointro}[\Cref{theocoinv}]\label{theocoinvintro}
  Let $\P$ be a reduced operad spanned by a set operad. The category of $\Gamma(\P)$-algebras is isomorphic to the category $\gamma'(\P)$ of vector spaces $A$ endowed with a family of polynomial operations $\gamma_{[x],\r}:A^{\times p}\to A$ where $\r=(r_1,\dots,r_p)$ is a list of integers and $[x]\in \P(r_1+\dots+r_p)_{\∑_{r_1}\times\dots\times\∑_{r_p}}$, and which satisfy some 7 relations (see \Cref{defgamma'}).
\end{theointro}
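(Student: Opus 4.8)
The plan is to deduce this statement from \Cref{theoinv}, the analogous characterisation in terms of \emph{invariants}, rather than to re-analyse the monad $\Gamma(\P)$ from scratch. Since \Cref{theoinv} already provides an isomorphism of categories between $\Gamma(\P)$-algebras and $\beta'(\P)$, it suffices to produce an isomorphism of categories $\beta'(\P)\cong\gamma'(\P)$ and to compose. The whole point of the extra hypothesis — that $\P$ is spanned by a set operad — is that it lets us compare, for each Young subgroup $H=\∑_{r_1}\times\dots\times\∑_{r_p}$ of $\∑_n$ (with $n=r_1+\dots+r_p$), the invariants $\P(n)^{H}$ indexing the operations $\beta_{x,\r}$ with the coinvariants $\P(n)_{H}$ indexing the operations $\gamma_{[x],\r}$.

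First I would make this comparison explicit. Writing $\P(n)=k[B(n)]$ for the $\∑_n$-set $B(n)$ that spans it, the invariant space $\P(n)^{H}$ has as a distinguished basis the orbit sums $s_O=\sum_{b\in O}b$ indexed by the $H$-orbits $O$ of $B(n)$, while the coinvariant space $\P(n)_{H}$ has as a distinguished basis the orbit classes $[b]$, again indexed by the $H$-orbits. Sending an orbit class to the corresponding orbit sum therefore defines a linear isomorphism $\nu\colon\P(n)_{H}\xrightarrow{\ \sim\ }\P(n)^{H}$, with $\nu([b])=s_{O}$ for $O$ the orbit of $b$. This is a bijection between the two distinguished bases, hence an isomorphism over any ground ring, even though the composite $\P(n)^{H}\hookrightarrow\P(n)\twoheadrightarrow\P(n)_{H}$ is in general not (it carries $s_O$ to $\val{O}\,[b]$). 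It is precisely the set-operad hypothesis that makes $\nu$ available.

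I would then transport the operations along $\nu$: given a $\gamma'(\P)$-structure, set $\beta_{x,\r}:=\gamma_{\nu^{-1}(x),\r}$ for $x\in\P(n)^{H}$, and conversely $\gamma_{[x],\r}:=\beta_{\nu([x]),\r}$ for $[x]\in\P(n)_{H}$; linearity of the operations in their index guarantees that these assignments are well defined and mutually inverse on the underlying families. It then remains to verify that a family $(\beta_{x,\r})$ satisfies the eight relations of \Cref{defbeta'} if and only if the transported family $(\gamma_{[x],\r})$ satisfies the seven relations of \Cref{defgamma'}. Most relations should translate term by term once one rewrites orbit sums as orbit classes under $\nu$; the genuine content is the discrepancy between eight and seven.

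The main obstacle, and the heart of the argument, is exactly this passage from eight relations to seven. I expect that precisely one relation in \Cref{defbeta'} encodes the way $\beta_{x,\r}$ depends on the representative $x$ within its $\∑_n$-orbit data — a compatibility that, after passing to coinvariants, where $[x]$ already identifies such representatives, is satisfied automatically and therefore disappears from \Cref{defgamma'}. Pinning down which relation degenerates, and checking that the remaining seven $\gamma$-relations are genuinely equivalent to (not merely implied by) the surviving $\beta$-relations under $\nu$, is the step demanding real care, since it is where the combinatorics of orbit sums versus orbit representatives interacts with the composition and additivity axioms. Once this equivalence of relation systems is established, composing the isomorphism of \Cref{theoinv} with the transport along $\nu$ yields the desired isomorphism $\Gamma(\P)\text{-alg}\cong\gamma'(\P)$.
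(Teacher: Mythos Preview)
Your overall strategy is exactly that of the paper: invoke \Cref{theoinv} and then establish an isomorphism $\beta'(\P)\cong\gamma'(\P)$ by transporting along the orbit-sum isomorphism between coinvariants and invariants (the paper denotes your $\nu$ by $\O_{\∑_\r}$). So the architecture is right.

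However, your diagnosis of the $8\to 7$ discrepancy is off, and this matters for how the verification actually runs. The relation that disappears is \ref{rellin}, linearity in the index: $\beta_{\lambda x+y,\r}=\lambda\beta_{x,\r}+\beta_{y,\r}$. It does not become automatic under passage to coinvariants; rather, it becomes \emph{meaningless}, because the $\gamma$-operations are indexed only by the set $\∑_\r\backslash\B(n)$ of orbit classes of \emph{basis} elements, not by the whole vector space $\P(n)_{\∑_\r}$. Linearity is precisely what one uses to recover all $\beta_{x,\r}$ from the generating subfamily $\beta_{\nu([b]),\r}$ (this is the content of \Cref{lem0} in the paper), and it is imposed rather than derived when building the inverse functor. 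No equivariance-type relation collapses: \ref{relperm} survives as \ref{rel'perm}.

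A second point you underestimate: the remaining relations do \emph{not} translate term by term. The repetition relation \ref{relrepet}, the sum relation \ref{relsomme}, and the composition relation \ref{relcomp} each acquire nontrivial combinatorial coefficients or index sets when rewritten in terms of orbit classes --- ratios of stabiliser orders in \ref{rel'repet} and \ref{rel'comp}, and a sum over $\∑_{\r\circ_1(l,m)}$-orbits inside a $\∑_\r$-orbit in \ref{rel'somme}. The paper handles these via compatibility lemmas (\Cref{g_1}, \Cref{g_2}, \Cref{INDMULT}) describing how the orbit-sum map $\O$ interacts with induction, restriction, and operadic composition. That is where the actual work lies, and it is more than a bookkeeping exercise.
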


The operations $\beta_{*,*}$ and $\gamma_{*,*}$ from these theorems are to be seen as ``divided power operations''. The practical interest of the second result is that, in general, it is easier to describe the set of orbits of a $G$-set than the invariant submodule of a representation of $G$.

The proof of these results also uncovers two results involving $\∑$-modules without operad structure:
\begin{propintro}[\Cref{propinv}]\label{propinvintro}
  Let $\M$ be a $\∑$-module. The category $\Gamma(\M)_{mod}$ of vector spaces $A$ equipped with a morphism $\bigoplus_{n\ge0}(\M(n)\otimes A^{\otimes n})^{\∑_n}\to A$ is isomorphic to the category $\beta(\M)$ of vector spaces $A$ endowed with a family of polynomial operations $\beta_{x,\r}:A^{\times p}\to A$ where $\r=(r_1,\dots,r_p)$ is a finite list of integers and $x\in \M(r_1+\dots+r_p)$ is stable under the action of $\∑_{r_1}\times\dots\times \∑_{r_p}\in\∑_{r_1+\dots+r_p}$, and which satisfy some 6 relations (see \Cref{defbeta}).
\end{propintro}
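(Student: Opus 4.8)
The plan is to produce the category isomorphism as a generators-and-relations presentation of the vector space $\Gamma(\M)(A)=\bigoplus_{n\ge0}\bigl(\M(n)\otimes A^{\otimes n}\bigr)^{\∑_n}$, natural in $A$, and then to read off the two functors. The basic construction is a \emph{symmetrisation map}. For a list $\r=(r_1,\dots,r_p)$ with $n=r_1+\dots+r_p$, a point $x\in\M(n)$ stable under $H=\∑_{r_1}\times\dots\times\∑_{r_p}\subset\∑_n$, and elements $a_1,\dots,a_p\in A$, the tensor $x\otimes a_1^{\otimes r_1}\otimes\dots\otimes a_p^{\otimes r_p}$ is $H$-invariant, so
$$\mathrm{s}_{x,\r}(a_1,\dots,a_p):=\sum_{\sigma\in\∑_n/H}\sigma\cdot\bigl(x\otimes a_1^{\otimes r_1}\otimes\dots\otimes a_p^{\otimes r_p}\bigr)$$
is a well-defined element of $(\M(n)\otimes A^{\otimes n})^{\∑_n}$ (the summand being $H$-invariant, the sum over a transversal does not depend on the chosen representatives). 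Given a structure map $\rho\colon\Gamma(\M)(A)\to A$, I set $\beta_{x,\r}:=\rho\circ\mathrm{s}_{x,\r}$, and the forward functor sends $(A,\rho)$ to $(A,(\beta_{x,\r}))$.

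The structural heart is an explicit description of the target of $\rho$. Fix a basis $(e_i)_{i\in I}$ of $A$: the monomials $e_{i_1}\otimes\dots\otimes e_{i_n}$ form a basis of $A^{\otimes n}$ permuted by $\∑_n$, whose orbits are indexed by multisets of basis vectors, and an orbit in which $p$ distinct vectors occur with multiplicities $r_1,\dots,r_p$ has stabiliser conjugate to $H$. Frobenius reciprocity (the tensor identity $(\M(n)\otimes\F[\∑_n/H])^{\∑_n}\cong\M(n)^{H}$, valid over any field since $[\∑_n:H]<\infty$) then gives
$$\Gamma(\M)(A)\;\cong\;\bigoplus_{n}\;\bigoplus_{\text{orbits}}\M(n)^{H},$$
and under this identification the elements $\mathrm{s}_{x,\r}(e_{i_1},\dots,e_{i_p})$, as $x$ runs over a basis of each $\M(n)^{H}$ and $(e_{i_1},\dots,e_{i_p})$ over tuples of distinct basis vectors, form a basis of $\Gamma(\M)(A)$. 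In particular the symmetrisation maps generate, and a linear map out of $\Gamma(\M)(A)$ is determined by its values on these basis vectors.

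The main obstacle is the backward direction: showing that a family $(\beta_{x,\r})$ subject to the six relations of \Cref{defbeta} determines a \emph{basis-independent} linear map $\rho$, and conversely that every such $\rho$ yields a family satisfying exactly those relations. This is where polynomiality enters. Among the six relations, homogeneity fixes the behaviour under scalars, $\∑$-equivariance together with linearity in $x$ cuts the index set down to $\M(n)^{H}$, the block-permutation relation matches the evident symmetry of $\mathrm{s}_{x,\r}$, and the crucial additivity (cross-effect) relation expresses $\mathrm{s}_{x,\r}(\dots,a_i+a_i',\dots)$ as the sum over all splittings of the block $r_i$ into $(s,r_i-s)$, with $x$ now read as invariant under the smaller Young subgroup $\∑_s\times\∑_{r_i-s}\subset\∑_{r_i}$. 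Expanding $(a_i+a_i')^{\otimes r_i}$ inside $\mathrm{s}_{x,\r}$ and regrouping cosets of $\∑_n/H$ shows the symmetrisation maps obey these relations, so the forward functor is well defined; conversely the same relations let one rewrite $\beta_{x,\r}(a_1,\dots,a_p)$ for arbitrary $a_j=\sum_i\lambda_{ij}e_i$ as a fixed $\F$-linear combination of the values $\beta_{x',\r'}$ on tuples of distinct basis vectors, which forces the value of $\rho$ on every basis element of $\Gamma(\M)(A)$ and proves that this forced assignment is consistent.

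Assembling these, the backward functor sends $(A,(\beta_{x,\r}))$ to $(A,\rho)$, where $\rho$ is defined on the basis by $\rho\bigl(\mathrm{s}_{x,\r}(e_{i_1},\dots,e_{i_p})\bigr)=\beta_{x,\r}(e_{i_1},\dots,e_{i_p})$ and extended linearly. The two functors are then visibly mutually inverse on underlying spaces, and they act as the identity on morphisms (a morphism is in both cases a linear map intertwining all the data, and these conditions correspond termwise under the presentation). The round trip starting from $\beta$ returns $\rho\circ\mathrm{s}_{x,\r}$, which equals $\beta_{x,\r}$ on basis arguments by construction and hence on \emph{all} arguments by the reduction of the previous paragraph — this is precisely the compatibility needed for the composite to be the identity. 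I expect the only delicate bookkeeping to be the multinomial coefficients arising when a block is split or when several arguments coincide, and the careful tracking of coset representatives in $\∑_n/H$ throughout.
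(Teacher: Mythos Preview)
Your proposal is correct and follows essentially the same approach as the paper: the symmetrisation map $\mathrm{s}_{x,\r}$ is exactly the paper's construction, the basis decomposition of $\Gamma(\M,A)$ via Frobenius reciprocity is the content of the paper's \Cref{decompsum}, and your reduction from arbitrary arguments to distinct basis vectors via the six relations is carried out in the paper as the chain of Lemmas \ref{lemmperm}--\ref{lemmfin} (using a totally ordered basis to fix a canonical representative in each orbit). The only place where your sketch is slightly loose is the role of relation \ref{relrepet} (repeated arguments), which you allude to at the end but which is genuinely needed---the paper isolates it as \Cref{lemmrepet}.
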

\begin{propintro}[\Cref{propcoinv}]\label{propcoinvintro}
  Let $\M$ be a $\∑$-module equipped with a basis stable under the action of the symmetric groups (that is, $\M$ is spanned by a $\∑$-set). The category $\Gamma(\M)_{mod}$ is also isomorphic to the category $\gamma(\M)$ of vector spaces $A$ endowed with a family of polynomial operations $\gamma_{[x],\r}:A^{\times p}\to A$ where $\r=(r_1,\dots,r_p)$ is a finite list of integers and $[x]\in \M(r_1+\dots+r_p)_{\∑_{r_1}\times\dots\times\∑_{r_p}}$, and which satisfy some 5 relations (see \Cref{defgamma}).
\end{propintro}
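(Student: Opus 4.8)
The plan is to construct an explicit pair of mutually inverse functors between $\Gamma(\M)_{mod}$ and $\gamma(\M)$, exploiting the hypothesis that $\M$ is a permutation $\∑$-module to produce a canonical basis of each space $(\M(n)\otimes A^{\otimes n})^{\∑_n}$ indexed by coinvariant classes. Throughout write $\∑_\r:=\∑_{r_1}\times\dots\times\∑_{r_p}$ for the Young subgroup attached to a list $\r=(r_1,\dots,r_p)$ with $r_1+\dots+r_p=n$.

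First I would record the linear-algebra fact underlying the whole statement. Fix the $\∑$-set basis $\mathcal E$ of $\M$ and an auxiliary basis of $A$; then $A^{\otimes n}$ is the permutation $\∑_n$-module on monomials, and $\M(n)\otimes A^{\otimes n}$ is the permutation $\∑_n$-module on the pairs $(e,m)$ with $e\in\mathcal E(n)$ and $m$ a monomial. Hence its invariants admit as a basis the orbit sums
\[
O(e,m):=\sum_{(e',m')\in\∑_n\cdot(e,m)} e'\otimes m'.
\]
Writing each monomial in normal form $m=a_1^{\otimes r_1}\otimes\dots\otimes a_p^{\otimes r_p}$ with the $a_i$ distinct, the stabiliser of $m$ in $\∑_n$ is exactly $\∑_\r$, so two pairs $(e,m)$ and $(e'',m)$ lie in the same $\∑_n$-orbit if and only if $e$ and $e''$ lie in the same $\∑_\r$-orbit. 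This identifies the orbit sums supported on monomials of shape $\r$ with the basis $\mathcal E(n)/\∑_\r$ of the coinvariants $\M(n)_{\∑_\r}$, which is precisely the indexing set of the operations $\gamma_{[x],\r}$. This is the point at which the $\∑$-set hypothesis is used, and it is what replaces the invariant-element parametrisation of $\beta(\M)$ by a coinvariant one; it also lets the symmetrisation be a genuine orbit sum rather than a full-group average, so that no multinomial coefficients are lost in positive characteristic.

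Next I would define the two functors. From a structure map $\rho\colon\Gamma(\M)(A)\to A$ I set $\gamma_{[x],\r}(a_1,\dots,a_p):=\rho\big(O(x,\,a_1^{\otimes r_1}\otimes\dots\otimes a_p^{\otimes r_p})\big)$, a formula that makes sense for every tuple; by the previous paragraph the right-hand side depends only on the class $[x]\in\M(n)_{\∑_\r}$, and one extends it linearly in $[x]$. Conversely, from a family $(\gamma_{[x],\r})$ I define $\rho$ on the orbit-sum basis by the same formula and extend linearly. The bulk of the verification is that the five relations of \Cref{defgamma} are exactly the conditions making these assignments well defined and mutually inverse: the relation permuting the blocks $(r_i,a_i)$ records the $\∑_p$-symmetry of the normal form of a monomial; the relation treating a zero exponent $r_i=0$ records that such a block may be discarded; and the additivity/expansion and homogeneity relations record the behaviour of the orbit sums $O(x,m)$ under the functoriality of $A\mapsto A^{\otimes n}$, i.e.\ under substituting a sum or a scalar multiple into one of the $a_i$. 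Necessity of each relation follows by applying $\rho$ to the corresponding identity among orbit sums.

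The main obstacle will be the sufficiency half: showing that the five relations generate \emph{all} identities among the orbit-sum elements as $A$ varies, so that the reconstructed $\rho$ is well defined and natural in $A$. Concretely this amounts to proving that, modulo the relations, every tuple $(a_1,\dots,a_p)$ can be reduced to the case of linearly independent $a_i$ in normal form, where the orbit-sum basis gives a unique expression; the additivity/expansion relation drives this reduction, and one must check it is compatible with the block-permutation and coinvariance identifications so that the reduction is confluent and basis-independent. I expect that one relation fewer is needed here than in $\beta(\M)$ precisely because passing to coinvariant classes $[x]$ absorbs the equivariance bookkeeping that the invariant parametrisation must impose separately; making this explicit — ideally by exhibiting the norm isomorphism $\M(n)_{\∑_\r}\xrightarrow{\sim}\M(n)^{\∑_\r}$ sending a class to its orbit sum and checking that it intertwines the relation lists of \Cref{defbeta} and \Cref{defgamma} — would yield an alternative proof deducing the statement directly from \Cref{propinvintro}.
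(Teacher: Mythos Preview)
Your proposal is correct, and you have in fact identified both routes---but with the priorities reversed. What you sketch in your final paragraph as an ``alternative proof'' is precisely the argument the paper gives: it first establishes $\Gamma(\M)_{mod}\cong\beta(\M)$ (\Cref{propinvintro}), then constructs $P_\M:\beta(\M)\to\gamma(\M)$ by setting $\gamma_{[x]_\r,\r}:=\beta_{\O_{\∑_\r}([x]_\r),\r}$ via the orbit-sum isomorphism $\O_{\∑_\r}:\M(n)_{\∑_\r}\to\M(n)^{\∑_\r}$, and checks that the six $\beta$-relations translate into the five $\gamma$-relations using two compatibility lemmas for $\O_G$ under induction and restriction along $\∑_\r\subset\∑_{\q\rhd\r}$ and $\∑_{\r\circ_1(l,m)}\subset\∑_\r$. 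Your orbit-sum element $O(x,a_1^{\otimes r_1}\otimes\dots\otimes a_p^{\otimes r_p})$ is exactly $\sum_{\sigma\in\∑_n/\∑_\r}\sigma\cdot\O_{\∑_\r}([x])\otimes\sigma\cdot(\bigotimes_i a_i^{\otimes r_i})$, so the two descriptions coincide on the nose.

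Your primary, direct, approach---going straight from $\Gamma(\M)_{mod}$ to $\gamma(\M)$ without the intermediate $\beta(\M)$---is viable, but the ``main obstacle'' you flag is genuine work: you would have to reprove, for the $\gamma$-relations, the chain of reduction lemmas (arbitrary tuples $\to$ distinct basis elements $\to$ ordered distinct basis elements, with basis-independence) that the paper carries out once for $\beta(\M)$. Factoring through $\beta(\M)$ buys a clean separation of concerns: well-definedness and basis-independence are settled once with the invariant parametrisation, and the passage to coinvariants becomes a pure relation-by-relation translation through the norm map. One small correction to your diagnosis: the relation that disappears in going from six to five is not an equivariance condition but the linearity relation $(\beta6)$, $\beta_{\lambda x+y,\r}=\lambda\beta_{x,\r}+\beta_{y,\r}$; it evaporates because the $\gamma$-operations are indexed only by the set $\∑_\r\backslash\B(n)$ (a basis of the coinvariants) rather than by the whole vector space $\M(n)^{\∑_\r}$.
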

More precisely, under the assumptions of the previous theorems and propositions, one has the following diagram of categories:
$$\diag{
  \Gamma(\M)_{mod}\ar@/_/[d]_{F_\M}&\Gamma(\P)_{alg}\ar@{_{(}->}[l]\ar@/_/[d]_{F_\P}\\
  \beta(\M)\ar@/_/[d]_{P_\M}\ar@/_/[u]_{G_\M}&\beta'(\P)\ar@{_{(}->}[l]\ar@/_/[d]_{P_\P}\ar@/_/[u]_{G_\P}\\
  \gamma(\M)\ar@/_/[u]_{O_\M}&\gamma'(\P)\ar@{_{(}->}[l]\ar@/_/[u]_{O_\P}
},$$
where the horizontal arrows are inclusions of full subcategories and exist if $\P=\M$ is endowed with a structure of reduced operad. The functors $G_*$ and $O_*$ are inverse to the corresponding functors $F_*$ and $P_*$.

\vskip 1cm
The notion of level algebras appeared in the study of unstable modules over the Steenrod algebra. In \cite{DD}, D.M. Davis defines non-associative ``depth-invariant'' algebras. Unstable modules over the Steenrod algebra are studied into more details in Lionel Schwartz's work \cite{LS}. Later, David Chataur and Muriel Livernet gave the definition of a level algebra (in \cite{CL}) and described the operad $Lev$, whose algebras are the level algebras. This definition coincides with that of Davis's depth-invariant algebras. In this article, we shall give a complete description of the structure of a $\Gamma(Lev)$-algebra. The operad $Lev$ being spanned by a set operad, \Cref{theocoinvintro} gives a characterisation of $\Gamma(Lev)$-algebras in terms of polynomial operations, and we will give a finer characterisation. Our result can be reformulated as follows:
\begin{theointro}[\Cref{theoL}]\label{theoLintro}
  A $\Gamma(Lev)$-algebra is an $\F$-vector space $A$ endowed with a family of polynomial operations $\phi_{h,\r}:A^{\times p}\to A$, where $\r=(r_1,\dots,r_p)$ is a list of integers, and $h:\{1,\dots,r_1+\dots+r_p\}\to\N$ is a map that is constant on each intervals $\{r_1+\dots, r_{k-1}+1,\dots, r_1+\dots+r_{k}-1\}$, $k=1,\dots,p$, and satisfies $\sum_{i\in[n]}2^{-h(i)}=1$, and which satisfy 7 relations (see \Cref{defL})
\end{theointro}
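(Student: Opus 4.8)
The plan is to deduce this from \Cref{theocoinvintro} applied to the operad $Lev$, which is spanned by a set operad, and then to make the resulting coinvariant description completely explicit. The first step is to recall the combinatorial description of the set operad underlying $Lev$ (due to Chataur and Livernet, \cite{CL}): a basis of $Lev(n)$ is given by the \emph{level functions}, i.e.\ the maps $h\colon\{1,\dots,n\}\to\N$ with $\sum_{i=1}^n 2^{-h(i)}=1$, where $h(i)$ records the depth of the $i$-th leaf of a full binary tree, commutativity and the exchange law guaranteeing that a multilinear monomial is determined by this depth function alone. I would then check that, in these terms, the operadic partial composition is just grafting: if leaf $i$ of the outer monomial sits at depth $d$, then composing an inner level function $e$ into slot $i$ shifts every inner depth by $d$ and leaves the other depths unchanged. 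A short computation shows that the Kraft equality $\sum 2^{-(\cdot)}=1$ is preserved, confirming this describes the operad structure on $Lev$.

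Next I would identify a reduced set of generating operations. By \Cref{theocoinvintro} the operations of $\gamma'(Lev)$ are indexed by pairs $(\r,[x])$ with $[x]\in Lev(r_1+\dots+r_p)_{\∑_{r_1}\times\dots\times\∑_{r_p}}$; since $\∑_{r_1}\times\dots\times\∑_{r_p}$ permutes the arguments within each block, such an orbit $[x]$ is precisely the datum, for each block $k$, of the multiset of depths occurring in that block. The key observation is that any orbit reduces to a \emph{block-constant} level function after refining the composition $\r$: splitting block $k$ into one sub-block per distinct depth value turns $[x]$ into a level function $h'$ that is constant on each sub-block, and $\gamma_{[x],\r}(a_1,\dots,a_p)$ is recovered from $\phi_{h',\r'}$ by repeating each $a_k$ across its sub-blocks, e.g.\ $\gamma_{\{1,2,3,3\},(4)}(a)=\phi_{(1,2,3),(1,1,2)}(a,a,a)$. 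This diagonal substitution will itself be one of the seven relations, and it shows that the operations $\phi_{h,\r}$ with $h$ block-constant --- equivalently, a tuple of depths $(h_1,\dots,h_p)$ with $\sum_k r_k\,2^{-h_k}=1$ --- suffice to generate the whole structure.

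The bulk of the work is then to rewrite each of the relations defining $\gamma'(Lev)$ (\Cref{defgamma'}) in terms of the $\phi_{h,\r}$, using the grafting description of composition to track how depth functions add up. The operadic-composition relations become statements about shifting the depths $h_k$ by the depth of the slot into which one substitutes, while the remaining structural relations (homogeneity of degree $r_k$ in the $k$-th argument, the additive Cartan-type expansion, equivariance, and the unit) transcribe directly, yielding the seven relations of \Cref{defL}. I would package this as two mutually inverse functors between the image of $\Gamma(Lev)$-algebras in $\gamma'(Lev)$ and the category of \Cref{defL}, checking that the seven relations correspond on each side, the isomorphism of categories then following from \Cref{theocoinvintro} by transport of structure along the reindexing $[x]\leftrightarrow(h,\r)$. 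The main obstacle I anticipate is the bookkeeping in this last step: verifying that the grafting/depth-shift formula turns the abstract composition relation of $\gamma'$ into the intended relation among the $\phi_{h,\r}$, and in particular that refining and un-refining compositions (the diagonal relation) is compatible with composition, so that no relation beyond the seven is forced.
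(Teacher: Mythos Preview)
Your proposal is correct and follows essentially the same route as the paper. You correctly identify the reduction to $\gamma'(Lev)$ via \Cref{theocoinvintro}, the level-function description of the set operad underlying $Lev$, and the key mechanism: a general orbit $[I]_{\r}$ becomes a block-constant level function after refining $\r$ to $\r\wedge I^f$ and repeating each argument across the new sub-blocks, which is exactly how the paper defines its functor $V:Step\to\gamma'(Lev)$, while the inverse $U$ is the restriction $\varphi_{h,\r}:=\gamma_{[h^P]_\r,\r}$ you describe. The only point where the paper is slightly more careful than your sketch is in tracking the stabiliser quotients that appear as coefficients in relations \ref{rel'repet} and \ref{rel'comp}: one must check that for block-constant $h$ these ratios simplify to the explicit binomial and multinomial factors in \ref{relLrepet} and \ref{relLcomp}, and conversely that the refinement $\r\mapsto\r\wedge I^f$ produces coefficient~$1$ (since $Stab_\r(I)=Stab_{\r\wedge I^f}(I)=\∑_\r\cap\∑_I$), so that the two functors are genuinely inverse; your ``bookkeeping'' caveat anticipates exactly this.
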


In Section \ref{Background}, we give the notation used in this article, as well as definitions regarding operad theory.

In Section \ref{secinv} we give a construction of the categories $\beta(\M)$ and $\beta'(\P)$, and of the functors $F_\M,G_\M,F_\P$ and $G_\P$ of the previous diagram. We prove that those two couples of functors form isomorphisms of categories and we apply this result to the classical example of the operad $Com$ of associative commutative algebras.

In Section \ref{seccoinv} we give the construction of the categories $\gamma(\M)$ and $\gamma'(\P)$ and of the functors $P_\M,P_\P,O_\M$ and $O_\P$ of the previous diagram. 

In Section \ref{secLEV}, we prove \Cref{theoLintro}, as a refinement of \Cref{theocoinvintro}, in the case $\P=Lev$. 

In Section \ref{secStep}, we study the connections between $\Gamma(Lev)$-algebras and other types of algebras into more details.

\Cref{A} is devoted to the explicit computation of the multiplication $\tilde \mu$ of the monad $\Gamma(\P)$ for a reduced operad $\P$.

\begin{Note*}\item
\begin{itemize}
  \item The ground field $\F$ is fixed for the whole article.

  \item In order to make this article more comprehensible, $\∑$-modules and operads, as well as their bases, will be referred to with script letters (except when using Greek letters), whereas vector spaces and their bases will be assigned print letters. 
\end{itemize}
\end{Note*}
\subsection{Acknowledgements}

I would like to thank Muriel Livernet for advising me during the whole redaction process of this article. I would like to thank Benoît Fresse, the conversations that we had on the subject of divided powers have helped me sort some of the difficult technical issues that arose in the proofs of my results. I would also like to thank Eric Hoffbeck, who took the time to give my article a first reading. Finally, I would like to warmly thank the anonymous referee who gave me strong pieces of advice that helped drastically improve the layout of this article, and has given me a key mathematical remark to generalise my main result.

\section{Background}\label{Background}

In this section, we study the partitions of the set $\{1,\dots,n\}$, we give definitions for operations on those partitions and then recall the basic setting of operad theory on sets and on vector spaces.

First, we explain the definition of operations on the partitions of the sets $[n] = \{1,\dots,n\}$ which reflect the shape of the composition operations associated to the structure of an algebra over an operad.
The first subsection of the section, \Cref{Partitions}, is devoted to this topic. In passing, we also explain our notation and conventions for the symmetric groups and for partitions. In what follows, we use the definitions of this subsection in order to formulate the generalised divided power operations associated to operads. In particular, the notation which we introduce in \Cref{Partitions} are used freely throughout the paper.

In a second step, in \Cref{B}, we study the relationship between the coinvariants and the invariants for the vector space spanned by a $G$-set, where $G$ is any group. To be specific, we check that the coinvariants and the invariants coincide in this case. This observation is the crux of our description of the structure of a $\Gamma(\P)$-algebra for an operad spanned by an operad in sets.

Then, in \Cref{modop}, we review fundamental definitions of the theory of operads for operads in vector spaces and the general definition of the monads $S(\P)$ and $\Gamma(\P)$ associated to an operad $\P$.


\subsection{Partitions}\label{Partitions}

In this subsection, we explain our conventions on permutations, partitions, and we explain the definition of operations on partitions, which reflect the shape of composition operations associated to operads.

The operations are encoded by the structure of a coloured operad in sets. Therefore, we recall the definition of such an operad structure before tackling the applications to partitions.


\begin{Note*}\item
  \begin{itemize}
  \item Denote by $[n]$ the set $\{1,\dots,n\}$.

  \item For a set $E$, $\∑_E$ is the group of bijections $E\to E$, and $\∑_n$ is the group $\∑_{[n]}$.

  \item For $R=(R_i)_{1\le i\le p}$ a partition of $[n]$, define $\∑_R:=\prod_{i=1}^p\∑_{R_i}\subset\∑_n$. 

  \item For $X$ a $\∑_n$-set, for $x\in X$ and for $R$ a partition of $[n]$, denote by $[x]_R$, and $Stab_R(x)$, the class and the stabiliser of $x$ under the action of the subgroup $\∑_R$ of $\∑_n$. 

  \item For $G$ a group, $X$ a $G$-set and $x\in X$, denote by $\Omega_G(x)=\{g\cdot x\ |\ g\in G\}$ the orbit of $x$ under the action of $G$.

  \item For $E\subseteq[n]$ and $\rho\in\∑_n$, denote by $\rho(E)$ the set $\{\rho(x):x\in E\}$. 

  \item For $E\subseteq\N$ and $n\in\N$, denote $E+n:=\{x+n:x\in E\}$.
  \end{itemize}
\end{Note*}
\begin{defi}[\cite{BB} Definition 1.7]\label{defopcol}\item
   An $E$-coloured set operad $\P$ is the data, for all $(x_1,\dots,x_p,y)\in E^{p+1}$, of a set $\P(x_1,\dots,x_p;y)$ endowed, for all $\sigma\in\∑_p$, with a map $\P(x_1,\dots,x_p;y)\to\P(x_{\sigma^{-1}(1)},\dots,x_{\sigma^{-1}(p)},y)$, as well as an ``identity'' element $c_x\in\P(x;x)$ for all $x\in E$, and composition maps:
  $$\circ_i:\P(x_1,\dots,x_p;y)\times\P(z_1,\dots,z_q;x_i)\to \P(x_1,\dots,x_{i-1},z_1,\dots,z_q,x_{i+1},\dots,x_p;y),$$
  and that satisfy natural conditions of unitality, associativity and equivariance.
\end{defi}
\begin{defi}
  The $\N$-coloured set operad $\Pi$ is given by:
  $$\hspace{-12pt}\Pi(r_1,\dots,r_p;n)=\left\{\begin{array}{ll}
    \emptyset,&\mbox{ if }r_1+\dots+r_p\neq n,\\
    \{R=(R_i)_{1\le i\le p}:\sqcup_{i=1}^pR_i=[n]\mbox{ and }\forall i\in[p]\quad \val{R_i}=r_i\},&\mbox{ else.}
  \end{array}\right.
  $$
  The identity element $c_n\in\Pi(n;n)$ is the coarse partition $\big([n]\big)$. Given a permutation $\sigma\in\∑_p$, the map $\Pi(r_1,\dots,r_p,n)\to\Pi(r_{\sigma^{-1}(1)},\dots,r_{\sigma^{-1}(p)};n)$ sends $I$ to $\sigma\cdot I:=(I_{\sigma^{-1}(i)})_{i\in[p]}$. Compositions are given by refinement: if $R\in\Pi(r_1,\dots,r_p;n)$ and $Q\in\Pi(q_1,\dots,q_s;r_i)$, there exists a unique increasing bijection $b:[r_i]\to R_i$, and:
  $$R\circ_iQ=(R_1,\dots,R_{i-1},b(Q_1),\dots,b(Q_s),R_{i+1},\dots,R_p).$$
\end{defi}
\begin{rema}\label{transn}
  The set $\Pi(r_1,\dots,r_p;n)$ is also endowed with an action of $\∑_n$: if $R=(R_i)_{i\in [p]}\in \Pi(r_1,\dots,r_p;n)$, then $\rho(R)=(\rho(R_i))_{i\in[p]}$. This action is clearly transitive. 
\end{rema}
\begin{note}\item
  \begin{itemize}\label{COMP}
    \item A $p$-tuple of non-negative integers $\r=(r_1,\dots,r_p)$ such that $r_1+\dots+r_p=n$ is called a composition of the integer $n$. Denote by $\Comp_p(n)$ the set of compositions of $n$ into $p$ parts and $\Pi_p(n)$ the set of partitions of the set $[n]$ into $p$ parts. There is an injection $\iota:\Comp_p(n)\inj \Pi_p(n)$, mapping $\r$ to:
    $$r_1+\dots+r_{i-1}+[r_i]=\{r_1+\dots+r_{i-1}+1,\dots, r_1+\dots+r_i\}.$$
    The composition $\r$ will be identified with its image $\iota(\r)$. the map $\iota$ admits a left inverse $pr:\Pi_p(n)\surj\Comp_p(n)$, which sends $R=(R_i)_{i\in[p]}$ to $(\val{R_i})_{i\in[p]}$.
    \item $\Comp_p(n)$ is endowed with the following $\∑_p$-action: if $\sigma\in\∑_p$ and $\r\in\Comp_p(n)$, $\r^\sigma:=(r_{\sigma^{-1}(1)},\dots,r_{\sigma^{-1}(p)})$. Note that $\iota$ is not compatible with this action.
    \item In the set $\Pi(\r;n)$ there is a unique element of $\Comp_p(n)$ (more precisely, in the image of $\Comp_p(n)$ by $\iota$), namely $\r$, and $\Pi_p(n)=\bigsqcup_{\r\in \Comp_p(n)}\Pi(\r;n)$.
    \item If $\r\in \Comp_p(n)$ and $\_q\in \Comp_s(r_i)$, then $\r\circ_i\_q=\_t\in \Comp_{p-1+s}(n)$, with
    $$\_t=(r_1,\dots,r_{i-1},q_1,\dots,q_s,r_{i+1},\dots,r_p).$$
    \end{itemize}
\end{note}
\begin{rema}\label{partfonc}
  Equivalently, the set $\Pi(\r;n)$ can be defined as the set of maps $f:[n]\to[p]$ satisfying, for all $i\in[p]$, $\val{f^{-1}(i)}=r_i$. The identification is done as follows: a partition $R=(R_i)_{i\in[p]}$ of $[n]$ is identified with the function $\p_R:[n]\to[p]$, mapping $x\in[n]$ to the unique $i\in[p]$ such that $x\in R_i$. Conversely, each $f:[n]\to[p]$ is associated with the partition $(f^{-1}(i))_{i\in [p]}$ of $[n]$. For $\sigma\in\∑_p$ and $f:[n]\to[p]$, one has $\sigma\cdot f:x\mapsto \sigma(f(x))$.
  Following this setting, for $f\in\Pi(r_1,\dots,r_p;n)$ and $g\in\Pi(q_1,\dots,q_s;r_i)$, there exists a unique increasing bijection $b:f^{-1}(i)\to [r_i]$, so that
    \begin{eqnarray*}
      f\circ_ig:&[n]&\to[p+s-1]\\
      &x&\mapsto\left\{
      \begin{array}{lcc}
      f(x),&\mbox{if}&f(x)<i,\\
      g\circ b(x)+i-1,&\mbox{if}&f(x)=i,\\
      f(x)+s-1,&\mbox{if}&f(x)\ge i.\\
      \end{array}\right.
    \end{eqnarray*}
\end{rema}
\vskip0.5cm
\begin{defis}\textbf{Operations on partitions}\label{wedge}\label{diamond}\item
  In the next paragraphs we will define 5 operations on partitions: 1) a product $\rhd$, 2) a product $\otimes$, 3) a family of unitary operations $(\gamma_k)_{k\in\N}$, 4) a composition $\diamond$ and 5) a product $\wedge$.
\begin{enumerate}[label=\arabic*)]
  \item Let $Q\in\Pi(q_1,\dots,q_s;p)$ and $R\in \Pi(r_1,\dots,r_p;n)$. According to the previous remark, $Q$ corresponds to $\p_Q:[p]\to[s]$ and $R$ to $\p_R:[n]\to[p]$. Define the partition
  $$Q\rhd R\in\Pi\Big(\sum_{i\in Q_1}r_i,\dots,\sum_{i\in Q_s}r_i;n\Big),$$
  associated with the function $\p_Q\circ\p_R$, that is:
  $$(Q\rhd R)_{i\in[s]}=\Big(\coprod_{j\in Q_i}R_j\Big)_{i\in[s]}.$$
Note that, if $\_q\in \Comp_s(p)$ and $\r\in \Comp_p(n)$, then $\_q\rhd\r=\_t\in \Comp_s(n)$, where $\_t=\big(\sum_{i\in \_q_1}r_i,\dots,\sum_{i\in\_q_s}r_i\big)$.

  \item The operation:
  \begin{eqnarray*}
    \otimes:&\Pi(r_1,\dots,r_p;n)\times\Pi(q_1,\dots,q_s;m)&\to\Pi(r_1,\dots,r_p,q_1,\dots,q_s;n+m)\\
    &((R_i)_{i\in[p]},(Q_j)_{j\in[s]})&\mapsto R\otimes Q:=({(n,m)}\circ_2Q)\circ_1R.
  \end{eqnarray*}
satisfies $R\otimes Q=(R_1,\dots,R_p,Q_1+n,\dots,Q_s+n)$ and is associative.
  \item For $k$ a positive integer and  $R\in\Pi(r_1,\dots,r_p;n)$, $R^{\otimes k}$ denotes the partition:
$$R^{\otimes k}:=\underset{k}{\underbrace{R\otimes\dots\otimes R}}\in\Pi(\underset{k}{\underbrace{r_1,\dots,r_p,\dots,r_1,\dots,r_p}};kn).$$
  For all positive integers $k$ and $R\in\Pi(r_1,\dots,r_p;n)$, set:
  $$\gamma_k(R):=(\{i+(j-1)p\}_{j\in[k]})_{i\in[p]}\rhd R^{\otimes k}=(\coprod_{j=0}^{k-1}R_i+jn)_{i\in[p]}\in\Pi(kr_1,\dots,kr_p;kn).$$      
\begin{ex*}
  For $n=3$, $R=(\{1,3\},\{2\})$, $k=3$,
  $$R^{\otimes 3}=(\{1,3\},\{2\},\{4,6\},\{5\},\{7,9\},\{8\}),$$
  and:
  $$\gamma_3(R)=(\{1,3,4,6,7,9\},\{2,5,8\}).$$
\end{ex*}
\begin{rema*}
   $\r^{\otimes k}=\_u$, where $\_u=(r_1,\dots,r_p,\dots,r_1,\dots,r_p)$. On the other hand, it is clear that $\gamma_k(\r)$ is not the element $\_v\in\Pi(\_v;nk)$ with $\_v=(kr_1,\dots,kr_p)$. Note that $\gamma_k(c_n)=c_{nk}$ and that $\r=c_{r_1}\otimes\dots\otimes c_{r_p}$.
\end{rema*}
  \item Let $\r\in\Comp_p(n)$ and, for all $i\in[p]$, let $\q_i\in \Comp_{s_i}(m_i)$. For all $Q_1\in\Pi(\_q_1;m_1)$,\newline$Q_2\in\Pi(\_q_2;m_2)$,\dots,$Q_p\in\Pi(\_q_p;m_p)$, set:
  $$\r\diamond(Q_1,\dots,Q_p)=\gamma_{r_1}(Q_1)\otimes\dots\otimes\gamma_{r_p}(Q_p).$$
\begin{ex*}
  For $\r=(3,2)$, $\q_1=(2,1)$, $\q_2=(1,2)$,
  \begin{align*}
    \r\diamond(\q_1,\q_2)&=\gamma_3(\q_1)\otimes\gamma_2(\q_2)\\
    &=(\{1,2,4,5,7,8\},\{3,6,9\})\otimes (\{1,4\},\{2,3,5,6\})\\
    &=(\{1,2,4,5,7,8\},\{3,6,9\},\{10,13\},\{11,12,14,15\}).
  \end{align*}
\end{ex*}
The operation $\diamond$ will add a subscripted index to the partitions: for $i\in[p]$ and $j\in[s_i]$, denote by $\r\diamond(Q_1,\dots,Q_p)_{i,j}=\r\diamond(Q_1,\dots,Q_p)_{s_1+\dots+s_{i-1}+j}=r_1m_1+\dots+r_{i-1}m_{i-1}+(\gamma_{r_i}(Q_i))_j$. The previous example gives:
  $$\r\diamond(\q_1,\q_2)_{2,1}=\{10,13\}.$$
\item
  For $R\in\Pi_p(n)$ and $Q\in \Pi_s(n)$, set:
  $$(R\wedge Q)_{i\in[p],\ j\in[s]}=(R_i\cap Q_j)_{i\in[p],\ j\in[s]}.$$

    The operation $\wedge$ also adds a subscripted index to the partitions: for example, the partition $(\r\diamond(Q_1,\dots,Q_p))\wedge I$ has 3 indices. For instance:
  $$\big((\r\diamond(Q_1,\dots,Q_p))\wedge I\big)_{ijk}=(\r\diamond(Q_i))_{ij}\cap I_k.$$
  The index $i$ corresponds to the part of $\r$ and to the chosen $Q_i$. The index $j$ corresponds to the part $Q_{i,j}$ and the index $k$ corresponds to the part $I_k$.
\end{enumerate}
\end{defis}
\begin{lemm}\label{wedgecr}
  For $\r\in\Comp_p(n)$ and $R\in\Pi_s(n)$, if $\p_R$ (see \Cref{partfonc}) is increasing on each $\r_i$, then $\r\wedge R\in\Comp_{ps}(n)$.
\end{lemm}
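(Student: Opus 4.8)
The plan is to argue entirely through the functional description of partitions from \Cref{partfonc}. First I would record an elementary reformulation of membership in $\Comp_m(n)$: a partition $P\in\Pi_m(n)$ lies in the image of $\iota:\Comp_m(n)\inj\Pi_m(n)$ — i.e.\ $P\in\Comp_m(n)$ after identification — if and only if the associated function $\p_P:[n]\to[m]$ is non-decreasing. Indeed, $\p_P$ non-decreasing forces each part $P_k=\p_P^{-1}(k)$ to be an interval of consecutive integers and forces these intervals to appear in increasing order, which is precisely the shape of $\iota(\_t)$ for the composition $\_t=(\val{P_k})_{k\in[m]}$; empty parts, corresponding to zero entries of $\_t$, are permitted since compositions may have zero parts. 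This turns the lemma into a monotonicity statement.

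Then I would write down $\p_{\r\wedge R}$ explicitly. With the flattening convention that sends the double index $(i,j)\in[p]\times[s]$ to $(i-1)s+j\in[ps]$ (the convention already used for $\otimes$ and $\diamond$ in \Cref{wedge}), the $(i,j)$-th part of $\r\wedge R$ is $\r_i\cap R_j$, so for $x\in[n]$ one has $\p_{\r\wedge R}(x)=(\p_\r(x)-1)s+\p_R(x)$, where $\p_\r$ denotes the function attached to $\iota(\r)$. By the previous paragraph it then suffices to show that this function is non-decreasing on $[n]$.

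I would check monotonicity by a short case analysis on two elements $x<y$ of $[n]$. Since $\r\in\Comp_p(n)$, its parts are the consecutive intervals of $\iota(\r)$, so $\p_\r$ is itself non-decreasing and $\p_\r(x)\le\p_\r(y)$. If $\p_\r(x)<\p_\r(y)$, then, using $\p_R(x)\le s$ and $\p_\r(y)\ge\p_\r(x)+1$, one gets $\p_{\r\wedge R}(x)\le(\p_\r(x)-1)s+s=\p_\r(x)\,s\le(\p_\r(y)-1)s<\p_{\r\wedge R}(y)$. If instead $\p_\r(x)=\p_\r(y)=i$, then $x$ and $y$ both lie in $\r_i$, so the hypothesis that $\p_R$ is (weakly) increasing on $\r_i$ gives $\p_R(x)\le\p_R(y)$, whence $\p_{\r\wedge R}(x)\le\p_{\r\wedge R}(y)$. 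In both cases the value does not decrease, so $\p_{\r\wedge R}$ is non-decreasing and $\r\wedge R\in\Comp_{ps}(n)$.

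The computations are routine; the only point requiring genuine care is the bookkeeping of indexing conventions — fixing the flattening of $[p]\times[s]$ into $[ps]$ consistently with \Cref{wedge} (note that the $i$-index must be the primary one, or else the parts $\r_i\cap R_j$ would be interleaved across distinct intervals $\r_i$ and fail to be contiguous), and correctly accommodating the possibly empty intersections $\r_i\cap R_j$ so that the conclusion lands in $\Comp_{ps}(n)$ rather than merely in $\Pi_{ps}(n)$. This is the step I would treat most carefully.
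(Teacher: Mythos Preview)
Your proof is correct and takes a slightly different route from the paper. The paper's argument stays in the set-theoretic picture: it sets $\q=(\val{\r_i\cap R_j})_{i,j}\in\Comp_{ps}(n)$ and verifies, block by block, that the first $\val{\r_i\cap R_1}$ elements of each interval $\r_i$ lie in $R_1$, the next $\val{\r_i\cap R_2}$ lie in $R_2$, and so on, so that each $\r_i\cap R_j$ is exactly the corresponding interval of $\iota(\q)$. You instead pass entirely through the functional description of \Cref{partfonc}: you record that $P\in\Comp_m(n)$ iff $\p_P$ is non-decreasing, express $\p_{\r\wedge R}$ as the lexicographic combination $(\p_\r-1)s+\p_R$, and reduce the lemma to a two-case monotonicity check. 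Both arguments invoke the hypothesis in the same place (monotonicity of $\p_R$ on each $\r_i$), but your packaging is tidier and makes the $[p]\times[s]\to[ps]$ flattening convention explicit, whereas the paper's version is more hands-on and leaves that bookkeeping implicit.
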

\begin{proof}
  Let $\q=(\val{\r_1\cap R_1},\dots,\val{\r_1\cap R_s},\dots,\val{\r_p\cap R_s})\in \Comp_{ps}(n)$. We have to prove that $\r\wedge R=\q$. For all $i\in[p]$, $r_i\cap R_1$ contains $\q_{s(i-1)+1}$ elements. Hence, the fact that $R$ is increasing on $\r_1$ implies that $\{1,\dots,\q_1\}\subset R_1$, and so, that $\{1,\dots,\q_1\}\in (\r\wedge R)_{11}$. Thus, $(\r\wedge R)_{i1}=\q_{s(i-1)+1}$.  The same reasoning shows that $(\r\wedge R)_{i2}=\q_{s(i-1)+2}$, and so on, until $(\r\wedge R)_{is}=\q_{si}$.
\end{proof}
\subsection{Permutation representations of finite groups}\label{B}
This subsection aims to shed light on an isomorphism between invariants and coinvariants of a permutation representation. The following general lemma will be used in our construction of generalised divided power operations.

Let $G$ be a finite group and let $X$ be a $G$-set. 
\begin{lemm}
  There is a commutative diagram:
  $$\diag{\F[G\backslash X]\ar[r]^{\phi_G}&\F[X]^G\\ 
  \F[X]_G\ar[u]_{\psi_G}\ar[ur]_{\O_G}}$$
  where the arrows are linear isomorphisms.
\end{lemm}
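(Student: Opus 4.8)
The plan is to write down the three comparison maps explicitly, so that the triangle commutes by construction, and then to reduce the bijectivity statement to the case of a single orbit, where everything becomes a one-dimensional linear-algebra check.

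First I would define the maps on generators. Let $\psi_G$ be induced on coinvariants by the linear projection $\F[X]\to\F[G\backslash X]$ sending a basis element $x$ to its orbit $\Omega_G(x)$; this factors through $\F[X]_G$ precisely because $x$ and $g\cdot x$ share the same orbit, so every $x-g\cdot x$ lies in the kernel. Let $\phi_G$ send an orbit $\Omega\in G\backslash X$ to the \emph{orbit sum} $\sum_{y\in\Omega}y$; since $g$ permutes $\Omega$, this sum is $G$-invariant, so $\phi_G$ indeed lands in $\F[X]^G$. Finally I set $\O_G:=\phi_G\circ\psi_G$, so that the triangle commutes by definition; concretely $\O_G$ carries the class of a basis element $x$ to the sum of the elements of its orbit. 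It then remains to prove that $\phi_G$ and $\psi_G$ are linear isomorphisms.

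Next I would reduce to a transitive $G$-set. Writing $X=\bigsqcup_{\Omega\in G\backslash X}\Omega$ as a disjoint union of orbits yields a decomposition $\F[X]=\bigoplus_{\Omega}\F[\Omega]$ of $G$-modules. The three constructions $(-)^G$, $(-)_G$ and $\F[G\backslash-]$ all commute with this direct sum, and the maps $\phi_G,\psi_G,\O_G$ respect the decomposition summand by summand; hence it suffices to treat the case where $X$ is a single orbit. For a transitive $X$ the verification is then immediate by a dimension count. The space $\F[G\backslash X]$ is one-dimensional, spanned by the unique orbit $X$ itself. A routine argument shows that $\F[X]^G$ is one-dimensional, spanned by $\sum_{x\in X}x$ (transitivity forces the coefficients of an invariant vector to be constant), and that $\F[X]_G$ is one-dimensional, spanned by the common class $[x]$ of the basis elements (transitivity identifies all of them in the quotient). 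By construction $\phi_G$ sends the generator $X$ to the generator $\sum_{x}x$, and $\psi_G$ sends $[x]$ to $X$; both are therefore isomorphisms, and so is $\O_G=\phi_G\circ\psi_G$.

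The main point to get right, rather than a genuine obstacle, is the \emph{choice} of the map $\O_G$. The seemingly canonical candidate, the composite $\F[X]^G\hookrightarrow\F[X]\twoheadrightarrow\F[X]_G$ of the inclusion of invariants with the projection to coinvariants, fails to be an isomorphism here: on a transitive orbit of cardinality $d$ it sends the orbit sum to $d\cdot[x]$, which vanishes whenever $\operatorname{char}\F$ divides $d$. Factoring the comparison through the orbit space $\F[G\backslash X]$, whose natural basis matches both the orbit-sum basis of $\F[X]^G$ and the orbit-class basis of $\F[X]_G$, is exactly what makes all three arrows isomorphisms in arbitrary characteristic.
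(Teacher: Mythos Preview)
Your proof is correct and follows exactly the paper's approach: the maps $\phi_G$, $\psi_G$, and $\O_G=\phi_G\circ\psi_G$ are defined identically, so the triangle commutes by construction. The paper simply asserts that $\phi_G$ and $\psi_G$ are isomorphisms (``It is easy to show\ldots''), whereas you supply the orbit-by-orbit reduction and the one-dimensional check; this is the natural way to flesh out the paper's claim, not a different argument.
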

\begin{proof}
  The map $\phi_G$ is induced by
  \begin{eqnarray*}
    &G\backslash X&\to\F[X]^G\\
    &[x]&\mapsto \sum_{y\in\Omega_G(x)}y,
  \end{eqnarray*}
  It is easy to show that this gives an isomorphism. It is also easy to show that the map $\F[X]\to \F[G\backslash X]$, induced by:
  \begin{eqnarray*}
    &X&\to\F[G\backslash X]\\
    &x&\mapsto[x]
  \end{eqnarray*}
  passes to the quotient by the action of $G$, giving an isomorphism $\psi_G:\F[X]_G\to\F[G\backslash X]$. Set then $\O_G=\phi_G\circ\psi_G$.
\end{proof}
\subsection{\texorpdfstring{$\∑$}{Sigma}-modules and operads}\label{modop}

This section contains the definition of a $\∑$-module, of the tensor and the composition product of $\∑$-modules, of the Schur functor associated to a $\∑$-module and the coinvariant version $\Gamma$, as well as the definition of operads, algebras over an operad and divided power algebras over a reduced operad. For general definitions concerning operads in a symmetric monoidal category $\C$, we refer to Sections 5.1 and 5.2 of \cite{LV}, and Section 1.1 of \cite{BF}.

The category of operads in $Set$ is denoted $Op_{Set}$ and objects in $Op_{Set}$ are called set operads. Similarly, $Op_\F$ denotes the category of operads in the category of $\F$-vector spaces and objects in $Op_\F$ are just called operads. 
\begin{defi}
  A $\∑$-module $\M$ is a sequence $(\M(n))_{n\in\N}$, where for each $n\in\N$, $\M(n)$ is a representation of the symmetric group $\∑_n$. A morphism between two $\∑$-modules is a sequence of equivariant maps. We denote by $\∑_{mod}$ the category of $\∑$-modules. The arity of $x\in\M(n)$ is $n$.
\end{defi}
\begin{defi}
  For $\M,\ı$ two $\∑$-modules, the $\∑$-module $\M\otimes\ı$ is given by:
  $$(\M\otimes\ı)(n):=\bigoplus_{i+j=n}\Ind_{\∑_i\times\∑_j}^{\∑_n}\M(i)\otimes\ı(j).$$
\end{defi}
The tensor product $\otimes$ makes $\∑_{mod}$ into a symmetric monoidal category with unit the $\∑$-module which is $\F$ in arity 0 and is $0$ in positive arity. Therefore, given a $\∑$-module $\M$ and an integer $n$, $\∑_n$ acts on $\M^{\otimes n}$. 
\begin{defi}
  For $\M,\ı$ two $\∑$-modules, the $\∑$-modules $\M\circ \ı$ and $\M \tilde\circ\ı$ are given by:
  $$(\M\circ\ı)(n):=\bigoplus_{k\in\N}(\M(k)\otimes \ı^{\otimes k}(n))_{\∑_k},$$
  and:
  $$(\M\tilde\circ\ı)(n):=\bigoplus_{k\in\N}(\M(k)\otimes \ı^{\otimes k}(n))^{\∑_k},$$
  where $\∑_k$ acts diagonally on $\M(k)\otimes \ı^{\otimes k}(n)$.
\end{defi}
The operations $\circ$ and $\tilde\circ$ provide as well $\∑_{mod}$ with a (non-braided) monoidal category structure, with unit, in both cases, the $\∑$-module $\scr U$ which is $\F$ in arity 1 and $0$ in any other arity.
\begin{prop}[\cite{LV}, Section 5.3.9]\label{foncop}
  The free functor $Set\to\F_{vect}$, sending $X$ to the $\F$-vector space spanned by $X$, extends to a functor $\F[\cdot]:\Op_{ens}\to \Op_\F$.
\end{prop}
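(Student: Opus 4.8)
The statement to prove is \Cref{foncop}: the free vector space functor $\F[\cdot]\colon Set \to \F_{vect}$ extends to a functor $\Op_{Set}\to\Op_\F$ from set operads to operads in vector spaces.

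=== PROOF PROPOSAL ===

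The plan is to verify that the free-vector-space functor is \emph{lax symmetric monoidal} for the composition products, and then invoke the general principle that a lax symmetric monoidal functor between symmetric monoidal categories carries monoids (here, operads) to monoids. Concretely, given a set operad $\P$, I would define $\F[\P]$ to be the $\∑$-module $n\mapsto\F[\P(n)]$, where each $\P(n)$ is a $\∑_n$-set and $\F[\P(n)]$ is the corresponding permutation representation; the equivariant structure of $\F[\P]$ is inherited linearly from that of $\P$.

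First I would record the comparison morphisms making $\F[\cdot]$ lax monoidal for $\circ$. The key elementary fact is that for a finite set $X$ one has a natural isomorphism $\F[X\times Y]\cong\F[X]\otimes\F[Y]$, and for a group action, taking the free vector space commutes with forming orbits, i.e. $\F[G\backslash X]\cong\F[X]_G$ (this is precisely the isomorphism $\psi_G$ recorded in the lemma of \Cref{B}). Applying this arity-wise, for set operads $\P,\Q$ one computes
$$\F[\P]\circ\F[\Q]\,(n)=\bigoplus_{k}\big(\F[\P(k)]\otimes\F[\Q]^{\otimes k}(n)\big)_{\∑_k}\cong\bigoplus_k\F\big[\∑_k\backslash(\P(k)\times(\Q^{\times k}(n)))\big],$$
which is naturally isomorphic to $\F[\P\circ_{Set}\Q\,(n)]$, where $\circ_{Set}$ denotes the composition product of set operads. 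Thus $\F[\cdot]$ sends the set-level composition product to the linear one, and carries the set-level unit $\scr U$ (the one-point set in arity $1$) to the linear unit $\scr U$. The main verification is that these comparison isomorphisms are compatible with the associativity and unit constraints of the two monoidal structures—a coherence/naturality check that reduces to the fact that $\F[\cdot]$, $\otimes$, and coinvariants all commute up to canonical natural isomorphism.

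Given this, the extension to operads is formal: a set operad $\P$ is a monoid $\P\circ_{Set}\P\to\P$, $\scr U\to\P$ in $(\Op_{Set},\circ_{Set},\scr U)$; applying $\F[\cdot]$ and pre-composing with the comparison isomorphisms yields structure maps $\F[\P]\circ\F[\P]\cong\F[\P\circ_{Set}\P]\to\F[\P]$ and $\scr U\cong\F[\scr U]\to\F[\P]$, and the monoid axioms for these follow from those of $\P$ together with the coherence just checked. Functoriality on morphisms is immediate, since a morphism of set operads induces a morphism of the associated permutation representations compatible with all the structure. I expect the only genuinely nontrivial point—and hence the main obstacle—to be verifying that the isomorphism $\F[\∑_k\backslash(-)]\cong\F[-]_{\∑_k}$ is natural and monoidal enough to interchange with the nested tensor powers in the definition of $\circ$; once this bookkeeping is in place, the rest is a routine application of the transport of monoid structures along a lax symmetric monoidal functor.
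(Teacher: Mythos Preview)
The paper does not give its own proof of this proposition: it simply records the statement with a citation to \cite{LV}, Section~5.3.9. So there is nothing to compare against beyond checking that your argument is sound, and it is. Your outline is exactly the standard one: the free vector space functor is strong monoidal for $(\times,\ast)\to(\otimes,\F)$ and commutes with passage to orbits/coinvariants (your appeal to the isomorphism $\psi_G$ of \Cref{B} is apt), hence it intertwines the two composition products $\circ_{Set}$ and $\circ$ and carries monoids to monoids.

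Two small remarks. First, the composition product $\circ$ is \emph{not} symmetric, so the phrase ``lax symmetric monoidal for the composition products'' is a slip; what you want (and what you in fact verify) is that $\F[\cdot]$ is a (strong) monoidal functor between the monoidal categories $(\∑\text{-}Set,\circ_{Set},\scr U)$ and $(\∑_{mod},\circ,\scr U)$. Second, your comparison maps are genuine isomorphisms, not merely lax structure maps, so the transport of monoid structures is entirely straightforward and the ``main obstacle'' you anticipate is really just a naturality check for the chain $\F[X\times Y]\cong\F[X]\otimes\F[Y]$ and $\F[G\backslash X]\cong\F[X]_G$, both of which are immediate.
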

\begin{defi}
  Let $\M$ be a $\∑$-module. The Schur functor associated to $\M$ is the endofunctor of the category of $\F$-vector spaces, mapping $V$ to
  $$S(\M,V):=\bigoplus_{n\in\N}(\M(n)\otimes V^{\otimes n})_{\∑_n},$$
  where $\∑_n$ acts diagonally on $\M(n)\otimes V^{\otimes n}$. This endofunctor is denoted $S(\M)$.

  Similarly, for all $\∑$-modules $\M$, there is an endofunctor of the category of $\F$-vector spaces, which maps $V$ to
    $$\Gamma(\M,V):=\bigoplus_{n\in\N}(\M(n)\otimes V^{\otimes n})^{\∑_n},$$
    where the action of $\∑_n$ is the same as before. This endofunctor is denoted $\Gamma(\M)$.
\end{defi}
\begin{prop*}[\cite{BF}, Section 1.1.11]
  Given an operad $\P$, the Schur functor $S(\P)$ is endowed with a monad structure.
\end{prop*}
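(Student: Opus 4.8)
The plan is to recognise the asserted monad structure as a formal consequence of a more structural fact: the Schur functor assembles into a (strong) monoidal functor
$$S:(\∑_{mod},\circ,\scr U)\longrightarrow(\operatorname{End}(\F_{vect}),\circ,\operatorname{Id}),$$
from $\∑$-modules under the composition product to endofunctors of $\F$-vector spaces under composition. Since an operad $\P$ is by definition a monoid in $(\∑_{mod},\circ,\scr U)$---equipped with a composition $\gamma:\P\circ\P\to\P$ and a unit $\eta:\scr U\to\P$ obeying the associativity and unit axioms---its image under any monoidal functor is a monoid in the target, that is, precisely a monad. So once the monoidal structure on $S$ is in place, the monad $S(\P)$ will be obtained for free.

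First I would construct the comparison natural isomorphism
$$\tau_{\M,\ı}\colon S(\M\circ\ı)\xrightarrow{\ \sim\ }S(\M)\circ S(\ı)$$
of endofunctors. Evaluating the right-hand side on a vector space $V$ gives $\bigoplus_k(\M(k)\otimes S(\ı,V)^{\otimes k})_{\∑_k}$. I would expand the $k$-fold tensor power of $S(\ı,V)=\bigoplus_j(\ı(j)\otimes V^{\otimes j})_{\∑_j}$, distributing over the direct sum and using that coinvariants commute with tensor products, namely $(W_1)_{G_1}\otimes(W_2)_{G_2}\cong(W_1\otimes W_2)_{G_1\times G_2}$, to identify
$$S(\ı,V)^{\otimes k}\cong\bigoplus_{j_1,\dots,j_k}\big(\ı(j_1)\otimes\dots\otimes\ı(j_k)\otimes V^{\otimes(j_1+\dots+j_k)}\big)_{\∑_{j_1}\times\dots\times\∑_{j_k}}.$$
Grouping the summands by total arity $n=j_1+\dots+j_k$ and invoking the definition $\ı^{\otimes k}(n)=\bigoplus_{j_1+\dots+j_k=n}\Ind_{\∑_{j_1}\times\dots\times\∑_{j_k}}^{\∑_n}\ı(j_1)\otimes\dots\otimes\ı(j_k)$ together with the projection formula $(\Ind_H^{\∑_n}W\otimes V^{\otimes n})_{\∑_n}\cong(W\otimes V^{\otimes n})_H$, I would rewrite the whole expression as $\bigoplus_n\big((\M\circ\ı)(n)\otimes V^{\otimes n}\big)_{\∑_n}=S(\M\circ\ı,V)$, using $(\M\circ\ı)(n)=\bigoplus_k(\M(k)\otimes\ı^{\otimes k}(n))_{\∑_k}$. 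I would then check that this chain of isomorphisms is natural in $V$ and in the $\∑$-modules $\M,\ı$. The unit isomorphism $S(\scr U)\cong\operatorname{Id}$ is immediate, since $\scr U$ is $\F$ in arity $1$ and $0$ elsewhere.

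Next I would verify the coherence conditions making $(S,\tau)$ monoidal: the associativity constraint comparing the two bracketings of $S(\M)\circ S(\ı)\circ S(\scr Q)$ via $S((\M\circ\ı)\circ\scr Q)$ and $S(\M\circ(\ı\circ\scr Q))$, together with the left and right unit triangles. These are diagram chases that reduce to the associativity and unitality isomorphisms of $\circ$ on $\∑_{mod}$, which hold because $(\∑_{mod},\circ,\scr U)$ is monoidal, as recorded just before the statement. Granting this, the monad structure on $S(\P)$ is given explicitly by the multiplication $S(\P)\circ S(\P)\xrightarrow{\tau^{-1}}S(\P\circ\P)\xrightarrow{S(\gamma)}S(\P)$ and the unit $\operatorname{Id}\cong S(\scr U)\xrightarrow{S(\eta)}S(\P)$; its associativity and unit laws follow formally from those of $\P$.

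The main obstacle is the construction and, above all, the naturality of $\tau_{\M,\ı}$: one must track the interaction of induction, coinvariants and iterated tensor powers, and confirm that the isomorphism for $S(\ı,V)^{\otimes k}$ is equivariant for the diagonal $\∑_k$-action \emph{before} passing to coinvariants, so that the reindexing by total arity is compatible with the residual symmetric group actions. Once this distributivity isomorphism and its coherence are established, the passage from the operad $\P$ to the monad $S(\P)$ is purely formal.
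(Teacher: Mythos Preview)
Your proposal is correct and follows the standard argument for this result. Note, however, that the paper does not supply its own proof of this proposition: it is stated with a citation to \cite{BF}, Section 1.1.11, and no further justification is given in the text. There is therefore no paper-specific proof to compare against; your sketch is precisely the approach one finds in the cited reference and in \cite{LV}, Section 5.1--5.2, namely to exhibit $S$ as a monoidal functor and transport the monoid structure of $\P$ along it.
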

\begin{defi*}
  Given an operad $\P$, a $\P$-algebra is an algebra over the monad $S(\P)$.
\end{defi*}
\begin{defi}
  A $\∑$-module $\M$ is said to be reduced (or connected) if $\M(0)=0$.
\end{defi}
\begin{prop*}[\cite{BF}, Section 1.1.18]
  Given a reduced operad $\P$, denote by $\eta$ and $\mu$ the unit and multiplication transformations of the monad $S(\P)$. The functor $\Gamma(\P)$ is endowed with a monad structure, whose unit and multiplication transformations are denoted $\tilde\eta$ and $\tilde\mu$.
\end{prop*}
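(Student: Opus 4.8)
The plan is to reproduce, with invariants in place of coinvariants, the standard argument that makes $S(\P)$ a monad. The unit $\tilde\eta:\mathrm{Id}\to\Gamma(\P)$ is the easy part: since $\P$ is reduced, the arity-$0$ summand $(\P(0)\otimes V^{\otimes 0})^{\∑_0}$ of $\Gamma(\P,V)$ vanishes, and the operadic unit $\mathbb 1\in\P(1)$ furnishes a natural map $V=(\P(1)\otimes V)^{\∑_1}\hookrightarrow\Gamma(\P,V)$, $v\mapsto\mathbb 1\otimes v$. All the content is in the multiplication $\tilde\mu$ and in the verification of the monad axioms.

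The key step I would isolate first is a comparison lemma: for all $\∑$-modules $\M,\ı$ there is a natural isomorphism $\Gamma(\M\tilde\circ\ı,V)\cong\Gamma(\M,\Gamma(\ı,V))$, i.e. $\Gamma$ is strong monoidal from $(\∑_{mod},\tilde\circ,\scr U)$ to the endofunctors of $\F_{vect}$ under composition (note $\Gamma(\scr U,V)=V$). To prove it, I would use that over a field, invariants for a product of groups acting on separate tensor factors split as a tensor product of invariants, so that $\Gamma(\ı,V)^{\otimes k}\cong\bigoplus_{j_1+\cdots+j_k}(\ı(j_1)\otimes\cdots\otimes\ı(j_k)\otimes V^{\otimes(j_1+\cdots+j_k)})^{\∑_{j_1}\times\cdots\times\∑_{j_k}}$, then take $\∑_k$-invariants and match the result with the decomposition of $(\M\tilde\circ\ı)(n)=\bigoplus_k(\M(k)\otimes\ı^{\otimes k}(n))^{\∑_k}$ using $\ı^{\otimes k}(n)=\bigoplus_{j_1+\cdots+j_k=n}\Ind_{\∑_{j_1}\times\cdots\times\∑_{j_k}}^{\∑_n}\ı(j_1)\otimes\cdots\otimes\ı(j_k)$. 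Reducedness of $\ı$ enters precisely here: it forces $j_a\ge 1$, hence only finitely many $k$ contribute in each arity $n$, which is what lets me commute invariants past the relevant direct sums.

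It remains to promote the operad structure $\mu:\P\circ\P\to\P$ to a $\tilde\circ$-monoid structure $\tilde\mu_\P:\P\tilde\circ\P\to\P$. Here I would use that the operadic composition is encoded by $\∑$-equivariant maps $\P(k)\otimes\P(j_1)\otimes\cdots\otimes\P(j_k)\to\P(j_1+\cdots+j_k)$; the monad $S(\P)$ is obtained by letting these descend to $\∑_k$-coinvariants, but being equivariant they equally restrict to the $\∑_k$-invariant subspaces, which defines $\tilde\mu_\P$ with values in $\P$, so that no norm map or assumption on the characteristic of $\F$ is needed. I would then set $\tilde\mu:=\Gamma(\tilde\mu_\P)$ under the comparison isomorphism and deduce the associativity and unit axioms for $\tilde\mu$ from those for $\mu$ together with the coherence of the isomorphism of the previous paragraph.

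The main obstacle I anticipate is exactly this last compatibility: checking that the comparison isomorphism $\Gamma(\M\tilde\circ\ı)\cong\Gamma(\M)\Gamma(\ı)$ is natural and coherent with the associativity constraints of $\tilde\circ$ and of functor composition, so that the equality of the two composites $\Gamma(\P\tilde\circ\P\tilde\circ\P)\to\Gamma(\P)$ really does reduce to the operadic associativity of $\mu$. This is a careful but essentially routine bookkeeping with induced representations and the iterated invariant identifications; the explicit form of $\tilde\mu$ that results is what I would record separately (cf. the computation in the appendix).
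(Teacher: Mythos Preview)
There is a genuine gap in your third paragraph. The composition maps do assemble to a $\∑_k$-invariant map $\P(k)\otimes\P^{\otimes k}(n)\to\P(n)$, so they both descend to coinvariants and restrict to invariants; but the restriction $(\P(k)\otimes\P^{\otimes k}(n))^{\∑_k}\hookrightarrow\P(k)\otimes\P^{\otimes k}(n)\to\P(n)$ is \emph{not} the map used in \cite{BF} and in \Cref{A}, and it does \emph{not} make $\P$ a $\tilde\circ$-monoid. Concretely, the right unit isomorphism $\P(n)\cong(\P\tilde\circ\scr U)(n)=(\P(n)\otimes\F[\∑_n])^{\∑_n}$ sends $x\mapsto\sum_{\sigma\in\∑_n}(\sigma^{-1}\cdot x)\otimes\sigma$; pushing this into $(\P\tilde\circ\P)(n)$ via $\eta:\scr U\to\P$ and then applying your restricted $\gamma$ returns $\sum_\sigma\sigma\cdot(\sigma^{-1}\cdot x)=n!\,x$, not $x$. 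For $\P=Com$ and $n=2$, the invariant $1\otimes(e+(12))$ is sent by your map to $2\in\P(2)$, so in characteristic $2$ the composite $\P\cong\P\tilde\circ\scr U\to\P\tilde\circ\P\to\P$ is zero. The point is that your map is $\bar\mu$ precomposed with the canonical map $X^{\∑_k}\to X_{\∑_k}$, whereas the correct map is $\bar\mu$ precomposed with $Tr^{-1}$, and these differ whenever the norm is not the identity.

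The fix is exactly the step you tried to avoid: set $\tilde\mu_\P=\mu\circ Tr_{\P,\P}^{-1}$, using that $Tr_{\P,\P}:\P\circ\P\to\P\tilde\circ\P$ is an isomorphism because $\P$ is reduced (\cite{BF}, 1.1.15). This is how the multiplication appears in the diagram of \Cref{A}. Your comparison isomorphism $\Gamma(\M,\Gamma(\ı,V))\cong\Gamma(\M\tilde\circ\ı,V)$ and the monoidal-transport strategy are sound; but reducedness is needed not only for the finiteness you invoke in that lemma, it is essential for inverting the norm.
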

\begin{prop}[See \Cref{A}]\label{propA}
Let $\P$ be a reduced operad and $V$ be an $\F$-vector space. Denote by $\mu:\P\circ\P\to\P$ the multiplication of $\P$ and by $\tilde\mu:\Gamma(\P)\circ\Gamma(\P)\to\Gamma(\P)$ the multiplication of the monad $\Gamma(\P)$. Consider an element $\t\in\Gamma(\P,\Gamma(\P,V))$ of the form
\begin{equation*}
  \t=\sum_{\sigma\in \∑_n/\∑_\r}\sigma\cdot x\otimes \sigma\cdot\bigg(\bigotimes_{i=1}^p\Big(\sum_{\sigma_i\in\∑_{m_i}/\∑_{\q_i}}\sigma_i\cdot x_i\otimes \sigma_i\cdot\_{v_i}\Big)^{\otimes r_i}\bigg),
\end{equation*}
with $\r\in\Comp_p(n)$, $x\in \P(n)^{\∑_\r}$, and for all $i\in[p]$, $\q_i\in\Comp_{k_i}(m_i)$, $x_i\in\P(m_i)^{\∑_{\q_i}}$ and $\_{v_i}\in (V^{\otimes m_i})^{\∑_{\q_i}}$. One then has
\begin{equation*}
 \tilde\mu_V(\t)=\sum_{\tau\in\∑_{M}/\prod_{i=1}^p\∑_{r_i}\wr\∑_{\q_i}}\mu(x\otimes[\tau\otimes x_1^{\otimes r_1}\otimes\dots\otimes x_p^{\otimes r_p}]_{\prod_{i}\∑_{m_i}^{\times r_i}})\otimes \tau\cdot(\_{v_1}^{\otimes r_1}\otimes\dots\otimes\_{v_p}^{\otimes r_p}),
\end{equation*}
where $M=r_1m_1+\dots+r_pm_p$, and 
$$[\tau\otimes x_1^{\otimes r_1}\otimes\dots\otimes x_p^{\otimes r_p}]_{\prod_{i}\∑_{m_i}^{\times r_i}}\in \Ind_{\prod_i\∑_{m_i}^{\times r_i}}^{\∑_M}\bigotimes_{i}\P(m_i)^{\otimes r_i}.$$
\end{prop}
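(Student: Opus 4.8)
The plan is to evaluate the monad multiplication $\tilde\mu_V$ directly on the symmetrised element $\t$, starting from the definition of $\tilde\mu$ as the transformation induced by the operad composition $\mu:\P\circ\P\to\P$ on the invariants functor $\Gamma(\P)$. Concretely, on an element $p\otimes(w_1\otimes\dots\otimes w_n)$ with $p\in\P(n)$ and each $w_j$ supported in a single arity $m_{(j)}$, say $w_j=q_j\otimes\_{u_j}$ with $q_j\in\P(m_{(j)})$ and $\_{u_j}\in V^{\otimes m_{(j)}}$, the composition part of $\tilde\mu$ forms $\mu(p\otimes(q_1\otimes\dots\otimes q_n))\in\P(m_{(1)}+\dots+m_{(n)})$ and concatenates the $V$-factors into $\_{u_1}\otimes\dots\otimes\_{u_n}$, the invariance of the output being guaranteed by the symmetrised shape of $\t$. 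The content of the Proposition is to carry out this recipe when $p$, the $q_j$, and the $\_{u_j}$ are themselves given in the fully symmetrised (norm) form of the statement, and to re-express the resulting $\∑_M$-invariant again in norm form.

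First I would substitute the expression for $\t$ and distribute $\tilde\mu_V$ over the outer sum indexed by $\sigma\in\∑_n/\∑_\r$ and over the tensor factor $\bigotimes_{i=1}^p W_i^{\otimes r_i}$, where $W_i=\sum_{\sigma_i\in\∑_{m_i}/\∑_{\q_i}}\sigma_i\cdot x_i\otimes\sigma_i\cdot\_{v_i}$ is the inner invariant concentrated in arity $m_i$. Since $W_i$ occurs with multiplicity $r_i$ and $x$ has arity $n=r_1+\dots+r_p$, the operadic composition plugs $r_1$ copies of $x_1$, then $r_2$ copies of $x_2$, and so on, into the inputs of $x$ grouped according to $\r$, producing an element of $\P(M)$ with $M=r_1m_1+\dots+r_pm_p$. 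This composite is exactly what is recorded by the induced-representation element $[\tau\otimes x_1^{\otimes r_1}\otimes\dots\otimes x_p^{\otimes r_p}]_{\prod_i\∑_{m_i}^{\times r_i}}$ together with $\mu$, while the $V$-factors reassemble into $\tau\cdot(\_{v_1}^{\otimes r_1}\otimes\dots\otimes\_{v_p}^{\otimes r_p})$; here $\tau$ encodes which of the $M$ output slots receive each block.

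The heart of the argument, and the step I expect to be the main obstacle, is the bookkeeping showing that the three independent sources of symmetry collapse into the single sum over $\tau\in\∑_M/\prod_{i=1}^p\∑_{r_i}\wr\∑_{\q_i}$. These sources are the outer symmetrisation over $\∑_n/\∑_\r$, the inner symmetrisations $(\∑_{m_i}/\∑_{\q_i})^{r_i}$ coming from the $r_i$ copies of $W_i$, and the coinvariance built into the composition product $\P\circ\P$. The key structural fact is that the stabiliser in $\∑_M$ of the composite datum $x\otimes(x_1^{\otimes r_1}\otimes\dots\otimes x_p^{\otimes r_p})$ is precisely $\prod_{i}\∑_{r_i}\wr\∑_{\q_i}$: the base factors $\∑_{\q_i}$ record the $\∑_{\q_i}$-invariance of each $x_i$ and each $\_{v_i}$ inside a single block of size $m_i$, while the factors $\∑_{r_i}$ record the freedom to interchange the $r_i$ identical copies of $x_i$, an operation that leaves $\mu(x\otimes-)$ unchanged precisely because $x$ is $\∑_\r$-invariant. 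I would establish well-definedness of the claimed sum by checking invariance of the summand under right translation of $\tau$ by generators of each type (the block-internal $\∑_{\q_i}$ and the block-permuting $\∑_{r_i}$), and then confirm that, after the induction built into $\P\circ\P$ absorbs the relabelling of the $M$ slots, no term is over- or under-counted, so that the norm form of $\tilde\mu_V(\t)$ is exactly the stated sum over $\∑_M/\prod_i\∑_{r_i}\wr\∑_{\q_i}$. The complete verification, including the precise recollection of the construction of $\tilde\mu$, is carried out in \Cref{A}.
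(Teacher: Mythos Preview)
Your plan is essentially the same strategy the paper carries out in \Cref{A}, and the identification of the wreath product $\prod_i\∑_{r_i}\wr\∑_{\q_i}$ as the correct stabiliser is exactly the point. One aspect you gloss over deserves to be made explicit, however: your description of $\tilde\mu_V$ as simply ``form $\mu(p\otimes(q_1\otimes\dots\otimes q_n))$ and concatenate'' hides the fact that $\mu$ is defined on the \emph{coinvariant} composite $\P\circ\P$, whereas $\Gamma(\P,\Gamma(\P,V))$ is built from invariants. The precise construction of $\tilde\mu$ (following Fresse) passes through the inverse of the norm map $Tr_{\P,\P}:\P\circ\P\to\P\,\tilde\circ\,\P$, which is an isomorphism because $\P$ is reduced. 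In the paper's computation this is exactly what absorbs the outer sum over $\sigma\in\∑_n/\∑_\r$: after one decomposes each $\tau\in\∑_M/\prod_i\∑_{\q_i}^{\times r_i}$ as $\tau_1\tau_2^*$ with $\tau_2\in\∑_\r$ acting by blocks, the outer sum combines with the sum over $\tau_2$ to give a full sum over $\∑_n$, which is precisely the norm, and $Tr^{-1}$ then replaces it by the single coinvariant class on which $\mu$ acts.

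So your ``three independent sources of symmetry collapse'' is right, but the mechanism for the first of the three (the outer $\∑_n/\∑_\r$) is not merely a stabiliser count: it is the norm--coinvariants passage built into the definition of $\tilde\mu$. Make that step explicit and your sketch becomes the paper's proof.
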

\begin{defi*}
  Given a reduced operad $\P$, a divided power $\P$-algebra, or $\Gamma(\P)$-algebra, is an algebra over the monad $\Gamma(\P)$.
\end{defi*}
\begin{defi}[\cite{BF}, Section 1.1.14]\label{Tr}
  For all $\∑$-modules $\M$ and $\ı$, there is a morphism of $\∑$-modules, which is natural in $\M$ and $\ı$, called the norm map or the trace map: 
  $$Tr_{\M,\ı}:\M\circ \ı\to\M\tilde\circ\ı,$$
  inducing a natural transformation $Tr_\M:S(\M)\to\Gamma(\M)$.
\end{defi}
\begin{prop*}[\cite{BF}, Section 1.1.15]
  If $\ı$ is reduced, then $Tr_{\M,\ı}$ is an isomorphism.
\end{prop*}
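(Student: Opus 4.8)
\section*{Proof proposal}

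The plan is to reduce the statement to a single, purely group-theoretic fact about free modules over a group algebra, and to isolate the role of reducedness in one elementary freeness claim about $\ı^{\otimes k}$.

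First I would observe that $Tr_{\M,\ı}$ splits, in each arity $n$, as a direct sum over $k$ of the norm maps
$$N_{k,n}\colon \big(\M(k)\otimes\ı^{\otimes k}(n)\big)_{\∑_k}\longrightarrow \big(\M(k)\otimes\ı^{\otimes k}(n)\big)^{\∑_k},\qquad [w]\mapsto\sum_{\sigma\in\∑_k}\sigma\cdot w,$$
so it suffices to prove that each $N_{k,n}$ is an isomorphism. The general principle I would invoke is: for a finite group $G$ and a \emph{free} $\F[G]$-module $W$, the norm map $W_G\to W^G$ is an isomorphism. This reduces to the case $W=\F[G]$, since both functors and the norm commute with direct sums; there $W_G\cong\F$ is generated by the class of the unit, $W^G=\F\cdot\sum_{g}g$, and the norm sends the former generator to the latter. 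Thus the proposition will follow once I show that $\M(k)\otimes\ı^{\otimes k}(n)$, with its diagonal $\∑_k$-action, is a free $\F[\∑_k]$-module.

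Next I would remove the factor $\M(k)$ by the standard untwisting isomorphism: for any $\F[\∑_k]$-module $M$, the diagonal module $M\otimes\F[\∑_k]$ is isomorphic to $M_{\mathrm{triv}}\otimes\F[\∑_k]$ via $m\otimes g\mapsto g^{-1}m\otimes g$, so the tensor product over $\F$ (with diagonal action) of an arbitrary $\F[\∑_k]$-module with a free one is again free. Hence it is enough to prove that $\ı^{\otimes k}(n)$ is a free $\F[\∑_k]$-module. This last point is the crux, and the only place where reducedness of $\ı$ is used. I would use the decomposition
$$\ı^{\otimes k}(n)\;\cong\;\bigoplus_{(B_1,\dots,B_k)}\ı(B_1)\otimes\dots\otimes\ı(B_k),$$
indexed by the ordered partitions $(B_1,\dots,B_k)$ of $[n]$ into $k$ blocks, on which $\∑_k$ acts by permuting blocks (carrying the summand indexed by $(B_1,\dots,B_k)$ isomorphically to the one indexed by $(B_{\sigma^{-1}(1)},\dots,B_{\sigma^{-1}(k)})$). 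Because $\ı(0)=0$, only ordered partitions with all blocks \emph{nonempty} contribute; such blocks are pairwise disjoint and nonempty, hence pairwise distinct, so a permutation $\sigma$ fixing the tuple $(B_1,\dots,B_k)$ must be the identity. Thus $\∑_k$ acts \emph{freely} on the indexing set, and a module on which a group freely permutes a family of summands is free. This yields the freeness of $\ı^{\otimes k}(n)$ and completes the proof; reducedness also makes the sum over $k$ finite ($0\le k\le n$), so no convergence issue arises.

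The main obstacle is precisely this freeness claim, and it is where I would be most careful: the relevant $\∑_k$-action is the diagonal one coming from the symmetric monoidal structure of $\∑$-modules, so swapping two tensor factors of equal arity is accompanied by the corresponding block transposition in $\∑_n$. It is exactly this block transposition that moves a summand to a genuinely different ordered partition and makes the action free; were $\ı$ not reduced, empty blocks would be indistinguishable and could be permuted without changing the index, the action would fail to be free, and in positive characteristic the norm map would cease to be an isomorphism.
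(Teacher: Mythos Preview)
Your proof is correct. The paper itself does not supply a proof of this proposition: it is merely recalled from \cite{BF}, Section~1.1.15, so there is nothing in the present paper to compare your argument against. The route you take---reducing to the norm map on $\M(k)\otimes\ı^{\otimes k}(n)$, untwisting to remove the $\M(k)$ factor, and then observing that $\ı^{\otimes k}(n)$ is a free $\F[\∑_k]$-module because reducedness forces the ordered block partitions indexing its summands to have pairwise distinct (nonempty) blocks, so that $\∑_k$ permutes them freely---is the standard argument and is essentially the one in Fresse's paper. Your closing caution about the block transposition accompanying the swap of equal-arity tensor factors is exactly the point at which a careless reader could go wrong, and you have identified it correctly.
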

\begin{prop*}[\cite{BF}, Section 1.1.19]
  If $\P$ is a reduced operad, then $Tr_\P$ is a morphism of monads.
\end{prop*}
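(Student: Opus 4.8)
The plan is to verify the two defining axioms of a morphism of monads separately: compatibility with the units and compatibility with the multiplications. I would dispose of the units first, as they are immediate. Both the unit $\eta$ of $S(\P)$ and the unit $\tilde\eta$ of $\Gamma(\P)$ factor through the arity-$1$ summand, where they are induced by the operadic unit $\scr U\to\P$. Since $\∑_1$ is trivial, the coinvariants and the invariants of $\P(1)\otimes V$ coincide and $Tr_\P$ restricts to the identity on that summand; hence $Tr_\P\circ\eta=\tilde\eta$ at once.

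The substance lies in the multiplication square, and I would organise it around the fact that both monads arise from $\P$ through strong monoidal functors. The Schur functor $S$ is strong monoidal from $(\∑_{mod},\circ)$ to the endofunctors of $\F$-vector spaces under composition, so that $S(\M)S(\ı)\cong S(\M\circ\ı)$ and the multiplication of $S(\P)$ is $S(\mu)$ transported along this isomorphism. On the other side, over the ground field $\F$ taking invariants commutes with finite tensor products, which yields a natural isomorphism $\chi^\Gamma\colon\Gamma(\M)\Gamma(\ı)\cong\Gamma(\M\tilde\circ\ı)$ (reducedness of $\P$ ensuring the relevant sums are finite); thus $\Gamma$ is strong monoidal from $(\∑_{mod},\tilde\circ)$. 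By its construction (Fresse, Section 1.1.18), the multiplication $\tilde\mu$ of $\Gamma(\P)$ is the image under $\Gamma$ of the $\tilde\circ$-monoid structure $\tilde\mu_\∑:=\mu\circ Tr_{\P,\P}^{-1}$ on $\P$, which exists because $\P$ is reduced and hence $Tr_{\P,\P}\colon\P\circ\P\to\P\tilde\circ\P$ is invertible; consequently $\tilde\mu_\∑\circ Tr_{\P,\P}=\mu$.

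Granting these ingredients, the multiplication square reduces to a single coherence identity expressing that the trace natural transformation intertwines the two monoidal structures: writing $Tr_\M\ast Tr_\ı\colon S(\M)S(\ı)\Rightarrow\Gamma(\M)\Gamma(\ı)$ for the horizontal composite and $\chi^S,\chi^\Gamma$ for the monoidal structure isomorphisms, I would establish
\[
\chi^{\Gamma}_{\M,\ı}\circ(Tr_{\M}\ast Tr_{\ı})=\Gamma(Tr_{\M,\ı})\circ Tr_{\M\circ\ı}\circ\chi^{S}_{\M,\ı}.
\]
With this in hand the square commutes by a short chase: specialise to $\M=\ı=\P$, apply the identity $\tilde\mu_\∑\circ Tr_{\P,\P}=\mu$, and finally use naturality of $Tr$ along the operad multiplication $\mu\colon\P\circ\P\to\P$ in the form $Tr_\P\circ S(\mu)=\Gamma(\mu)\circ Tr_{\P\circ\P}$ to recover $Tr_\P\circ\mu^{S(\P)}=\tilde\mu\circ(Tr_\P\ast Tr_\P)$.

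The coherence identity is the main obstacle, and its content is a compatibility of norm (trace) maps with induction. In the summand indexed by a decomposition $n=m_1+\dots+m_k$, the arity-$n$ part of $\ı^{\otimes k}$ is induced from $\prod_i\∑_{m_i}$ to $\∑_n$, and the standard isomorphism $(\Ind_H^G U)^G\cong U^H$ carries the $\∑_n$-norm performed by $Tr_{\M\circ\ı}$ to the $\prod_i\∑_{m_i}$-norm; this matches exactly the inner norm performed by $Tr_\ı$, while the outer $\∑_k$-norm of $\Gamma(Tr_{\M,\ı})$ matches that performed by $Tr_\M$. Over the field $\F$ invariants commute with finite tensor products, which both supplies $\chi^\Gamma$ and legitimises interchanging these layers, so the verification is a careful but routine bookkeeping of orbit and coset representatives. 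Reducedness of $\P$ intervenes only to make $Tr_{\P,\P}$ invertible and to keep the sums finite. As an alternative, one could instead prove the square by direct computation, comparing the explicit formula for $\tilde\mu$ from \Cref{propA} with the corresponding expansion of $\mu^{S(\P)}$ and checking that both reduce to the same sum over the coset representatives indexing the wreath-product subgroups $\∑_k\ltimes\prod_i\∑_{m_i}\subset\∑_n$.
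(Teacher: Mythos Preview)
The paper does not give its own proof of this proposition; it is stated with a citation to Fresse \cite{BF}, Section 1.1.19, and no argument is supplied. There is therefore nothing in the paper to compare your proposal against directly.

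That said, your sketch is sound and is essentially the argument Fresse gives. The unit compatibility is indeed trivial. For the multiplication, the reduction to a monoidal coherence identity for the norm transformation is the right organising principle, and your identification of the key point---that the $\∑_n$-norm on an induced module $\Ind_{\prod_i\∑_{m_i}}^{\∑_n}$ corresponds to the product of the $\∑_{m_i}$-norms via the isomorphism $(\Ind_H^G U)^G\cong U^H$---is exactly what drives the verification. The paper's Appendix~A (proof of \Cref{propA}) in fact unpacks $\tilde\mu$ along precisely the lines you describe: the isomorphism you call $\chi^\Gamma$ appears there as the map $\varphi$ (cited from \cite{NS}, Proposition~4.2), and the explicit coset manipulation that follows is your ``routine bookkeeping''. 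Your alternative suggestion---comparing the explicit formula of \Cref{propA} with the coinvariant side---is also viable and amounts to the same calculation read in the other direction.

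One minor caution: the strong monoidality of $\Gamma$ from $(\∑_{mod},\tilde\circ)$ to endofunctors is not entirely formal and does require the finiteness coming from reducedness together with the fact that invariants of a finite group over a field commute with tensor products; you acknowledge this, but in a fully written proof it deserves a sentence of justification rather than being asserted.
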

\begin{prop*}[\cite{BF}, Section 1.1.1]
  If $\F$ has characteristic 0, then $Tr_\P$ is an isomorphism.
\end{prop*}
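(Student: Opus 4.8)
The plan is to verify that the trace map is an isomorphism arity by arity, where it reduces to the classical fact that, over a field in which the order of a finite group is invertible, the norm map from coinvariants to invariants is bijective.

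Recall from \Cref{Tr} that for a $\∑$-module $\M$ the natural transformation $Tr_\M\colon S(\M)\to\Gamma(\M)$ is, evaluated at a vector space $V$ and restricted to arity $n$, the norm map
$$N_n\colon\big(\M(n)\otimes V^{\otimes n}\big)_{\∑_n}\longrightarrow\big(\M(n)\otimes V^{\otimes n}\big)^{\∑_n},\qquad [w]\longmapsto\sum_{\sigma\in\∑_n}\sigma\cdot w,$$
induced by $w\mapsto\sum_{\sigma\in\∑_n}\sigma\cdot w$ on $W:=\M(n)\otimes V^{\otimes n}$. Since $\P$ is in particular a $\∑$-module and the operad structure is irrelevant to bijectivity, it suffices to show that every such $N_n$ is an isomorphism. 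As this concerns a single finite group $G=\∑_n$ acting on an arbitrary $\F$-vector space $W$, I would prove the general statement: if $|G|$ is invertible in $\F$, then the norm map $N\colon W_G\to W^G$, $[w]\mapsto\sum_{g\in G}g\cdot w$, is an isomorphism. This applies here because $\F$ has characteristic $0$ and each $\∑_n$ is finite.

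First I would check that $N$ is well defined (if $w'=g\cdot w-w$ then $\sum_h h\cdot w'=0$ after reindexing the sum) and that it lands in $W^G$ (the element $\sum_g g\cdot w$ is manifestly $G$-fixed). Then I would introduce the averaging idempotent $e:=\tfrac1{|G|}\sum_{g\in G}g\in\F[G]$, which satisfies $e^2=e$ and $g\cdot e=e$, and whose image is exactly $W^G$ (indeed $eW\subseteq W^G$, and $ew=w$ for $w\in W^G$). The key point is the identification of the subspace $I:=\langle\,w-g\cdot w:w\in W,\ g\in G\,\rangle$ defining $W_G$ with $\ker e$: the inclusion $I\subseteq\ker e$ is immediate from $g\cdot e=e$, and conversely the identity $w-ew=\tfrac1{|G|}\sum_{g}(w-g\cdot w)\in I$ shows $\ker e\subseteq I$. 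Hence $W=W^G\oplus\ker e=\mathrm{im}(e)\oplus I$, so the composite $\theta\colon W^G\hookrightarrow W\twoheadrightarrow W_G$ is an isomorphism.

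Finally I would compose $N$ with $\theta$: for $w\in W^G$ one has $N(\theta(w))=N([w])=\sum_{g}g\cdot w=|G|\,w$, so $N\circ\theta=|G|\cdot\mathrm{id}_{W^G}$ is invertible; since $\theta$ is an isomorphism, so is $N$. Taking $G=\∑_n$ and $W=\P(n)\otimes V^{\otimes n}$ for all $n$ and all $V$, and invoking naturality in $V$, shows that $Tr_\P$ is a natural isomorphism $S(\P)\xrightarrow{\ \sim\ }\Gamma(\P)$. I expect no genuine difficulty here; the only step demanding care is the identification $I=\ker e$, which is precisely where invertibility of $|G|$—and hence the characteristic $0$ hypothesis—enters.
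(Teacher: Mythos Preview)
Your argument is correct and is the standard one: in characteristic $0$ the averaging idempotent $e=\tfrac{1}{|G|}\sum_{g\in G}g$ splits the quotient map $W\to W_G$, identifying $W_G\cong W^G$ via the norm. The paper itself does not supply a proof of this proposition---it is merely quoted from \cite{BF}, Section 1.1.1---so there is no in-text argument to compare against; your proof is exactly what one would expect and fills the gap cleanly.
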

Given a reduced operad $\P$, the composition product of $\Gamma(\P)$ involves the isomorphism $Tr_{\P,\P}$ (see \cite{BF}, Section 1.1.18 and \Cref{A}). According to the foregoing, a $\Gamma(\P)$-algebra is also endowed with a structure of a $\P$-algebra and, if the characteristic of the ground field is 0, then those two structures coincide.

\section{Operations indexed by invariant elements}\label{secinv}

In this section, we describe the category of $\Gamma(\P)$-algebras associated to a reduced operad $\P$ in terms of operations parametrised by invariants elements of $\P(n)$ under the action of Young subgroups of $\∑_n$. These polynomial operations are to be seen as generalised divided power operations. 

Formally, the goal of this section is to explain the definition of comparison functors that fit in a diagram of the following form:
$$\diag{
  \Gamma(\M)_{mod}\ar@/_/[d]_{F_\M}&\Gamma(\P)_{alg}\ar@{_{(}->}[l]\ar@/_/[d]_{F_\P}\\
  \beta(\M)\ar@/_/[u]_{G_\M}&\beta'(\P)\ar@{_{(}->}[l]\ar@/_/[u]_{G_\P}},
$$
where $\beta(\M)$ and $\beta'(\P)$ will be categories of vector spaces endowed with generalised divided power operations, and to check that the vertical functors define isomorphisms of categories.

In \Cref{subsecF}, we explain the definition of the categories $\beta(\M)$ and $\beta'(\M)$ with full details and the definition of the functors $F_\M$ and $F_\P$.
The functor $F_\P$ endows any $\Gamma(\P)$-algebra with a generalised divided power algebra structure.

In \Cref{subseclemminv}, we explain the definition of the functors $G_\M$ and $G_\P$ and we check that these functors are inverse to $F_\M$ and $F_\P$ in order to complete the proof of our statement.

In \Cref{secCOM}, we examine the applications of our constructions to the case of the commutative operad.

The main result of this section is \Cref{theoinv}, stated in the introduction of this article as \Cref{theoinvintro}.


\subsection{Construction of the functors \texorpdfstring{$F_\M$}{} and \texorpdfstring{$F_\P$}{}}\label{subsecF}
In this subsection, we define categories $\Gamma(\M)_{mod}$ and $\Gamma(\P)_{alg}$ of divided $\M$-modules and divided $\P$-algebras, as well as categories $\beta(\M)$ and $\beta'(\P)$, which have objects vector spaces endowed with polynomial operations $\beta_{x,\r}$ indexed by compositions $\r\in\Comp_p(n)$ of integers and invariant elements of $\P$, $x\in\P(n)^{\∑_\r}$. One of the main results of this article is the \cref{theoinv}, which states that $\Gamma(\P)_{alg}$ and $\beta'(\P)$ are isomorphic, and therefore, that the data of a divided power algebra over an operad is equivalent to the data of generalised divided power operations on the underlying vector space.

In this section, we will need the notation defined in \Cref{Partitions} to deal with compositions of integer and partitions of the set $[n]$. 

Let $\M$ and $\P$ be a $\∑$-module and a reduced operad.

\begin{defi}\label{defbeta}
\begin{itemize}
  \item The category $\Gamma(\M)_{mod}$ has objects $(A,f)$ where $A$ is an $\F$-vector space and $f$ is a linear morphism $f:\Gamma(\M,A)\to A$. A morphism of $\Gamma(\M)_{mod}$ of the form $(A,f)\to (A',f')$ is a linear map $g:A\to A'$ such that $f'\circ\Gamma(\M,g)=g\circ f$.
  \item The category $\beta(\M)$ has objects $(A,\beta)$, where $A$ is an $\F$-vector space and $\beta_{*,*}$ is a family of operations $\beta_{x,\r}:A^{\times p}\to A$, given for all $\r\in\Comp_p(n)$ and $x\in\M(n)^{\∑_\r}$, and which satisfy the relations:
  \begin{enumerate}[label=($\beta$\arabic*),itemsep=5pt]
    \item\label{relperm}$\beta_{x,\r}((a_i)_i)=\beta_{\rho^*\cdot x,\r^\rho}((a_{\rho^{-1}(i)})_{i})$ for all $\rho\in\∑_p$, where $\rho^*$ denotes the block permutation with blocks of size $(r_i)$ associated to $\rho$.

    \item\label{rel0} $\beta_{x,(0,r_1,r_2,\dots,r_p)}(a_0,a_1,\dots,a_p)=\beta_{x,(r_1,r_2,\dots,r_p)}(a_1,\dots,a_p)$.

    \item\label{rellambda} $\beta_{x,\r}(\lambda a_1,a_2,\dots,a_p)=\lambda^{r_1}\beta_{x,\r}(a_1,\dots,a_p)\quad\forall \lambda\in \F$.

    \item\label{relrepet}
    If $\r\in \Comp_p(n)$ and $\q\in \Comp_s(p)$, then
    $$\beta_{x,\r}(\undercount{q_1}{a_1,\dots,a_1},\undercount{q_2}{a_2,\dots,a_2},\dots,\undercount{q_s}{a_s,\dots,a_s})=\beta_{\big(\sum_{\sigma\in \∑_{\q\rhd\r}/\∑_\r}\sigma\cdot x\big),\ \q\rhd\r}(a_1,a_2,\dots,a_s).$$

    \item\label{relsomme} $\beta_{x,\r}(a_0+a_1,\dots,a_p)=\sum_{l+m=r_1}\beta_{x,\r\circ_1(l,m)}(a_0,a_1,\dots,a_p)$.

    \item\label{rellin} $\beta_{\lambda x+y,\r}=\lambda\beta_{x,\r}+\beta_{y,\r}$ , for all $x,y\in\M(n)^{\∑_\r}$.
  \end{enumerate}
  The morphisms are the linear maps compatible with the operations $\beta_{x,\r}$.
\end{itemize}
If $(A,f)$ is an object of $\Gamma(\M)_{mod}$, or if $(A,\beta)$ is an object of $\beta(\M)$, then the vector space $A$ is the underlying vector space of the object.
\end{defi}
\begin{rema}\label{rempart}  
If $(A,\beta)$ is an object in the category $\beta(\M)$, the operations $\beta_{*,*}$ induce operations $\beta_{x,R}:A^{\times p}\to A$, for all partitions $R\in\Pi(r_1,\dots,r_p;n)$ and all $x$ stable under the action of $\∑_R$. Indeed, according to \Cref{transn}, there exists $\tau\in\∑_n$ such that $\tau(R)=\r$. Generally, the permutation $\tau\in\∑_n$ such that $\tau(R)=\r$ is not unique. One can define:
$$\beta_{x,R}(a_1,\dots,a_p):=\beta_{\tau\cdot x,\r}(a_1,\dots,a_p),$$
which does not depend on the chosen $\tau$.
\end{rema}
\begin{rema}\label{remacting}
  The relations \ref{rel0}, \ref{rellambda} and \ref{relsomme} are expressed here by acting on the first variable, but can be rewritten, thanks to the relation \ref{relperm}, to act on any other variable. For example, from the relation \ref{relsomme}, one deduces the equivalent relation:
  \begin{enumerate}[label=($\beta$\arabic* bis)]
    \setcounter{enumi}{4}
    \item \label{relsomme+}For all $\r\in \Comp_p(n)$,
     and $q\in \Comp_p(m)$,
    $$\beta_{x,\r}\Big(\sum_{i\in\_q_1}a_i,\dots,\sum_{i\in\_q_p}a_i\Big)=\sum_{\_k_1\in \Comp_{q_1}(r_1),\dots,\_k_p\in \Comp_{q_p}(r_p)}\beta_{x,\r\circ(\_k_1,\dots,\_k_p)}(a_1,\dots,a_m).$$
  \end{enumerate}
\end{rema}
\begin{defi}\label{defbeta'}
  \begin{itemize}
    \item The category $\Gamma(\P)_{alg}$ is the category of algebras over the monad $\Gamma(\P)$.
    \item The category $\beta'(\P)$ is the full subcategory of objects $(A,\beta)$ of $\beta(\P)$ satisfying:
    \begin{enumerate}[label=($\beta$7\alph*)]
    \item \label{relunit}$\beta_{id_\P,(1)}(a)=a \quad \forall a\in A$.
    \item \label{relcomp} Let $\r\in \Comp_p(n)$, $x\in \P(n)^{\∑_\r}$ and for all $i\in[p]$, let $\q_i\in \Comp_{k_i}(m_i)$, $x_i\in\P(m_i)^{\∑_{\q_i}}$ and $(b_{ij})_{1\le j\le k_i}\in A^{\times k_i}$. Denote by $b=(b_{ij})_{i\in[p],j\in[k_i]}$ and for all $i\in[p]$, $b_i=(b_{i,j})_{j\in[k_i]}$. Then:
    $$\beta_{x,\r}(\beta_{x_1,\q_1}(b_{1}),\dots,\beta_{x_p,\q_p}(b_{p}))=\beta_{\sum_{\tau}\tau\cdot\mu \big(x\otimes \big(\bigotimes_{i=1}^px_i^{\otimes r_i}\big)\big),\r\diamond(\q_i)_{i\in[p]}}(b),$$
    where $\r\diamond(\q_i)_{i\in[p]}$ is defined in \Cref{diamond}, where $\beta_{\cdot,\r\diamond(\q_i)_{i\in[p]}}$ is defined in \Cref{rempart} and where $\tau$ ranges over $\∑_{\r\diamond(q_i)_{i\in[p]}}/(\prod_{i=1}^{p}\∑_{r_i}\wr \∑_{\q_i})$ in the sum.
  \end{enumerate}
  \end{itemize}
\end{defi}
\begin{prop}\label{prepM}
 Let $(A,f)$ be an object of $\Gamma(\M)_{mod}$. For all $n\in\N$, all compositions $\r\in\Comp_p(n)$ and all $x\in\M(n)^{\∑_\r}$, define:
\begin{eqnarray*}
  \beta_{x,\r}:&A^{\times p}&\to A\\
  &(a_{i})_{1\le i\le p}&\mapsto f\Big(\sum_{\sigma\in\∑_n/\∑_\r}\sigma\cdot x\otimes\sigma\cdot(\otimes_ia_i^{\otimes r_i})\Big).
\end{eqnarray*}
The mapping $(A,f)\mapsto(A,\beta)$ extends to a functor $F_\M:\Gamma(\M)_{mod}\to\beta(\M)$. In particular, the operations $\beta_{*,*}$ satisfy relations \ref{relperm} to \ref{rellin} of \Cref{defbeta}.  
\end{prop}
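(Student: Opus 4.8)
The plan is to verify three things in succession: that the displayed formula is well defined, that the resulting operations satisfy \ref{relperm}--\ref{rellin}, and that a morphism of $\Gamma(\M)_{mod}$ induces a morphism of $\beta(\M)$. For well-definedness, note that both $x\in\M(n)^{\∑_\r}$ and the tensor $\bigotimes_i a_i^{\otimes r_i}$ are fixed by $\∑_\r$ (the latter because $a_i^{\otimes r_i}$ is symmetric under $\∑_{r_i}$), so replacing a representative $\sigma$ of a coset in $\∑_n/\∑_\r$ by $\sigma\tau$ with $\tau\in\∑_\r$ leaves the corresponding summand unchanged; the sum is therefore independent of the choice of representatives. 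Since left translation by any $\rho\in\∑_n$ merely permutes these cosets, the sum lies in $(\M(n)\otimes A^{\otimes n})^{\∑_n}$, a direct summand of $\Gamma(\M,A)$, so $f$ may legitimately be applied.

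The relations split into an easy group and a hard group. Relation \ref{rellin} is immediate from linearity of the summand in $x$ and of $f$; relation \ref{rellambda} follows since $a_1$ appears exactly $r_1$ times in $\bigotimes_i a_i^{\otimes r_i}$, so scaling $a_1$ by $\lambda$ scales the argument of $f$ by $\lambda^{r_1}$; and relation \ref{rel0} holds because a block of size $0$ contributes the empty tensor factor and leaves $\∑_n/\∑_\r$ unchanged ($\∑_0$ being trivial). Relation \ref{relperm} is a reindexing argument: the block permutation $\rho^*\in\∑_n$ satisfies $\rho^*\∑_\r(\rho^*)^{-1}=\∑_{\r^\rho}$ and carries $\bigotimes_i a_i^{\otimes r_i}$ to the tensor attached to $((a_{\rho^{-1}(i)})_i,\r^\rho)$, so the substitution $\sigma\mapsto\sigma(\rho^*)^{-1}$ identifies the two defining sums.

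The substance of the proof lies in \ref{relrepet} and \ref{relsomme}. For \ref{relsomme} I would expand $(a_0+a_1)^{\otimes r_1}$ as a sum over subsets $S\subseteq[r_1]$ (with $a_0$ in the positions of $S$ and $a_1$ elsewhere) and group the terms by $l=\val S$, writing $m=r_1-l$. Since $\∑_{\r\circ_1(l,m)}=\∑_l\times\∑_m\times\∑_{r_2}\times\dots\times\∑_{r_p}$ is a subgroup of $\∑_\r$ of index $\binom{r_1}{l}$ whose cosets correspond exactly to the size-$l$ subsets of $[r_1]$, summing over such subsets inside each $\∑_\r$-coset refines $\∑_n/\∑_\r$ into a complete system of representatives for $\∑_n/\∑_{\r\circ_1(l,m)}$; this reproduces $\beta_{x,\r\circ_1(l,m)}(a_0,a_1,\dots,a_p)$ for each $l$. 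For \ref{relrepet}, repeating the variables according to $\q$ makes $\bigotimes_i a_i^{\otimes r_i}$ (with repeats) equal to the tensor attached to the composition $\q\rhd\r$, and this tensor is now invariant under the larger group $\∑_{\q\rhd\r}\supseteq\∑_\r$. Splitting the sum over $\∑_n/\∑_\r$ as a sum over $\∑_n/\∑_{\q\rhd\r}$ followed by a sum over $\∑_{\q\rhd\r}/\∑_\r$, and noting that the latter now acts only on $x$, produces precisely $\sum_{\sigma\in\∑_{\q\rhd\r}/\∑_\r}\sigma\cdot x$ as the new index, giving the claimed identity. The main obstacle is exactly this coset bookkeeping, and I expect \ref{relsomme} to be the most error-prone step, since one must verify that ranging over size-$l$ subsets of $[r_1]$ within each $\∑_\r$-coset yields neither repetition nor omission among the $\∑_{\r\circ_1(l,m)}$-cosets.

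Finally, for functoriality, a morphism $g\colon(A,f)\to(A',f')$ satisfies $f'\circ\Gamma(\M,g)=g\circ f$. Applying $\Gamma(\M,g)$ to the argument of $f$ replaces each $a_i$ by $g(a_i)$ while fixing the $\M(n)$-factors, whence $g(\beta_{x,\r}((a_i)_i))=\beta'_{x,\r}((g(a_i))_i)$; thus $g$ is a morphism in $\beta(\M)$, and compatibility with identities and composition is formal. This produces the functor $F_\M$.
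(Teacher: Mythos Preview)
Your proposal is correct and follows essentially the same approach as the paper: the paper dismisses \ref{relperm}, \ref{rel0}, \ref{rellambda}, \ref{rellin} as ``clear'', derives \ref{relrepet} from the coset factorisation $\sum_{\sigma\in\∑_n/\∑_\r}\sigma\cdot x=\sum_{\sigma\in\∑_n/\∑_{\q\rhd\r}}\sigma\cdot\sum_{\tau\in\∑_{\q\rhd\r}/\∑_\r}\tau\cdot x$, and derives \ref{relsomme} from the tensor expansion $(a+b)^{\otimes u}=\sum_{l+m=u}\sum_{\sigma\in\∑_u/\∑_l\times\∑_m}\sigma\cdot(a^{\otimes l}\otimes b^{\otimes m})$---exactly the two identities underlying your arguments. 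Your write-up is simply more detailed, making the well-definedness and functoriality explicit where the paper leaves them implicit.
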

\begin{prop}\label{prepP}
  Let $\P$ be a reduced operad and $(A,f)$ be an object of $\Gamma(\P)_{alg}$. Then $(A,\beta)$, as defined in \Cref{prepM}, is an object of $\beta'(\P)$. Hence, $F_\M$ restricts to a functor $F_\P:\Gamma(\P)_{alg}\to\beta'(\P)$.
\end{prop}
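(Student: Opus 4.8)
The plan is to reduce everything to the monad axioms for the $\Gamma(\P)$-algebra $(A,f)$. Viewing $(A,f)$ as an object of $\Gamma(\M)_{mod}$ with $\M=\P$ through the inclusion $\Gamma(\P)_{alg}\hookrightarrow\Gamma(\M)_{mod}$, \Cref{prepM} already guarantees that the associated operations $\beta_{*,*}$ satisfy relations \ref{relperm}--\ref{rellin}, so $(A,\beta)$ lies in $\beta(\P)$. It remains only to check the two relations \ref{relunit} and \ref{relcomp} cutting out the full subcategory $\beta'(\P)$; these are precisely the relations encoding the unit and multiplication of the monad, so they will follow from the algebra axioms $f\circ\tilde\eta_A=\mathrm{id}_A$ and $f\circ\tilde\mu_A=f\circ\Gamma(\P,f)$ respectively.

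For \ref{relunit} I would use the unit axiom. When $\r=(1)$ the index set $\∑_n/\∑_\r$ is a singleton, so the formula of \Cref{prepM} collapses to $\beta_{id_\P,(1)}(a)=f(id_\P\otimes a)$. Identifying the monad unit as $\tilde\eta_A(a)=id_\P\otimes a$ in arity one --- where the trace map $Tr_\P$ is the identity --- the axiom $f\circ\tilde\eta_A=\mathrm{id}_A$ gives $\beta_{id_\P,(1)}(a)=a$.

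For \ref{relcomp}, which is the substantial point, I would apply the associativity axiom to the element $\t\in\Gamma(\P,\Gamma(\P,A))$ exhibited in \Cref{propA}, built from the data $x,\r$ and $x_i,\q_i$ with $\_{v_i}=b_{i,1}^{\otimes q_{i,1}}\otimes\dots\otimes b_{i,k_i}^{\otimes q_{i,k_i}}$. On one side, $\Gamma(\P,f)$ acts factorwise and sends each inner invariant $w_i=\sum_{\sigma_i\in\∑_{m_i}/\∑_{\q_i}}\sigma_i\cdot x_i\otimes\sigma_i\cdot\_{v_i}$ to $f(w_i)=\beta_{x_i,\q_i}(b_i)$, so a further application of $f$ yields, directly from the definition of $\beta_{x,\r}$, the left-hand side $\beta_{x,\r}(\beta_{x_1,\q_1}(b_1),\dots,\beta_{x_p,\q_p}(b_p))$. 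On the other side I would insert the explicit expression of \Cref{propA} for $\tilde\mu_A(\t)$ and reorganise its sum over $\∑_M/\prod_i\∑_{r_i}\wr\∑_{\q_i}$.

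The main obstacle is matching this reorganised sum with the right-hand side $\beta_{y,\r\diamond(\q_i)_{i\in[p]}}(b)$, a purely combinatorial task resting on three observations: (i) $\prod_i\∑_{r_i}\wr\∑_{\q_i}\subseteq\∑_{\r\diamond(\q_i)_{i\in[p]}}$, since the base $\∑_{\q_i}^{\times r_i}$ preserves each block of $\gamma_{r_i}(\q_i)$ and the top $\∑_{r_i}$ only permutes the $r_i$ identical copies amalgamated inside each such block; (ii) the consequent factorisation of $\∑_M/\prod_i\∑_{r_i}\wr\∑_{\q_i}$ through $\∑_M/\∑_{\r\diamond(\q_i)_{i\in[p]}}$ and $\∑_{\r\diamond(\q_i)_{i\in[p]}}/\prod_i\∑_{r_i}\wr\∑_{\q_i}$; and (iii) the invariance of $\_{v_1}^{\otimes r_1}\otimes\dots\otimes\_{v_p}^{\otimes r_p}$ under $\∑_{\r\diamond(\q_i)_{i\in[p]}}$, each $b_{i,j}$ occupying exactly the positions of the part indexed $(i,j)$. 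Writing $\tau=\rho\tau'$ with $\rho$ ranging over $\∑_M/\∑_{\r\diamond(\q_i)_{i\in[p]}}$ and $\tau'$ over $\∑_{\r\diamond(\q_i)_{i\in[p]}}/\prod_i\∑_{r_i}\wr\∑_{\q_i}$, and unwinding the induced-representation bracket $[\tau\otimes x_1^{\otimes r_1}\otimes\dots\otimes x_p^{\otimes r_p}]$ via operad equivariance into $\rho\tau'\cdot\mu(x\otimes\bigotimes_i x_i^{\otimes r_i})$, the inner sum over $\tau'$ assembles the $\∑_{\r\diamond(\q_i)_{i\in[p]}}$-invariant element $y=\sum_{\tau'}\tau'\cdot\mu(x\otimes\bigotimes_i x_i^{\otimes r_i})$ while the outer sum over $\rho$ reproduces, through the convention of \Cref{rempart}, the defining formula of $\beta_{y,\r\diamond(\q_i)_{i\in[p]}}(b)$. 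Finally, $F_\P$ is a functor because $F_\M$ is one and every morphism of $\Gamma(\P)$-algebras is in particular a morphism in $\Gamma(\M)_{mod}$, hence compatible with the operations $\beta_{*,*}$.
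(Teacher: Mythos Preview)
Your proof is correct and follows essentially the same approach as the paper: both verify \ref{relunit} via the unit axiom $f\circ\tilde\eta_A=\mathrm{id}_A$, and both verify \ref{relcomp} by applying the associativity axiom $f\circ\Gamma(\P,f)=f\circ\tilde\mu_A$ to the element $\t$ of \Cref{propA}, then factoring the resulting sum over $\∑_M/\prod_i\∑_{r_i}\wr\∑_{\q_i}$ through the intermediate subgroup $\∑_{\r\diamond(\q_i)_{i\in[p]}}$ and using the invariance of $\bigotimes_i\_{v_i}^{\otimes r_i}$ under that subgroup. Your three observations (i)--(iii) are exactly the ingredients the paper uses, presented in the paper as an explicit chain of equalities rather than as a prose sketch.
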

\begin{proof}[Proof of \Cref{prepM} and \Cref{prepP}]
  Relations \ref{relperm}, \ref{rel0}, \ref{rellambda}, \ref{rellin} and \ref{relunit} are clear.

  Relation \ref{relrepet} comes from the following fact:
    \begin{equation*}
      \sum_{\sigma\in\∑_{n}/\∑_{\r}}\sigma\cdot x=\sum_{\sigma\in\∑_n/\∑_{\q\rhd\r}}\sigma\cdot\sum_{\tau\in\∑_{\q\rhd\r}/\∑_\r}\tau\cdot x.
    \end{equation*}
  
  Relation \ref{relsomme} comes from the following fact:
    \begin{equation*}
      (a+b)^{\otimes u}=\sum_{(l,m)\in \Comp_2(u)}\sum_{\sigma\in\∑_{u}/\∑_{l}\times\∑_m}\sigma\cdot (a^{\otimes l}\otimes b^{\otimes m}).
    \end{equation*}

  Relation \ref{relcomp} makes sense, firstly because $\prod_{i=1}^p\∑_{r_i}\wr \∑_{\q_i}\subseteq\∑_{\r\diamond(\q_i)_{i\in[p]}}$, and secondly because $\mu \bigg(x\otimes\Big[\sum_{\tau\in\∑_{\r\diamond(\q_i)_{i\in[p]}}/\prod_{i=1}^p\∑_{r_i}\wr \∑_{\q_i}}\tau\otimes\big(\bigotimes_{i=1}^px_{i}^{\otimes r_{i}}\big)\Big]\bigg)$ is stable under the action of $\∑_{\r\diamond(\q_i)_{i\in[p]}}$. 

  It comes from the fact that, $f$ being compatible with the multiplication $\tilde\mu$ of $\Gamma(\P)$, $f\circ \Gamma(\P,f)=f\circ\tilde{\mu}_A$, where $\tilde\mu_A:\Gamma(\P,\Gamma(\P,A))\to\Gamma(\P,A)$ is the component of $\tilde\mu$ at $A$, and hence, following \Cref{A}:
    \begin{multline*}
      \beta_{x,\r}(\beta_{x_1,\q_1}(b_{1}),\dots,\beta_{x_p,\q_p}(b_{p}))=\\
      f\bigg(\sum_{\sigma\in\∑_n/\∑_\r}\sigma\cdot x\otimes \sigma \cdot\Big(\bigotimes_{i=1}^p f\Big(\sum_{\rho_i\in\∑_{m_i}/\∑_{\q_i}}\rho_i\cdot x_i\otimes \rho_i\cdot(b_{i1}^{\otimes q_{i1}}\otimes\dots\otimes b_{ik_i}^{\otimes q_{ik_i}})\Big)^{\otimes r_i}\Big)\bigg)\\
      = f\bigg(\tilde{\mu}_A\Big(\sum_{\sigma\in\∑_n/\prod_i\∑_{r_i}}\sigma\cdot x\otimes \sigma \cdot\Big(\bigotimes_{i=1}^{p}\big(\sum_{\rho_i\in\∑_{m_i}/\∑_{\q_i}}\rho_i\cdot x_i\otimes\rho_i\cdot (b_{i1}^{\otimes q_{i1}}\otimes\dots\otimes b_{ik_i}^{\otimes q_{ik_i}})\big)^{\otimes r_i}\Big)\Big)\bigg)\\
      =f\bigg(\sum_{\sigma\in \∑_{\sum_ir_im_i}/\prod_{i=1}^p\∑_{r_i}\wr \∑_{\q_i}}\sigma\cdot\mu\Big(x\otimes\big[\bigotimes_{i=1}^p x_i^{\otimes r_i}\big]\Big)\otimes \sigma\cdot\bigotimes_{i=1}^p(b_{i1}^{\otimes q_{i1}}\otimes\dots\otimes b_{ik_i}^{\otimes q_{ik_i}})^{\otimes r_i}\bigg),
    \end{multline*}\label{7bproof}
    \begin{multline*}
      =f\bigg(\sum_{\sigma\in \∑_{\sum_ir_in_i}/\∑_{\r\diamond(\q_i)_{i\in[p]}}}\sum_{\tau\in\∑_{\r\diamond(\q_i)_{i\in[p]}}/(\prod_{i=1}^{p}\∑_{r_i}\wr\∑_{\q_i})}\sigma\tau\cdot\mu\Big(x\otimes\big[\bigotimes_{i=1}^px_i^{\otimes r_i}\big]\Big)\\
      \otimes \sigma\cdot\tau\cdot\Big(\bigotimes_{i=1}^p(b_{i1}^{\otimes q_{i1}}\otimes\dots\otimes b_{ik_i}^{\otimes q_{ik_i}})^{\otimes r_i}\Big)\bigg)
    \end{multline*}
    \begin{equation*}
      =\beta_{\sum_{\tau}\tau\cdot\mu \Big(x\otimes \big(\bigotimes_{i=1}^px_i^{\otimes r_i}\big)\Big),\r\diamond(\q_i)_{i\in[p]}}(b_{ij})_{i\in[p],j\in[k_i]}.
   \qedhere \end{equation*}
\end{proof}
\begin{prop}\label{propinv}
  For any $\∑$-module $\M$, the functor $F_\M:\Gamma(\M)_{mod}\to\beta(\M)$ is an isomorphism of categories.
\end{prop}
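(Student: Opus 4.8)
The plan is to produce an explicit inverse $G_\M\colon\beta(\M)\to\Gamma(\M)_{mod}$ and to check that the two composites with $F_\M$ are identities; since $F_\M$ acts as the identity on underlying vector spaces and linear maps (see \Cref{prepM}), this is what ``isomorphism of categories'' amounts to here. Everything rests on one structural description of $\Gamma(\M,A)=\bigoplus_n(\M(n)\otimes A^{\otimes n})^{\∑_n}$. Fix a basis $(e_j)_{j\in J}$ of $A$. Then $A^{\otimes n}$ is the permutation $\∑_n$-module on the words $e_{\mathbf j}=e_{j_1}\otimes\dots\otimes e_{j_n}$, and grouping these words into $\∑_n$-orbits gives, by the projection formula,
$$\M(n)\otimes A^{\otimes n}\cong\bigoplus_{[\mathbf j]}\Ind_{\∑_{R_{\mathbf j}}}^{\∑_n}\M(n),$$
where $R_{\mathbf j}$ is the partition of $[n]$ by the fibres of $\mathbf j$ and $\∑_{R_{\mathbf j}}$ the corresponding Young subgroup (and $\M(n)$ is restricted to it). Since $\Ind=\mathrm{Coind}$ for finite groups, the norm isomorphism $(\Ind_H^{\∑_n}W)^{\∑_n}\cong W^H$ holds over any field and identifies $\Gamma(\M,A)$ with $\bigoplus_{[\mathbf j]}\M(n)^{\∑_{R_{\mathbf j}}}$. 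Concretely, a basis of $\Gamma(\M,A)$ is given by the norm elements
$$N_{x,R}(e_{a_1},\dots,e_{a_p}):=\sum_{\sigma\in\∑_n/\∑_R}\sigma\cdot x\otimes\sigma\cdot\big(e_{a_1}^{\otimes r_1}\otimes\dots\otimes e_{a_p}^{\otimes r_p}\big),$$
indexed by an orbit --- that is, a choice of pairwise distinct basis vectors $e_{a_1},\dots,e_{a_p}$ with multiplicities $r_i$, giving a partition $R\in\Pi(\r;n)$ --- together with $x$ running through a basis of $\M(n)^{\∑_R}$. This is exactly the kind of element on which $F_\M$ evaluates $f$, which is what makes the two descriptions match. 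Note that no averaging is used, so this works in arbitrary characteristic.

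I would then define $G_\M(A,\beta)=(A,f)$ by setting
$$f\big(N_{x,R}(e_{a_1},\dots,e_{a_p})\big):=\beta_{x,R}(e_{a_1},\dots,e_{a_p})$$
on the above basis and extending linearly, where $\beta_{x,R}$ is the extension of $\beta$ to partitions from \Cref{rempart}. This is well defined: the map $x\mapsto\beta_{x,R}(\dots)$ is linear by \ref{rellin}, so it is compatible with the summand $\M(n)^{\∑_R}$, and \ref{relperm} makes $\beta_{x,R}$ independent of the auxiliary permutation chosen in \Cref{rempart}, so that the value does not depend on the orbit representative. On morphisms $G_\M$ keeps the underlying linear map $g$; it sends a $\beta$-compatible $g$ to a $\Gamma(\M)_{mod}$-morphism because $\Gamma(\M,g)\big(N_{x,R}(a_i)\big)=N_{x,R}(g(a_i))$, so the relation $f'\circ\Gamma(\M,g)=g\circ f$, tested on the spanning norm elements, is exactly $g(\beta_{x,R}(a_i))=\beta'_{x,R}(g(a_i))$.

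The composite $G_\M\circ F_\M=\mathrm{id}$ is immediate: starting from $(A,f)$, the operation produced by $F_\M$ is $\beta_{x,\r}(a_i)=f(N_{x,\r}(a_i))$ by definition, so the map $f'$ produced by $G_\M$ agrees with $f$ on every basis norm element, whence $f'=f$. The substance lies in $F_\M\circ G_\M=\mathrm{id}$. Writing $\beta'=F_\M(G_\M(A,\beta))$, one has $\beta'_{x,\r}(a_1,\dots,a_p)=f\big(N_{x,\r}(a_1,\dots,a_p)\big)$, and one must show this equals $\beta_{x,\r}(a_1,\dots,a_p)$ for arbitrary $a_1,\dots,a_p\in A$, not just for basis vectors. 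The strategy is to show that both sides, as functions of the $a_i$, are determined by their values on tuples of distinct basis vectors through one and the same expansion. On the $f$-side one substitutes $a_i=\sum_j\lambda_{ij}e_j$, expands the tensor $N_{x,\r}(a_1,\dots,a_p)$ multilinearly into basis words, and regroups these words by their $\∑_n$-orbit, using linearity of $f$. On the $\beta$-side one applies the relations: \ref{rellambda} pulls out the scalars $\lambda_{ij}$, \ref{relsomme} in the form \ref{relsomme+} expands the sums over $j$, \ref{rel0} discards the vanishing multiplicities, \ref{relperm} reorders entries to the standard order of the partition, and \ref{relrepet} performs the partial symmetrisation $\sum_{\sigma\in\∑_{\q\rhd\r}/\∑_\r}\sigma\cdot x$ of the $\M$-factor forced by basis vectors that coincide after expansion. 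Since $f$ and $\beta$ agree on distinct-basis-vector tuples by construction of $G_\M$, matching the two expansions term by term yields $\beta'=\beta$.

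The main obstacle is precisely this last bookkeeping: one must set up a coefficient-preserving bijection between the basis words arising from the multilinear expansion of the invariant tensor $N_{x,\r}(a_1,\dots,a_p)$ and the terms generated by the relations \ref{relperm}--\ref{rellin}, while correctly tracking the induced action on the $\M(n)$-factor. The delicate case is that of repeated basis vectors: there the fibre partition indexing a basis element of $\Gamma(\M,A)$ is strictly coarser than the partition $\r$, so one passes from $\r$ to a coarser composition via the product $\rhd$, and it is exactly relation \ref{relrepet} that encodes the compatibility of the norm map with this coarsening, the element $x$ being replaced by $\sum_{\sigma}\sigma\cdot x$ over the relevant coset. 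Once this correspondence is in place the equality of the two sides is formal, and together with the easy composite this shows $F_\M$ is an isomorphism of categories.
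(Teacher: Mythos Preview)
Your proposal is correct and follows essentially the same approach as the paper: decompose $\Gamma(\M,A)$ via a basis of $A$ into a direct sum of invariant submodules $\M(n)^{\∑_R}$, define $G_\M$ on the resulting norm elements, observe that $G_\M\circ F_\M=\mathrm{id}$ is immediate, and then establish $F_\M\circ G_\M=\mathrm{id}$ by expanding $\beta_{x,\r}(a_1,\dots,a_p)$ for arbitrary $a_i$ using relations \ref{relperm}--\ref{rellin} until only distinct basis vectors remain. The only organisational difference is that the paper fixes a totally ordered basis to select canonical orbit representatives and proceeds in three incremental lemmas (ordered distinct, then distinct up to permutation via \ref{relperm}, then with repetitions via \ref{relrepet}, then general via \ref{rellambda} and \ref{relsomme}), whereas you bypass the ordering by invoking the partition-indexed operations $\beta_{x,R}$ of \Cref{rempart} directly; both routes carry the same content.
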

\begin{theo}\label{theoinv}
  For any reduced operad $\P$, the functor $F_\P:\Gamma(\P)_{alg}\to\beta'(\P)$ is an isomorphism of categories.
\end{theo}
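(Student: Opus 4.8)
The plan is to derive \Cref{theoinv} from \Cref{propinv} by restricting an isomorphism to full subcategories. Applying \Cref{propinv} to $\M=\P$ gives that $F_\M\colon\Gamma(\P)_{mod}\to\beta(\P)$ is an isomorphism of categories, with inverse $G_\M$. Now $\Gamma(\P)_{alg}$ is the full subcategory of $\Gamma(\P)_{mod}$ whose objects are the algebras over the monad $\Gamma(\P)$, and $\beta'(\P)$ is by definition the full subcategory of $\beta(\P)$ cut out by \Cref{relunit} and \Cref{relcomp}; moreover $F_\P=F_\M|_{\Gamma(\P)_{alg}}$ takes values in $\beta'(\P)$ by \Cref{prepP}. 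Since inclusions of full subcategories are fully faithful and $F_\M,G_\M$ are inverse isomorphisms, it suffices to prove the object-level equivalence
\[
(A,f)\in\Gamma(\P)_{alg}\iff F_\M(A,f)\in\beta'(\P);
\]
then $F_\P$ and $G_\P:=G_\M|_{\beta'(\P)}$ are mutually inverse and $F_\P$ is an isomorphism of categories. The implication ``$\Rightarrow$'' is exactly \Cref{prepP}, so the whole content lies in the converse.

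For the converse I would take $(A,\beta)\in\beta'(\P)$ and set $(A,f):=G_\M(A,\beta)$, so that $f\colon\Gamma(\P,A)\to A$ is the linear map characterised by $f\big(\sum_{\sigma\in\∑_n/\∑_\r}\sigma\cdot x\otimes\sigma\cdot(\bigotimes_ia_i^{\otimes r_i})\big)=\beta_{x,\r}(a_1,\dots,a_p)$, and I must check that $f$ satisfies the two axioms of a $\Gamma(\P)$-algebra, namely $f\circ\tilde\eta_A=\mathrm{id}_A$ and $f\circ\Gamma(\P,f)=f\circ\tilde\mu_A$. The unit axiom is immediate: $\tilde\eta_A(a)=id_\P\otimes a$ in $(\P(1)\otimes A)^{\∑_1}\subseteq\Gamma(\P,A)$, whence $f(\tilde\eta_A(a))=\beta_{id_\P,(1)}(a)=a$ by \Cref{relunit}.

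The associativity axiom is the crux. Both $f\circ\Gamma(\P,f)$ and $f\circ\tilde\mu_A$ are linear maps on $\Gamma(\P,\Gamma(\P,A))$, so it is enough to test them on a spanning family. The construction of the inverse $G_\M$ in \Cref{propinv} rests on the fact that, for every vector space $C$, the space $\Gamma(\P,C)$ is spanned by the divided power monomials $\sum_{\sigma\in\∑_n/\∑_\r}\sigma\cdot x\otimes\sigma\cdot(\bigotimes_ic_i^{\otimes r_i})$; applying this with $C=A$ and then with $C=\Gamma(\P,A)$, and expanding each entry of the outer monomial over a basis of $\Gamma(\P,A)$ consisting of divided power monomials, shows that $\Gamma(\P,\Gamma(\P,A))$ is spanned by the elements $\t$ of the shape treated in \Cref{propA}. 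On such a $\t$ one computes $\Gamma(\P,f)(\t)=\sum_\sigma\sigma\cdot x\otimes\sigma\cdot(\bigotimes_i\beta_{x_i,\q_i}(b_i)^{\otimes r_i})$, so that $f(\Gamma(\P,f)(\t))=\beta_{x,\r}(\beta_{x_1,\q_1}(b_1),\dots,\beta_{x_p,\q_p}(b_p))$, the left-hand side of \Cref{relcomp}; while feeding the closed formula of \Cref{propA} for $\tilde\mu_A(\t)$ into $f$ identifies $f(\tilde\mu_A(\t))$ with the right-hand side of \Cref{relcomp}. These two agree because $(A,\beta)\in\beta'(\P)$ satisfies \Cref{relcomp}, and therefore $f\circ\Gamma(\P,f)=f\circ\tilde\mu_A$. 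This is the very chain of equalities appearing in the proof of \Cref{prepP}, now read backwards: there the monad axiom was invoked to derive \Cref{relcomp}, whereas here \Cref{relcomp} is invoked to derive the monad axiom, the two outer equalities being unconditional and resting only on the definition of $f$ and on \Cref{propA}.

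I expect the only genuinely delicate points to be (i) the spanning statement, that the structured elements $\t$ of \Cref{propA} exhaust $\Gamma(\P,\Gamma(\P,A))$ — which ultimately comes from the fact that $C^{\otimes n}$ is a permutation representation of $\∑_n$ once a basis of $C$ is fixed, so that, after tensoring with $\P(n)$ and passing to $\∑_n$-invariants, one is reduced via Frobenius reciprocity to relative norms of $\∑_\r$-invariant elements — and (ii) the availability, through \Cref{propA}, of a closed form for $\tilde\mu_A(\t)$ that manifestly lands among divided power monomials, so that $f$ can be applied to it termwise. Everything else is the formal bookkeeping of restricting an isomorphism of categories to a pair of matching full subcategories.
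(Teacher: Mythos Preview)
Your proposal is correct and follows essentially the same approach as the paper: reduce to \Cref{propinv}, then show that $G_\M$ restricts to $\beta'(\P)\to\Gamma(\P)_{alg}$ by verifying the unit and multiplication axioms of a $\Gamma(\P)$-algebra from \ref{relunit} and \ref{relcomp}, the latter via the closed formula of \Cref{propA}. The paper carries this out in Lemmas \ref{lemmunit} and \ref{lemmcomp}, and your observation that the associativity check is the computation from the proof of \Cref{prepP} ``read backwards'' is exactly how the paper proceeds.
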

\vspace{.5cm}
\begin{proof}[Plan of the proof of \Cref{propinv} and \Cref{theoinv}]
We want to construct the functor $G_\M:\beta(\M)\to\Gamma(\M)_{mod}$, inverse of $F_\M$. Let $(A,\alpha)$ be an object of $\beta(\M)$.  \Cref{decompsum} gives a decomposition of $\Gamma(\M,A)$ as a direct sum. \Cref{deff} shows how to define a function $f:\Gamma(\M,A)\to A$ from this decomposition, which is natural in $(A,\alpha)$. We thereby obtain a functor $G_\M:\beta(\M)\to\Gamma(\M)_{mod}$, and we readily check that $G_\M\circ F_\M=id_{\Gamma(\M)_{mod}}$ (\Cref{remf}). The \Cref{lemmfin} shows that $F_\M\circ G_\M$ is the identity of $\beta(\M)$.

To prove \Cref{theoinv}, let us furthermore assume that $\M=\P$ is endowed with a structure of reduced operad and that the operations $\alpha_{x,\r}$ satisfy relations \ref{relunit} and \ref{relcomp}. It is then enough to show that the morphism $f$ of \Cref{deff} endows $A$ with the structure of a $\Gamma(\P)$-algebra, which implies that $G_\M$ restricts to a functor $G_\P:\beta'(\P)\to\Gamma(\P)_{alg}$, which will be inverse to $F_\P$. This is done in Lemmas \ref{lemmunit} and \ref{lemmcomp}.
\end{proof}

\subsection{Technical lemmas}\label{subseclemminv}
In this section, we give the construction of the functors $G_\M$ and $G_\P$, and we check that these functors are inverse to $F_\M$ and $F_\P$ following the argument given in the plan of the proof of \Cref{propinv} and \Cref{theoinv}. \Cref{decompsum} gives a decomposition of $\Gamma(\M,A)$ as a direct sum that depends on a choice of a totally ordrered basis of $A$, \Cref{deff} gives the definition of a morphism $f:\Gamma(\M,A)\to A$ from that decomposition and the definition of $G_\M$. Lemmas \ref{lemmperm} to \ref{lemmfin} show that $f$, and so $G_\M$, do not depend on the choice of the basis of $A$, and that $G_\M:\beta(\M)\to\Gamma(\M)_{mod}$ is an inverse to $F_\M$. Finally, \ref{lemmunit} and \ref{lemmcomp} show that, in the case where $\M=\P$ is endowed with a structure of operad, $G_\M$ restricts to a functor $G_\P:\beta'(\P)\to\Gamma(\P)_{alg}$ that is the inverse to $F_\P$.

\vspace{.5cm}
Let $(A,\alpha)$ be an object of $\beta(\M)$, $B$ be a totally ordered $\F$-vector space basis of $A$.
\begin{lemm}\label{decompsum}
  The module $\Gamma(\M,A)$ is isomorphic to the direct sum:
\begin{equation*}
  \bigoplus_{n,p,\r,b_1,\dots,b_p}\M(n)^{\∑_\r},
\end{equation*} 
where $n$ ranges over $\N$, $p$ ranges over $[n]$, $\r$ ranges over the set $\Comp'_p(n):=\{\r\in\Comp_p(n):\forall i\in[p], r_i>0\}$, and $b_1<\dots<b_p\in B$. Moreover, the isomorphism is given by the direct sum over $n\in\N$, $p\in[n]$, $\r\in\Comp'_p(n)$, and $b_1<\dots<b_p\in B$ of the morphisms:
\begin{eqnarray*}
  &M(n)^{\∑_\r}&\to\Gamma(\M,A)\\
  &x&\mapsto \sum_{\sigma\in\∑_n/\∑_\r} \sigma\cdot x\otimes \sigma\cdot\Big(\bigotimes_{i\in[p]}b_i^{\otimes r_i}\Big).
\end{eqnarray*}
\end{lemm}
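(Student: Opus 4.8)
The plan is to reduce the statement to the structure of $A^{\otimes n}$ as a permutation representation of $\∑_n$ and then apply the invariants functor termwise. Since $\Gamma(\M,A)=\bigoplus_{n\in\N}(\M(n)\otimes A^{\otimes n})^{\∑_n}$, it suffices to describe each summand.

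First I would fix the monomial basis of $A^{\otimes n}$ coming from $B$: the elements $c_1\otimes\dots\otimes c_n$ with $c_j\in B$ form a basis, and $\∑_n$ permutes them by $\sigma\cdot(c_1\otimes\dots\otimes c_n)=c_{\sigma^{-1}(1)}\otimes\dots\otimes c_{\sigma^{-1}(n)}$. Thus $A^{\otimes n}$ is the permutation representation $\F[B^{[n]}]$ on the set of maps $[n]\to B$. Its $\∑_n$-orbits are indexed by the multiset of values, and using the total order on $B$ each orbit has a unique representative of the form $b_1^{\otimes r_1}\otimes\dots\otimes b_p^{\otimes r_p}$ with $b_1<\dots<b_p$ in $B$ and all $r_i>0$, i.e. with $\r=(r_1,\dots,r_p)\in\Comp'_p(n)$. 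The stabiliser of such a representative is exactly the Young subgroup $\∑_\r$, since a permutation fixes it iff it preserves each fibre of equal entries. Hence, as a $\∑_n$-representation,
\[
A^{\otimes n}\cong\bigoplus_{p,\,\r,\,b_1<\dots<b_p}\F[\∑_n/\∑_\r],
\]
the sum running over $p\in[n]$, $\r\in\Comp'_p(n)$ and $b_1<\dots<b_p\in B$, with the summand attached to $(p,\r,b_\bullet)$ spanned by the orbit of $b_1^{\otimes r_1}\otimes\dots\otimes b_p^{\otimes r_p}$.

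Next I would tensor with $\M(n)$ and take $\∑_n$-invariants, commuting the (exact) invariants functor past the direct sum. The remaining input is the standard identification, for a subgroup $H\le G$ and a $G$-module $V$,
\[
(V\otimes\F[G/H])^{G}\cong V^{H},
\]
given by $v\mapsto\sum_{\sigma\in G/H}\sigma v\otimes\sigma H$, with inverse reading off the coefficient of the trivial coset. I would check that this map is well defined (the summand is independent of the coset representatives precisely because $v\in V^H$) and that the two maps are mutually inverse, which is a short direct computation. Applying this with $G=\∑_n$, $H=\∑_\r$, $V=\M(n)$, and identifying the coset $\sigma\∑_\r$ with $\sigma\cdot(b_1^{\otimes r_1}\otimes\dots\otimes b_p^{\otimes r_p})$, gives
\[
(\M(n)\otimes A^{\otimes n})^{\∑_n}\cong\bigoplus_{p,\,\r,\,b_\bullet}\M(n)^{\∑_\r},
\]
the isomorphism sending $x\in\M(n)^{\∑_\r}$ to $\sum_{\sigma\in\∑_n/\∑_\r}\sigma\cdot x\otimes\sigma\cdot\big(\bigotimes_{i\in[p]}b_i^{\otimes r_i}\big)$, exactly as in the statement. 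Summing over $n$ then finishes the proof.

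The routine but slightly delicate bookkeeping is in this last step: one must confirm that the coset sum defining the map is genuinely independent of the chosen representatives (using simultaneously the $\∑_\r$-invariance of $x$ and the fact that $\∑_\r$ stabilises $b_1^{\otimes r_1}\otimes\dots\otimes b_p^{\otimes r_p}$), and that distinct triples $(p,\r,b_\bullet)$ have linearly independent images, since they correspond to distinct orbits spanning complementary subspaces. The only genuinely conceptual point is recognising that passing to a \emph{totally ordered} basis is precisely what pins down canonical orbit representatives, and hence the clean index set of compositions $\Comp'_p(n)$ together with increasing tuples $b_1<\dots<b_p$; once the orbit structure is identified the rest is the permutation-module computation above.
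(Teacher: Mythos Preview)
Your proof is correct and follows essentially the same approach as the paper: both decompose $A^{\otimes n}$ into permutation modules $\F[\∑_n/\∑_\r]$ indexed by the ordered orbit representatives $b_1^{\otimes r_1}\otimes\dots\otimes b_p^{\otimes r_p}$, then identify $(\M(n)\otimes\F[\∑_n/\∑_\r])^{\∑_n}\cong\M(n)^{\∑_\r}$. The only cosmetic difference is that you invoke the general isomorphism $(V\otimes\F[G/H])^G\cong V^H$ as a black box, whereas the paper spells out the same computation by hand, writing a generic invariant as $\sum_\sigma x_\sigma\otimes\sigma\cdot(\dots)$ and deducing $x_\sigma=\sigma\cdot x_{id}$ with $x_{id}\in\M(n)^{\∑_\r}$.
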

\begin{proof}
  For all $n\in\N$, we can identify the $\∑_n$-module $A^{\otimes n}$ with:
  $$\bigoplus_{p,\r,b_1,\dots,b_p} Ind_{\∑_\r}^{\∑_n}T_\r(b_1^{\otimes r_1}\otimes\dots \otimes b_p^{\otimes r_p}),$$
 where $p$ ranges over $[n]$, $\r$ ranges over $\Comp'_p(n)$, $b_1<\dots<b_p\in B$, $Ind_{\∑_\r}^{\∑_n}$ designates the induced representation, and $T_{\r}(b_1^{\otimes r_1}\otimes\dots \otimes b_p^{\otimes r_p})$ is the trivial $\∑_{\r}$-module spanned by $b_1^{r_1}\otimes\dots\otimes b_p^{\otimes r_p}$. Remark that $Ind_{\∑_\r}^{\∑_n}T_{\r}(b_1^{\otimes r_1}\otimes\dots \otimes b_p^{\otimes r_p})$ is equal, as a $\∑_n$-module, to $\F[\∑_n/\∑_\r]\otimes T_{(n)}(b_1^{\otimes r_1}\otimes\dots \otimes b_p^{\otimes r_p})$, where $\∑_n$ acts on $\∑_n/\∑_\r$ by translation. This $\∑_n$-module is a permutation representation with basis $\{\sigma\otimes (b_1^{\otimes r_1}\otimes\dots \otimes b_p^{\otimes r_p}):\sigma\in\∑_n/\∑_\r\}$. 
 The identification $\bigoplus_{p,\r,b_1,\dots,b_p} Ind_{\∑_\r}^{\∑_n}T_{\r}(b_1^{\otimes r_1}\otimes\dots \otimes b_p^{\otimes r_p})\cong A^{\otimes n}$ is given by $\sigma\otimes (b_1^{\otimes r_1}\otimes\dots \otimes b_p^{\otimes r_p})=\sigma\cdot (b_1^{\otimes r_1}\otimes\dots \otimes b_p^{\otimes r_p})$.

  The module $(\M(n)\otimes A^{\otimes n})^{\∑_n}$ decomposes as $\bigoplus_{p,\r,b_1,\dots,b_p}\Big(M(n)\otimes Ind_{\∑_\r}^{\∑_n}T_{\r}(b_1,\dots,b_p)\Big)^{\∑_n}$. Let $p\in[n], \r\in\Comp'_p(n)$, and $b_1<\dots<b_p\in B$. Given an element $\t\in M(n)\otimes Ind_{\∑_\r}^{\∑_n}T_{\r}(b_1,\dots,b_p)$, there is a unique $(x_{\sigma})_{\sigma\in\∑_n/\∑_\r}\in M(n)^{\times \val{\∑_n/\∑_\r}}$ such that
 $$\t=\sum_{\sigma\in\∑_n/\∑_\r}x_\sigma\otimes \sigma\cdot (b_1^{\otimes r_1}\otimes\dots \otimes b_p^{\otimes r_p}).$$
 Suppose that $\t$ is invariant under the action of $\∑_n$. Then, for all $\tau\in\∑_n$, the uniqueness of $(x_\sigma)_{\sigma\in \∑_n/\∑_\r}$ implies that $\tau\cdot x_\sigma=x_{\tau\cdot \sigma}$, and in particular, if $\tau\in \∑_\r$, $\tau\cdot x_\sigma=x_\sigma$. Finally, $\t=\sum_{\sigma\in\∑_n/\∑_\r}\sigma\cdot x_{id}\otimes \sigma\cdot\Big(\bigotimes_{i}b_{i}^{\otimes r_{i}}\Big)$, where $id\in \∑_n/\∑_\r$ is the class of the neutral element. Hence, an element $x\in\M(n)^{\∑_\r}$ uniquely determines the element
\begin{equation*}
  \sum_{\sigma\in\∑_n/\∑_\r}\sigma\cdot x\otimes \sigma\cdot\Big(\bigotimes_{i}b_{i}^{\otimes r_{i}}\Big)\in \Big(M(n)\otimes Ind_{\∑_\r}^{\∑_n}T_{\r}(b_1,\dots,b_p)\Big)^{\∑_n}.\qedhere
\end{equation*}
\end{proof}
\begin{defi}\label{deff}
	Given $(A,\alpha)$ an object of $\beta(\M)$, there is a linear map
	\begin{eqnarray*}
		f:&\Gamma(\M,A)&\to A\\
    &\t=\sum_{\sigma\in\∑_{n}/\∑_\r}\sigma\cdot x\otimes \sigma\cdot\Big(\bigotimes_{i}b_{i}^{\otimes r_{i}}\Big)&\mapsto \alpha_{x,\r}(b_1,\dots,b_p)\ .
	\end{eqnarray*}
	Denote by $G_\M(A,\alpha):=(A,f)$.
\end{defi}
\begin{rema}\label{remf}
	Indeed, the expression of $f$ is linear in $x\in\M(n)^{\∑_\r}$. With this definition, it is clear that if $G_\M(F_\M(A,g))=G_\M(A,\alpha)=(A,f)$, then $f=g$, for $f$ and $g$ are linear and are equal on each factor of the decomposition of $\Gamma(\M,A)$ as a direct sum given by \Cref{decompsum}.
\end{rema}
To prove that this extends to a functor $G_\M:\beta(\M)\to\Gamma(\M)_{mod}$ which is inverse to $F_\M$, one has to show that $F_\M\circ G_\M$ is the identity of $\beta(\M)$ and that $G_\M$ is actually a functor, that is, the construction of $f$ does not depend on the basis $B$ of $A$ used. For that, let $(A,\beta)=F_\M(A,f)$ where $f$ is the morphism of \Cref{deff}. The next lemmas show that $\alpha_{x,\r}=\beta_{x,\r}$ for all $\r\in\Comp_p(n)$ and $x\in\M(n)^{\∑_\r}$.

It is clear that, for all $\r\in\Comp_p(n)$, all $b_1<\dots<b_p\in B$ and all $x\in\M(n)^{\∑_\r}$,
\begin{equation}\label{eq0}
  \beta_{x,\r}(b_1,\dots,b_p)=\alpha_{x,\r}(b_1,\dots,b_p)=f(\t)
\end{equation}
\begin{lemm}\label{lemmperm}
	 For $\r\in\Comp_p(n)$, for $(b_i)_{1\le i\le p}\in B^{\times p}$ pairwise distinct and for $x\in \M(n)^{\∑_\r}$, we have the equality:
    \begin{equation}\label{eq1}
    \beta_{x,\r}(b_i)_i=\alpha_{x,\r}(b_i)_i.
    \end{equation}
\end{lemm}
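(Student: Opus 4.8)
The plan is to reduce the pairwise-distinct case to the already-settled increasing case of \eqref{eq0}, using the permutation relation \ref{relperm}, which is available for \emph{both} families of operations. The key preliminary observation is that both $\alpha$ and $\beta$ satisfy \ref{relperm}: the former because $(A,\alpha)$ is by hypothesis an object of $\beta(\M)$, the latter because $(A,\beta)=F_\M(A,f)$ is an object of $\beta(\M)$ by \Cref{prepM}. Thus reordering of arguments is governed by the same formula on each side, and it suffices to carry out the reordering and invoke the base case.

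Concretely, since the $b_i$ are pairwise distinct elements of the totally ordered basis $B$, there is a unique $\rho\in\∑_p$ such that the reordered tuple $(b_{\rho^{-1}(i)})_{1\le i\le p}$ is strictly increasing. Applying \ref{relperm} with this $\rho$ to each side gives
$$\beta_{x,\r}(b_i)_i=\beta_{\rho^*\cdot x,\r^\rho}(b_{\rho^{-1}(i)})_i,\qquad\alpha_{x,\r}(b_i)_i=\alpha_{\rho^*\cdot x,\r^\rho}(b_{\rho^{-1}(i)})_i.$$
Here $\r^\rho\in\Comp_p(n)$ and $\rho^*\cdot x\in\M(n)^{\∑_{\r^\rho}}$, the block permutation $\rho^*$ conjugating $\∑_\r$ onto $\∑_{\r^\rho}$; this invariance is exactly what renders the right-hand operations well-defined. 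Because the arguments $(b_{\rho^{-1}(i)})_i$ are now in increasing order, \eqref{eq0} applies verbatim with composition $\r^\rho$ and invariant $\rho^*\cdot x$, giving
$$\beta_{\rho^*\cdot x,\r^\rho}(b_{\rho^{-1}(i)})_i=\alpha_{\rho^*\cdot x,\r^\rho}(b_{\rho^{-1}(i)})_i,$$
and chaining the three displayed equalities yields the claimed identity.

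The argument is essentially formal once the base case \eqref{eq0} and the relation \ref{relperm} are in hand, so I do not anticipate a genuine obstacle. The only points deserving a line of justification are the invariance $\rho^*\cdot x\in\M(n)^{\∑_{\r^\rho}}$ and the correct choice of the sorting permutation $\rho$; both are routine, and with them the reduction to \eqref{eq0} is immediate.
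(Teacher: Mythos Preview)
Your proof is correct and follows essentially the same approach as the paper: choose $\rho\in\∑_p$ sorting the tuple, apply relation \ref{relperm} to pass to the ordered case, invoke \eqref{eq0}, and undo the permutation via \ref{relperm} on the $\alpha$ side. The extra remarks you make (that both $\alpha$ and $\beta$ satisfy \ref{relperm}, and that $\rho^*\cdot x\in\M(n)^{\∑_{\r^\rho}}$) are worthwhile clarifications but do not change the argument.
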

\begin{proof}
	There exists a permutation $\rho\in\∑_p$ such that $\rho\cdot(b_i)_i$ is ordered, and:
	\begin{align*}
		\beta_{x,\r}(b_i)_i\overeg{Relation \ref{relperm}}\beta_{\rho^*\cdot x, \r^\rho}(b_{\rho^{-1}(i)})_i
		&\overeg{\Cref{eq0}}\alpha_{\rho^*\cdot x,\r^\rho}(b_{\rho^{-1}(i)})_i
		\\&\overeg{Relation \ref{relperm}}\alpha_{x,\r}(b_i)_i.\qedhere
	\end{align*}
\end{proof}
\begin{lemm}\label{lemmrepet}
	Equality (\ref{eq1}) holds for $\r\in\Comp_p(n)$, for $(b_i)_{1\le i\le p}\in B^{\times p}$ and for $x\in \M(n)^{\∑_\r}$.
\end{lemm}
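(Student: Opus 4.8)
The plan is to reduce the case of repeated basis elements to the ordered, pairwise distinct case already recorded in \Cref{eq0}, using only the permutation relation \ref{relperm} and the repetition relation \ref{relrepet}. The point is that both families of operations at play satisfy these relations: $\alpha$ does because $(A,\alpha)$ is an object of $\beta(\M)$, and $\beta$ does by \Cref{prepM}. Since each relation is applied in the very same form to $\alpha$ and to $\beta$, every manipulation performed on one side is mirrored on the other, so it suffices to produce a chain of reductions ending in a configuration to which \Cref{eq0} applies directly.

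First I would sort the arguments. Pick $\rho\in\∑_p$ such that $(b_{\rho^{-1}(i)})_i$ is non-decreasing. Applying \ref{relperm} simultaneously to $\beta_{x,\r}(b_i)_i$ and $\alpha_{x,\r}(b_i)_i$ turns them into $\beta_{\rho^*\cdot x,\r^\rho}(b_{\rho^{-1}(i)})_i$ and $\alpha_{\rho^*\cdot x,\r^\rho}(b_{\rho^{-1}(i)})_i$. Hence we may assume from the start that $(b_i)_i$ is non-decreasing. Writing the distinct values as $c_1<\dots<c_s$ in $B$, with $c_j$ occurring $q_j$ times and $\q=(q_1,\dots,q_s)\in\Comp_s(p)$, the tuple becomes $(\underbrace{c_1,\dots,c_1}_{q_1},\dots,\underbrace{c_s,\dots,c_s}_{q_s})$.

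I would then apply \ref{relrepet} to both operations to collapse the repetitions. This yields, on each side, the operation indexed by the coarser composition $\q\rhd\r\in\Comp_s(n)$ and the element $y:=\sum_{\sigma\in\∑_{\q\rhd\r}/\∑_\r}\sigma\cdot x$, so that $\beta_{x,\r}(b_i)_i=\beta_{y,\q\rhd\r}(c_1,\dots,c_s)$ and $\alpha_{x,\r}(b_i)_i=\alpha_{y,\q\rhd\r}(c_1,\dots,c_s)$. Here one checks the two facts that make \ref{relrepet} meaningful: that $\∑_\r\subseteq\∑_{\q\rhd\r}$, because $\r$ refines the partition $\q\rhd\r$ so any permutation preserving $\r$ preserves $\q\rhd\r$; and that $y\in\M(n)^{\∑_{\q\rhd\r}}$, because summing a $\∑_\r$-invariant element over a transversal of $\∑_{\q\rhd\r}/\∑_\r$ produces a $\∑_{\q\rhd\r}$-invariant element.

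Finally, since $c_1<\dots<c_s$ are ordered and pairwise distinct and $y$ is $\∑_{\q\rhd\r}$-invariant, \Cref{eq0} applies verbatim and gives $\beta_{y,\q\rhd\r}(c_1,\dots,c_s)=\alpha_{y,\q\rhd\r}(c_1,\dots,c_s)$; reading the chain of equalities backwards establishes (\ref{eq1}). The argument is essentially bookkeeping, and the only point requiring care is the coordination of the sorting step \ref{relperm} with the grouping step \ref{relrepet}, together with the verification that the merged parameter $y$ carries the invariance demanded by \Cref{eq0}.
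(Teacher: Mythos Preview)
Your proof is correct and follows essentially the same route as the paper: sort the arguments via \ref{relperm}, collapse consecutive repetitions via \ref{relrepet} on both $\beta$ and $\alpha$, invoke the already-established case on the resulting pairwise distinct tuple, and read the chain backwards. The only cosmetic difference is that you sort to a non-decreasing tuple and therefore appeal directly to \Cref{eq0} at the bridge step, whereas the paper cites \Cref{lemmperm} (the pairwise distinct but not necessarily ordered case); since your $c_1<\dots<c_s$ are strictly increasing this is immaterial.
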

\begin{proof}
	According to \Cref{lemmperm}, $(b_i)_i$ can be sorted in order to obtain only consecutive repetitions. Yet, if $\q\in \Comp_s(p)$ and if $(b_1,\dots,b_s)\in B^{\times s}$ are pairwise distinct, then:
	\begin{align*}
		\beta_{x,\r}(\underset{q_1}{\underbrace{b_1,\dots,b_1}},\dots,\underset{q_s}{\underbrace{b_s,\dots,b_s}})&\overset{\mbox{Relation \ref{relrepet}}}{=}\beta_{\sum_{\sigma\in\∑_{\q\rhd\r}/\∑_\r}\sigma\cdot x,\q\rhd\r}(b_1,b_2,\dots,b_p)\\
		&\overset{\mbox{\Cref{lemmperm}}}{=}\alpha_{\sum_{\sigma\in\∑_{\q\rhd\r}/\∑_\r}\sigma\cdot x,\q\rhd\r}(b_1,b_2,\dots,b_p)\\
		&\overset{\mbox{Relation \ref{relrepet}}}{=}\alpha_{x,(r_1,\dots,r_p)}(\underset{q_1}{\underbrace{b_1,\dots,b_1}},\dots,\underset{q_s}{\underbrace{b_s,\dots,b_s}}).\qedhere\\
	\end{align*}
\end{proof}
\begin{lemm}\label{lemmfin}
	Equality (\ref{eq1}) holds for $\r\in\Comp_p(n)$, $(b_1,\dots,b_p)\in A^{\times p}$ and $x\in \M(n)^{\∑_\r}$. The definition of $f$ does not depend on the choice of $B$, and $ F_\M\circ G_\M=id_{\beta(\M)}$.
\end{lemm}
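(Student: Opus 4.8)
The plan is to upgrade equality (\ref{eq1}), currently known only for tuples of basis vectors by \Cref{lemmrepet}, to arbitrary tuples $(b_1,\dots,b_p)\in A^{\times p}$, and then to deduce the two remaining assertions as formal consequences.

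First I would expand each argument in the basis $B$: write $a_i=\sum_j\lambda_{ij}c_{ij}$ as a finite linear combination of basis vectors $c_{ij}\in B$. The decisive observation is that both families of operations $\alpha_{*,*}$ and $\beta_{*,*}$ obey exactly the same list of relations \ref{relperm}--\ref{rellin}: the former because $(A,\alpha)$ is by hypothesis an object of $\beta(\M)$, the latter because $\beta=F_\M(A,f)$ lands in $\beta(\M)$ by \Cref{prepM}. Homogeneity in each slot (\ref{rellambda}) together with the multinomial additivity relation \ref{relsomme+} of \Cref{remacting} let me rewrite $\beta_{x,\r}(a_1,\dots,a_p)$ as one explicit $\F$-linear combination of evaluations $\beta_{x,\r'}(c_{11},\dots,c_{p m_p})$ on the basis vectors $c_{ij}$, and likewise for $\alpha$. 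Since these manipulations are purely formal — they only rewrite an evaluation of a family in terms of other evaluations of the same family — the resulting combinations are literally identical for $\alpha$ and for $\beta$.

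Each term in these expansions is an evaluation on a tuple of basis vectors (possibly with repetitions across the different $a_i$), so \Cref{lemmrepet} applies term by term to give $\beta_{x,\r'}(c_{11},\dots)=\alpha_{x,\r'}(c_{11},\dots)$. Summing, I obtain $\beta_{x,\r}(a_1,\dots,a_p)=\alpha_{x,\r}(a_1,\dots,a_p)$ for arbitrary $a_i\in A$, which is equality (\ref{eq1}) in full generality, i.e.\ $\beta=\alpha$. As $\beta=F_\M(G_\M(A,\alpha))$ by construction, this is precisely $F_\M\circ G_\M=\mathrm{id}_{\beta(\M)}$ on objects; on morphisms both functors leave the underlying linear map unchanged, so the composite is the identity on morphisms as well. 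For basis-independence of $f$, note that the equality just proved says that for every $\r$, every $x\in\M(n)^{\∑_\r}$ and all $a_1,\dots,a_p\in A$,
\[
f\Big(\sum_{\sigma\in\∑_n/\∑_\r}\sigma\cdot x\otimes\sigma\cdot\big(\textstyle\bigotimes_i a_i^{\otimes r_i}\big)\Big)=\alpha_{x,\r}(a_1,\dots,a_p),
\]
whose right-hand side makes no reference to $B$. By \Cref{decompsum} the elements on the left span $\Gamma(\M,A)$ (they include the generators indexed by ordered basis tuples), so a linear map on $\Gamma(\M,A)$ is determined by its values on them; hence any two maps built from two bases coincide, and $f=G_\M(A,\alpha)$ is independent of $B$.

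The main obstacle is the first step: one must be sure that the reduction to basis vectors via \ref{rellambda} and \ref{relsomme+} produces the \emph{same} combination for $\alpha$ and $\beta$ before invoking \Cref{lemmrepet}. This is exactly where it matters that $F_\M$ was already shown (in \Cref{prepM}) to take values in $\beta(\M)$, so that $\beta$ satisfies the same relations as $\alpha$; without that, the two expansions could a priori differ and agreement on basis vectors would not propagate to agreement on all of $A$.
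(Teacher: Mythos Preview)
Your proposal is correct and follows essentially the same route as the paper: expand each $b_i$ in the basis $B$, use \ref{relsomme+} and \ref{rellambda} to reduce both $\beta_{x,\r}$ and $\alpha_{x,\r}$ to identical $\F$-linear combinations of evaluations on tuples of basis vectors, apply \Cref{lemmrepet} term by term, and then read off $F_\M\circ G_\M=\mathrm{id}_{\beta(\M)}$ and the basis-independence of $f$. Your explicit remark that the reduction step relies on \Cref{prepM} (so that $\beta$ obeys the same relations as $\alpha$) is exactly the point the paper is using implicitly.
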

\begin{proof}
	For $i\in[p]$, let us write $b_i$ in the basis $B$ as
	$$b_i=\sum_{j=1}^{k_i}\lambda_{ij}x_{ij}.$$
	Set $X=(x_{ij})_{i\in[p],j\in[k_i]}$ and $\Lambda X=(\lambda_{ij}x_{i,j})_{i\in[p],j\in[k_i]}$. One has:
		\begin{align*}
		\beta_{x,\r}(b_1,\dots,b_p)&\overeg{Relation \ref{relsomme+}}\sum_{\q_1\in \Comp_{k_1}(r_1),\dots,\q_p\in \Comp_{k_p}(r_p)}\beta_{x,\r\circ(\q_1,\dots,\q_p)}(\Lambda X)\\
		&\overeg{Relation \ref{rellambda}}\sum_{\q_1\in \Comp_{k_1}(r_1),\dots,\q_p\in \Comp_{k_p}(r_p)}\bigg(\prod_{i=1}^p\prod_{j=1}^{k_i}\lambda_{ij}^{q_{ij}}\bigg)\beta_{x,\r\circ(\q_1,\dots,\q_p)}(X)\\
		&\overeg{Lemme \ref{lemmrepet}}\sum_{\q_1\in \Comp_{k_1}(r_1),\dots,\q_p\in \Comp_{k_p}(r_p)}\bigg(\prod_{i=1}^p\prod_{j=1}^{k_i}\lambda_{ij}^{q_{ij}}\bigg)\alpha_{x,\r\circ(\q_1,\dots,\q_p)}(X)\\
		&\overeg{Relations \ref{rellambda} et \ref{relsomme+}}\alpha_{x,\r}(b_1,\dots,b_p).\qedhere
	\end{align*}
\end{proof}
According to \Cref{lemmfin}, the map $f$ as defined does not depend on the choice of $B$ and the resulting functor $G_\M$ is indeed an inverse to $F_\M$. Hence, \Cref{propinv} is proven. To prove \Cref{theoinv}, let us assume that $\M=\P$ is endowed with the structure of a reduced operad, and let us show that $G_\M$ restricts into a functor $\beta'(\P)\to\Gamma(\P)_{alg}$. Denote by $\eta$ and $\mu$ the unit and multiplication of $S(\P)$, and by $\tilde\eta$ and $\tilde\mu$ those of $\Gamma(\P)$ (\Cref{A} gives the full expression of $\tilde \mu$).
\begin{lemm}\label{lemmunit}
	The map $f$ is compatible with $\tilde\eta$.
\end{lemm}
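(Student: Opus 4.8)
The statement to prove is that the map $f:\Gamma(\P,A)\to A$ of \Cref{deff} satisfies $f\circ\tilde\eta_A=id_A$, where $\tilde\eta_A:A\to\Gamma(\P,A)$ is the component of the unit of the monad $\Gamma(\P)$ at the vector space $A$. The plan is to compute $\tilde\eta_A$ explicitly on the decomposition of $\Gamma(\P,A)$ provided by \Cref{decompsum} and then apply the defining formula for $f$ together with relation \ref{relunit}.

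First I would recall that for a reduced operad $\P$, the unit $\eta$ of the monad $S(\P)$ is induced by the operadic unit $id_\P\in\P(1)$; concretely, $\eta_A(a)$ is the class of $id_\P\otimes a\in(\P(1)\otimes A)_{\∑_1}$ inside $S(\P,A)=\bigoplus_n(\P(n)\otimes A^{\otimes n})_{\∑_n}$. Since $\Gamma(\P)$ is obtained from $S(\P)$ via the trace isomorphism $Tr_{\P,\P}$ (and more generally $\tilde\eta=Tr_\P\circ\eta$, as $Tr_\P$ is a morphism of monads by the cited result of \cite{BF}), the component $\tilde\eta_A$ sends $a\in A$ to the element $id_\P\otimes a\in(\P(1)\otimes A)^{\∑_1}\subset\Gamma(\P,A)$. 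The group $\∑_1$ being trivial, this invariant is just $id_\P\otimes a$ itself, sitting in the arity-$1$ summand.

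Next I would match this against the direct-sum decomposition of \Cref{decompsum}. Writing $a$ in the ordered basis $B$ as $a=\sum_{j=1}^k\lambda_j b_j$ with $b_1<\dots<b_k$, linearity of $\tilde\eta_A$ and of $f$ reduces the verification to the case $a=b$ a single basis element. For $a=b\in B$, the element $\tilde\eta_A(b)=id_\P\otimes b$ corresponds, in the notation of \Cref{decompsum}, to the data $n=1$, $p=1$, $\r=(1)$, the single basis vector $b$, and the invariant $x=id_\P\in\P(1)^{\∑_{(1)}}$; indeed the summand morphism of \Cref{decompsum} sends $id_\P$ to $\sum_{\sigma\in\∑_1/\∑_{(1)}}\sigma\cdot id_\P\otimes\sigma\cdot b=id_\P\otimes b$, since $\∑_1/\∑_{(1)}$ is a singleton. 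Applying the formula defining $f$ to this element gives $f(\tilde\eta_A(b))=\alpha_{id_\P,(1)}(b)$, which equals $b$ by relation \ref{relunit}. Extending by linearity yields $f\circ\tilde\eta_A=id_A$, as required.

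The only genuine subtlety — and thus the main obstacle — is pinning down the precise form of $\tilde\eta_A$ through the identification $\tilde\eta=Tr_\P\circ\eta$, i.e.\ checking that the trace map does not disturb the arity-$1$ term and that the resulting invariant is literally $id_\P\otimes a$ rather than some symmetrised variant. Since $\∑_1$ is trivial, $Tr_{\P,\P}$ is the identity on the arity-$1$ component, so this check is routine; everything else is a direct unwinding of definitions. I expect no difficulty beyond bookkeeping once the explicit shape of $\tilde\eta_A$ is fixed.
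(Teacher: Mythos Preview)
Your proposal is correct and follows essentially the same approach as the paper: identify $\tilde\eta_A(a)=id_\P\otimes a$ and apply the defining formula for $f$ together with relation \ref{relunit} to get $f(\tilde\eta_A(a))=\alpha_{id_\P,(1)}(a)=a$. The paper's proof is a one-line version of yours; your reduction to basis elements is unnecessary since \Cref{lemmfin} (already established) shows $f$ satisfies $f\big(\sum_\sigma\sigma\cdot x\otimes\sigma\cdot(\otimes_i a_i^{\otimes r_i})\big)=\alpha_{x,\r}(a_1,\dots,a_p)$ for arbitrary $a_i\in A$, not just basis vectors.
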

\begin{proof}
Indeed, using Relation \ref{relunit}:
\begin{equation*}
	f\circ\tilde\eta(a)=f(id_\P\otimes a)
	=\alpha_{id_\P,(1)}(a)
	=a.\qedhere
\end{equation*}
\end{proof}
\begin{lemm}\label{lemmcomp}
	The map $f$ is compatible with $\tilde{\mu}$.
\end{lemm}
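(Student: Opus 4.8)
The plan is to verify that $f\circ\Gamma(\P,f)=f\circ\tilde\mu_A$ as maps $\Gamma(\P,\Gamma(\P,A))\to A$. By naturality and linearity, it suffices to check this equality on a set of generators of $\Gamma(\P,\Gamma(\P,A))$. The natural generators to use are elements of the form $\t$ appearing in \Cref{propA}, namely
$$\t=\sum_{\sigma\in \∑_n/\∑_\r}\sigma\cdot x\otimes \sigma\cdot\bigg(\bigotimes_{i=1}^p\Big(\sum_{\sigma_i\in\∑_{m_i}/\∑_{\q_i}}\sigma_i\cdot x_i\otimes \sigma_i\cdot\_{v_i}\Big)^{\otimes r_i}\bigg),$$
where the inner sums are exactly the images under $f$-parametrising data of operations $\beta_{x_i,\q_i}$ applied to basis monomials. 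Using \Cref{decompsum} applied twice (once for the outer $\Gamma(\P,-)$ and once for each inner $\Gamma(\P,A)$), every element of $\Gamma(\P,\Gamma(\P,A))$ decomposes as a sum of such $\t$, so checking the two composites agree on these $\t$ is enough.

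First I would compute $f\circ\Gamma(\P,f)(\t)$. Applying $\Gamma(\P,f)$ replaces each inner factor $\sum_{\sigma_i}\sigma_i\cdot x_i\otimes\sigma_i\cdot\_{v_i}$ by the element $f$ sends it to, which by \Cref{deff} is $\beta_{x_i,\q_i}(\v_i)$ read as a monomial in $A$; then the outer $f$ produces exactly the left-hand side $\beta_{x,\r}(\beta_{x_1,\q_1}(b_1),\dots,\beta_{x_p,\q_p}(b_p))$ of relation \ref{relcomp}. On the other hand, $f\circ\tilde\mu_A(\t)$ is computed by substituting the explicit formula for $\tilde\mu_V(\t)$ from \Cref{propA} and then applying the definition of $f$ from \Cref{deff} to the resulting element of $\Gamma(\P,A)$; this yields precisely the right-hand side $\beta_{\sum_\tau\tau\cdot\mu(x\otimes(\bigotimes_i x_i^{\otimes r_i})),\,\r\diamond(\q_i)_{i\in[p]}}(b)$. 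The equality of these two expressions is exactly the statement of relation \ref{relcomp}, which holds by hypothesis since $(A,\alpha)$ lies in $\beta'(\P)$.

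The main obstacle is purely bookkeeping: one must match the indexing of the sum over $\tau\in\∑_M/\prod_i\∑_{r_i}\wr\∑_{\q_i}$ in \Cref{propA} with the combinatorial gadget $\r\diamond(\q_i)_{i\in[p]}$ from \Cref{diamond} and with the $\beta_{\cdot,\r\diamond(\q_i)}$ notation defined via \Cref{rempart}, verifying that the partition-shape data and the invariance under $\∑_{\r\diamond(\q_i)_{i\in[p]}}$ coincide on both sides. This is essentially the same verification already carried out in the proof of \Cref{prepP} for relation \ref{relcomp}, read in the reverse direction; indeed, the displayed multi-line computation there establishes precisely that $f\circ\tilde\mu_A$ and $f\circ\Gamma(\P,f)$ agree on $\t$, so invoking that computation together with relation \ref{relcomp} completes the proof.
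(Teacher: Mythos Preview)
Your proposal is correct and follows essentially the same approach as the paper: take a generic element $\t$ of $\Gamma(\P,\Gamma(\P,A))$ of the shape given in \Cref{propA}, compute $f\circ\Gamma(\P,f)(\t)$ and $f\circ\tilde\mu_A(\t)$ separately, and observe that their equality is precisely relation \ref{relcomp}. Your remark that the bookkeeping matching $\∑_M/\prod_i\∑_{r_i}\wr\∑_{\q_i}$ with $\r\diamond(\q_i)_{i\in[p]}$ is the same computation already carried out in the proof of \Cref{prepP} (read in the converse direction) is exactly how the paper concludes as well.
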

\begin{proof}
Let us check that the following diagram commutes:
\begin{equation*}
	\diag{\Gamma(\P,\Gamma(\P,A))\ar[rr]^{\Gamma(\P,f)}\ar[d]_{\tilde{\mu}_A}&&\Gamma(\P,A)\ar[d]^{f}\\
	\Gamma(\P,A)\ar[rr]_f&&A}
\end{equation*}
A generic element of $\Gamma(\P,\Gamma(\P,A))$ is of the form:
\begin{equation*}
	\t:=\sum_{\sigma\in\∑_{n}/\∑_\r}\sigma\cdot x\otimes \sigma\cdot\Big(\bigotimes_{i=1}^p\t_i^{\otimes r_i}\Big),
\end{equation*}
where $\∑_\r\subseteq Stab(x)$ and where, for all $i\in[p]$,
\begin{equation*}
	\t_i:=\sum_{\sigma\in\∑_{n_i}/\∑_{\q_i}}\sigma\cdot x_{i}\otimes\sigma\cdot(\bigotimes_{j=1}^{s_i}a_{ij}^{\otimes q_{ij}}),
\end{equation*}
with $\q_i\in \Comp_{s_i}(n_i)$ and $\∑_{\q_i}\subseteq Stab(x_i)$.

Set $a:=(a_{ij})_{i\in[p],j\in[s_i]}$ and, for all $i\in[p]$, $a_i:=(a_{ij})_{j\in[s_i]}$. On one hand:
\begin{align*}
	f\circ\Gamma(\P,f)(\t)&=f\bigg(\sum_{\sigma\in\∑_{n}/\∑_\r}\sigma\cdot x\otimes \sigma\cdot\Big(\bigotimes_{i}f(\t_{i})^{\otimes r_{i}}\Big)\bigg)\\
	&=\alpha_{x,r}(\alpha_{x_{1},\q_1}(a_{1}),\dots,\alpha_{x_{p},\q_p}(a_{p})),
\end{align*}
which is equal, according to relation \ref{relcomp}, to:
\begin{equation*}
	\alpha_{\sum_{\tau\in G}\tau\cdot\mu \big(x\otimes\big[\bigotimes_{i}x_{i}^{\otimes r_{i}}\big]\big),\r\diamond(\q_1,\dots,\q_s)}(a).
\end{equation*}
On the other hand, setting $H:=\∑_{\r\diamond(\q_1,\dots,\q_p)}$, one has
\begin{equation*}
	\hspace{-39.61874pt}f\circ\tilde{\mu}_A(\t)=f\bigg(\sum_{(\sigma,\tau)\in \Big(\∑_{\sum_ir_in_i}/H\Big)\times \Big(H/\prod_i\∑_{r_i}\wr\big(\prod_j\∑_{q_{ij}}\big)\Big)}\sigma\tau\cdot\Big(\mu\Big(x\otimes\Big[\bigotimes_{i=1}^px_i^{\otimes r_i}\Big]\Big)\Big)\otimes \sigma\cdot\Big(\bigotimes_{i=1}^p\Big(\bigotimes_{j=1}^{s_i}a_{ij}^{\otimes q_{ij}}\Big)^{\otimes r_i}\Big)\bigg)
\end{equation*}
Following the same computation as in the proof of relation \ref{relcomp} of \Cref{prepP} (page \pageref{7bproof}), one gets
\begin{equation*}
	f\circ\tilde{\mu}_A(\t)=\alpha_{\sum_{\tau\in G}\tau\cdot\mu \big(x\otimes\big[\bigotimes_{i}x_{i}^{\otimes r_{i}}\big]\big),\r\diamond(\q_1,\dots,\q_s)}(a).\qedhere
\end{equation*}
\end{proof}
\subsection{Example: the operad \texorpdfstring{$Com$}{Com}}\label{secCOM}
In this section, we check how our description of the structure of a $\Gamma(\P)$-algebra applies in the particular case $\P = Com$. We retrieve the classical definition of divided power operations on commutative algebras, as given by H. Cartan in \cite{HC}.

\begin{defi}\label{puissdiv}
A divided power algebra is an associative and commutative algebra $A$ equipped, for every integer $n\in\N$, of an operation $\gamma_n:A\to A$ satisfying:
\begin{enumerate}[label=(C\arabic*),itemsep=5pt]
\item\label{relComlambda} $\gamma_n(\lambda a)=\lambda^n\gamma_n(a)$ for all $a\in A_i$ et $\lambda\in\F$
\item\label{relComrepet} $\gamma_m(a)\gamma_n(a)=\binom{m+n}{m}\gamma_{m+n}(a)$ for all $a\in A$
\item\label{relComsomme} $\gamma_n(a+b)=\gamma_n(a)+\big(\sum_{l=1}^{n-1}\gamma_l(a)\gamma_{n-l}(b)\big)+\gamma_n(b)$ for all $a\in A$, $b\in A$
\item\label{relComunit} $\gamma_1(a)=a$ for all $a\in A$
\item\label{relComcomp1} $\gamma_n(ab)=n!\gamma_n(a)\gamma_n(b)=a^n\gamma_n(b)=\gamma_n(a)b^n$ for all $a\in A$, $b\in A$.
\item\label{relComcomp2} $\gamma_m(\gamma_n(a))=\frac{(mn)!}{m!(n!)^m}\gamma_{mn}(a)$ for all $a\in A$
\end{enumerate}
A morphism between divided power algebras is an algebra morphism compatible with the operations $\gamma_n$.
\end{defi}
Recall that the operad $Com$ is an operad defined, in terms of $\∑$-modules, as:
\begin{align*}
	Com(n):=\left\{
	\begin{array}{lc}
		0,&\mbox{ if }n=0,\\
		\F\mbox{ (trivial representation)},&\mbox{ if }n\neq 0.
	\end{array}
	\right.
\end{align*}
One can apply \Cref{theoinv} on it. The category of $\Gamma(Com)$-algebras is isomorphic to the category of $\F$-vector spaces $A$ equipped with operations $\beta_{\lambda X_n,\r}$ for all $n\in\N$ and $\lambda\in\F$. Using the relations of \Cref{defbeta'}, it can be shown that the family of operations $(\gamma_n)_{n\in\N}:=(\beta_{X_n,(n)})_{n\in\N}$ endows $A$ with a structure of divided power algebra, with multiplication the operation $\beta_{X_2,(1,1)}$. Conversely, on any divided power algebra $(A,\gamma)$ one can create a family of operations $\beta_{*,*}$ by setting, for all $\r\in\Comp_p(n)$ all $(a_i)_{i\in[p]}$, and $\lambda\in\F$, 
\begin{equation}\label{eqCOM}
  \beta_{\lambda X_n,\r}(a_1,\dots,a_p)=\lambda\prod_{i\in[p]}\gamma_{r_i}(a_i).
\end{equation}
For other modern surveys on divided power algebras, see \cite{NS}, Section 3, \cite{BR}, Section 2 and \cite{BF}, Section 1.2.2.

\section{From invariants to coinvariants}\label{seccoinv}
The aim of this section is to improve the characterisation of $\Gamma(\P)$-algebras given by \Cref{theoinv} in the case where $\P$ is a reduced operad spanned by a set operad.

Formally, the goal of this section is to explain the definition of comparison functors that fit in a diagram of the following form:
$$\diag{\beta(\M)\ar@/_/[d]_{P_\M}&\beta'(\P)\ar@{_{(}->}[l]\ar@/_/[d]_{P_\P}\\
  \gamma(\M)\ar@/_/[u]_{O_\M}&\gamma'(\P)\ar@{_{(}->}[l]\ar@/_/[u]_{O_\P}
},$$
where the vertical functors will be shown to be isomorphisms under the right assumptions. Here, $\beta(\M)$ and $\beta'(\P)$ are the categories of generalised divided power modules and algebras defined in \Cref{subsecF}. The categories $\gamma(\M)$ and $\gamma'(\P)$ will be similar to the categories $\beta(\M)$ and $\beta'(\P)$ in that they have objects vector spaces endowed with polynomial operations subjected to a list of relations, except that the family of operations considered and the list of relations will be somewhat smaller and easier to manipulate.

In \Cref{seccoinv1}, we explain the definition of the categories $\gamma(\M)$ and $\gamma'(\P)$ in full details, we give the definition of the functors $P_\M$ and $P_\P$, and we show that, when $\M$ is endowed with a basis stable under the action of the symmetric groups, and when $\P$ is spanned by a set operad, these functors are isomorphisms of categories. This is the result of \Cref{theocoinv}. Together with \Cref{theoinv}, this result induces \Cref{theocoinvintro} as stated in the introduction of this article.

In \Cref{C}, we study the compatibility of the isomorphisms $\O_G$ of \Cref{B}, depending on a finite group $G$, with the properties of restriction to and induction from a subgroup of $G$. We construct morphisms that are used to prove \Cref{theocoinv}.

In \Cref{secsum}, we give a very brief summary of the results obtained in \Cref{secinv} and \Cref{seccoinv}.



\subsection{Construction of the functors \texorpdfstring{$P_\M$}{} and \texorpdfstring{$P_\P$}{}}\label{seccoinv1}
In this subsection, we consider the particular case of a $\∑$-module $\M$ endowed with a basis $\B$ stable by permutation, and the case of a reduced operad $\P$ spanned by a set operad $\B$ ( that is, $\P=\F[\B]$ as in \Cref{foncop}). 
We define categories $\gamma(\M)$ and $\gamma'(\P)$, which are similar to the categories $\beta(\M)$ and $\beta'(\P)$ of \Cref{secinv} in that they have objects vector spaces endowed with polynomial operations $\gamma_{[x]_{\r},\r}$ satisfying some relations, except that these operations are now indexed by $\r\in\Comp_p(n)$ and $[x]_\r\in\M(n)_{\∑_\r}$. We then construct a pair of functors $P_\M:\beta(\M)\to\gamma(\M)$ and $P_\P:\beta'(\P)\to\gamma'(\P)$. The main result of this section is \Cref{theocoinv}, which states that these functors are isomorphisms of categories with inverse functors $O_\M$ and $O_\P$. The proof of \Cref{theocoinv} uses results from \Cref{C}. Composed with the isomorphisms $F_\M$ and $F_\P$ of \Cref{secinv}, these functors give a refined characterisation of $\Gamma(\M)$-modules and $\Gamma(\P)$-algebras as generalised divided power algebras. This is \Cref{theocoinvintro} as it is stated in the introduction of this article.

In this section, we will use the definitions and notation defined in \Cref{Partitions}, \Cref{B} and in \Cref{subsecF}.

\begin{defi}\label{defgamma}
  Let $\M$ be a $\∑$-module such that, for all $n\in \N$, $\M(n)$ is endowed with a basis $\B(n)$ stable under the action of $\∑_n$. The category $\gamma(\M)$ has objects $(A,\gamma)$, where $A$ is an $\F$-vector space, and $\gamma_{*,*}$ is a family of operations $\gamma_{[x]_\r,\r}:A^{\times p}\to A$, where $\r\in\Comp_p(n)$ and $[x]_\r\in\∑_\r\backslash\B(n)$, and which satisfy:
    \begin{enumerate}[label=($\gamma$\arabic*),itemsep=5pt]
    \item\label{rel'perm}$\gamma_{[x]_{\r},\r}(a_1,\dots,a_p)=\gamma_{[\rho^*\cdot x]_{\r^\rho}, \r^\rho}(a_{\rho^{-1}(1)},\dots,a_{\rho^{-1}(p)})$ for all $\rho\in\∑_p$,  for all $\rho\in\∑_p$, where $\rho^*$ stands for the block permutation of blocks of size $(r_i)$ associated to $\rho$.

    \item\label{rel'0} $\gamma_{[x]_{(0,r_1,r_2,\dots,r_p)},(0,r_1,r_2,\dots,r_p)}(a_0,a_1,\dots,a_p)=\gamma_{[x]_{(r_1,\dots,r_p)},(r_1,r_2,\dots,r_p)}(a_1,\dots,a_p)$.

    \item\label{rel'lambda} $\gamma_{[x]_{\r},\r}(\lambda a_1,a_2,\dots,a_p)=\lambda^{r_1}\gamma_{[x]_{\r},\r}(a_1,\dots,a_p)\quad\forall \lambda\in \F$.

    \item\label{rel'repet} Let $\r\in \Comp_p(n)$, $\q\in \Comp_s(p)$ and $x\in \B(n)$. Then:
    $$\gamma_{[x]_\r,\r}(\undercount{q_1}{a_1,\dots,a_1},\undercount{q_2}{a_2,\dots,a_2},\dots,\undercount{q_s}{a_s,\dots,a_s})=\big(\frac{\val{Stab_{{\q\rhd\r}}(x)}}{\val{Stab_{\r}(x)}}\big)\gamma_{[x]_{\q\rhd\r},\q\rhd\r}(a_1,a_2,\dots,a_s).$$

    \item\label{rel'somme} 
    $$\gamma_{[x]_{\r,\r}}(a_0+a_1,a_2,\dots,a_p)=\sum_{l+m=r_1}\sum_{[y]_{\r\circ_1(l,m)}\in \∑_{\r\circ_1(l,m)}\backslash \Omega_{\∑_\r}(x)}\gamma_{[y]_{\r\circ_1(l,m)},\r\circ_1(l,m)}(a_0,a_1,\dots,a_p),$$
  \end{enumerate}
  and morphisms are the linear maps which are compatible with the operations $\gamma_{[x]_\r,\r}$.
\end{defi}
\begin{note*}
  Objects of $\gamma(\M)$ will be denoted as couples $(A,\gamma)$, where $A$ is an $\F$-vector space and $\gamma$ is a family of operations $\gamma_{[x]_\r,\r}:A^{\times p}\to A$ with $\r\in\Comp_p(n)$ and $[x]_\r\in\∑_\r\backslash\B(n)$.
\end{note*}
\begin{rema}
  This definition depends on the chosen basis $\B$ of $\M$. However, there is an isomorphism of categories between two categories $\gamma(\M)$ coming from two different bases of the same $\M$ stable under the actions of $\∑_n$. The proof of this fact is in the proof of \Cref{theocoinv}.
\end{rema}
\begin{rema}\label{rem'part}
  Those operations induce, for all, $\r\in\Comp_p(n)$, $R\in\Pi(\r;n)$ and all $[x]_R\in\∑_R\backslash\B(n)$, an operation $\gamma_{[x]_R,R}:A^{\times p}\to A$. Indeed, according to \Cref{transn}, there exists $\tau\in\∑_n$ such that $\tau(R)=\r$. We set:
$$\gamma_{[x]_R,R}(a_1,\dots,a_p):=\gamma_{[\tau\cdot x]_\r,\r}(a_1,\dots,a_p).$$
This does not depend on the choice of $\tau$.
\end{rema}
\begin{defi}\label{defgamma'}
  Let $\B$ be a set operad and $\P=\F[\B]$ be the operad spanned by $\B$, as described in \ref{foncop}. Suppose that $\P$ is reduced. Let $\gamma'(\P)$ be the full subcategory of $\gamma(\P)$ of objects $(A,\gamma)$ satisfying:
  \begin{enumerate}[label=($\gamma$6\alph*),itemsep=5pt]
    \item\label{rel'unit} $\gamma_{id_\P,(1)}(a)=a$, for all $a\in A$.

    \item\label{rel'comp} Let $\r\in \Comp_p(n)$, $x\in\B(n)$ and, for all $i\in[p]$, let $\q_i\in \Comp_{k_i}(m_i)$, $x_i\in\B(m_i)$ and $a_i=(a_{ij})_{j\in[k_i]}\in A^{\times k_i}$. Set $a=(a_{i,j})_{i\in[p],j\in[k_i]}$.
    Then:
    \begin{multline*}
      \gamma_{[x]_\r,\r}(\gamma_{[x_1]_{\q_1},\q_1}(a_{1}),\dots,\gamma_{[x_p]_{\q_p},\q_p}(a_{p}))=\\
      \Bigg(\frac{\val{Stab_{\r\diamond(\q_i)_{i\in[p]}}\bigg(\mu\Big(x\otimes \big(\bigotimes_{i=1}^px_i^{\otimes r_i}\big)\Big)\bigg)}}{\val{Stab_{\r}(x)}\prod_{i=1}^p\val{Stab_{\q_i}(x_i)}^{r_i}}\Bigg)\gamma_{\big[\mu\big(x\otimes \big(\bigotimes_{i=1}^px_i^{\otimes r_i}\big)\big)\big]_{r\diamond(\q_i)_{i\in[p]}},\r\diamond(q_i)_{i\in[p]}}(a),
    \end{multline*}
  \end{enumerate}
  where $\gamma_{\big[\mu\left(x\otimes \left(\bigotimes_{i=1}^px_i^{\otimes r_i}\right)\right)\big]_{r\diamond(\q_i)_{i\in[p]}},\r\diamond(q_i)_{i\in[p]}}$ is defined in \Cref{rem'part}.
\end{defi}
 \begin{rema}\label{rem'}
   Relation \ref{rel'repet} is equivalent to the following relation, which is \textit{a priori} weaker:
   \begin{multline*}
   	\gamma_{[x]_{(r_0,r_1,\dots,r_p)},\r}(a_1,a_1,a_2,\dots,a_p)=\\
   	\frac{\val{Stab_{(r_0+r_1,r_2,r_3,\dots,r_p)}(x)}}{\val{Stab_{(r_0,r_1,\dots,r_p)}(x)}}\gamma_{[x]_{(r_0+r_1,r_2,r_3,\dots,r_p)},(r_0+r_1,r_2,r_3,\dots,r_p)}(a_1,\dots,a_p).
   \end{multline*}
   Indeed, Relation \ref{rel'repet} is deduced from this one by iterating it, and using relation \ref{rel'perm}.
 \end{rema}
 \begin{lemm}\label{lem0}
  Let $\M$ be a $\∑$-module, endowed, for all $n\in\N$, with a basis $\B(n)$ stable under the action of $\∑_n$. Then, the operations $\beta_{\O_{\∑_\r}([y]_\r),\r}$, where $\O_{\∑_\r}$ is the morphism of \Cref{B} and $[y]_\r\in\∑_\r\backslash \B(n)$, generates all the operations $\beta_{*,*}$.
\end{lemm}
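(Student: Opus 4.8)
The plan is to reduce the statement to a purely linear-algebraic fact about permutation representations, exploiting the linearity relation \ref{rellin}. First I would fix a composition $\r\in\Comp_p(n)$ and observe that relation \ref{rellin} asserts precisely that the assignment $x\mapsto\beta_{x,\r}$ is $\F$-linear in the index $x\in\M(n)^{\∑_\r}$. Consequently, in order to show that the operations $\beta_{\O_{\∑_\r}([y]_\r),\r}$ generate every operation $\beta_{x,\r}$, it suffices to show that the invariant elements $\O_{\∑_\r}([y]_\r)$, as $[y]_\r$ ranges over $\∑_\r\backslash\B(n)$, span (indeed form a basis of) the invariant subspace $\M(n)^{\∑_\r}$.

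To establish this, I would invoke the isomorphism of \Cref{B}. Since $\B(n)$ is a basis of $\M(n)$ stable under the action of $\∑_n$, it is in particular a $\∑_\r$-set, and $\M(n)=\F[\B(n)]$ as a $\∑_\r$-module. Applying \Cref{B} with $G=\∑_\r$ and $X=\B(n)$ yields a linear isomorphism $\O_{\∑_\r}:\F[\B(n)]_{\∑_\r}\to\F[\B(n)]^{\∑_\r}=\M(n)^{\∑_\r}$, which sends the class $[y]_\r$ of a basis element to the orbit sum $\sum_{z\in\Omega_{\∑_\r}(y)}z$. The classes $[y]_\r\in\∑_\r\backslash\B(n)$ form a basis of the coinvariants $\F[\B(n)]_{\∑_\r}$; hence their images $\O_{\∑_\r}([y]_\r)$ form a basis of the invariant subspace $\M(n)^{\∑_\r}$.

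Combining these two observations, every $x\in\M(n)^{\∑_\r}$ is written uniquely as an $\F$-linear combination $x=\sum_{[y]_\r\in\∑_\r\backslash\B(n)}\lambda_{[y]_\r}\O_{\∑_\r}([y]_\r)$, and relation \ref{rellin} then gives $\beta_{x,\r}=\sum_{[y]_\r}\lambda_{[y]_\r}\beta_{\O_{\∑_\r}([y]_\r),\r}$, which is exactly the desired generation statement. I do not expect a serious obstacle here: the whole content amounts to the fact that the orbit sums of a permutation representation form a basis of its invariants, which is precisely the isomorphism recorded in \Cref{B}. The only point to verify with care is that the $\∑_\r$-action on $\B(n)$ under which we take orbits is the restriction of the full $\∑_n$-action, so that $\B(n)$ is genuinely a $\∑_\r$-set and $\M(n)^{\∑_\r}$ really is the invariants of a permutation representation.
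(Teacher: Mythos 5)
Your proposal is correct and follows essentially the same route as the paper: relation \ref{rellin} reduces the claim to exhibiting a basis of $\M(n)^{\∑_\r}$, and the Lemma of \Cref{B} applied to the $\∑_\r$-set $\B(n)$ shows that the orbit sums $\O_{\∑_\r}([y]_\r)$ form such a basis. Your version merely spells out the details the paper leaves implicit.
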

\begin{proof}
Relation \ref{rellin} implies that any basis of $\M(n)^{\∑_\r}$ furnishes generating operations. The Lemma of \Cref{B} shows that $\∑_\r\backslash\B(n)$ is a basis for $\M(n)_{\∑_\r}$, and that $\O_{\∑_\r}:\M(n)_{\∑_\r}\to\M(n)^{\∑_\r}$ is an isomorphism.
\end{proof}
 \begin{prop}\label{func1}
   Let $(A,\beta)$ be an object of $\beta(\M)$. Let us set, for all $\r\in\Comp_p(n)$ and all $[x]_\r\in\∑_\r\backslash\B(n)$:
  \begin{equation}\label{eqgamma}
    \gamma_{[x]_\r,\r}=\beta_{\O_\r([x]_\r),\r},
  \end{equation}
    where $\O_{\∑_\r}:\M(n)_{\∑_\r}\to\M(n)^{\∑_\r}$ is defined in \Cref{B}. This gives a functor 
    \begin{eqnarray*}
      P_\M:&\beta(\M)&\to\gamma(\M)\\
      &(A,\beta)&\mapsto(A,\gamma)
    \end{eqnarray*}
 \end{prop}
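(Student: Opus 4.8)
The plan is to verify, in turn, that each operation $\gamma_{[x]_\r,\r}:=\beta_{\O_\r([x]_\r),\r}$ is well defined, that the family $(\gamma_{[x]_\r,\r})$ obeys relations \ref{rel'perm}--\ref{rel'somme} of \Cref{defgamma}, and that the assignment is functorial. Throughout I use only the explicit shape of the isomorphism from \Cref{B}: $\O_{\∑_\r}:\M(n)_{\∑_\r}\to\M(n)^{\∑_\r}$ sends a class $[x]_\r$ to the orbit sum $\sum_{y\in\Omega_{\∑_\r}(x)}y$, equivalently $\O_\r([x]_\r)=\val{Stab_\r(x)}^{-1}\sum_{g\in\∑_\r}g\cdot x$. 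Well-definedness is then immediate: $\O_\r([x]_\r)$ is a genuine $\∑_\r$-invariant depending only on the $\∑_\r$-orbit of $x$, so $\beta_{\O_\r([x]_\r),\r}$ is correctly indexed by a class $[x]_\r\in\∑_\r\backslash\B(n)$.

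For the relations I reduce each $\gamma$-identity to the corresponding $\beta$-identity of \Cref{defbeta} together with one algebraic property of the orbit-sum operator $\O$. Relations \ref{rel'lambda} and \ref{rel'0} are direct consequences of \ref{rellambda} and \ref{rel0}, the latter because $\∑_{(0,r_1,\dots,r_p)}=\∑_{(r_1,\dots,r_p)}$, so that $\O$ returns the same invariant. For \ref{rel'perm} I apply \ref{relperm} to $\beta_{\O_\r([x]_\r),\r}$ and am left to check the equivariance $\rho^*\cdot\O_\r([x]_\r)=\O_{\r^\rho}([\rho^*\cdot x]_{\r^\rho})$; this holds because the block permutation $\rho^*$ conjugates $\∑_\r$ onto $\∑_{\r^\rho}$ and hence carries $\Omega_{\∑_\r}(x)$ bijectively onto $\Omega_{\∑_{\r^\rho}}(\rho^*\cdot x)$.

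The two substantial steps, \ref{rel'repet} and \ref{rel'somme}, both compare $\O$ across a pair of nested Young subgroups (these are exactly the restriction/induction compatibilities of $\O$ analysed in \Cref{C}). For \ref{rel'repet} I apply \ref{relrepet} with first index $\O_\r([x]_\r)$, which leaves the identity
$$\sum_{\sigma\in\∑_{\q\rhd\r}/\∑_\r}\sigma\cdot\O_\r([x]_\r)=\frac{\val{Stab_{\q\rhd\r}(x)}}{\val{Stab_\r(x)}}\,\O_{\q\rhd\r}([x]_{\q\rhd\r}),$$
valid since $\∑_\r\subseteq\∑_{\q\rhd\r}$; writing $\O_\r([x]_\r)=\val{Stab_\r(x)}^{-1}\sum_{g\in\∑_\r}g\cdot x$ and noting that $\sigma g$ runs exactly once over $\∑_{\q\rhd\r}$ reassembles the full group sum $\sum_{h\in\∑_{\q\rhd\r}}h\cdot x=\val{Stab_{\q\rhd\r}(x)}\,\O_{\q\rhd\r}([x]_{\q\rhd\r})$, which produces precisely the stabiliser ratio of \ref{rel'repet}. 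For \ref{rel'somme} I apply \ref{relsomme} and then split the index using linearity of $\beta$ in its first slot, relation \ref{rellin}: for each $l+m=r_1$ the subgroup $\∑_{\r\circ_1(l,m)}\subseteq\∑_\r$ partitions $\Omega_{\∑_\r}(x)$ into its own orbits, so $\O_\r([x]_\r)=\sum_{[y]}\O_{\r\circ_1(l,m)}([y])$ with $[y]$ ranging over $\∑_{\r\circ_1(l,m)}\backslash\Omega_{\∑_\r}(x)$, and this is exactly the sum indexing \ref{rel'somme}.

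Finally, functoriality is formal: any morphism $g$ of $\beta(\M)$ commutes with every $\beta_{x,\r}$, hence with each $\gamma_{[x]_\r,\r}=\beta_{\O_\r([x]_\r),\r}$, so $g$ is a morphism of $\gamma(\M)$, and $P_\M$, being the identity on underlying spaces and maps, preserves identities and composites. I expect the bookkeeping in \ref{rel'repet} to be the only delicate point: matching the stabiliser ratio requires the orbit-multiplicity count above, that is, the behaviour of $\O$ under the inclusion $\∑_\r\subseteq\∑_{\q\rhd\r}$; once that identity is secured the remaining verifications are routine.
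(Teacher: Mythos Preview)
Your proof is correct and follows essentially the same route as the paper's: relations \ref{rel'perm}--\ref{rel'lambda} are reduced directly to \ref{relperm}--\ref{rellambda}, while \ref{rel'repet} and \ref{rel'somme} are handled via the behaviour of the orbit-sum operator $\O$ along the inclusions $\∑_\r\subseteq\∑_{\q\rhd\r}$ and $\∑_{\r\circ_1(l,m)}\subseteq\∑_\r$. The only organisational difference is that the paper isolates these two orbit-sum identities as separate propositions (\Cref{g_1} and \Cref{g_2}) before invoking them, whereas you prove them inline; one small caution is that your formula $\O_\r([x]_\r)=\val{Stab_\r(x)}^{-1}\sum_{g\in\∑_\r}g\cdot x$ should be read as a multiplicity count over coset representatives rather than a literal division in $\M(n)$, since the paper works over an arbitrary field.
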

 \begin{prop}\label{propcoinv}
   Let $\M$ be a $\∑$-module such that, for all $n\in \N$, $\M(n)$ is endowed with a basis $\B(n)$ that is stable under the action of $\∑_n$. The functor $P_\M$ is an isomorphism of categories.
 \end{prop}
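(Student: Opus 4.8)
The plan is to construct an explicit inverse $O_\M:\gamma(\M)\to\beta(\M)$ and to check that $P_\M$ (a functor by \Cref{func1}) and $O_\M$ are mutually inverse. By \Cref{lem0}, the elements $\O_{\∑_\r}([y]_\r)$, as $[y]_\r$ runs over $\∑_\r\backslash\B(n)$, form a basis of $\M(n)^{\∑_\r}$. Given an object $(A,\gamma)$ of $\gamma(\M)$, I would therefore define $O_\M(A,\gamma)=(A,\beta)$ by linear extension: for $x=\sum_{[y]_\r}\lambda_{[y]_\r}\,\O_{\∑_\r}([y]_\r)\in\M(n)^{\∑_\r}$, set $\beta_{x,\r}:=\sum_{[y]_\r}\lambda_{[y]_\r}\,\gamma_{[y]_\r,\r}$. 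With this definition relation \ref{rellin} holds by construction, and a linear map compatible with the $\gamma_{*,*}$ is automatically compatible with the $\beta_{*,*}$, so $O_\M$ is a functor on morphisms.

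The core of the argument is to verify that $(A,\beta)$ satisfies relations \ref{relperm}--\ref{rellin} of \Cref{defbeta}. Since each relation is linear in the indexing element $x$ on both sides, it suffices to check it on the generators $x=\O_{\∑_\r}([x]_\r)$. Relations \ref{rel0} and \ref{rellambda} then reduce immediately to \ref{rel'0} and \ref{rel'lambda}. For \ref{relperm} I would use that the block permutation $\rho^*$ conjugates $\∑_\r$ into $\∑_{\r^\rho}$, whence $\rho^*\cdot\Omega_{\∑_\r}(x)=\Omega_{\∑_{\r^\rho}}(\rho^*\cdot x)$ and therefore $\rho^*\cdot\O_{\∑_\r}([x]_\r)=\O_{\∑_{\r^\rho}}([\rho^*\cdot x]_{\r^\rho})$; substituting this turns \ref{relperm} into \ref{rel'perm}.

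The two substantial relations are \ref{relrepet} and \ref{relsomme}, and they rest on two compatibility properties of the norm isomorphism $\O$ (the content of \Cref{C}). For \ref{relrepet}, using $\∑_\r\subseteq\∑_{\q\rhd\r}$ and writing $\O_{\∑_\r}([x]_\r)=\sum_{z\in\Omega_{\∑_\r}(x)}z$, a multiplicity count over the coset representatives gives
\[
\sum_{\sigma\in\∑_{\q\rhd\r}/\∑_\r}\sigma\cdot\O_{\∑_\r}([x]_\r)=\frac{\val{Stab_{\q\rhd\r}(x)}}{\val{Stab_{\r}(x)}}\,\O_{\∑_{\q\rhd\r}}([x]_{\q\rhd\r}),
\]
because each element of the larger orbit $\Omega_{\∑_{\q\rhd\r}}(x)$ is hit exactly $\val{Stab_{\q\rhd\r}(x)}/\val{Stab_{\r}(x)}$ times; feeding this into \ref{relrepet} produces precisely the scalar appearing in \ref{rel'repet}. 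For \ref{relsomme}, set $\r'=\r\circ_1(l,m)$, so that $\∑_{\r'}\subseteq\∑_\r$; decomposing the $\∑_\r$-orbit of $x$ into $\∑_{\r'}$-orbits yields the restriction formula
\[
\O_{\∑_\r}([x]_\r)=\sum_{[y]_{\r'}\in\∑_{\r'}\backslash\Omega_{\∑_\r}(x)}\O_{\∑_{\r'}}([y]_{\r'}),
\]
and feeding this into \ref{relsomme} (read at the finer composition $\r'$), then expanding by \ref{rellin}, reproduces the double sum of \ref{rel'somme}. I expect these two identities — the coset-sum multiplicity and the orbit-splitting under restriction — to be the main obstacle, since they are exactly where the combinatorics of $\rhd$, of $\circ_1$ and of the Young subgroups $\∑_\r$ must be reconciled with the norm map; they are established in \Cref{C}.

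Finally, I would check the two composites. For $O_\M\circ P_\M=id$: given $(A,\beta)$, $P_\M$ sets $\gamma_{[y]_\r,\r}=\beta_{\O_{\∑_\r}([y]_\r),\r}$ and $O_\M$ re-extends linearly; since $\beta$ is already linear in $x$ by \ref{rellin} and the $\O_{\∑_\r}([y]_\r)$ form a basis, the extension returns $\beta$. For $P_\M\circ O_\M=id$: given $(A,\gamma)$, extending and then restricting to a basis element gives $\beta_{\O_{\∑_\r}([y]_\r),\r}=\gamma_{[y]_\r,\r}$ by construction. Both composites are the identity, trivially so on morphisms, which proves that $P_\M$ is an isomorphism of categories.
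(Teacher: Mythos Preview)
Your proposal is correct and follows essentially the same strategy as the paper: construct $O_\M$ by setting $\beta_{\O_{\∑_\r}([x]_\r),\r}:=\gamma_{[x]_\r,\r}$ and extending linearly via \Cref{lem0}, then verify the relations \ref{relperm}--\ref{rellin} using the two compatibility identities for the norm map $\O$ which are precisely \Cref{g_1} (your multiplicity count for \ref{relrepet}) and \Cref{g_2} (your orbit-splitting for \ref{relsomme}). The only cosmetic difference is that the paper establishes these equivalences in the forward direction while proving \Cref{func1} and then invokes their reversibility, whereas you spell out the reverse direction explicitly.
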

 \begin{theo}\label{theocoinv}
  If $\P$ is a reduced operad spanned by a set operad $\B$ (i.e. $\P=\F[\B]$), then the functor $P_\P$ restricts to a functor $P_\P:\beta'(\P)\to\gamma'(\P)$ and the latter is an isomorphism of categories.
 \end{theo}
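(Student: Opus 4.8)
The plan is to exploit that $P_\M\colon\beta(\P)\to\gamma(\P)$ is already known to be an isomorphism of categories (\Cref{propcoinv}), with inverse $O_\M$. Since $\beta'(\P)\hookrightarrow\beta(\P)$ and $\gamma'(\P)\hookrightarrow\gamma(\P)$ are full subcategories, it suffices to prove that $P_\M$ restricts to a bijection on objects; functoriality and bijectivity of the restriction then come for free. Concretely, under the dictionary $\gamma_{[x]_\r,\r}=\beta_{\O_{\∑_\r}([x]_\r),\r}$, I must show that $(A,\beta)\in\beta(\P)$ satisfies \ref{relunit} and \ref{relcomp} if and only if $(A,\gamma)=P_\M(A,\beta)$ satisfies \ref{rel'unit} and \ref{rel'comp}. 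The unit relations are immediate: for $\r=(1)$ the group $\∑_\r=\∑_1$ is trivial, the orbit of $id_\P$ is the singleton $\{id_\P\}$, so $\O_{\∑_{(1)}}([id_\P])=id_\P$ and $\gamma_{id_\P,(1)}=\beta_{id_\P,(1)}$; thus \ref{relunit} and \ref{rel'unit} are literally the same relation.

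For the substantial equivalence $\ref{relcomp}\Leftrightarrow\ref{rel'comp}$, I would start from the left-hand side of \ref{rel'comp}, rewrite it through the dictionary as $\beta_{\O_{\∑_\r}([x]),\r}(\beta_{\O_{\∑_{\q_1}}([x_1]),\q_1}(a_1),\dots,\beta_{\O_{\∑_{\q_p}}([x_p]),\q_p}(a_p))$, and apply \ref{relcomp} to these (genuine) invariant indices. This produces $\beta_{E,\r\diamond(\q_i)}(a)$ with $E=\sum_{\tau}\tau\cdot\mu(\O_{\∑_\r}([x])\otimes\bigotimes_i\O_{\∑_{\q_i}}([x_i])^{\otimes r_i})$, $\tau$ running over $\∑_{\r\diamond(\q_i)}/\prod_i\∑_{r_i}\wr\∑_{\q_i}$. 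Writing $M=\sum_ir_im_i$ and $z=\mu(x\otimes\bigotimes_ix_i^{\otimes r_i})\in\B(M)$, comparison with the right-hand side of \ref{rel'comp} reduces the whole equivalence, via linearity \ref{rellin} of $\beta$ in its index, to the single identity of invariant elements in $\P(M)^{\∑_{\r\diamond(\q_i)}}$:
\[
  E=\frac{\val{Stab_{\r\diamond(\q_i)}(z)}}{\val{Stab_\r(x)}\,\prod_{i=1}^p\val{Stab_{\q_i}(x_i)}^{r_i}}\;\O_{\∑_{\r\diamond(\q_i)}}([z]).
\]

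Establishing this operad identity is the main obstacle, and it is exactly where the hypothesis $\P=\F[\B]$ and the restriction/induction compatibility of the isomorphisms $\O_G$ from \Cref{C} enter. I would argue in three steps. First, expanding $\mu$ multilinearly, $E$ is a sum of basis elements $\mu(x'\otimes\bigotimes_{i,k}x'_{i,k})$ with $x'\in\Omega_{\∑_\r}(x)$ and $x'_{i,k}\in\Omega_{\∑_{\q_i}}(x_i)$; because $\B$ is a set operad, each is again a basis element, and by operadic equivariance it equals $\theta\cdot z$ for some $\theta\in\∑_{\r\diamond(\q_i)}$, so every term of $E$ lies in the single orbit $\Omega_{\∑_{\r\diamond(\q_i)}}(z)$. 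Second, $E$ is the transfer $\sum_{\tau\in G/H}\tau\cdot(\,\cdot\,)$ of an $H$-invariant element along $H=\prod_i\∑_{r_i}\wr\∑_{\q_i}\subseteq G=\∑_{\r\diamond(\q_i)}$, hence $G$-invariant, so $E=c\,\O_{\∑_{\r\diamond(\q_i)}}([z])$ for a single integer $c$. Third, $c$ is pinned down by counting total mass: $c\cdot\val{\Omega_G(z)}=[G:H]\cdot\val{\Omega_{\∑_\r}(x)}\prod_i\val{\Omega_{\∑_{\q_i}}(x_i)}^{r_i}$, and inserting the orbit–stabiliser formulas together with $\val{H}=\val{\∑_\r}\prod_i\val{\∑_{\q_i}}^{r_i}$ collapses the ratio to precisely the stabiliser quotient of \ref{rel'comp}. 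I expect this bookkeeping of orbits, stabilisers and the wreath-product index to be the genuinely delicate point, whereas everything else is formal; this is the content \Cref{C} packages as the compatibility of $\O$ with induction.

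Finally, the converse inclusion $O_\M(\gamma'(\P))\subseteq\beta'(\P)$ (i.e. $\ref{rel'comp}\Rightarrow\ref{relcomp}$ for all invariant indices, not merely orbit sums) follows from the same identity read backwards once one reduces to orbit-sum indices. By \Cref{lem0} the orbit sums $\O_{\∑_\r}([x])$ span $\P(n)^{\∑_\r}$, so linearity \ref{rellin} handles the outer index, while additivity in an inner index $x_i$ is matched to additivity in the corresponding slot: splitting $x_i=u+w$ expands $x_i^{\otimes r_i}$ binomially on the index side and, by \ref{relsomme} together with the homogeneity \ref{rellambda} and \ref{relsomme+}, splits the $i$-th slot of $\beta_{x,\r}$ into separate slots carrying $\beta_{u,\q_i}(a_i)$ and $\beta_{w,\q_i}(a_i)$, reducing to the orbit-sum case already treated. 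Thus $P_\M$ restricts to a bijection between the objects of $\beta'(\P)$ and $\gamma'(\P)$, and hence to an isomorphism of these full subcategories, which is the assertion of the theorem.
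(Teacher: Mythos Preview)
Your proposal is correct and follows essentially the same approach as the paper: the core of both is the identity $E=\dfrac{|Stab_{\r\diamond(\q_i)}(z)|}{|Stab_\r(x)|\prod_i|Stab_{\q_i}(x_i)|^{r_i}}\,\O_{\∑_{\r\diamond(\q_i)}}([z])$, which the paper obtains by quoting \Cref{INDMULT} together with \Cref{g_1}, while you derive it directly by the orbit/transfer/counting argument. Your treatment of the converse inclusion $O_\M(\gamma'(\P))\subseteq\beta'(\P)$ is in fact more explicit than the paper's (which simply asserts that the chains of equalities are reversible); your reduction via \ref{rellin} for the outer index and via \ref{relsomme}/\ref{rellambda} for the inner indices is the right idea, though carrying it out fully requires also invoking \ref{relrepet} to recombine the duplicated argument tuples after splitting, a step you leave implicit.
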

 \begin{proof}[Proof of \Cref{func1}, \Cref{propcoinv} and \Cref{theocoinv}]\item 
  For this proof, we will need \Cref{g_1}, \Cref{g_2}, and \Cref{INDMULT}. Let $(A,\beta)$ be an object of $\beta(\M)$. The operations $\beta$ satisfy the relations \ref{relperm}, \ref{rel0} and \ref{rellambda} of \Cref{defbeta}, which, through \cref{eqgamma}, translate into \ref{rel'perm}, \ref{rel'0} and \ref{rel'lambda}. To translate Relation \ref{relrepet}, observe that
  \begin{align*}
    \gamma_{[x]_\r,\r}(\undercount{q_1}{a_1,\dots,a_1},\undercount{q_2}{a_2,\dots,a_2},\dots,\undercount{q_s}{a_s,\dots,a_s})&=\beta_{\O_\r\big([x]_\r\big),\r}(\undercount{q_1}{a_1,\dots,a_1},\undercount{q_2}{a_2,\dots,a_2},\dots,\undercount{q_s}{a_s,\dots,a_s})\\
    &=\beta_{\Big(\sum_{\sigma\in \∑_{\q\rhd\r}/\∑_\r}\sigma\cdot \O_\r\big([x]_\r\big)\Big),\ \q\rhd\r}(a_1,a_2,\dots,a_s)\\
  \end{align*}
  \Cref{g_1} then implies
  \begin{multline*}
     \gamma_{[x]_\r,\r}(\undercount{q_1}{a_1,\dots,a_1},\undercount{q_2}{a_2,\dots,a_2},\dots,\undercount{q_s}{a_s,\dots,a_s})=\\
     \beta_{\O_{\∑_{\q\rhd\r}}(Ind_\r^{\q\rhd\r}([x]_\r)),\ \q\rhd\r}(a_1,a_2,\dots,a_s)\\
     =\big(\frac{\val{Stab_{{\q\rhd\r}}(x)}}{\val{Stab_{\r}(x)}}\big)\beta_{\O_{\∑_{\q\rhd\r}}(\O_{\∑_{\q\rhd\r}}([x]_{\q\rhd\r})),\ \q\rhd\r}(a_1,a_2,\dots,a_s)\\
     =\big(\frac{\val{Stab_{{\q\rhd\r}}(x)}}{\val{Stab_{\r}(x)}}\big)\gamma_{[x]_{\q\rhd\r},\ \q\rhd\r}(a_1,a_2,\dots,a_s)
  \end{multline*}
  To translate Relation \ref{relsomme}, observe that
  \begin{align*}
    \gamma_{[x]_{\r,\r}}(a_0+a_1,a_2,\dots,a_p)&=\beta_{\O_{\∑_\r}([x]_{\r},\r)}(a_0+a_1,a_2,\dots,a_p)\\
    &=\sum_{l+m=r_1}\beta_{\O_{\∑_\r}([x]_{\r}),\r\circ_1(l,m)}(a_0,a_1,\dots,a_p)
  \end{align*}
  \Cref{g_2} then implies
  \begin{multline*}
    \gamma_{[x]_{\r,\r}}(a_0+a_1,a_2,\dots,a_p)=
    \sum_{l+m=r_1}\beta_{\O_{\∑_{\r\circ_1(l,m)}(Res_{\r\circ_1(l,m)}^{\r}([x]_\r)),\r\circ_1(l,m)}}(a_0,a_1,\dots,a_p)\\
    =\sum_{l+m=r_1}\sum_{[y]_{\r\circ_1(l,m)}\in\∑_{\r\circ_1(l,m)}\backslash\Omega_{\∑_r}(x)}\beta_{\O_{\∑_{\r\circ_1(l,m)}([y]_{\r\circ_1(l,m)})),\r\circ_1(l,m)}}(a_0,a_1,\dots,a_p)\\
    =\sum_{l+m=r_1}\sum_{[y]_{\r\circ_1(l,m)}\in\∑_{\r\circ_1(l,m)}\backslash\Omega_{\∑_r}(x)}\gamma_{[y]_{\r\circ_1(l,m)},\r\circ_1(l,m)}(a_0,a_1,\dots,a_p).
  \end{multline*}

   Assuming that $\M=\P=\F[\B]$ and that $(A,\beta)$ is an object of $\beta'(\P)$, then Relation \ref{relunit} of \Cref{defbeta'} translates into \ref{rel'unit} and, to translate Relation \ref{relcomp}, observe that
   \begin{align*}
     \gamma_{[x]_\r,\r}(\gamma_{[x_1]_{\q_1},\q_1}(a_{1}),\dots,\gamma_{[x_p]_{\q_p},\q_p}(a_{p}))&=\beta_{\O_{\∑_\r}([x]_\r),\r}(\beta_{\O_{\∑_{\q_1}}([x_1]_{\q_1}),\q_1}(a_{1}),\dots,\beta_{\O_{\∑_{\q_p}}([x_p]_{\q_p}),\q_p}(a_{p}))\\
     &=\beta_{\sum_{\tau}\tau\cdot\mu\left(\O_{\∑_\r}\left([x]_\r\right)\otimes \left(\bigotimes_{i=1}^p\O_{\∑_{\q_i}}([x_i]_{\q_i})\right)\right),\r\diamond(\q_i)_{i\in[p]}}(a),
   \end{align*}
   where $\tau$ ranges over $\∑_{\r\diamond(q_i)_{i\in[p]}}/(\prod_{i=1}^{p}\∑_{r_i}\wr \∑_{\q_i})$. \Cref{INDMULT} implies
   \begin{equation*}
     \gamma_{[x]_\r,\r}(\gamma_{[x_1]_{\q_1},\q_1}(a_{1}),\dots,\gamma_{[x_p]_{\q_p},\q_p}(a_{p}))=
     \beta_{\sum_{\tau}\tau\cdot\O_{\prod_{i=1}^p\∑_{r_i}\wr\∑_{\q_i}}(\mu'([x]_\r;[x_i]_{\q_i})),\r\diamond(\q_i)_{i\in[p]}}(a).
   \end{equation*}
   \Cref{g_1} implies
       \begin{multline*}
      \gamma_{[x]_\r,\r}(\gamma_{[x_1]_{\q_1},\q_1}(a_{1}),\dots,\gamma_{[x_p]_{\q_p},\q_p}(a_{p}))=\\
      \Bigg(\frac{\val{Stab_{\r\diamond(\q_i)_{i\in[p]}}\bigg(\mu\Big(x\otimes \big(\bigotimes_{i=1}^px_i^{\otimes r_i}\big)\Big)\bigg)}}{\val{Stab_{\r}(x)}\prod_{i=1}^p\val{Stab_{\q_i}(x_i)}^{r_i}}\Bigg)\gamma_{\left[\mu\left(x\otimes \left(\bigotimes_{i=1}^px_i^{\otimes r_i}\right)\right)\right]_{r\diamond(\q_i)_{i\in[p]}},\r\diamond(q_i)_{i\in[p]}}(a).
    \end{multline*}
   Thus, $P_{\M}(A,\beta)$ is an object of $\gamma(\M)$ and $P_{\P}(A,\beta)$ is an object of $\gamma'(\P)$. It is natural in $(A,\beta)$ if $\M$ and its basis $\B$ are fixed. 

   Let us construct an inverse functor. According to \Cref{lem0}, it is sufficient to define the operations $\beta_{\O_{\∑_\r}([x]_{\r}),\r}$ for $x$ ranging over  $\∑_\r\backslash\B(n)$. Then, setting:
$$\beta_{\O_{\∑_\r}(x),\r}:=\gamma_{[x]_\r,\r},$$
    one obtains a functor $O_{\M}:\gamma(\M)\to\beta(\M)$ which restricts to a functor $O_{\P}:\gamma'(\P)\to\beta'(\P)$. Indeed, we already checked that Relations \ref{rel'perm} to \ref{rel'comp} are equivalent to Relations \ref{relperm} to \ref{relcomp}. Moreover, those functors are clearly inverse to $P_{\M,\B}$ and $P_{\P,\B}$ respectively. Furthermore, since $\beta(\M)$ and $\beta'(\P)$ do not depend on the choice of $\B$, then changing the chosen basis would not change the isomorphism class of the category $\beta(\M)$, which justifies \textit{a posteriori} our choice to name the categories without referring to the chosen basis.
 \end{proof}
 \subsection{Technical results on the map \texorpdfstring{$\O$}{O}}\label{C}
In this subsection, we investigate the compatibility of the isomorphism $\O_G$ defined in \Cref{B}, and which depends on a finite group $G$, with the functorial properties for group representations of induction from and restriction to a subgroup $H$ of $G$. We define morphisms that we suggestively call $Ind$ and $Res$, with the appropriate groups as index, that are used in the proof of \Cref{theocoinv}.

Let $G$ be a finite group acting on a set $X$, let $H$ be a subgroup of $G$. Recall that there is a unique ring morphism $\Z\to\F$, that will be denoted $n\mapsto n_\F$. Recall also that \Cref{B} gives a linear isomorphism $\F[G\backslash X]\cong\F[X]_G$, so that $G\backslash X$ represents a basis of $\F[X]_G$.
\begin{prop}\label{g_1}
	Consider the morphism $Ind_H^G:\F[X]_H\to \F[X]_G$ induced by $Ind_H^G([x]_H)=\Big(\frac{ \val{Stab_G(x)}}{\val{Stab_H(x)}}\Big)_{\F}[x]_G$.
	The following diagram is commutative:
	\begin{align*}
		\diag{\F[X]_H\ar[r]^{\O_H}\ar[d]_{Ind_H^G}&\F[X]^H\ar[d]^{\sum_{g\in G/H}g\cdot id}\\
		\F[X]_G\ar[r]_{\O_G}&\F[X]^G}
	\end{align*}
\end{prop}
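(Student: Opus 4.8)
The plan is to verify the commutativity of the square on a basis of $\F[X]_H$, namely on the classes $[x]_H$ indexed by the $H$-orbits $\Omega_H(x)$ (recall from \Cref{B} that $H\backslash X$ is a basis of $\F[X]_H$); since all four maps are $\F$-linear, this reduces the statement to an identity in $\F[X]$ for each such generator. Throughout I would use the explicit description of the isomorphisms of \Cref{B}: the map $\O$ sends an orbit class to the corresponding orbit sum, so that $\O_H([x]_H)=\sum_{y\in\Omega_H(x)}y$ and $\O_G([x]_G)=\sum_{z\in\Omega_G(x)}z$.

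First I would record two preliminary points. The map $Ind_H^G$ is well defined on classes: replacing $x$ by $h\cdot x$ for $h\in H$ leaves $[x]_G$ unchanged and conjugates both stabilisers, hence preserves $\val{Stab_H(x)}$ and $\val{Stab_G(x)}$. Moreover $Stab_H(x)=Stab_G(x)\cap H$ is a subgroup of $Stab_G(x)$, so $\val{Stab_H(x)}$ divides $\val{Stab_G(x)}$ and the ratio $\frac{\val{Stab_G(x)}}{\val{Stab_H(x)}}$ is a genuine integer, whose image under $\Z\to\F$ is what appears in the definition of $Ind_H^G$. The down-then-right composite is then immediate:
\[
  \O_G\big(Ind_H^G([x]_H)\big)=\Big(\tfrac{\val{Stab_G(x)}}{\val{Stab_H(x)}}\Big)_\F\sum_{z\in\Omega_G(x)}z.
\]

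The heart of the argument is the right-then-down composite, which I would compute over $\Z$ to avoid any division in $\F$. Applying $\O_H$ gives the $H$-orbit sum, and the right vertical transfer produces $M:=\sum_{g\in G/H}g\cdot\big(\sum_{y\in\Omega_H(x)}y\big)\in\Z[X]$. Multiplying by $\val{Stab_H(x)}$ and using $\val{Stab_H(x)}\sum_{y\in\Omega_H(x)}y=\sum_{h\in H}h\cdot x$ together with the coset decomposition $G=\bigsqcup_{g\in G/H}gH$ yields $\val{Stab_H(x)}\,M=\sum_{g'\in G}g'\cdot x=\val{Stab_G(x)}\sum_{z\in\Omega_G(x)}z$. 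Since $M$ is visibly supported on $\Omega_G(x)$ and is $G$-invariant (left multiplication by any $g'\in G$ merely permutes the cosets in $G/H$, using that $\sum_y y$ is $H$-invariant), and since $G$ acts transitively on $\Omega_G(x)$, all coefficients of $M$ on $\Omega_G(x)$ coincide; writing $M=c\sum_{z\in\Omega_G(x)}z$ with $c\in\Z$, the previous identity forces $\val{Stab_H(x)}\,c=\val{Stab_G(x)}$, whence $c=\frac{\val{Stab_G(x)}}{\val{Stab_H(x)}}$. Reducing into $\F[X]$ then matches the down-then-right composite exactly, which establishes commutativity.

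The only genuine subtlety, and the step I expect to be the main obstacle, is precisely this characteristic issue: one cannot simply divide the identity $\val{Stab_H(x)}\,M=\val{Stab_G(x)}\sum_z z$ by $\val{Stab_H(x)}$ inside $\F$. Carrying out the computation in $\Z[X]$ and invoking the transitivity of the $G$-action — to know \emph{a priori} that $M$ is an integer multiple of the orbit sum — circumvents this cleanly, after which the single ring map $\Z\to\F$ is applied only at the very end.
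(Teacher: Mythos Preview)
Your proof is correct, and its skeleton matches the paper's: both check commutativity on the basis $\{[x]_H\}$, both write the right-then-down composite as the transfer $\sum_{g\in G/H}g\cdot\sum_{y\in\Omega_H(x)}y$, and both identify it as $\bigl(\tfrac{\val{Stab_G(x)}}{\val{Stab_H(x)}}\bigr)_{\F}$ times the $G$-orbit sum. The tactical difference is in how that last identification is made. The paper regroups cosets directly: it collapses $\sum_{g\in G/H}\sum_{h\in H/Stab_H(x)}$ into $\sum_{g\in G/Stab_H(x)}$, then splits along the tower $Stab_H(x)\subseteq Stab_G(x)\subseteq G$ as $\sum_{g\in G/Stab_G(x)}\sum_{h\in Stab_G(x)/Stab_H(x)}h\cdot x$; since each $h$ fixes $x$, the inner sum is simply $[Stab_G(x):Stab_H(x)]\cdot x$, and the integer coefficient falls out with no division anywhere. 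Your route instead multiplies by $\val{Stab_H(x)}$ to pass through $\sum_{g'\in G}g'\cdot x$, then appeals to $G$-invariance and transitivity to conclude $M=c\sum_z z$ before solving for $c$ in $\Z$. That works, but the detour through $\Z[X]$ and the invariance argument are unnecessary: the paper's coset regrouping already avoids any division, so the ``genuine subtlety'' you flag does not actually arise in the direct approach either.
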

\begin{proof}
	Given two elements $x,y$ of $X$, if $x$ and $y$ are in the same class under the action of $H$, then they are in the same class under the action of $G$. Moreover, $Stab_H(x)$ and $Stab_H(y)$ are conjugated, and so are $Stab_G(x)$ and $Stab_G(y)$. Hence, the map $\F[X]\to \F[X]_G$ sending $x$ to $\Big(\frac{ \val{Stab_G(x)}}{\val{Stab_H(x)}}\Big)_{\F}[x]_G$ passes to the quotient by $H$. For $[x]_H\in H\backslash X$, one has
	\begin{align*}
		\sum_{g\in G/H}g\cdot \O_H([x]_H)&=\sum_{g\in G/H}g\cdot \sum_{h\in H/Stab_H(x)} h\cdot x\\
		&=\sum_{g\in G/Stab_H(x)}g\cdot x\\
		&=\sum_{g\in G/Stab_G(x)}g\cdot \sum_{h\in Stab_G(x)/Stab_H(x)} h\cdot x\\
		&=\O_G\bigg(\Big(\frac{ \val{Stab_G(x)}}{\val{Stab_H(x)}}\Big)_{\F}[x]_G\bigg).\qedhere
	\end{align*}
\end{proof}
\begin{prop}\label{g_2}
Let $x\in X$. The orbit of $x$ under the action of $G$, denoted $\Omega_G(x)$, is equipped with an action of $H$ induced by that of $G$. There is an injection $H\backslash \Omega_G(x)\inj H\backslash X$. Consider the morphism $Res_H^G:\F[X]_G\to \F[X]_H$ induced by $Res_H^G([x]_G)=\sum_{y\in H\backslash \Omega_G(x)}y$. The following diagram is commutative:
	\begin{align*}
		\diag{\F[X]_G\ar[r]^{\O_G}\ar[d]_{Res_H^G}&\F[X]^G\ar@{^(->}[d]\\
		\F[X]_H\ar[r]_{\O_H}&\F[X]^H}.
	\end{align*}
\end{prop}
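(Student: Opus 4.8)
The plan is to verify commutativity directly on a basis of $\F[X]_G$, namely the set of $G$-orbits $G\backslash X$, using the explicit description of $\O_G$ coming from the lemma of \Cref{B}. Recall that $\O_G=\phi_G\circ\psi_G$ sends the class $[x]_G$ to the orbit sum $\sum_{z\in\Omega_G(x)}z\in\F[X]^G$. So the whole computation reduces to understanding how a single $G$-orbit sum behaves under the two paths of the square.

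First I would confirm that $Res_H^G$ is well defined on coinvariants: the assignment $x\mapsto\sum_{y\in H\backslash\Omega_G(x)}y$ depends only on the $G$-orbit of $x$, since $\Omega_G(g\cdot x)=\Omega_G(x)$ for all $g\in G$, hence it descends along the quotient $\F[X]\surj\F[X]_G$; the injection $H\backslash\Omega_G(x)\inj H\backslash X$ makes the target meaningful. The key observation is then that the $G$-orbit $\Omega_G(x)$ is partitioned by the $H$-orbits it contains,
$$\Omega_G(x)=\bigsqcup_{[y]_H\in H\backslash\Omega_G(x)}\Omega_H(y).$$
Chasing the element $[x]_G$ down then right yields
$$\O_H(Res_H^G([x]_G))=\sum_{[y]_H\in H\backslash\Omega_G(x)}\O_H([y]_H)=\sum_{[y]_H\in H\backslash\Omega_G(x)}\ \sum_{z\in\Omega_H(y)}z=\sum_{z\in\Omega_G(x)}z,$$
the last equality being exactly the partition above. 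Chasing right then down gives the image of $\O_G([x]_G)=\sum_{z\in\Omega_G(x)}z$ under the inclusion $\F[X]^G\inj\F[X]^H$, i.e. the same orbit sum regarded in $\F[X]^H$. The two agree, so the diagram commutes.

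There is essentially no obstacle here: the only point requiring care is bookkeeping the two descriptions of $\O$ as orbit sums and checking that the $H$-orbit decomposition of a single $G$-orbit is precisely what $\O_H\circ Res_H^G$ produces. This is the dual (and considerably simpler) counterpart of the induction statement in \Cref{g_1}: there the stabiliser-index factors $\val{Stab_G(x)}/\val{Stab_H(x)}$ appear because induction must enlarge orbit sums, whereas here restriction merely regroups an existing orbit sum into its $H$-suborbit sums, so no such numerical factors intervene.
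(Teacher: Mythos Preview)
Your proof is correct and follows essentially the same approach as the paper: both compute $\O_H\circ Res_H^G$ on a class $[x]_G$ by expanding each $\O_H([y]_H)$ as the $H$-orbit sum and then using that the $H$-orbits partition $\Omega_G(x)$ to recover $\O_G([x]_G)$. Your write-up is slightly more detailed (you make the well-definedness of $Res_H^G$ and the partition identity explicit), but the argument is the same.
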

\begin{proof}
	\begin{equation*}
		\O_H\bigg(\sum_{y\in H\backslash\Omega_G(x)}y\bigg)=\sum_{y\in H\backslash\Omega_G(x)}\O_H(y)
		=\sum_{y\in H\backslash\Omega_G(x)}\sum_{z\in \Omega_H(y)}z
		=\sum_{z\in \Omega_G(x)}z
		=\O_G([x]_G).\qedhere
	\end{equation*}
\end{proof}
\begin{prop}\label{INDMULT}
	Let $\P$ be an operad spanned by a set operad $\B$. Let $\r\in \Comp_p(n)$ and, for all $i\in[p]$, let $n_i\in\N$, and $H_i$ a subgroup of $\∑_{n_i}$.
	Let $x\in\B(n)$ and, for all $i\in[p]$, let $x_i\in\B(n_i)$. The  morphism $\mu':\P(n)_{\∑_\r}\otimes\Big(\bigotimes_i(\P(n_i)_{H_i}^{\otimes r_i})\Big)\to \P\Big(\sum_ir_in_i\Big)_{\prod_{i=1}^p\∑_{r_i}\wr H_i}$ induced by
	$$\mu'(x;(x_i)_i):=\Bigg(\frac{\val{Stab_{\prod_{i=1}^p\∑_{r_i}\wr H_i}\bigg(\mu\Big(x\otimes\Big(\bigotimes_{i}x_{i}^{\otimes r_i}\Big)\Big)\bigg)}}{\val{Stab_{\∑_\r}(x)}\prod_{i}\val{Stab_{H_i}(x_{i})}^{r_i}}\Bigg)_\F\mu\bigg(x\otimes\Big(\bigotimes_ix_i^{\otimes r_i}\Big)\bigg),$$
	
	makes the following diagram commute:
	$$\diag{
	\P(n)_{\∑_\r}\otimes\Big(\bigotimes_i(\P(n_i)_{H_i}^{\otimes r_i})\Big)\ar[rr]^{\O_{\∑_\r}\otimes\big(\bigotimes_i\O_{H_i}^{\otimes r_i}\big)}\ar[d]_{\mu'} &&\P(n)^{\∑_\r}\otimes\Big(\bigotimes_i\big((\P(n_i)^{H_i})^{\otimes r_i}\big)\Big)\ar[d]^{\mu}\\
	\P\Big(\sum_ir_in_i\Big)_{\prod_{i=1}^p\∑_{r_i}\wr H_i}\ar[rr]_{\O_{\prod_{i=1}^p\∑_{r_i}\wr H_i}} &&\P\Big(\sum_ir_in_i\Big)^{\prod_{i=1}^p\∑_{r_i}\wr H_i}
	}$$
\end{prop}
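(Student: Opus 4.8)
We argue by tracing a spanning family through the two legs of the square and matching coefficients by an orbit-counting argument. First I would note that both $\O_{G}\circ\mu'$ and $\mu\circ(\O_{\∑_\r}\otimes\bigotimes_i\O_{H_i}^{\otimes r_i})$ are linear (with $G:=\prod_{i=1}^p\∑_{r_i}\wr H_i$), so it suffices to check the equality on the generators $[x]_{\∑_\r}\otimes\bigotimes_i[x_i]_{H_i}^{\otimes r_i}$, $x\in\B(n)$, $x_i\in\B(n_i)$; the case of a general basis element (with distinct classes in the $r_i$ slots) is identical. Writing $M=\sum_i r_in_i$ and $z:=\mu(x\otimes(\bigotimes_i x_i^{\otimes r_i}))\in\B(M)$, the bottom-left leg produces $\big(\tfrac{\val{Stab_G(z)}}{\val{Stab_{\∑_\r}(x)}\prod_i\val{Stab_{H_i}(x_i)}^{r_i}}\big)_\F\sum_{w\in\Omega_G(z)}w$, while the top-right leg, since $\mu$ is multilinear and $\O_H([y]_H)=\sum_{v\in\Omega_H(y)}v$, produces
$$\sum_{u\in\Omega_{\∑_\r}(x)}\ \sum_{\substack{v_{i,k}\in\Omega_{H_i}(x_i)\\ i\in[p],\,k\in[r_i]}}\mu\Big(u\otimes\bigotimes_i(v_{i,1}\otimes\dots\otimes v_{i,r_i})\Big).$$
The proposition is the equality of these two elements of $\P(M)$.

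Next I would show that every composite on the right lies in $\Omega_G(z)$ and determine with what multiplicity. Using the left equivariance axiom of $\P$, together with the fact that $\∑_\r$ preserves the $\r$-blocks while the same $x_i$ fills all $r_i$ slots of block $i$, replacing $x$ by $\sigma\cdot x$ ($\sigma\in\∑_\r$) multiplies $z$ by a block permutation lying in the top subgroup $\prod_i\∑_{r_i}$ of $G$; using the right equivariance axiom, replacing the $k$-th copy of $x_i$ by $h_{i,k}\cdot x_i$ ($h_{i,k}\in H_i$) multiplies $z$ by a block-diagonal element lying in the base subgroup $\prod_iH_i^{\times r_i}$ of $G$. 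Since $G$ is the semidirect product of these two subgroups, I expect the resulting assignment $(\sigma,(h_{i,k}))\mapsto g$, characterised by $\mu((\sigma\cdot x)\otimes\bigotimes_i\bigotimes_k(h_{i,k}\cdot x_i))=g\cdot z$, to be a bijection $\∑_\r\times\prod_{i,k}H_i\xrightarrow{\ \sim\ }G$; in particular each composite above is in $\Omega_G(z)$.

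The counting step then goes as follows. Let $\pi:\∑_\r\times\prod_{i,k}H_i\surj\Omega_{\∑_\r}(x)\times\prod_{i,k}\Omega_{H_i}(x_i)$, $(\sigma,(h_{i,k}))\mapsto(\sigma\cdot x,(h_{i,k}\cdot x_i))$, whose fibres all have cardinality $\val{Stab_{\∑_\r}(x)}\prod_i\val{Stab_{H_i}(x_i)}^{r_i}$, and let $\Psi(u,(v_{i,k}))=\mu(u\otimes\bigotimes_i(v_{i,1}\otimes\dots\otimes v_{i,r_i}))$, so the top-right expression is the sum of $\Psi$ over its whole domain. Through the bijection of the previous step, $\Psi\circ\pi$ is the orbit map $g\mapsto g\cdot z$, whose fibres all have cardinality $\val{Stab_G(z)}$; dividing fibre cardinalities, every $w\in\Omega_G(z)$ has exactly $\val{Stab_G(z)}/(\val{Stab_{\∑_\r}(x)}\prod_i\val{Stab_{H_i}(x_i)}^{r_i})$ preimages under $\Psi$. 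This integer is precisely the scalar appearing in the bottom-left leg (and the identity $\val{\∑_\r}\prod_i\val{H_i}^{r_i}=\val{G}$ confirms that it equals $\val{\Omega_{\∑_\r}(x)}\prod_i\val{\Omega_{H_i}(x_i)}^{r_i}/\val{\Omega_G(z)}$), so the two legs agree.

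The main obstacle I anticipate is exactly the identification in the second step: extracting from the two equivariance axioms the precise statement that reindexing $x$ and the copies of the $x_i$ realises the base-times-top factorisation of the wreath product $G$, and that the induced set map onto $G$ is a bijection. This is where the wreath-product symmetry of the composite $z$ genuinely enters, and it is the same bookkeeping that underlies the operation $\diamond$ of \Cref{diamond}; the remaining work is routine orbit--stabiliser counting. I would also record, for well-definedness of $\mu'$ on coinvariants, that replacing $x$ and the $x_i$ by elements of their orbits changes neither $[z]_G$ nor, by conjugation-invariance of stabiliser orders, the scalar coefficient.
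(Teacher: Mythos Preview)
Your argument is correct and is essentially the paper's own proof, rephrased in the language of fibres rather than cosets: both unfold $\O_{\∑_\r}(x)\otimes\bigotimes_i\O_{H_i}(x_i)^{\otimes r_i}$, push the resulting orbit sums through $\mu$, and then reorganise the answer as a multiple of $\O_G([z]_G)$ via the orbit--stabiliser theorem and the cardinality identity $\val{G}=\val{\∑_\r}\prod_i\val{H_i}^{r_i}$. The paper condenses your second step into the single line
\[
\mu\Big(\O_{\∑_\r}(x)\otimes\bigotimes_i\O_{H_i}(x_i)^{\otimes r_i}\Big)=\sum_{g\in G/K}g\cdot z,\qquad K:=Stab_{\∑_\r}(x)\ltimes\textstyle\prod_{i,k}Stab_{H_i}(x_i),
\]
and then splits the coset sum over $G/Stab_G(z)$ and $Stab_G(z)/K$; your fibre count does exactly the same bookkeeping.

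One phrasing to tighten: the assignment $(\sigma,(h_{i,k}))\mapsto g$ is not ``characterised'' by $\mu((\sigma\cdot x)\otimes\bigotimes_{i,k}h_{i,k}\cdot x_i)=g\cdot z$ (that equation only pins $g$ down modulo $Stab_G(z)$). What you actually use---and what the operad equivariance axioms give---is that the semidirect-product bijection $\∑_\r\times\prod_{i,k}H_i\xrightarrow{\sim}G$ makes $\Psi\circ\pi$ equal to the orbit map $g\mapsto g\cdot z$; once stated this way, your fibre argument goes through verbatim. This is precisely the step you flagged as the anticipated obstacle, and it is also the step the paper leaves implicit.
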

\begin{proof}
	Set $H:=Stab_{\prod_{i=1}^p\∑_{r_i}\wr H_i}\bigg(\mu\Big(x\otimes\big(\bigotimes_ix_i^{\otimes r_i}\big)\Big)\bigg)$. One has
	\begin{align*}
		\hspace{-32pt}\mu\bigg(\O_{\∑_\r}(x)\otimes\Big(\bigotimes_i\O_{H_i}(x_i)^{\otimes r_i}\Big)\bigg)&=\mu\bigg(\Big(\sum_{g\in\∑_\r/Stab_{\∑_\r}(x)}g\cdot x\Big)\otimes\Big(\bigotimes_i\Big(\sum_{h_i\in H_i/Stab_{H_i}(x_i)}h_i\cdot x_i\Big)^{\otimes r_i}\Big)\bigg)\\
		&=\sum_{g\in\big(\prod_{i=1}^p\∑_{r_i}\wr H_i\big)/Stab_{\∑_\r}(x)\wr\prod_i Stab_{H_i}(x_i)}g\cdot\mu\Big(x\otimes\Big(\bigotimes_ix_i^{\otimes r_i}\Big)\Big)\\
		&=\sum_{g\in \big(\prod_{i=1}^p\∑_{r_i}\wr H_i\big)/H}g\cdot\sum_{h\in H/Stab_{\∑_\r}(x)\wr\prod_i Stab_{H_i}(x_i)}h\cdot \mu\Big(x\otimes\Big(\bigotimes_ix_i^{\otimes r_i}\Big)\Big)\\
    &=\sum_{g\in \big(\prod_{i=1}^p\∑_{r_i}\wr H_i\big)/H}g\cdot\mu'(x;(x_i)_i).
	\end{align*}
  Since $Stab_{\prod_{i=1}^p\∑_{r_i}\wr H_i}\Big(\mu\big(x\otimes\big(\bigotimes_ix_i^{\otimes r_i}\big)\big)\Big)=Stab_{\prod_{i=1}^p\∑_{r_i}\wr H_i}\big(\tilde\mu(x;(x_i)_i)\big)$, one gets
  \begin{equation*}
    \mu\Big(\O_{\∑_\r}(x)\otimes\big(\bigotimes_i\O_{H_i}(x_i)^{\otimes r_i}\big)\Big)=\O_{\prod_i\∑_{r_i}\wr H_i}\big(\tilde\mu(x;(x_i)_i)\big).\qedhere
  \end{equation*}
\end{proof}
\subsection{Summary}\label{secsum}
For $\P=\F[\B]$ an operad spanned by a set operad $\B$, \Cref{theoinv} and \Cref{theocoinv} give isomorphisms among the categories $\Gamma(\P)_{alg}$, $\beta'(\P)$ and $\gamma'(\P)$. For $A$ an $\F$-vector space, this translates to an equivalence between the data of a morphism $f:\Gamma(\P,A)\to A$ compatible with the monad structure of $\Gamma(\P)$, with the two equivalent data of either:
\begin{itemize}
  \item operations $\beta_{*,*}$ such that, for all $\r\in\Comp_p(n)$, $x\in\P(n)^{\∑_\r}$ and $(a_i)_{1\le i\le p}\in A^{\times p}$
  $$\beta_{x,\r}(a_1,\dots,a_p)=f\bigg(\sum_{\sigma\in\∑_n/\∑_\r}\sigma\cdot x\otimes\sigma\cdot\Big(\bigotimes_{i=1}^pa_i^{r_i}\Big)\bigg),$$
  or,
  \item operations $\gamma_{*,*}$ such that, for all $\r\in\Comp_p(n)$, $x\in\B(n)_{\∑_\r}$ and $(a_i)_{1\le i\le p}\in A^{\times p}$
  $$\gamma_{[x]_\r,\r}(a_1,\dots,a_p)=f\bigg(\sum_{\sigma\in\∑_n/\∑_\r}\sigma\cdot \O_{\∑_\r}(x)\otimes\sigma\cdot\Big(\bigotimes_{i=1}^pa_i^{r_i}\Big)\bigg).$$
\end{itemize}

\section{The Operad \texorpdfstring{$Lev$}{Lev}}\label{secLEV}
In this section we give a characterisation of $\Gamma(Lev)$-algebras.

In \Cref{secdefiL}, we recall the basic definitions and notation regarding level algebras and the operad governing this structure.

In \Cref{chaLev}, we define the category $Step$ of vector spaces $A$ endowed with polynomial operations $\phi_{h,\r}:A^{\times p}\to A$ indexed by the functions $h:[n]\to\N$ that are constant on each part of the partition $\r$ of $n$ and satisfying $\sum_{i\in[n]}\frac{1}{2^{h(i)}}=1$, subjected to a list of relations. \Cref{theoL} states that the categories $Step$ and $\Gamma(Lev)_{alg}$ are isomorphic.

In the \Cref{proofL}, we give the proof of \Cref{theoL} by constructing an explicit pair of functors $U:\gamma'(Lev)\to Step$ and $V:Step\to \gamma'(Lev)$, and proving that these functors are inverse to each other. The functor $U$ equips any $\Gamma(Lev)$-algebra with an explicit family of polynomial operations.

\subsection{Level algebras: definitions}\label{secdefiL}
In this Subsection, we recall the definition of a level algebra, as given by D. Chataur and M. Livernet in \cite{CL}, following the definition of a depth algebra given by D.M. Davis in \cite{DD}, and recall a construction of the operad $Lev$ of level algebras, due to M. Livernet. We will use definitions and notation from \Cref{Partitions}.

\begin{defi}[Chataur and Livernet, \cite{CL}]\label{deflev}
  A level algebra is an $\F$-vector space $A$ equipped with a bilinear operation $*$ which is commutative (but not necessarily associative), satisfying:
  $$\forall (a,b,c,d)\in A^{\times 4}\quad (a*b)*(c*d)=(a*c)*(b*d).$$
\end{defi}
  For $n$ a positive integer, let $\L(n)$ be the set of partitions $I=(I_i)_{i\ge 0}$ of $[n]$ satisfying \newline$\sum_{i\in\N}\frac{\val{I_i}}{2^i}=1$. The set $\L(n)$ is endowed with the action of $\∑_n$ on partitions (see \Cref{transn}). 
\begin{note}
  For $I\in \L(n)$, $J\in \L(m)$ and $i\in[n]$, there is a unique $k\in\N$ such that $i\in I_k$ and a unique increasing bijection:
  $$b:[n]\setminus\{i\}\to \{1,\dots, i-1\}\cup\{i+m,\dots,n+m-1\}.$$
  The partition $I\circ_i J$ of $[n+m-1]$ is defined, for $j\ge 0$, by:
  $$(I\circ_iJ)_j:=\left\{\begin{array}{ccc}
  b(I_j),&\mbox{if}& j< k,\\
  b(I_k\setminus\{i\})\cup J_0+i-1,&\mbox{if}&j=k,\\
  b(I_j)\cup J_{j-k}+i-1,&\mbox{if}&j>k.
\end{array}\right.$$
\end{note}
\begin{lemm}
  The partition $I\circ_iJ$ is in $\L(m+n-1)$.
\end{lemm}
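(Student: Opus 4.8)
The plan is to verify the defining weight condition $\sum_{j\ge0}\val{(I\circ_iJ)_j}/2^j=1$ by a direct computation; that $I\circ_iJ$ is genuinely a partition of $[m+n-1]$ is clear from the construction, since the sets $J_l+i-1$ partition $\{i,\dots,i+m-1\}$ while the images under $b$ partition its complement $\{1,\dots,i-1\}\cup\{i+m,\dots,m+n-1\}$.

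First I would read off the cardinality of each part. As $b$ is a bijection it preserves cardinality, so the three cases in the definition of $I\circ_iJ$ give, with $k$ the unique index satisfying $i\in I_k$,
\begin{align*}
\val{(I\circ_iJ)_j}&=\val{I_j} && \text{for } j<k,\\
\val{(I\circ_iJ)_k}&=\val{I_k}-1+\val{J_0},\\
\val{(I\circ_iJ)_j}&=\val{I_j}+\val{J_{j-k}} && \text{for } j>k.
\end{align*}

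Then I would split the weight sum into an $I$-part and a $J$-part. Summing the contributions $\val{I_j}/2^j$ over all $j\ge0$ and subtracting the single unit deleted at level $k$ (coming from the term $\val{I_k}-1$) produces $\sum_{j\ge0}\val{I_j}/2^j-2^{-k}=1-2^{-k}$, by $I\in\L(n)$. The remaining terms, namely $\val{J_0}/2^k$ and $\val{J_{j-k}}/2^j$ for $j>k$, become after the substitution $l=j-k$ the sum $\sum_{l\ge0}\val{J_l}/2^{k+l}=2^{-k}\sum_{l\ge0}\val{J_l}/2^l=2^{-k}$, by $J\in\L(m)$. Adding the two parts yields $(1-2^{-k})+2^{-k}=1$, which is the claim.

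There is no genuine obstacle here; the only step needing attention is the index shift $j\mapsto l=j-k$, which records that $J$ is grafted starting at level $k$, so that its entire weight gets rescaled by $2^{-k}$, precisely compensating the $2^{-k}$ lost by removing the element $i$ from $I_k$.
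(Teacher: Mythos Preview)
Your argument is correct: the cardinality count in each of the three ranges is right (the unions are disjoint since $b$ takes values outside $\{i,\dots,i+m-1\}$ while each $J_l+i-1$ lies inside it), and the index shift $l=j-k$ cleanly gives the factor $2^{-k}$ that cancels the loss from removing $i$. The paper states this lemma without proof, so there is no alternative approach to compare against; your direct verification is exactly the intended routine check.
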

\begin{note}
  Set $1_\L:=(\{1\},\emptyset,\emptyset,\dots)\in\L(1)$.
 \end{note}
\begin{rema}\label{remlev}
  The set $\L(n)$ is in bijection with the set $\L'(n)$ of maps $h:[n]\to\N$ satisfying $\sum_{i=1}^n\frac{1}{2^{h(n)}}=1$.
  The bijection $\L(n)\to\L'(n)$ associates $I$ to the map $I^f$, which sends $x\in[n]$ to the unique $j\in\N$ such that $x\in I_j$. Its inverse $\L'(n)\to \L(n)$ sends $h$ to the partition $h^P:=(h^{-1}(j))_{j\in \N}$ of $[n]$.
  From the action of $\∑_n$ on $\L(n)$ one deduces $\sigma\cdot h(x)=h(\sigma^{-1}(x))$ and, for $h\in\L'(n)$, $g\in\L'(m)$ and $i\in[n]$: 
  $$(h\circ_ig)(x)=\left\{\begin{array}{ccc}
  h(x),&\mbox{if}&x\in\{1,\dots, i-1\},\\
  h(i)+g(x-i+1),&\mbox{if}&x\in\{i,\dots, m+i-1\},\\
  h(x-m+1),&\mbox{if}&x\in \{i+m,\dots,n+m-1\}.
  \end{array}\right.$$
  The unit of this operation is the map $1_{\L'}:[1]\to\N$ that sends $1$ to $0$.
\end{rema}
\begin{prop}
  The family $(\L(n))_{n\in\N}$, along with the operations $\circ_i:\L(m)\times\L(n)\to\L(m+n-1)$ for $i\in[m]$ and with $1_\L$, forms a set operad denoted by $\L$.
\end{prop}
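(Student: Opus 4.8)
The plan is to verify the three families of axioms defining a symmetric set operad --- unitality, equivariance, and the sequential and parallel associativity relations --- for the composition $\circ_i$ and the unit $1_\L$. Well-definedness, namely that $I\circ_i J$ again lies in $\L(m+n-1)$, is exactly the preceding lemma, so only the structural identities remain. I would carry out every check in the functional model, using the bijection $\L(n)\cong\L'(n)$ of \Cref{remlev}: there the action $\sigma\cdot h=h\circ\sigma^{-1}$ and the composite $h\circ_i g$ are both given by explicit closed formulas, which turns the axioms into elementary identities between $\N$-valued maps. Unitality is immediate from the formula for $h\circ_i g$: taking $g=1_{\L'}$ (arity $1$, value $0$) collapses the middle branch to $h(i)+g(1)=h(i)$ and leaves the two outer branches unshifted, so $h\circ_i 1_{\L'}=h$; dually $1_{\L'}\circ_1 g=g$, since the only surviving branch is $g(x)+0$.

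The essential point for associativity is that the composition formula decouples into a positional part and a value part. On the domain, $h\circ_i g$ is precisely the substitution of an ordered block of $m$ new positions into the $i$-th slot of the ordered list $[n]$ (with the reindexings $x\mapsto x-i+1$ on the inserted block and $x\mapsto x-m+1$ on the tail), that is, ordinary grafting of ordered leaves, whose associativity is standard. On the values, each inserted leaf receives $h(i)+g(\cdot)$, i.e. the depth of the insertion point is added to the depth inside $g$. Hence for the sequential relation $(f\circ_i g)\circ_{i-1+j} h=f\circ_i(g\circ_j h)$ both sides send an $h$-leaf to $f(i)+g(j)+h(\cdot)$ and every other leaf to its evident value, the two groupings agreeing by associativity of addition in $\N$; and for the parallel relation $(f\circ_i g)\circ_{k+m-1} h=(f\circ_k h)\circ_i g$ with $i<k$, the inserted $g$- and $h$-leaves receive $f(i)+g(\cdot)$ and $f(k)+h(\cdot)$ independently of the order of substitution. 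Equivariance is handled the same way: since $\sigma\cdot h=h\circ\sigma^{-1}$ merely relabels positions while the value attached to a leaf is intrinsic, the formula for $h\circ_i g$ commutes with the symmetric-group actions after reindexing the slot $i$ and the inserted block by the relevant permutations.

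I expect the only real difficulty to be notational rather than conceptual: one must track the three index ranges $\{1,\dots,i-1\}$, $\{i,\dots,i+m-1\}$, $\{i+m,\dots,n+m-1\}$ and the nested shifts (including the slot relabelings $i-1+j$ and $k+m-1$) carefully enough to see that the positional parts of the two sides of each associativity relation coincide on the nose. Once that bookkeeping is in place, the value part contributes nothing beyond the associativity and commutativity of addition, and all the operad axioms follow, establishing that $\L$ is a set operad.
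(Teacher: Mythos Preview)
Your plan is sound: working in the functional model $\L'(n)$ via \Cref{remlev} and decoupling the positional bookkeeping from the additive depth assignment is exactly the right way to verify the operad axioms, and your sketch of each identity (unitality, sequential and parallel associativity, equivariance) is correct.

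For comparison, the paper gives no proof of this proposition at all --- it is stated and then immediately followed by further notation. So you have supplied considerably more than the paper does. Presumably the author regards the verification as routine once the formula for $h\circ_i g$ in \Cref{remlev} is in hand, and your argument confirms that this is indeed the case: the only content is the combinatorial reindexing, which is standard grafting of ordered leaves, together with the trivial observation that depths add associatively.
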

 For $I\in \L(n)$, denote by $o(I)$ the height of $I$, that is the smallest $j\in\N$ such that for all $i>j$, $I_i=\emptyset$. If $h\in\L'(n)$, then $o(h)$ is the maximum of $h$. Note that, here, because the partitions $I\in Lev(n)$ are indexed from 0, $I$ can be considered as an ordered partition of $[n]$ into $o(n)+1$ parts.
\begin{defi}
  $Lev$ is the operad spanned by $\L$, that is $Lev=\F[\L]$, where the operad $\F[\B]$ for a set operad $\B$ is defined in \Cref{foncop}.
\end{defi}
\begin{prop}
  The category of $Lev$-algebras is exactly the category of level algebras.
\end{prop}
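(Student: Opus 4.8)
The plan is to exhibit $Lev=\F[\L]$ as the operad presented by a single commutative binary generator subject to the exchange law, so that, by the very definition of an operad given by generators and relations, an algebra over it is a level algebra. Concretely, a $Lev$-algebra structure on $A$ is the same datum as a morphism of operads $Lev\to\operatorname{End}_A$, and I will show that such a morphism is determined by, and may be freely prescribed by, a single commutative bilinear map $*:A^{\otimes2}\to A$ satisfying $(a*b)*(c*d)=(a*c)*(b*d)$. The starting observation is that $\L(2)$ has a unique element $m:=(\emptyset,\{1,2\})$: the Kraft equality $\sum_i\frac{\val{I_i}}{2^i}=1$ with two leaves forces $2^{-a}+2^{-b}=1$, hence both leaves lie at depth $1$. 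Since $m$ is fixed by $\∑_2$, the representation $Lev(2)=\F\cdot m$ is trivial, so every $Lev$-algebra carries a canonical commutative product $*$, and conversely any level algebra supplies the assignment $m\mapsto *$ in arity $2$.

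Next I would prove that $\L$ is generated as an operad by $m$, i.e. that every $I\in\L(n)$ is an iterated $\circ_i$-composite of copies of $m$. This is exactly the Kraft correspondence: an element $I\in\L(n)$, equivalently a map $h:[n]\to\N$ with $\sum_{i\in[n]}2^{-h(i)}=1$, is the leaf-depth function of a full binary tree on $[n]$, and grafting copies of $m$ along that tree produces, via the composition formula for $\L$, precisely $I$ (the $\circ_i$ of $\L$ adds depths, which is what makes $T\mapsto h_T$ an operad morphism). Two consequences follow at once. First, every operation of a $Lev$-algebra is a composite of $*$, so a linear map commutes with all the operations iff it commutes with $*$; this identifies the morphisms of the two categories and reduces the statement to an equivalence of structures on a fixed space. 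Second, the exchange law drops out of arity $4$: the two balanced composites of $m$ with leaf pairings $\{1,2\},\{3,4\}$ and $\{1,3\},\{2,4\}$ both have all four leaves at depth $2$, hence are equal as elements of $\L(4)$ (the unique $I\in\L(4)$ with $h\equiv2$), and evaluating this equality at $a,b,c,d$ yields $(a*b)*(c*d)=(a*c)*(b*d)$. Thus every $Lev$-algebra is a level algebra.

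For the converse I would define, for each $h\in\L(n)$, an $n$-ary operation $\phi_h$ on a level algebra $(A,*)$ by choosing any full binary tree $T$ with leaf-depth function $h$ and evaluating $T$ using $*$, and then check that the family $(\phi_h)_{h}$ is $\∑_n$-equivariant and compatible with operadic composition, hence assembles into an operad morphism $Lev\to\operatorname{End}_A$. Together with the generation statement above this gives two constructions which are visibly mutually inverse and act as the identity on morphisms, establishing the claimed isomorphism of categories.

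The one genuinely non-formal point, and the main obstacle, is the well-definedness of $\phi_h$: its value must not depend on the chosen tree $T$. Equivalently, writing $\scr F$ for the free commutative (nonassociative) magma operad, i.e. the operad of binary trees modulo commutativity at each node, and $\pi:\scr F\twoheadrightarrow\L$ for the surjection $T\mapsto h_T$, I must show that the kernel congruence of $\pi$ is generated, as an operadic ideal, by the single arity-$4$ exchange relation. This amounts to the combinatorial assertion that any two full binary trees on $[n]$ with the same labeled leaf-depth function are connected by a finite sequence of medial (exchange) moves together with commutativity. I would prove it by induction on $n$, decomposing a tree at its root and using exchange moves to normalise the pairing of leaves at each depth into a canonical tree depending only on $h$; the exchange law is exactly the freedom needed to re-pair two sibling balanced subtrees of equal height, which is the only information about $T$ not already recorded by $h$. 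This confluence is the essential content of the Chataur--Livernet description of $Lev$, and once it is in hand the remaining checks (equivariance, compatibility with $\circ_i$, and inverseness of the two functors) are routine.
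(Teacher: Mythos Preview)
The paper states this proposition without proof, treating it as a known result (implicitly from Chataur--Livernet \cite{CL}, where the operad is originally described). So there is no proof in the paper to compare against; you are supplying an argument where the paper supplies none.

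Your outline is correct and identifies the right ingredients. The uniqueness of $m\in\L(2)$, the generation of $\L$ by $m$ via the Kraft correspondence, and the verification that the exchange relation holds in $\L(4)$ (both balanced composites give the constant function $h\equiv 2$) are all straightforward and correctly stated. You also correctly isolate the one substantive point: that the surjection $\pi:\scr F\twoheadrightarrow\L$ from the free commutative-magma operad has kernel generated by the exchange relation, equivalently that two full binary trees on $[n]$ with the same labelled leaf-depth function are connected by exchange moves.

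Your inductive sketch for this lemma is on the right track but slightly underspecified. The exchange move $(a*b)*(c*d)\rightsquigarrow(a*c)*(b*d)$ only applies when the two pairs share a grandparent, so ``normalising the pairing of leaves at each depth'' requires first bringing two target leaves $i,j$ at maximal depth into the same depth-$2$ subtree. A clean way to organise this is to first prove, by a separate induction on $k$, that the balanced $2^k$-ary product is fully $\∑_{2^k}$-symmetric (exchange at the root moves a leaf across halves, then the inductive hypothesis on each half finishes it); then, for general $h$, argue by induction on $n$: pick two leaves $i,j$ at maximal depth, show that in any realising tree one can make $i,j$ siblings (using the balanced-tree symmetry on the subtree at their least common ancestor), contract the pair, and apply the inductive hypothesis. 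With that refinement your argument goes through.
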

\subsection{Characterisation of divided power \texorpdfstring{$Lev$}{Lev}-algebras}\label{chaLev}
In this subsection, we give our notation for certain representatives of coinvariant elements of the operad $Lev$, then we give a definition of a category $Step$ of vector spaces equipped with polynomial operations indexed by these coinvariant elements. The result of \Cref{theoL} gives a characterisation of $\Gamma(Lev)$-algebras. More precisely, the operad $Lev$ being spanned by a set operad, \Cref{theoinv} and \Cref{theocoinv} give an isomorphism between $\Gamma(Lev)_{alg}$ and $\gamma'(Lev)$, and \Cref{theoL} states that $\gamma'(Lev)$ is isomorphic to $Step$.

In this subsection, the definitions and notation of \Cref{Partitions} will be broadly used.

\begin{note}\label{notaL}\item
\begin{itemize}
  \item For all $I\in\L(n)$, recall that $\∑_I=\prod_{i\in\N}\∑_{I_i}$. Note that $\∑_I=Stab(I)$.
  \item For all $R\in \Pi(\r,n)$, $\C_R$ is the set of maps $h:[n]\to\N$ which are constant on the sets $R_i$ for all $i$ and satisfy $\sum_{i=1}^n\frac{1}{2^{h(i)}}=1$. There is an inclusion $\C_R\subset\L'(n)$. For all refinements $Q$ of $R$, there is an inclusion $\omega_Q^R:\C_R\inj\C_Q$ (sometimes denoted $\omega$, when partitions $R$ and $Q$ can be deduced from the context).
  For $R\in \Pi(\r,n)$ and $\rho\in \∑_p$, there is a bijection:
  \begin{eqnarray*}
    &\C_R&\to\C_{\rho\cdot R}\\
    &h&\mapsto \rho^*\cdot h,
  \end{eqnarray*}
  with $\rho^*\in\∑_n$ the block permutation deduced from $\rho$ whose blocks have size $r_1,\dots,r_p$.
  
  Recall that there is a map $\iota:\Comp_p(n)\inj\Pi_p(n)$ which identifies $\underline{r} = (r_1,\dots,r_p)$ with the partition $\iota(\r)$ such that $R_k = \{r_1+\dots+r_{k-1}+1,\dots,r_1+\dots+r_{k-1}+r_k\}$. We will abuse notation and use the expression $\C_{\r}$ for $\C_{\iota(\r)}$.
  \item Every $[h]_{\r}\in\∑_\r\backslash\L'(n)$ has a unique representative $h$ which is non-decreasing on each $\r_i$, and the composition $\r\wedge h\in \Comp_{(o(h)+1)p}(n)$ (see \ref{wedge}) satisfies $Stab_\r(h)=\∑_{\r\wedge h}$.
  \item 
  The next propositions will involve operations of the type $\phi_{h,\r}:A^{\times p}\to A$, indexed by $\r\in\Comp_p(n)$ and $h\in\C_\r$. These operations also induce operations $\phi_{h,R}:A^{\times p}\to A$ for all partitions $R\in\Pi(\r,n)$ and $h\in\C_R$. Indeed, following \Cref{transn}, there exists $\tau\in\∑_n$ such that $\tau(R)=\r$ and one defines:
  $$\phi_{h,R}(a_1,\dots,a_p):=\phi_{\tau\cdot h,\r}(a_1,\dots,a_p),$$
  which does not depend on the chosen $\tau$.
\end{itemize}
\end{note}
\begin{rema}
  The partial order on the set of compositions of $n$, given by $\r\le\q$ if and only if $\q$ is a refinement of $\r$, induces a partial order on the set $\{\C_\r\}$ where $\r$ ranges over the compositions of $n$. This partial order is the inclusion order. The minimal element is $\C_{(n)}$, which is empty, unless $n=2^k$ with $k\in\N$, in which case it only contains the constant function equal to $k$. The maximal element is $\C_{d_n}$ ($d_n$ being the discrete composition $(\underset{n}{\underbrace{1,\dots,1}})$), which is equal to $\L'(n)$.
\end{rema}
\begin{defi}\label{defL}
  A step algebra $(A,\phi)$ is an $\F$-vector space $A$ equipped, for all $\r\in \Comp_p(n)$ and $h\in\C_\r$, of an operation:
  $$\phi_{h,\r}:A^{\times p}\to A$$
  satisfying the following relations:
  \begin{enumerate}[label=(S\arabic*),itemsep=5pt]
    \item\label{relLperm} $\phi_{\rho^*\cdot h,\r^\rho}(a_{\rho^{-1}(1)},\dots,a_{\rho^{-1}(p)})=\phi_{h,\r}(a_1,\dots,a_p)$.
    \item\label{relL0} $\phi_{h,(0,r_1,\dots,r_p)}(a_0,a_1,\dots,a_p)=\phi_{h,(r_1,\dots,r_p)}(a_1,\dots,a_p)$.
    \item\label{relLlambda} $\phi_{h,\r}(\lambda a_1,a_2,\dots,a_p)=\lambda^{r_1}\phi_{h,\r}(a_1,\dots,a_p)$.
    \item\label{relLrepet}$\binom{r_1}{l}\phi_{h,\r}(a_1,\dots,a_p)=\phi_{\omega_{\r\circ_1(l,m)}^\r(h),\r\circ_1(l,m)}(a_1,a_1,a_2,\dots,a_p)$ for all $l,m\in\Comp_2(r_1)$.
    \item\label{relLsomme}$\phi_{h,\r}(a+b,a_2,\dots,a_p)=\sum_{l+m=r_1}\phi_{\omega_{\r\circ_1(l,m)}^{\r}(h),\r\circ_1(l,m)}(a,b,a_2,\dots,a_p)$.
    \item\label{relLunit}$\phi_{1_{\L'},(1)}(a)=a$.
    \item\label{relLcomp} Let $\r\in \Comp_p(n)$ and for all $i\in[p]$, let $\q_i\in \Comp_{k_i}(m_i)$. Then,
    \begin{multline*}
    	\phi_{h,\r}(\phi_{g_i,\q_i}(a_{ij})_{j\in[k_i]})_{i\in[p]}=\\
    	\bigg(\prod_{i\in[p]}\frac{1}{r_i!}\big(\prod_{j\in[k_i]}\frac{(r_iq_{ij})!}{(q_{ij}!)^{r_i}}\big)\bigg)\phi_{\mu\left(h\otimes\left(\bigotimes_{i\in[p]}g_i^{\otimes r_i}\right)\right),\r\diamond(\q_i)_{i\in[p]}}(a_{ij})_{i\in[p],j\in[k_i]},
    \end{multline*}
  \end{enumerate}
  where $\phi_{\mu\left(h\otimes\left(\bigotimes_{i\in[p]}g_i^{\otimes r_i}\right)\right),\r\diamond(\q_i)_{i\in[p]}}$ is defined in the last item of \ref{notaL}.

  A morphism of step algebras $f:A\to A'$ is a linear map $f:A\to A'$ satisfying, for all $\r\in \Comp_p(n)$, all $h\in\C_\r$ and all $a_1,\dots,a_p\in A$,
  $$f(\phi_{h,\r}(a_1,\dots,a_p))=\phi_{h,\r}(f(a_1),\dots,f(a_p)).$$
  Step algebras and their morphisms form a category denoted by $Step$.
\end{defi}
\begin{rema}\label{remLcomp}
  Relation \ref{relLcomp} makes sense, because $\mu\bigg(h\otimes\Big(\bigotimes_{i\in[p]}g_i^{\otimes r_i}\Big)\bigg)$ is constant on $\r\diamond(\q_i)_{i\in[p]}$.
\end{rema}
\begin{rema}
  Relations \ref{relL0} to \ref{relLsomme} are expressed here by acting on the first variable, but can be rewritten, thanks to Relation \ref{relLperm}, as acting on another variable (in a similar way as in \Cref{remacting}).
\end{rema}
\begin{theo}\label{theoL}
  There is an isomorphism of categories: 
  $$\Gamma(Lev)_{alg}\to Step.$$
\end{theo}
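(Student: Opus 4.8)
The plan is to reduce, via \Cref{theocoinv}, which already identifies $\Gamma(Lev)_{alg}$ with $\gamma'(Lev)$ (as $Lev=\F[\L]$ is reduced and spanned by the set operad $\L$), the theorem to producing an isomorphism $\gamma'(Lev)\cong Step$. The conceptual heart of this is a dictionary between the two indexing sets. An operation $\gamma_{[x]_\r,\r}$ is indexed by a class $[x]_\r\in\∑_\r\backslash\L(n)$; through the bijection $\L(n)\cong\L'(n)$ of \Cref{remlev} I would represent such a class by its unique representative $h$ that is non-decreasing on each $\r_i$ (see \Cref{notaL}). Then $\q:=\r\wedge h$ is a genuine composition by \Cref{wedgecr}, the function $h$ is constant on every block of $\q$, i.e. $h\in\C_\q$, and one has the key identity $Stab_\r(h)=\∑_{\r\wedge h}=\∑_\q$. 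This is exactly the data of a $Step$-operation $\phi_{h,\q}$, so coinvariant classes on $\r$ correspond to constant-on-parts functions on refined compositions.

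I would then define the two functors explicitly. For $U\colon\gamma'(Lev)\to Step$, given $(A,\gamma)$ I set $\phi_{h,\r}:=\gamma_{[h]_\r,\r}$ whenever $h\in\C_\r$ (so $h$ is its own non-decreasing representative and $\r\wedge h=\r$ up to empty parts). For $V\colon Step\to\gamma'(Lev)$, given $(A,\phi)$ and a class $[x]_\r$ with non-decreasing representative $h$, I set $\q=\r\wedge h$ and
\[
\gamma_{[h]_\r,\r}(a_1,\dots,a_p):=\phi_{h,\q}\big((a_i)\big),
\]
where the block $(i,v)$ of $\q$ receives the argument $a_i$ (each $a_i$ being repeated once per value taken by $h$ on $\r_i$). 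That $U$ and $V$ are mutually inverse on objects is then a direct consequence of \ref{rel'repet}: applying it with fine composition $\q=\r\wedge h$ and the grouping that merges the blocks $(i,v)$ back into $\r_i$, the prefactor equals $\val{Stab_\r(h)}/\val{Stab_\q(h)}=\val{\∑_\q}/\val{\∑_\q}=1$, so the repeated $\phi_{h,\q}$ reproduces $\gamma_{[h]_\r,\r}$ exactly, giving $V\circ U=\mathrm{id}$; and $U\circ V=\mathrm{id}$ is immediate since $\r\wedge h=\r$ for $h\in\C_\r$.

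The remaining work is to check that $U$ carries the relations \ref{rel'perm}--\ref{rel'comp} to \ref{relLperm}--\ref{relLcomp} and that $V$ does the reverse. Relations \ref{relLperm}, \ref{relL0}, \ref{relLlambda} and \ref{relLunit} are literal transcriptions of \ref{rel'perm}, \ref{rel'0}, \ref{rel'lambda} and \ref{rel'unit} (noting that $\rho^*$ sends $\C_\r$ to $\C_{\r^\rho}$). The explicit scalars appearing in $Step$ all come from the fact that the stabiliser of a level function constant on the parts of a composition $\t$ is the full Young subgroup $\∑_\t$, of order $\prod_k(\text{size of }\t_k)!$: the binomial $\binom{r_1}{l}$ in \ref{relLrepet} is the stabiliser ratio $\val{\∑_\r}/\val{\∑_{\r\circ_1(l,m)}}$ from \ref{rel'repet}, and in \ref{relLsomme} the orbit $\Omega_{\∑_\r}(h)$ of a constant-on-parts $h$ is a single point, collapsing the double sum of \ref{rel'somme} to the single sum over $l+m=r_1$. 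For the composition relation I would use \Cref{remLcomp}: $\mu\big(h\otimes\bigotimes_i g_i^{\otimes r_i}\big)$ is constant on the blocks of $\r\diamond(\q_i)_{i\in[p]}$, so its stabiliser inside $\∑_{\r\diamond(\q_i)}$ is the whole group, of order $\prod_{i,j}(r_iq_{ij})!$ (the block of $\r\diamond(\q_i)$ indexed by $(i,j)$ has size $r_iq_{ij}$, by \Cref{diamond}). Feeding this, together with $\val{Stab_\r(h)}=\prod_ir_i!$ and $\val{Stab_{\q_i}(g_i)}^{r_i}=\prod_j(q_{ij}!)^{r_i}$, into the prefactor of \ref{rel'comp} yields exactly $\prod_{i}\frac1{r_i!}\prod_j\frac{(r_iq_{ij})!}{(q_{ij}!)^{r_i}}$, the coefficient in \ref{relLcomp}.

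The main obstacle lies in the composition relation and, for the functor $V$, in the sum relation. For \ref{relLcomp}/\ref{rel'comp} one must genuinely establish the constancy claim of \Cref{remLcomp}, which requires unwinding the operad composition of $Lev$ on level functions through the formula for $\circ_i$ in \Cref{remlev}, and hence the shape of $\mu$ applied to $h\otimes\bigotimes_i g_i^{\otimes r_i}$. For $V$, the delicate point is that \ref{rel'somme} must hold for classes $[x]_\r$ whose representative is \emph{not} constant on the parts of $\r$: there the orbit $\Omega_{\∑_\r}(x)$ is non-trivial, the inner sum over $\∑_{\r\circ_1(l,m)}\backslash\Omega_{\∑_\r}(x)$ carries several terms, and one has to match the orbit decomposition against the way the repeated arguments of $V(\phi)$ split when $a_0+a_1$ is substituted. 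This is the one place where the extension performed by $V$ is not a formal transcription and genuinely relies on the combinatorics of $\C_\r$ developed in \Cref{notaL}.
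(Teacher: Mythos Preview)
Your proposal is correct and follows essentially the same route as the paper: reduce via \Cref{theocoinv} to $\gamma'(Lev)$, define $U$ by $\phi_{h,\r}:=\gamma_{[h^P]_\r,\r}$ for $h\in\C_\r$, define $V$ by sending $[I]_\r$ to $\phi_{I^f,\r\wedge I^f}$ with each $a_i$ repeated along the levels, and use $Stab_\r(h)=\∑_{\r\wedge h}$ to make the scalar ratios collapse. You have also correctly located the only genuinely delicate verifications (the constancy of \Cref{remLcomp} for \ref{relLcomp}, and the shuffle/orbit analysis of \Cref{lemmOmega} needed for \ref{rel'somme} under $V$); one small slip is that for $h\in\C_\r$ the composition $\r\wedge h$ is not literally $\r$ but $\r$ padded with empty parts, so the ``immediate'' identity $U\circ V=\mathrm{id}$ still needs an appeal to \ref{relL0}, exactly as in the paper's Step~4.
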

\begin{proof} 
Since $Lev=\F[\L]$, we can apply \Cref{theoinv} and \Cref{theocoinv} to obtain an isomorphism $\Gamma(Lev)_{alg}\to \gamma'(Lev)$. The next section proves in four steps that there is an isomorphism $\gamma'(Lev)\to Step$.
\end{proof}
\subsection{Proof of \texorpdfstring{\Cref{theoL}}{}}\label{proofL}
In this subsection, we develop the proof of \Cref{theoL}, by constructing an explicit pair of isomorphisms of categories $U:\gamma'(Lev)\leftrightarrow Step$ and $V:Step\to\gamma'(Lev)$. The definitions and notation introduced in Subsections \ref{chaLev} and \ref{Partitions} will be broadly used.
\subsubsection*{Step 1: Construction of a functor \texorpdfstring{$U:\gamma'(Lev)\to Step$}{F:gamma'(Lev)—>Step}}

  Let $(A,\gamma)$ be an object of $\gamma'(Lev)$. For $\r\in \Comp_p(n)$ and $h\in\C_\r$, set:
  \begin{equation}\label{eqphi}
    \varphi_{h,\r}:=\gamma_{[h^P]_\r,\r}
  \end{equation}
  Let us show that $(A,\varphi)$ is a step algebra.

  Relations \ref{relLperm}, \ref{relL0} and \ref{relLlambda} follow directly from relations \ref{rel'perm}, \ref{rel'0} and \ref{rel'lambda}. Let us show that relation \ref{relLrepet} is satisfied. By definition, one has
  \begin{equation*}
    \binom{r_1}{l}\varphi_{h,\r}(a_1,\dots,a_p)=\binom{r_1}{l}\gamma_{[h^P]_\r,\r}(a_1,\dots,a_p).
  \end{equation*}
  Since $h\in\C_\r$, then $\∑_{\r\circ_1(l,m)}\subseteq\∑_\r\subseteq Stab(h)$ and
  $$\frac{\val{Stab_{\r}(h^P)}}{\val{Stab_{\r\circ_1(l,m)}(h^P)}}=\frac{\val{\∑_\r}}{\val{\∑_{\r\circ_1(l,m)}}}=\binom{r_1}{l},$$
  thus, using relation \ref{rel'repet}:
  $$\binom{r_1}{l}\varphi_{h,\r}(a_1,\dots,a_p)=\gamma_{[h^P]_{\r\circ_1(l,m)},\r\circ_1(l,m)}(a_1,a_1,a_2,\dots,a_p),$$
  which is equal to $\varphi_{\omega_{\r\circ_1(l,m)}^\r(h),\r\circ_1(l,m)}(a_1,a_1,a_2,\dots,a_p)$.
  Let us show relation \ref{relLsomme}:
  \begin{align*}
    \varphi_{h,\r}(a+b,r_2,\dots,r_p)&=\gamma_{[h^P]_\r,\r}(a+b,r_2,\dots,r_p)\\
    &\overeg{Relation \ref{rel'somme}}\sum_{l+m=r_1}\sum_{I\in\∑_{\r\circ_1(l,m)}\backslash\Omega_{\∑_\r}(h^P)}\gamma_{[I]_{\r\circ_1(l,m)},\r\circ_1(l,m)}(a,b,a_2,\dots,a_p).
  \end{align*}
  One has $\Omega_{\∑_\r}(h^P)=\{h^P\}$ for $\∑_\r\subseteq Stab(h)$, thus:
  \begin{align*}
    \varphi_{h,\r}(a+b,r_2,\dots,r_p)&=\sum_{l+m=r_1}\gamma_{[h^P]_{\r\circ_1(l,m)},\r\circ_1(l,m)}(a,b,a_2,\dots,a_p)\\
    &=\sum_{l+m=r_1}\varphi_{\omega_{\r\circ_1(l,m)}^{\r}(h),\r\circ_1(l,m)}(a,b,a_2,\dots,a_p).
  \end{align*}
  Relation \ref{relLunit} follows directly from \ref{rel'unit}. Let us finally show relation \ref{relLcomp}:
  \begin{equation*}
    \varphi_{h,\r}(\varphi_{g_i,\q_i}(a_{ij})_{j\in[k_i]})_{i\in[p]}=\gamma_{[h^P]_\r,\r}(\gamma_{[g_i^P]_{\q_i},\q_i}(a_{ij})_{j\in[k_i]})_{i\in[p]},
  \end{equation*}
  which is equal, according to \ref{rel'comp}, to:
  $$\Bigg(\frac{\val{Stab_{\r\diamond(\q_i)_{i\in[p]}}\bigg(\mu\Big(h\otimes \big(\bigotimes_{i=1}^pg_i^{\otimes r_i}\big)\Big)\bigg)}}{\val{Stab_{\r}(h)}\prod_{i=1}^p\val{Stab_{\q_i}(g_i)^{r_i}}}\Bigg)\gamma_{\left[\mu\left(h\otimes \left(\bigotimes_{i=1}^pg_i^{\otimes r_i}\right)\right)^P\right]_{r\diamond(\q_i)_{i\in[p]}},\r\diamond(q_i)_{i\in[p]}}(a_{ij})_{i\in[p],j\in[k_i]}.$$
  Since $h\in \C_\r$ and, for all $i\in[p]$, $g_i\in\C_{\q_i}$, then $\∑_\r\subseteq Stab(h)$ and $\∑_{\q_i}\subseteq Stab(g_i)$ and, according to Remark \ref{remLcomp}, $\∑_{\r\diamond(\q_i)}\subseteq Stab\bigg(\mu\Big(h\otimes \big(\bigotimes_{i=1}^pg_i^{\otimes r_i}\big)\Big)\bigg)$. 
  It follows that
  \begin{equation*}
    \frac{\val{Stab_{\r\diamond(\q_i)_{i\in[p]}}\bigg(\mu\Big(h\otimes \big(\bigotimes_{i=1}^pg_i^{\otimes r_i}\big)\Big)\bigg)}}{\val{Stab_{\r}(h)}\prod_{i=1}^p\val{Stab_{\q_i}(g_i)^{r_i}}}=\frac{\val{\∑_{\r\diamond(\q_i)_{i\in[p]}}}}{\val{\∑_\r}\times\prod_{i\in[p]}\val{\∑_{\q_i}}^{r_i}}
    =\bigg(\prod_{i\in[p]}\frac{1}{r_i!}\big(\prod_{j\in[k_i]}\frac{(r_iq_{ij})!}{(q_{ij}!)^{r_i}}\big)\bigg),
  \end{equation*}
  and
  \begin{multline*}
    \gamma_{\left[\mu\left(h\otimes \left(\bigotimes_{i=1}^pg_i^{\otimes r_i}\right)\right)^P\right]_{r\diamond(\q_i)_{i\in[p]}},\r\diamond(q_i)_{i\in[p]}}(a_{ij})_{i\in[p],j\in[k_i]}=\\
    \varphi_{\mu\left(h\otimes\left(\bigotimes_{i\in[p]}g_i^{\otimes r_i}\right)\right),\r\diamond(\q_i)_{i\in[p]}}(a_{ij})_{i\in[p],j\in[k_i]}.
  \end{multline*}
  Thus $(A,\varphi)$ is a step algebra.  
  Moreover, if $(A,\gamma)$ and $(A',\gamma)$ are two objects in $\gamma'(Lev)$ and if $f:A\to A'$ is a morphism in $\gamma'(Lev)$, then, by definition, $f$ is compatible with the operations $\gamma_{[I]_\r,\r}$, and so, according to the definition of $\varphi_{h,\r}$, $f$ is a morphism of step algebras. As a consequence,
  \begin{eqnarray*}
    U:&\gamma'(Lev)&\to Step\\
    &(A,\gamma)&\mapsto(A,\varphi)
  \end{eqnarray*}
is a functor.
\subsubsection*{Step 2: Construction of a functor \texorpdfstring{$V:Step\to\gamma'(Lev)$}{V:Step—>Gamma(Lev)-algebras}}

  Let $(A,\phi)$ be a step algebra. Let us set, for $\r\in \Comp_p(n)$, $[I]_\r\in\∑_{\r}\backslash\L(n)$ and $(a_1,\dots,a_p)\in A^{\times p}$:
  $$\theta_{[I]_\r,\r}(a_1,\dots,a_p):=\phi_{I^f,\r\wedge I^f}(\underset{o(I)+1}{\underbrace{a_1,\dots,a_1}},\dots,\underset{o(I)+1}{\underbrace{a_p,\dots,a_p}}),$$
  where, $I^f$ is the representative of $[I^f]_{\r}$ which is non-decreasing on each $\r_i$. Let us show that the operations $\theta_{[I]_{\r},\r}$ satisfy relations \ref{rel'perm} to \ref{rel'somme} of \Cref{defgamma} and relations \ref{rel'unit} and \ref{rel'comp} of \Cref{defgamma'}, making $A$ into an object of $\gamma'(Lev)$.
  Let us show that relation \ref{rel'perm} is satisfied.
For $(a_1,\dots,a_m)\in A^{\times m}$ and $\sigma\in\∑_m$, set $\sigma\cdot(a_1,\dots,a_m)=(a_{\sigma^{-1}(1)},\dots,a_{\sigma^{-1}(m)})$.
Let $\rho\in\∑_p$. On one hand, $\rho$ induces a block permutation $\rho^*\in\∑_n$ whose blocks have size $r_1,\dots,r_p$. On the other hand, $\rho$ also induces a block permutation $\rho^\triangle\in\∑_{(o(I)+1)p}$ whose blocks all have size $o(I)+1$ and this permutation itself induces a block permutation $(\rho^\triangle)^*\in\∑_n$ whose blocks have size $(\r\wedge I^f)_{1,0},\dots,(\r\wedge I^f)_{p,o(I)}$. It is clear that $(\rho^\triangle)^*=\rho^*$ and that
$$\r^\rho\wedge(\rho^*\cdot I^f)=(\r\wedge I^f)^{\rho^\triangle}.$$

It then follows that:
\begin{align*}
  \theta_{[\rho^*\cdot I]_{\r^\rho},\r^\rho}(\rho\cdot(a_1,\dots,a_p))&=\phi_{\rho^*\cdot I^f,\r^\rho\wedge(\rho^*\cdot I^f)}(\rho^\triangle\cdot(\underset{o(I)+1}{\underbrace{a_1,\dots,a_1}},\dots,\underset{o(I)+1}{\underbrace{a_p,\dots,a_p}}))\\
  &=\phi_{(\rho^\triangle)^*\cdot I^f,\rho^\triangle\cdot(\r\wedge I^f)}(\rho^\triangle\cdot(\underset{o(I)+1}{\underbrace{a_1,\dots,a_1}},\dots,\underset{o(I)+1}{\underbrace{a_p,\dots,a_p}}))\\
  &\overeg{Relation \ref{relLperm}}\phi_{I^f,\r\wedge I^f}(\underset{o(I)+1}{\underbrace{a_1,\dots,a_1}},\dots,\underset{o(I)+1}{\underbrace{a_p,\dots,a_p}})\\
  &=\theta_{[I]_\r,\r}(a_1,\dots,a_p).
\end{align*}
Let us check \ref{rel'0}. Note that:
$$(0,r_1,\dots,r_p)\wedge I^f=(\underset{o(I)+1}{\underbrace{0,\dots,0}},(\r\wedge I^f)_{1,0},\dots,(\r\wedge I^f)_{p,o(I)}).$$
One has:
\begin{align*}
  \theta_{[I]_{(0,r_1,\dots,r_p)},(0,r_1,\dots,r_p)}(a_0,a_1,\dots,a_p)&=\phi_{I^f,(0,r_1,\dots,r_p)\wedge I^f}(\underset{o(I)+1}{\underbrace{a_0,\dots,a_0}},\dots,\underset{o(I)+1}{\underbrace{a_p,\dots,a_p}})\\
  &\overeg{Relation \ref{relL0}}\phi_{I^f,\r\wedge I^f}(\underset{o(I)+1}{\underbrace{a_1,\dots,a_1}},\dots,\underset{o(I)+1}{\underbrace{a_p,\dots,a_p}}).
\end{align*}
Let us show relation \ref{rel'lambda}. Let $\lambda\in\F$. Because $(I_i)_{0\le i\le o(I)}$ is a partition of $[n]$, $((\r\wedge I^f)_{1j})_{0\le j\le o(I)}$ is a composition of $\r_1$, and so $\sum_{j=0}^{o(I)}(\r\wedge I^f)_{1j}=r_1$. One then has:
\begin{align*}
  \theta_{[I]_\r,\r}(\lambda a_1,a_2,\dots,a_p)&=\phi_{I^f,\r\wedge I^f}(\underset{o(I)+1}{\underbrace{\lambda a_1,\dots,\lambda a_1}},\underset{o(I)+1}{\underbrace{a_2,\dots,a_2}},\dots,\underset{o(I)+1}{\underbrace{a_p,\dots,a_p}})\\
  &\overeg{Relation \ref{relLlambda}}\big(\prod_{j=0}^{o(I)}\lambda^{(\r\wedge I^f)_{1j}}\big)\phi_{I^f,\r\wedge I^f}(\underset{o(I)+1}{\underbrace{a_1,\dots,a_1}},\dots,\underset{o(I)+1}{\underbrace{a_p,\dots,a_p}})\\
  &=\lambda^{r_1}\theta_{[I]_\r,\r}(a_1,\dots,a_p).
\end{align*}
Let us show \ref{rel'repet}. Set $\r=(r_0,\dots,r_p)$ and $\r'=(r_0+r_1,r_2,\dots,r_p)$. Let $J\in\L(n)$ and $I^f$ be a representative of $[J^f]_{\r}$ non-decreasing on each $\r_i$. One obtains a composition $(\r\wedge I^f)_{0\le i\le p,0\le j\le o(J)}$. There exists a permutation $\rho\in\∑_{(o(J)+1)(p+1)}$ which permutes the $2(o(J)+1)$ first parts of $\r\wedge I^f$ and stabilises the others, such that:
$$\rho\cdot(\r\wedge I^f)=((\r_0\cap I_0),(\r_1\cap I_0),(\r_0\cap I_1),(\r_1\cap I_1)\dots,(\r_0\cap I_{o(J)}),(\r_1\cap I_{o(J)}),\dots).$$
The permutation $\rho$ induces a block permutation $\rho^*\in\∑_n$ whose blocks have size $(\r\wedge I)_{00},\dots,(\r\wedge I)_{po(J)}$. One then has $\rho^*\in\∑_{\r'}$ and $\rho^*\cdot I^f$ is a representative of $[J^f]_{\r'}$ which is non-decreasing on the $\r'_i$.

One obtains another composition: $(\r'\wedge \rho^*\cdot I^f)_{1\le i\le p,0\le j\le o(J)}$. Note that:
\begin{equation*}
  \frac{\val{Stab_{\r'}(J)}}{\val{Stab_{\r}(J)}}=\frac{\prod_{j=0}^{o(J)}\val{(\r'_1)\cap J_j}!}{\prod_{j=0}^{o(J)}\val{\r_0\cap J_j}!\val{\r_1\cap J_j}!}
  =\prod_{j=0}^{o(J)}\binom{(\r'\wedge \rho^*\cdot I^f)_{1j}}{(\r\wedge I^f)_{0j}},
\end{equation*}
and that, for all $j\in\{0,\dots, o(J)\}$:
$$(\r'\wedge \rho^*\cdot I^f)_{1j}=(\r\wedge I^f)_{0j}+(\r\wedge I^f)_{1j}.$$
Thus:
\begin{align*}
 \hspace{-7.40134pt} \theta_{[J]_{\r},\r}(a_1,a_1,a_2,\dots,a_p)&=\phi_{I^f,\r\wedge I^f}(\underset{o(J)+1}{\underbrace{a_1,\dots,a_1}},\underset{o(J)+1}{\underbrace{a_1,\dots,a_1}},\dots,\underset{o(J)+1}{\underbrace{a_p,\dots,a_p}})\\
  &\overeg{Relation \ref{relLperm}}\phi_{\omega_{\rho\cdot(\r\wedge I^f)}^{\r'\wedge \rho^*\cdot I^f}(\rho^*\cdot I^f),\rho\cdot(\r\wedge I^f)}(\underset{2(o(J)+1)}{\underbrace{a_1,\dots,a_1}},\dots,\underset{o(J)+1}{\underbrace{a_p,\dots,a_p}})\\
  &\overeg{Relation \ref{relLrepet}}\prod_{j=0}^{o(J)}\binom{(\r'\wedge \rho^*\cdot I^f)_{1j}}{(\r\wedge I^f)_{0j}}\phi_{\rho^*\cdot I^f,\r'\wedge \rho^*\cdot I^f}(\underset{o(I)+1}{\underbrace{a_1,\dots,a_1}},\dots,\underset{o(I)+1}{\underbrace{a_p,\dots,a_p}})\\
  &=\frac{\val{Stab_{\r'}(J)}}{\val{Stab_{\r}(J)}}\theta_{[J]_{\r'},\r'}(a_1,\dots,a_p).
\end{align*}
According to \Cref{rem'}, this is enough to prove relation \ref{rel'repet}.

Let us check relation \ref{rel'somme}. One has:
\begin{equation*}
  \theta_{[I]_{\r,\r}}(a+b,a_2,\dots,a_p)=\phi_{I^f,\r\wedge I^f}(\underset{o(I)+1}{\underbrace{a+b,\dots,a+b}},\underset{o(I)+1}{\underbrace{a_2,\dots,a_2}},\dots,\underset{o(I)+1}{\underbrace{a_p,\dots,a_p}}),
\end{equation*}
which is equal, according to relation \ref{relLsomme}, to:

$$
	\sum\phi_{\omega_{(\r\wedge I^f)^{\nu}}^{(\r\wedge I^f)}(I^f),(\r\wedge I^f)^{\nu}}(\underset{o(I)+1}{\underbrace{a,b,\dots,a,b}},\underset{o(I)+1}{\underbrace{a_2,\dots,a_2}},\dots,\underset{o(I)+1}{\underbrace{a_p,\dots,a_p}}),
$$
where the sum ranges over the set of families $(\nu_{1j},\nu_{2,j})\in\prod_{j=0}^{o(I)}\Comp_2((\r\wedge I^f)_{1j})$ and where $(\r\wedge I^f)^{\nu}=((\dots((\r\wedge I^f)\circ_{io(h)}(\nu_{1o(h)},\nu_{2o(h)}))\circ_{1(o(h)-1)}(\nu_{1(o(h)-1)},\nu_{2(o(h)-1)}\dots)\circ_{10}(\nu_{10},\nu_{20}))$. Let $(\nu_{1j},\nu_{2j})_{0\le j\le o(I)}$ be such that $\nu_{1j}+\nu_{2j}=(\r\wedge I^f)_{1j}$ for all $j\in\{0,\dots,o(I)\}$. Set $l:=\sum_{j=0}^{o(I)}\nu_{1j}$ and $m:=\sum_{j=0}^{o(I)}\nu_{2j}$.
 The family of vectors $(a,b,\dots,a,b)$ can be sorted in order to put the occurrences of $a$ before the occurrences of $b$. Let $\rho\in\∑_{(o(I)+1)p}$ be the permutation stabilising the integers greater than $2(o(I)+1)$ and whose graph on the first $2(o(I)+1)$ integers is:
$$\big(\rho(1),\dots,\rho(2(o(I)+1))\big)=\big(1,3,5,\dots,2o(I)+1,2,4,6,\dots,2(o(I)+1)\big).$$
The permutation $\rho$ induces a block permutation $\rho^*\in\∑_n$ whose blocks have size $(\r\wedge I^f)^{\nu}_{ij}$, and it is an $(l,m)$-shuffle. One checks that $\rho^*\cdot I^f$ is the element of $Sh(l,m)I^f$ satisfying, for all $j\in\{0,\dots, o(I)\}$, 
$$(\nu_{1j})_{0\le j\le o(h)}=\val{[l]\cap (\rho^*\cdot I^f)^{-1}(\{j\})},$$
and
$$(\nu_{2j})_{0\le j\le o(h)}=\val{l+[m]\cap (\rho^*\cdot I^f)^{-1}(\{j\})},$$
see \ref{lemmOmega}, and that $\rho\cdot(\r\wedge I^f)^{\nu}=(\r\circ_1(l,m)\wedge \rho^*\cdot I^f)$. One then has:
\begin{multline*}
  \theta_{[I]_{\r},\r}(a+b,a_2,\dots,a_p)=\\
  \sum_{l+m=r_1}\sum_{J\in Sh(l,m)I}\phi_{J^f,(\r\circ_1(l,m)\wedge J^f)}(\underset{o(I)+1}{\underbrace{a,\dots,a}},\underset{o(I)+1}{\underbrace{b,\dots,b}},\underset{o(I)+1}{\underbrace{a_2,\dots,a_2}},\dots,\underset{o(I)+1}{\underbrace{a_p,\dots,a_p}}),
\end{multline*}
which is equal, according to \Cref{lemmOmega}, to:
$$\sum_{l+m=r_1}\sum_{[J]_{\r\circ_1(l,m)}\in\∑_{\r\circ_1(l,m)}\backslash\Omega_{\∑_\r}(I)}\theta_{[J]_{\r\circ_1(l,m)},\r\circ_1(l,m)}(a,b,a_2,\dots,a_p).$$
Relation \ref{rel'unit} is easy to check. Indeed, $1_\L^f=1_{\L'}$ $( (1)\wedge 1_{\L'})=(1)$, and so:
\begin{equation*}
  \theta_{[1_\L]_{(1)},(1)}(a)=\phi_{1_{\L'},(1)}(a)\overeg{Relation \ref{relLunit}}a\quad.
\end{equation*}
Let us now check Relation \ref{rel'comp}.
Let $\r\in \Comp_p(n)$ and let $I\in\L(n)$ be such that $I^f$ is non-decreasing on $\r_i$. For all $i\in[p]$, let $\q_i\in \Comp_{k_i}(m_i)$ and let $J_i\in\L(m_i)$ be such that $J_i^f$ is non-decreasing on each $\q_{ij}$ and let $(a_{ij})_{j\in[k_i]}\in A^{\times k_i}$. One has:
$$\hspace{-10cm}\theta_{[I]_{\r},\r}(\theta_{[J_i]_{\q_i},\q_i}(a_{ij})_{j\in[k_i]})_{i\in[p]}=$$
\begin{multline*}
  \phi_{I^f,(\r\wedge I^f)}\bigg(\\
  \underset{o(I)+1}{\underbrace{\phi_{J_1^f,(\q_1\wedge J_1^f)}(\underset{o(J_1)+1}{\underbrace{a_{11},\dots,a_{11}}},\dots,\underset{o(J_1)+1}{\underbrace{a_{1k_1},\dots,a_{1k_1}}}),\dots,\phi_{J_1^f,(\q_1\wedge J_1^f)}(\underset{o(J_1)+1}{\underbrace{a_{11},\dots,a_{11}}},\dots,\underset{o(J_1)+1}{\underbrace{a_{1k_1},\dots,a_{1k_1}}})}},\\
  \dots,\\
  \underset{o(I)+1}{\underbrace{\phi_{J_p^f,(\q_p\wedge J_p^f)}(\underset{o(J_p)+1}{\underbrace{a_{p1},\dots,a_{p1}}},\dots,\underset{o(J_p)+1}{\underbrace{a_{pk_p},\dots,a_{pk_p}}}),\dots,\phi_{J_p^f,(\q_p\wedge J_p^f)}(\underset{o(J_p)+1}{\underbrace{a_{p1},\dots,a_{p1}}},\dots,\underset{o(J_p)+1}{\underbrace{a_{pk_p},\dots,a_{pk_p}}}}})\bigg),
\end{multline*}
which is equal, according to relation \ref{relLcomp}, to:
\begin{multline*}
  \bigg(\prod_{i\in[p],j\in\{0,\dots,o(I)\}}\frac{1}{(\r\wedge I^f)_{ij}!}\big(\prod_{i'\in[k_i],j'\in\{0,\dots,o(J_i)\}}\frac{((\r\wedge I^f)_{ij}(\q_i\wedge J_i^f)_{i'j'})!}{((\q_i\wedge J_i^f)_{i'j'}!)^{(\r\wedge I^f)_{ij}}}\big)\bigg)\\
  \phi_{\mu\big(I^f\otimes\big(\bigotimes_{i\in[p],j\in\{0,\dots,o(I)\}}(J_i^f)^{\otimes (\r\wedge I^f)_{ij}}\big)\big),(\r\wedge I^f)\diamond(\q_i\wedge J_i^f)_{i\in[p],j\in\{0,\dots,o(I)\}}}\\
  (a_{ii'})_{i\in[p],j\in\{0,\dots,o(I)\},i'\in[k_i],j'\in\{0,\dots,o(J_i)\}}.
\end{multline*}
\Cref{remLcomp} indicates that $\mu\bigg(I^f\otimes\Big(\bigotimes_{i\in[p],j\in\{0,\dots,o(I)\}}(J_i^f)^{\otimes (\r\wedge I^f)_{ij}}\Big)\bigg)$ is constant on\newline$(\r\wedge I^f)\diamond(\q_i\wedge J_i^f)_{i\in[p],j\in\{0,\dots,o(I)\}}$, and so, that $(\r\wedge I^f)\diamond(\q_i\wedge J_i^f)_{i\in[p],j\in\{0,\dots,o(I)\}}$ is a refinement of $\bigg((\r\diamond(\q_i)_{i\in[p]})\wedge \mu\Big(I\otimes\big(\bigotimes_iJ_i^{\otimes r_i}\big)\Big)\bigg)_{ii'k}$.
On one hand,
$$\val{\big((\r\diamond(\q_i)_{i\in[p]})\wedge \mu(I\otimes\bigotimes_iJ_i^{\otimes r_i})\big)_{ii'k}}=\sum_{j+j'=k}(\r\wedge I^f)_{ij}(\q_i\wedge J_i^f)_{i'j'},$$
and on the other hand, $\val{\big((\r\wedge I^f)\diamond(\q_i\wedge J_i^f)_{i\in[p]}\big)_{iji'j'}}=(\r\wedge I^f)_{ij}(\q_i\wedge J_i^f)_{i'j'}$. So, according to Relation \ref{relLrepet}
\begin{multline*}
  \theta_{[I]_{\r},\r}(\theta_{[J_i]_{\q_i},\q_i}(a_{ij})_{j\in[k_i]})_{i\in[p]}=\\
  \Bigg(\frac{\prod_{k\in\N}\Big(\sum_{j+j'=k}(\r\wedge I^f)_{ij}(\q_i\wedge J_i^f)_{i'j'}\Big)!}{\prod_{i\in[p],j\in\{0,\dots,o(I)\}}(\r\wedge I^f)_{ij}!\prod_{i'\in[k_i],j'\in\{0,\dots,o(J_i)\}}((\q_i\wedge J_i^f)_{i'j'}!)^{(\r\wedge I^f)_{ij}}}\Bigg)\\
  \phi_{\omega^{(\r\wedge I^f)\hat \diamond(\q_i\wedge J_i^f)_{i\in[p]}}_{\r\diamond(\q_i)_i\wedge\mu\left(I\otimes\left(\bigotimes_iJ_i^{r_i}\right)\right)}\mu\left(I^f\otimes\left(\bigotimes_{i\in[p]}(J_i^f)^{\otimes \r_i}\right)\right),\r\diamond(\q_i)_i\wedge\mu\left(I\otimes\left(\bigotimes_iJ_i^{r_i}\right)\right)}
  (a_{ii'})_{i\in[p],i'\in[k_i]},
\end{multline*}
\begin{multline*}
  =\Bigg(\frac{\prod_{k\in\N}\Big(\sum_{j+j'=k}(\r\wedge I^f)_{ij}(\q_i\wedge J_i^f)_{i'j'}\Big)!}{\prod_{i\in[p],j\in\{0,\dots,o(I)\}}(\r\wedge I^f)_{ij}!\prod_{i'\in[k_i],j'\in\{0,\dots,o(J_i)\}}((\q_i\wedge J_i^f)_{i'j'}!)^{(\r\wedge I^f)_{ij}}}\Bigg)\\
  \theta_{\left[\mu\left(I\otimes\left(\bigotimes_iJ_i^{\otimes r_i}\right)\right)\right]_{\r\diamond(\q_i)_i\wedge\mu\left(I\otimes\left(\bigotimes_iJ_i^{r_i}\right)\right)},\r\diamond(\q_i)_i\wedge\mu\left(I\otimes\left(\bigotimes_iJ_i^{r_i}\right)\right)}(a_{ii'})_{i\in[p],i'\in[k_i]}.
\end{multline*}
Since
\begin{align*}
  \val{Stab_{\r\diamond(\q_i)_i}\bigg(\mu\Big(I\otimes\big(\bigotimes_iJ_i^{\otimes r_i}\big)\Big)\bigg)}&=\val{(\r\diamond_i\q_i)\wedge \mu\bigg(I\otimes\Big(\bigotimes_iJ_i^{\otimes r_i}\Big)\bigg)}\\
  &=\prod_{k\in\N}\Big(\sum_{j+j'=k}(\r\wedge I^f)_{ij}(\q_i\wedge J_i^f)_{i'j'}\Big)!,
\end{align*}
Relation \ref{rel'comp} is satisfied.

Moreover, if $A'$ is a step algebra and if $f:A\to A'$ is a morphism of step algebras, then, following the definition of $\theta_{[I]_\r,\r}$, the map $f$ is compatible with the operations $\theta_{[I]_\r,\r}$, and so, $f$ is a morphism in $\gamma'(Lev)$. Hence,
\begin{eqnarray*}
  V:&Step&\to \gamma'(Lev)\\
  &(A,\phi)&\mapsto(A,\theta)
\end{eqnarray*}
is a functor.
\subsubsection*{Step 3: \texorpdfstring{$V\circ U=id_{\gamma'(Lev)}$}{GF=id}}

  Let $(A,\gamma)$ be an object in $\gamma'(Lev)$. One sets:
  $$\varphi_{h,\r}:=\gamma_{[h^P]_\r,\r},$$
  and:
  $$\theta_{[I]_\r,\r}(a_1,\dots,a_p):=\varphi_{I^f,\r\wedge I^f}(\underset{o(I)+1}{\underbrace{a_1,\dots,a_1}},\dots,\underset{o(I)+1}{\underbrace{a_p,\dots,a_p}}),$$
  where $I^f$ is a representative of $[I^f]_\r$ which is non-decreasing on each $\r_i$. Then one has, for all $\r\in \Comp_p(n)$ and $[I]_\r\in\∑_{\r}\backslash\L(n)$, $\theta_{[I]_\r,\r}=\gamma_{[I]_\r,\r}$. Indeed, by definition, and following Relation \ref{rel'repet},
  \begin{align*}
    \theta_{[I]_\r,\r}(a_1,\dots,a_p)&=\gamma_{[I]_{\r\wedge I^f},\r\wedge I^f}(\underset{o(I)+1}{\underbrace{a_1,\dots,a_1}},\dots,\underset{o(I)+1}{\underbrace{a_p,\dots,a_p}})
    =\frac{\val{Stab_{\r}(I)}}{\val{Stab_{\r\wedge I^f}(I)}}\gamma_{[I]_\r,\r}(a_1,\dots,a_p),
  \end{align*}
   and $Stab_\r(I)=Stab_{\r\wedge I^f}(I)=\∑_\r\cap\∑_I$.

\subsubsection*{Step 4: \texorpdfstring{$U\circ V=id_{Step}$}{FG=id}}

  Let $(A,\phi)$ be a step algebra. One sets:
  $$\theta_{[I]_\r,\r}(a_1,\dots,a_p):=\phi_{I^f,\r\wedge I^f}(\underset{o(I)+1}{\underbrace{a_1,\dots,a_1}},\dots,\underset{o(I)+1}{\underbrace{a_p,\dots,a_p}}),$$
  and:
  $$\varphi_{h,\r}:=\theta_{[h^P]_\r,\r}.$$
  Then, one has, for all $\r\in \Comp_p(n)$ and $h\in\C_\r$, $\varphi_{h,\r}=\phi_{h,\r}$.

  Indeed, by definition, one has:
  \begin{equation*}
    \varphi_{h,\r}(a_1,\dots,a_p)=\phi_{h,\r\wedge h}(\underset{o(h)+1}{\underbrace{a_1,\dots,a_1}},\dots,\underset{o(h)+1}{\underbrace{a_p,\dots,a_p}}).
  \end{equation*}
  Since $h\in\C_\r$, one then has
  $$(\r\wedge h)_{ij}=\left\{\begin{array}{ccccc}
    r_i, &\mbox{if}&j=h(k)&\mbox{où}&k\in\r_i,\\
    0,&\mbox{else,}
  \end{array}\right.$$
  and so, according to relation \ref{relL0},
  $$\phi_{h,\r}(a_1,\dots,a_p)=\varphi_{h,\r}(a_1,\dots,a_p).$$
\begin{lemm}\label{lemmOmega}
Let $\r\in \Comp_p(n)$, let $(l,m)\in\Comp_2(r_1)$ and let $h:[n]\to[p]$ be a map non decreasing on each $\r_i$. Then the set $Sh(l,m)h:=\{\sigma\cdot h:\sigma\in Sh(l,m)\}$ is a set of representatives of $\∑_{\r\circ_1(l,m)}\backslash\Omega_{\∑_\r}(h)$. Moreover, an element $g$ of $Sh(l,m)h$ is uniquely determined by the families of integers $(\nu_{1j})_{0\le j\le o(h)}=\val{[l]\cap g^{-1}(\{j\})}$ and $(\nu_{2j})_{0\le j\le o(h)}=\val{l+[m]\cap g^{-1}(\{j\})}$, and those families satisfy $\nu_{1j}+\nu_{2j}=\val{\r_1\cap h^{-1}(\{j\})}$ for all $j\in\{0,\dots,o(h)\}$.
\end{lemm}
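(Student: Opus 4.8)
The plan is to reduce the statement to a question about the single block $\r_1$, and then to recognise the relevant orbits in terms of $2\times(o(h)+1)$ contingency tables.

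First I would isolate the first block. Since $\∑_{\r\circ_1(l,m)}=(\∑_l\times\∑_m)\times\∑_{r_2}\times\dots\times\∑_{r_p}$ is a subgroup of $\∑_\r=\∑_{r_1}\times\∑_{r_2}\times\dots\times\∑_{r_p}$ that agrees with it on every factor except the first, the action of $\∑_{\r\circ_1(l,m)}$ on $\Omega_{\∑_\r}(h)$ splits over the blocks. For $i\ge 2$ the full group $\∑_{r_i}$ acts, and since any $g\in\Omega_{\∑_\r}(h)$ has $g|_{\r_i}$ a rearrangement of the non-decreasing sequence $h|_{\r_i}$, these factors can be used to normalise every representative so that it agrees with $h$ outside $\r_1$. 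This yields a bijection
$$\∑_{\r\circ_1(l,m)}\backslash\Omega_{\∑_\r}(h)\ \cong\ (\∑_l\times\∑_m)\backslash\Omega_{\∑_{r_1}}(h|_{\r_1}),$$
and, because an $(l,m)$-shuffle fixes the points outside $[r_1]$, the set $Sh(l,m)h$ descends to a family of representatives on the right-hand side. It therefore suffices to understand the $(\∑_l\times\∑_m)$-orbits on $\Omega_{\∑_{r_1}}(h|_{\r_1})$.

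Next I would classify those orbits by a complete invariant. An element of $\Omega_{\∑_{r_1}}(h|_{\r_1})$ is a rearrangement of $h|_{\r_1}$, i.e. a map taking each value $j$ exactly $\val{\r_1\cap h^{-1}(\{j\})}$ times; to such a $g$ I attach the integers $\nu_{1j}=\val{[l]\cap g^{-1}(\{j\})}$ and $\nu_{2j}=\val{l+[m]\cap g^{-1}(\{j\})}$. Since $\∑_l$ (resp. $\∑_m$) permutes the first $l$ (resp. last $m$) positions arbitrarily, two rearrangements lie in the same $(\∑_l\times\∑_m)$-orbit precisely when they carry the same multiset of values on the first $l$ positions and on the last $m$ positions, that is, exactly when the families $(\nu_{1j})_j$ and $(\nu_{2j})_j$ agree. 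Thus the table $(\nu_{1j},\nu_{2j})_{0\le j\le o(h)}$ is a complete orbit invariant; its entries satisfy $\nu_{1j}+\nu_{2j}=\val{\r_1\cap h^{-1}(\{j\})}$, $\sum_j\nu_{1j}=l$, $\sum_j\nu_{2j}=m$, and every such admissible table is realised by some rearrangement.

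Finally I would match the shuffles with these tables. As $Sh(l,m)$ is a transversal of the left cosets $(\∑_l\times\∑_m)\backslash\∑_{r_1}$, the family $Sh(l,m)h$ meets every $(\∑_l\times\∑_m)$-orbit, which gives surjectivity onto the orbit set. For the reconstruction, I use that a shuffle produces an arrangement that is non-decreasing on $[l]$ and on $l+[m]$ separately — and it is exactly here that the hypothesis that $h$ is non-decreasing on $\r_1$ is needed — so for $g\in Sh(l,m)h$ the table $(\nu_{1j},\nu_{2j})_j$ determines $g$ uniquely, which is the asserted ``moreover''. Combining this with the complete-invariant property shows that $g\mapsto(\nu_{1j},\nu_{2j})_j$ is a bijection from $Sh(l,m)h$ to the set of admissible tables, that distinct elements of $Sh(l,m)h$ hence lie in distinct orbits, and that $Sh(l,m)h$ is a full set of representatives. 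The main obstacle I anticipate is the bookkeeping in the reduction to the first block together with pinning down the shuffle convention precisely enough to guarantee the reconstruction step, since a single double coset may contain several shuffles and one must check that this collapsing never produces two distinct elements of $Sh(l,m)h$ in the same orbit.
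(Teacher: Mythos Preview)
Your argument is correct and follows essentially the same line as the paper's: the paper invokes that $Sh(l,m)$ is a system of representatives of $\∑_{\r\circ_1(l,m)}\backslash\∑_\r$ for surjectivity, and that each $\sigma\cdot h$ is non-decreasing on $[l]$ and on $l+[m]$ for the reconstruction from the table $(\nu_{1j},\nu_{2j})_j$. Your write-up is more complete in one respect: you make explicit that the contingency table is a $(\∑_l\times\∑_m)$-orbit invariant and use this to rule out two distinct shuffles landing in the same orbit (the double-coset collapse you flag at the end), a point the paper's two-line proof leaves implicit. The reduction to the first block is a cosmetic simplification rather than a genuinely different route.
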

\begin{proof}
  The first assertion comes from the fact that $Sh(l,m)$ is a system of representative of $\∑_{\r\circ_1(l,m)}\backslash\∑_\r$. The second comes from the fact that all $\sigma\cdot h$ with $\sigma\in Sh(l,m)$ is non-decreasing on $[l]$ and $l+[m]$, and satisfies:
  $$\val{\r_1\cap (\sigma\cdot h)^{-1}(\{j\})}=\val{\r_1\cap h^{-1}(\{j\})}.$$
\end{proof}

\section{Divided power \texorpdfstring{$Lev$}{Lev}-algebras: examples and relations with other types of algebras}\label{secStep}
The aim of this section is to give the first examples of $\Gamma(Lev)$-algebras, and to shed light on natural connections between $\Gamma(Lev)$-algebras and other types of algebras.

In \Cref{othertypes}, we will focus on the links between $\Gamma(Lev)$-algebras and other types of algebras, those links being functorially deduced from operad morphisms to or from the operad $Lev$. 

In \Cref{SBHsec}, more details are given on the free level algebra, which corresponds to the free ``depth-invariant'' algebra of Davis (see \cite{DD}). We show that this level algebra can also be identified with the vector space spanned by the set of binary Huffman sequences (following the definition of \cite{EHP}), on which we define a level multiplication.

In this section, following the result of \Cref{theoL}, we will completely identify $\Gamma(Lev)$-algebras with elements of $Step$. We will use the definitions and notation from \Cref{modop}.

\subsection{Relations with other types of algebras}\label{othertypes}
In this Subsection, we consider two ``forgetful functors'', one from $\Gamma(Lev)$-algebras to $Lev$-algebras, and the other from $Lev$-algebras to $Com$-algebras. We will use the results from \Cref{secLEV} and \Cref{secCOM}.

For a reduced operad $\P$, there is a functor $Tr^*:\Gamma(\P)_{alg}\to \P_{alg}$, deduced from the norm map $Tr:S(\P,\cdot)\to\Gamma(\P,\cdot)$ (see \Cref{Tr} and \cite{BF}, 1.1.18). For $\P=Lev$ it expresses as:
\begin{prop}\label{propsteplev}
  A $\Gamma(Lev)$-algebra $(A,\phi)$ is endowed with a structure of level algebra $(A,*)$ such that:
  $$a*b=\phi_{h,(1,1)}(a,b),$$
  where $h:[2]\to\N$ is the map sending both $1$ and $2$ to $1$.
\end{prop}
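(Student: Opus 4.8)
The plan is to unwind the functor $Tr^*$ on the arity-$2$ component and to match the resulting product with $\phi_{h,(1,1)}$ through the chain of identifications $\phi\leftrightarrow\gamma\leftrightarrow\beta\leftrightarrow f$ built in the previous sections. First I would pin down the generating operation of $Lev$. Since $Lev=\F[\L]$, the space $Lev(2)$ has basis $\L(2)$, and solving $\sum_{i\in\N}\frac{\val{I_i}}{2^i}=1$ over the partitions of $[2]$ shows that $\L(2)$ has the single element $I=(\emptyset,\{1,2\},\emptyset,\dots)$, which is fixed by $\∑_2$. Under the bijection of \Cref{remlev} one has $I=h^P$ with $h:[2]\to\N$, $h(1)=h(2)=1$. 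The commutative product of the underlying level algebra is precisely the operation indexed by this unique generator, so that for a $Lev$-algebra with structure map $g:S(Lev,A)\to A$ one has $a*b=g\big([\,I\otimes a\otimes b\,]\big)$, the bracket denoting the class in the coinvariants $(Lev(2)\otimes A^{\otimes 2})_{\∑_2}$.

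Next I would use that $Tr^*$ equips $(A,f)$ with the $Lev$-algebra structure map $g=f\circ Tr_A$, where $Tr_A:S(Lev,A)\to\Gamma(Lev,A)$ is the component at $A$ of the norm map of \Cref{Tr}. On the arity-$2$ summand this is the unnormalised norm $(Lev(2)\otimes A^{\otimes 2})_{\∑_2}\to(Lev(2)\otimes A^{\otimes 2})^{\∑_2}$ sending a class to $\sum_{\sigma\in\∑_2}\sigma\cdot(-)$. Evaluating on $I\otimes a\otimes b$ and using $\tau\cdot I=I$ for the transposition $\tau\in\∑_2$ gives $Tr_A\big([\,I\otimes a\otimes b\,]\big)=I\otimes a\otimes b+I\otimes b\otimes a$, hence $a*b=f\big(I\otimes a\otimes b+I\otimes b\otimes a\big)$.

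Finally I would recognise the right-hand side as a divided power operation. By \Cref{prepM}, $\beta_{I,(1,1)}(a,b)=f\big(\sum_{\sigma\in\∑_2/\∑_{(1,1)}}\sigma\cdot I\otimes\sigma\cdot(a\otimes b)\big)$; since $\∑_{(1,1)}$ is trivial the sum ranges over all of $\∑_2$ and equals $f(I\otimes a\otimes b+I\otimes b\otimes a)$, so $a*b=\beta_{I,(1,1)}(a,b)$. Then \cref{eqgamma} together with $\O_{\∑_{(1,1)}}([I]_{(1,1)})=I$ (the group $\∑_{(1,1)}$ being trivial) yields $\beta_{I,(1,1)}=\gamma_{[I]_{(1,1)},(1,1)}$, while the identity $\varphi_{h,\r}=\gamma_{[h^P]_\r,\r}$ from Step 1 of the proof of \Cref{theoL} gives $\gamma_{[I]_{(1,1)},(1,1)}=\phi_{h,(1,1)}$ because $I=h^P$. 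Chaining these equalities gives $a*b=\phi_{h,(1,1)}(a,b)$, as claimed.

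The one point requiring care is the normalisation of the norm map: the $\beta$-formula sums over the cosets $\∑_n/\∑_\r$, whereas $Tr_A$ sums over the full group $\∑_2$. The identification works because for $\r=(1,1)$ the Young subgroup $\∑_{(1,1)}$ is trivial, so both sums reduce to exactly the two terms $I\otimes a\otimes b$ and $I\otimes b\otimes a$. I would emphasise that $Tr_A$ is the unnormalised norm $\sum_\sigma\sigma\cdot(-)$, which is defined over any $\F$; replacing it by an orbit-sum normalisation would produce the wrong coefficient on the diagonal $a=b$ and break the matching.
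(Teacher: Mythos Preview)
Your argument is correct but takes a different route from the paper. You invoke the general functor $Tr^*:\Gamma(\P)_{alg}\to\P_{alg}$ (stated just before the proposition) to obtain the level-algebra structure for free, and then identify the resulting binary product with $\phi_{h,(1,1)}$ by chasing it through the chain of isomorphisms $f\leftrightarrow\beta\leftrightarrow\gamma\leftrightarrow\phi$ set up in Sections~\ref{secinv}--\ref{secLEV}. The paper instead works entirely inside the category $Step$: it defines $a*b:=\phi_{h,(1,1)}(a,b)$ and checks by hand, from the axioms \ref{relLperm}--\ref{relLcomp}, that $*$ is commutative (via $\∑_2$-invariance of $h$ and $(1,1)$), bilinear (via \ref{relLsomme}, \ref{relL0}, \ref{relLlambda}), and satisfies the exchange law (by computing $(a*b)*(c*d)=\phi_{g,(1,1,1,1)}(a,b,c,d)$ with $g$ constant equal to $2$ using \ref{relLcomp}, then applying $\∑_4$-invariance). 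Your approach is more conceptual and explains \emph{why} the proposition holds---it is exactly the arity-$2$ shadow of $Tr^*$---while the paper's approach is self-contained within the $Step$ axioms and illustrates concretely how those seven relations encode the level-algebra identities; in particular, the paper never touches $Tr$, $\beta$, or $\gamma$ in this proof.
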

\begin{proof}
 Let $(A,\phi)$ be a $\Gamma(Lev)$-algebra. For all $a,b\in A$, denote by $a*b=\phi_{h,(1,1)}(a,b)$. The map $h$ and the composition $(1,1)$ being stable under the action of $\∑_2$, $*$ is commutative. Moreover, for $a,b,c\in A$ and $\lambda\in\F$, one then has
  \begin{align*}
    (a+\lambda b)*c&\overeg{Relation \ref{relLsomme}}\phi_{h,(0,1,1)}(a,\lambda b,c)+\phi_{h,(1,0,1)}(a,\lambda b,c)\\
    &\overeg{Relation \ref{relL0}}\phi_{h,(1,1)}(\lambda b,c)+\phi_{h,(1,1)}(a,c)\\
    &\overeg{Relation \ref{relLlambda}}\lambda(b*c)+a*c.
  \end{align*}
  Hence $*$ is bilinear. Finally, let $g:[4]\to\N$ be the constant map equal to 2. One has $g=\mu(h\otimes h\otimes h)$. From Relation \ref{relLcomp}, one checks that
  $$(a*b)*(c*d)=\phi_{g,(1,1,1,1)}(a,b,c,d).$$
  The map $g$ and the composition $(1,1,1,1)$ are stable under the action of $\∑_4$, so
  \begin{equation*}
    (a*b)*(c*d)
    =(a*c)*(b*d),
  \end{equation*}
  and so, $*$ is indeed a level algebra multiplication.
\end{proof}
For $\P,\P'$ two operads, any morphism of operads $\varphi:\P\to\P'$ induces a functor $\varphi^*:\P'_{alg}\to \P_{alg}$ (see \cite{LV}, 5.2.4) and, through the norm map, a functor $f^*:\Gamma(\P')_{alg}\to \Gamma(\P)_{alg}$. One then has the following result.
\begin{prop}
  A divided power algebra $(A,\gamma)$ is endowed with a structure of $\Gamma(Lev)$-algebra $(A,\phi)$ such that
  \begin{eqnarray*}
    \phi_{h,\r}:&A^{\times p}&\to A\\
    &(a_1,\dots,a_p)&\mapsto \prod_{i=1}^p\gamma_{r_i}(a_i),
  \end{eqnarray*}
  where the product is given by the commutative algebra structure on $A$, and where, by convention, the terms such that $r_i=0$ are omitted.
\end{prop}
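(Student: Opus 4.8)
The plan is to obtain this $\Gamma(Lev)$-structure by transport along an operad morphism $\varphi\colon Lev\to Com$, via the functor $f^*\colon\Gamma(Com)_{alg}\to\Gamma(Lev)_{alg}$ recalled just above. First I would define $\varphi$ on the generating set operad $\L$ by sending every $I\in\L(n)$ to the generator $X_n$ of $Com(n)$ and extending $\F$-linearly (this is $\F[\cdot]$ of \Cref{foncop} applied to the unique morphism from $\L$ to the commutative set operad). Since $Com(n)$ carries the trivial $\∑_n$-action, $\varphi$ is equivariant; and since $I\circ_iJ$ is again a single basis element while $X_p\circ_iX_q=X_{p+q-1}$, it is compatible with the operadic compositions and units. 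Hence $\varphi$ is a morphism of reduced operads.

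Recalling from \Cref{secCOM} that a divided power algebra is exactly a $\Gamma(Com)$-algebra, the functor $f^*$ equips $(A,\gamma)$ with a $\Gamma(Lev)$-structure whose structure map is the composite $f^{Lev}=f^{Com}\circ\Gamma(\varphi)_A\colon\Gamma(Lev,A)\to\Gamma(Com,A)\to A$. To read off the induced operations I would pass, through \Cref{theoL}, to the category $Step$, where the operation attached to $\r\in\Comp_p(n)$ and $h\in\C_\r$ is $\phi_{h,\r}=\gamma_{[h^P]_\r,\r}$ (see \eqref{eqphi}); the summary of \Cref{secsum} then gives
\[
\phi_{h,\r}(a_1,\dots,a_p)=f^{Lev}\Big(\sum_{\sigma\in\∑_n/\∑_\r}\sigma\cdot\O_{\∑_\r}(h^P)\otimes\sigma\cdot\big(\bigotimes_{i} a_i^{\otimes r_i}\big)\Big).
\]

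The decisive observation is that $\O_{\∑_\r}(h^P)=h^P$: because $h\in\C_\r$ is constant on each block $\r_i$, the partition $h^P$ is fixed by the whole group $\∑_\r$, so its $\∑_\r$-orbit is a single point and the norm of its class is $h^P$ itself. Applying $\Gamma(\varphi)_A$ then replaces $h^P$ by $\varphi(h^P)=X_n$, whence
\[
\phi_{h,\r}(a_1,\dots,a_p)=f^{Com}\Big(\sum_{\sigma\in\∑_n/\∑_\r}\sigma\cdot X_n\otimes\sigma\cdot\big(\bigotimes_{i} a_i^{\otimes r_i}\big)\Big)=\beta_{X_n,\r}(a_1,\dots,a_p)=\prod_{i=1}^p\gamma_{r_i}(a_i),
\]
the last step being \eqref{eqCOM} with $\lambda=1$ (the convention of omitting the factors with $r_i=0$ matching $\gamma_0=1$). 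The main obstacle is precisely this scalar bookkeeping: a priori $\Gamma(\varphi)$ multiplies by the cardinality of the $\∑_\r$-orbit of $h^P$, and the proof hinges on observing that this orbit is trivial exactly because $h$ is constant on the blocks of $\r$, so the coefficient is $1$ and no correction appears.

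As an independent cross-check one can bypass $\varphi$ and verify relations \ref{relLperm}--\ref{relLcomp} of \Cref{defL} directly for $\phi_{h,\r}:=\prod_i\gamma_{r_i}(a_i)$ (note this formula is independent of $h$): \ref{relLperm} uses commutativity of the product, \ref{relLlambda} is \ref{relComlambda}, \ref{relLrepet} is \ref{relComrepet}, \ref{relLsomme} is \ref{relComsomme}, \ref{relLunit} is \ref{relComunit}, and \ref{relLcomp} follows from \ref{relComcomp1}, \ref{relComcomp2} together with the identity $b^{r}=r!\,\gamma_r(b)$, the binomial factors that arise reproducing exactly the combinatorial coefficient $\prod_{i}\frac1{r_i!}\prod_{j}\frac{(r_iq_{ij})!}{(q_{ij}!)^{r_i}}$.
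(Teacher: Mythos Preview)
Your proof is correct and follows essentially the same route as the paper: define the operad morphism $Lev\to Com$ (the paper calls it $pr$), pull back the $\Gamma(Com)$-structure along it, and identify $\phi_{h,\r}$ via \eqref{eqphi} and \eqref{eqCOM}, the key point being that $h\in\C_\r$ forces $h^P$ to be $\∑_\r$-fixed so that $\O_{\∑_\r}(h^P)=h^P$. One small imprecision: the potential scalar you flag does not come from $\Gamma(\varphi)$ itself (which is linear and sends each basis element to $X_n$ without coefficient) but from the orbit sum $\O_{\∑_\r}$; your resolution---that the orbit is a singleton---is exactly right, and your direct verification of \ref{relLperm}--\ref{relLcomp} is a nice extra check the paper does not include.
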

\begin{proof}
  There is an operad morphism $pr:Lev\to Com$ sending $I\in \L(n)$ to $X_n$. This morphism induces a functor $pr^*:\Gamma(Com)_{alg}\to \Gamma(Lev)_{alg}$. If $(A,f)$ is a $\Gamma(Com)$-algebra, then $pr^*(A,f)=(A,f\circ \Gamma(pr,A))$. Let $(A,\gamma)$ be a divided power algebra (as in \Cref{puissdiv}). For $\r\in\Comp_p(n)$, $(a_1,\dots,a_p)\in A^{\times p}$, define
  $$\beta_{X_n,\r}(a_1,\dots,a_p)=\prod_{i\in[p]}\gamma_{r_i}(a_i),$$
  as in \ref{eqCOM}. This gives an object $(A,\beta)$ of $\beta'(Com)$. According to \Cref{deff}, there is a morphism $f:\Gamma(Com,A)\to A$, such that $(A,f)$ is a $\Gamma(Com)$-algebra, that satisfies, for all $\r\in\Comp_p(n)$, $(a_1,\dots,a_p)\in A^{\times p}$:
  $$f\bigg(\sum_{\sigma\in\∑_n/\∑_\r}X_n\otimes\sigma\cdot\Big(\bigotimes_{i=1}^pa_i^{r_i}\Big)\bigg)=\beta_{X_n,\r}(a_1,\dots,a_p).$$
  One then obtains a $\Gamma(Lev)$-algebra $(A,f\circ\Gamma(pr,A))$. Note that, for all $\r\in\Comp_p(n)$, $I\in \L(n)^{\∑_\r}$ and $(a_1,\dots,a_p)\in A^{\times p}$,
  $$f\bigg(\Gamma(pr,A)\Big(\sum_{\sigma\in\∑_n/\∑_\r}\sigma\cdot I\otimes \sigma\cdot\big(\bigotimes_{i=1}^pa_i^{\otimes r_i}\big)\Big)\bigg)=f\bigg(\sum_{\sigma\in\∑_n/\∑_\r}X_n\otimes\sigma\cdot\Big(\bigotimes_{i=1}^pa_i^{r_i}\Big)\bigg)$$
  This also gives an object of $\gamma'(Lev)$ such that, for all $I\in \L(n)$, $\r\in\Comp_p(n)$, $(a_1,\dots,a_p)\in A^{\times p}$,
  $$\gamma_{[I]_\r,\r}(a_1,\dots,a_p)=f\bigg(\Gamma(pr,A)\Big(\sum_{\sigma\in\∑_n/\∑_\r}\sigma\cdot \O_\r(I)\otimes \sigma\cdot\big(\bigotimes_{i=1}^pa_i^{\otimes r_i}\big)\Big)\bigg)$$
  According to the previous Section, for all $\r\in\Comp_p(n)$, $h\in\C_\r$ and $(a_1,\dots,a_p)\in A^{\times p}$, since $h^P\in\L(n)^{\∑_\r}$, one has (see \ref{eqphi})
  \begin{align*}
    \phi_{h,\r}(a_1,\dots,a_p)&=\gamma_{[h^P]_\r,\r}(a_1,\dots,a_p)=f\bigg(\Gamma(pr,A)\Big(\sum_{\sigma\in\∑_n/\∑_\r}\sigma\cdot h^P\otimes \sigma\cdot\big(\bigotimes_{i=1}^pa_i^{\otimes r_i}\big)\Big)\bigg)\\
    &=f\bigg(\sum_{\sigma\in\∑_n/\∑_\r}X_n\otimes\sigma\cdot\Big(\bigotimes_{i=1}^pa_i^{r_i}\Big)\bigg)=\beta_{X_n,\r}(a_1,\dots,a_p)=\prod_{i\in[p]}\gamma_{r_i}(a_i)\qedhere
  \end{align*}
\end{proof}

\subsection{The free divided power level algebra on one generator}\label{SBHsec}
This section aims to give an explicit description of the free $\Gamma(Lev)$-algebra generated by one element. 

\Cref{remlib1} explains why the free $Lev$-algebra and the free $\Gamma(Lev)$-algebra have the same underlying vector space. In \cite{DD}, D. M. Davis shows that the underlying vector space of the free level algebra generated by one element is endowed with a structure of module over the Steenrod algebra. He denote this module by $Y_1$, and this module coincides with the unstable module $X_1$ studied by Carlsson in \cite{GC}. The non-associative multiplication used by Carlsson on $X_1$ coincides with its level algebra product. 

In \Cref{propfreeLEV}, we give a new characterisation of this object, using binary Huffman sequences of \cite{EHP}. \Cref{propstepBHS} describes the free divided level operations on this object.

 \begin{rema}\label{remlib1}
  Let $\M$ be a $\∑$-module such that for all $n\in\N$, $\M(n)$ is equipped with a basis stable under the action of $\∑_n$. According to \Cref{B}, there is, for all $n\in\N$, an isomorphism $\O_{\∑_n}:\M(n)_{\∑_n}\to\M(n)^{\∑_n}$. Denote by $\F x$ the one dimensional $\F$-vector space spanned by $x$. One has
  \begin{equation*}
    \hspace{-8.36726pt}\Gamma(\M,\F x)=\bigoplus_{n\in\N}(\M(n)\otimes (\F x)^{\otimes n})^{\∑_n}
    \cong\bigoplus_{n\in\N}\M(n)^{\∑_n}
    \overset{\bigoplus_{n\in\N}\O_{\∑_n}^{-1}}{\cong}\bigoplus_{n\in\N}\M(n)_{\∑_n}
    \cong\bigoplus_{n\in\N}(\M(n)\otimes (\F x)^{\otimes n})_{\∑_n}.
  \end{equation*}
  In particular, when $\P$ is a reduced operad equipped with a basis stable under the action of the symmetric groups, then the free $\Gamma(\P)$-algebra over one generator is isomorphic, as a vector space, to the free $\P$-algebra over one generator. From \Cref{propfreeLEV}, we readily deduce:
\end{rema}
\begin{defi}\label{defhuf}
  For all $n\in\N$, let $\BHS(n)$ denote the set 
  $$\{u\in \N^\N:\sum_{i}u(i)=n\mbox{ and }\sum_{i\in\N}\frac{u(i)}{2^{i}}=1\}.$$
  A sequence $u\in \BHS(n)$ is called a binary Huffman sequence (following \cite{EHP}, Definition 2). Denote by $\BHS$ the union $\bigsqcup_{n\ge 1}\BHS(n)$. For all $u,v\in \BHS$, set $u\cdot v:=(0,u(0)+v(0),u(1)+v(1),\dots)\in\BHS$.
\end{defi}
\begin{prop}\label{prophuf}
  For all $n>0$, $\∑_n\backslash \L(n)$ is in bijection with $\BHS(n)$.
\end{prop}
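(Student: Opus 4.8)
The plan is to record, for a partition $I=(I_i)_{i\ge0}\in\L(n)$, the sequence of its block sizes $c(I):=(\val{I_i})_{i\ge0}$, and to show that $c$ descends to the bijection $\∑_n\backslash\L(n)\xrightarrow{\sim}\BHS(n)$ we are after.

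First I would check that $c$ indeed lands in $\BHS(n)$. Since $I$ partitions $[n]$, one has $\sum_{i}\val{I_i}=n$, and the defining relation of $\L(n)$ reads $\sum_{i\in\N}\frac{\val{I_i}}{2^i}=1$; these are exactly the two conditions of \Cref{defhuf}. Because the $\∑_n$-action of \Cref{transn} sends $I$ to $(\rho(I_i))_i$ and thus preserves every cardinality $\val{I_i}$, the map $c$ is constant on orbits and factors through a map $\bar c\colon\∑_n\backslash\L(n)\to\BHS(n)$.

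Next I would establish that $\bar c$ is a bijection. For surjectivity I would exhibit an explicit lift: given $u\in\BHS(n)$, take the consecutive intervals $I_i=\{u(0)+\dots+u(i-1)+1,\dots,u(0)+\dots+u(i)\}$, which form a partition of $[n]$ with $\val{I_i}=u(i)$, and whose membership in $\L(n)$ is guaranteed by $\sum_i u(i)2^{-i}=1$; then $c(I)=u$. For injectivity I would invoke the transitivity of the $\∑_n$-action on ordered set-partitions of a fixed type: if $c(I)=c(J)$ then $\val{I_i}=\val{J_i}$ for all $i$, so choosing a bijection $I_i\to J_i$ for each $i$ and assembling these into a single $\rho\in\∑_n$ yields $\rho(I)=J$, whence $[I]=[J]$. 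This identifies the fibres of $c$ with the $\∑_n$-orbits, so $\bar c$ is injective, and together with surjectivity it is the desired bijection.

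The content here is light: the only points requiring any care are that the two conditions defining $\BHS(n)$ are matched simultaneously by $c$, and the (standard) transitivity statement used for injectivity, which is really the crux of the argument rather than a genuine obstacle.
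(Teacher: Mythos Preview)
Your proof is correct and matches the paper's argument essentially line for line: the paper also uses the map $I\mapsto(\val{I_i})_{i\ge0}$, proves surjectivity via the lift by consecutive intervals (which it denotes $\u$), and deduces injectivity from the fact that two partitions with the same block sizes lie in the same $\∑_n$-orbit. Your write-up is simply a bit more explicit about why the map is $\∑_n$-invariant and how the transitivity step works.
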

\begin{proof}
  For all $n\in\N$, let $\pi:\L(n)\to \BHS(n)$ be the map sending $I$ to $(\val{I_0},\val{I_1},\dots)$. The map $\pi$ is surjective because, for $u\in\BHS(n)$, the composition $\u$ is in $\L(n)$, and $\pi(\u)=u$. Moreover, $I,J\in Lev(n)$ satisfy $\pi(I)=\pi(J)$ if and only if there exists $\sigma\in\∑_n$ such that $J=\sigma\cdot I$. So, $\pi$ induces a bijection $\bar\pi:\∑_n\backslash\L(n)\to \BHS(n)$.
\end{proof}
\begin{rema}\label{remhuf}
  Recall that the set operad $\L$ encodes level multiplications on sets. It is easy to see that the level multiplication on $\BHS$ endowed by the identification of \Cref{prophuf} is the multiplication $(u,v)\mapsto u\cdot v$ of \Cref{defhuf}.
\end{rema}
\begin{prop}\label{propfreeLEV}
  The free level algebra on one generator is isomorphic, as an $\F$-vector space, to the vector space spanned by the set of binary Huffman sequences. The level multiplication on $\F[\BHS]$ is spanned by the level multiplication $(u,v)\mapsto u\cdot v$ defined in \Cref{defhuf}. 
\end{prop}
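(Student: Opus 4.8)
The plan is to realise the free level algebra on the generator $x$ as the Schur construction $S(Lev,\F x)$, and then to trace both its underlying vector space and its multiplication through identifications already established in this paper. Recall that, since level algebras are exactly $Lev$-algebras, the free level algebra on $x$ is the free $Lev$-algebra $S(Lev,\F x)$.

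First I would determine the underlying vector space. By definition of the Schur functor,
$$S(Lev,\F x)=\bigoplus_{n\in\N}\big(Lev(n)\otimes(\F x)^{\otimes n}\big)_{\∑_n};$$
as $\∑_n$ acts trivially on $(\F x)^{\otimes n}\cong\F$, the $n$th summand is $Lev(n)_{\∑_n}=\F[\L(n)]_{\∑_n}$. The Lemma of \Cref{B}, applied with $G=\∑_n$ and $X=\L(n)$, gives an isomorphism $\psi_{\∑_n}\colon\F[\L(n)]_{\∑_n}\xrightarrow{\sim}\F[\∑_n\backslash\L(n)]$, and \Cref{prophuf} identifies $\∑_n\backslash\L(n)$ with $\BHS(n)$ through $\bar\pi$. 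Summing over $n\ge1$ then yields the first assertion, $S(Lev,\F x)\cong\bigoplus_{n\ge1}\F[\BHS(n)]=\F[\BHS]$.

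Next I would identify the multiplication. In the free $Lev$-algebra the commutative product is induced by the unique binary generator $\mu_2\in\L(2)$, the partition with both elements at level $1$ (equivalently the constant map $[2]\to\N$ of value $1$, as in \Cref{propsteplev}); for classes represented by $I\in\L(m)$ and $J\in\L(n)$, the product is the class of the operadic composite $(\mu_2\circ_1 I)\circ_{m+1}J\in\L(m+n)$. Reading the composition formula of \Cref{remlev}, grafting $I$ and $J$ beneath the binary root of $\mu_2$ raises the level of every leaf of $I$ and of $J$ by one; hence the binary Huffman sequence of the composite is $\big(0,\val{I_0}+\val{J_0},\val{I_1}+\val{J_1},\dots\big)$. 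Writing $u=\pi(I)$ and $v=\pi(J)$ with $\pi$ as in \Cref{prophuf}, this is exactly $u\cdot v=(0,u(0)+v(0),u(1)+v(1),\dots)$ of \Cref{defhuf}, which is the second assertion and the content of \Cref{remhuf}.

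The one step deserving genuine care is the descent of this product to orbits: I must check that operadic composition in $\L$ induces a well-defined operation on $\∑_n\backslash\L(n)$ and that, transported along $\psi_{\∑_n}$, it coincides with the $\F$-bilinear product spanned by $(u,v)\mapsto u\cdot v$. This holds because $\L$ is a \emph{set} operad, so its composition maps are $\∑$-equivariant and therefore pass to orbits, while $\psi_{\∑_n}$ is the linearisation of the quotient $\L(n)\surj\∑_n\backslash\L(n)$; the bilinearity of the resulting product on $\F[\BHS]$ is then automatic. The remaining verifications—that grafting increments every level by one and that $\pi$ intertwines the two products—are the routine combinatorial checks indicated above.
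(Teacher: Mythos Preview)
Your proof is correct and follows essentially the same route as the paper: the vector-space identification via $S(Lev,\F x)\cong\bigoplus_n Lev(n)_{\∑_n}\cong\F[\BHS]$ using \Cref{prophuf} is exactly the paper's argument, and your explicit computation of the product (tracing $(\mu_2\circ_1 I)\circ_{m+1}J$ through the composition formula of \Cref{remlev}) is precisely the content of \Cref{remhuf}, which the paper simply cites. You have unpacked the references a bit more than the paper does, but the approach is the same.
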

\begin{proof}
  The identification of \Cref{prophuf} linearly extend to an isomorphism $\varphi$ and we have:
  \begin{equation*}
    S(Lev,\F x)=\bigoplus_{n\in\N}Lev(n)_{\∑_n}
    =\bigoplus_{n\in\N}\F[\∑_n\backslash \L(n)]
    \overset{\varphi}{\cong}\bigoplus_{n\in\N}\F[\BHS(n)]
    =\F[\BHS].\qedhere
  \end{equation*}
  Following \Cref{remhuf}, this gives a level algebra isomorphism between $S(Lev,\F x)$ and $\F[\BHS]$ endowed with the multiplication $(u,v)\mapsto u \cdot v$.
\end{proof}
\begin{coro}\label{propfreestep}
  The free $\Gamma(Lev)$-algebra on one generator is isomorphic, as an $\F$-vector space, to the vector space spanned by the set of binary Huffman sequences.
\end{coro}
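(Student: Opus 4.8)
The plan is to obtain the statement immediately by combining \Cref{remlib1} with \Cref{propfreeLEV}, since all the real content is already established. First I would recall the general principle that, for the monad $\Gamma(Lev)$, the free $\Gamma(Lev)$-algebra on one generator has underlying vector space $\Gamma(Lev,\F x)$, where $\F x$ is the one-dimensional space on the generator $x$; likewise the free $Lev$-algebra on one generator (that is, the free level algebra on one generator) has underlying vector space $S(Lev,\F x)$.

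Next I would verify that $Lev$ satisfies the standing hypotheses of \Cref{remlib1}. Since $Lev=\F[\L]$ with $\L$ the set operad of level partitions, each space $Lev(n)$ carries the basis $\L(n)$, which is stable under the $\∑_n$-action by \Cref{transn}; moreover $\L(n)$ is only defined for $n\ge 1$, so $Lev(0)=0$ and $Lev$ is reduced. Hence \Cref{remlib1} applies and provides a vector space isomorphism
\[
\Gamma(Lev,\F x)\cong S(Lev,\F x),
\]
realised by the chain $\bigoplus_{n}\big(Lev(n)\otimes(\F x)^{\otimes n}\big)^{\∑_n}\cong\bigoplus_n Lev(n)^{\∑_n}\overset{\bigoplus_n\O_{\∑_n}^{-1}}{\cong}\bigoplus_n Lev(n)_{\∑_n}\cong S(Lev,\F x)$, whose essential step is the invariant--coinvariant isomorphism $\O_{\∑_n}$ of \Cref{B}, available precisely because $Lev$ is spanned by a set operad.

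Finally I would invoke \Cref{propfreeLEV}, which identifies $S(Lev,\F x)$ with $\F[\BHS]$, the vector space spanned by the binary Huffman sequences. Composing the two isomorphisms yields $\Gamma(Lev,\F x)\cong\F[\BHS]$, which is exactly the claim. I do not expect any genuine obstacle: all the difficulty has been absorbed into \Cref{remlib1} (the invariant--coinvariant comparison for the free algebra on a single generator) and \Cref{propfreeLEV} (the Huffman description of the free level algebra), so the only point to check is that $Lev$ meets their hypotheses, which is immediate from its definition as $\F[\L]$.
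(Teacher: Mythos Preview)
Your proposal is correct and matches the paper's own argument: the corollary is stated without a separate proof, being introduced (at the end of \Cref{remlib1}) by ``From \Cref{propfreeLEV}, we readily deduce,'' which is precisely the combination of \Cref{remlib1} and \Cref{propfreeLEV} that you spell out. Your added verification that $Lev=\F[\L]$ is reduced and has a $\∑_n$-stable basis is the only point to check, and it is immediate.
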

\begin{rema}
  \Cref{propfreeLEV} and \Cref{propfreestep} give two different structures on $\F[\BHS]$: a structure of level algebra, and one of divided power level algebra. The former is given by the level multiplication $(u,v)\mapsto u\cdot v$. The latter is given by the isomorphism $\F[\BHS]\to \Gamma(Lev,\F x)$, that associates $u\in\BHS(n)$ with $\O_{\∑_n}(\u)\otimes x^{\otimes n}$. By \Cref{propsteplev}, this $\Gamma(Lev)$-algebra structure yields another level multiplication on $\F[\BHS]$. We will denote it by $(u,v)\mapsto u*v$.
\end{rema}
\begin{lemm}\label{lemlev}
  The level algebra product given by \Cref{propsteplev} on $\F[\BHS]$ seen as the free $\Gamma(Lev)$-algebra spanned by one generator is given by
  $$u*v=\prod_{j\in\N}\binom{u(j)+v(j)}{u(j)}(u\cdot v)$$
\end{lemm}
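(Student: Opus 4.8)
The plan is to unwind the level product $u*v=\phi_{h,(1,1)}(u,v)$ (with $h(1)=h(2)=1$, as in \Cref{propsteplev}) through the isomorphism $\gamma'(Lev)\cong Step$ and the explicit $\gamma'(Lev)$-structure on the free algebra, and then to feed the result into the composition and repetition relations of \Cref{defgamma'} and \Cref{defgamma}. First I would record how the basis elements and the product read in the coinvariant operations. Writing $\xi:=\tilde\eta_{\F x}(x)$ for the generator, the identification $\F[\BHS]\cong\Gamma(Lev,\F x)$ from the preceding remarks, the formula for $\gamma_{[\cdot]_\r,\r}$ recalled in \Cref{secsum}, and the monad unit law $\tilde\mu\circ\Gamma(\tilde\eta)=id$ together give $u=\gamma_{[\u]_{(n)},(n)}(\xi)$ and $v=\gamma_{[\v]_{(m)},(m)}(\xi)$ for $u\in\BHS(n),v\in\BHS(m)$, where $\u,\v$ are the canonical representatives in $\L(n),\L(m)$. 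Since $\phi_{h,(1,1)}=\gamma_{[h^P]_{(1,1)},(1,1)}$ by \cref{eqphi} and $h^P=(\emptyset,\{1,2\},\emptyset,\dots)\in\L(2)$, this yields $u*v=\gamma_{[h^P]_{(1,1)},(1,1)}(\gamma_{[\u]_{(n)},(n)}(\xi),\gamma_{[\v]_{(m)},(m)}(\xi))$.

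Next I would apply Relation \ref{rel'comp} with $\r=(1,1)$, $\q_1=(n)$, $\q_2=(m)$ and $r_1=r_2=1$. As $\r\diamond(\q_1,\q_2)=\gamma_1((n))\otimes\gamma_1((m))=(n,m)$, it gives
$$u*v=\Big(\frac{\val{Stab_{(n,m)}(w)}}{\val{Stab_{(1,1)}(h^P)}\,\val{Stab_{(n)}(\u)}\,\val{Stab_{(m)}(\v)}}\Big)\gamma_{[w]_{(n,m)},(n,m)}(\xi,\xi),$$
where $w=\mu(h^P\otimes(\u\otimes\v))$. The crucial geometric input is to identify $w$: using the composition formula for $\L'$ recalled in \Cref{remlev}, inserting $\u$ at input $1$ and $\v$ at input $2$ of $h^P$ raises every level of $\u$ and of $\v$ by $h(1)=h(2)=1$, so $w$ has Huffman type $\pi(w)=(0,u(0)+v(0),u(1)+v(1),\dots)=u\cdot v$. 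In particular $[w]_{(n+m)}=[\underline{u\cdot v}]_{(n+m)}$ and $\val{Stab_{(n+m)}(w)}=\prod_{j}(u(j)+v(j))!$, the level-$0$ factor being $0!=1$.

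Finally I would merge the two equal arguments via Relation \ref{rel'repet}, taken with $\q=(2)$ so that $\q\rhd(n,m)=(n+m)$:
$$\gamma_{[w]_{(n,m)},(n,m)}(\xi,\xi)=\frac{\val{Stab_{(n+m)}(w)}}{\val{Stab_{(n,m)}(w)}}\,\gamma_{[w]_{(n+m)},(n+m)}(\xi),$$
and then $\gamma_{[w]_{(n+m)},(n+m)}(\xi)=u\cdot v$ as a basis element, again by the formula of \Cref{secsum} and the unit law. Substituting, the factor $\val{Stab_{(n,m)}(w)}$ cancels, and with $\val{Stab_{(1,1)}(h^P)}=1$, $\val{Stab_{(n)}(\u)}=\prod_j u(j)!$ and $\val{Stab_{(m)}(\v)}=\prod_j v(j)!$ one is left with $u*v=\big(\prod_j\frac{(u(j)+v(j))!}{u(j)!\,v(j)!}\big)(u\cdot v)=\prod_j\binom{u(j)+v(j)}{u(j)}(u\cdot v)$, as claimed.

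The main obstacle is the bookkeeping of the stabilizer orders together with the correct computation of the operadic composite $w$ and its Huffman type: one must verify that plugging $\u,\v$ into the two level-$1$ inputs of $h^P$ shifts both level profiles up by one and produces exactly the shifted, merged sequence $u\cdot v$, and then track the cancellation of $\val{Stab_{(n,m)}(w)}$ between the composition and repetition coefficients, taking care of the vanishing level-$0$ contribution. Everything else is a direct substitution into Relations \ref{rel'comp} and \ref{rel'repet}.
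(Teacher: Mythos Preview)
Your proof is correct but follows a genuinely different route from the paper's. The paper unwinds $\phi_{h,(1,1)}$ through $\gamma$ and $\beta$ back to the explicit monad multiplication $\tilde\mu$ and then applies the formula of \Cref{propA} directly, computing the sum over $\∑_{m+n}/(\∑_1\wr\∑_{\u}\times\∑_1\wr\∑_{\v})$ by hand. You instead stay entirely inside the axiomatic framework of $\gamma'(Lev)$: you first rewrite $u$ and $v$ as $\gamma_{[\u]_{(n)},(n)}(\xi)$ and $\gamma_{[\v]_{(m)},(m)}(\xi)$ via the unit law, then apply the composition relation \ref{rel'comp} and the repetition relation \ref{rel'repet}, letting the stabilizer coefficients do the work. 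Both approaches correctly identify the Huffman type of $w=\mu(h^P\otimes\u\otimes\v)$ as $u\cdot v$ and end with the same binomial product; the difference is that your argument shows the formula is forced by the \emph{relations} of $\gamma'(Lev)$ without reopening $\tilde\mu$, whereas the paper's computation is self-contained and does not invoke \ref{rel'comp} or \ref{rel'repet}. Your route generalises more transparently (it is essentially the same mechanism that yields \Cref{propstepBHS}), while the paper's route is shorter since no intermediate cancellation of $\val{Stab_{(n,m)}(w)}$ is needed.
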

\begin{proof}
  Let $u\in \BHS(n)$, $v\in \BHS(m)$. \Cref{propsteplev} implies that the level algebra product on $\F[\BHS]$ as a $\Gamma(Lev)$-algebra is given by:
  $$u*v=\phi_{h,(1,1)}(u,v)$$
  with $h:[2]\to\N$ constant and equal to $1$.

  Unwinding the definitions, one gets:
   \begin{align*}
    \phi_{h,(1,1)}(u,v)&=\phi_{h,(1,1)}(u,v)\\
    &=\gamma_{[h^P]_{(1,1)}},(1,1)(u,v)\\
    &\overeg{\ref{eqgamma}}\beta_{\O_{\∑_{(1,1)}}(h^P),(1,1)}(u,v)\\
    &=\tilde\mu\Big(\sum_{\sigma\in\∑_2/\∑_{(1,1)}}\sigma\cdot h^P\otimes\sigma\cdot\big((\O_{\∑_n}(\u)\otimes x^{\otimes n})^{\otimes 1}\otimes(\O_{\∑_m}(\v)\otimes x^{\otimes m})^{\otimes 1}\big)\Big)
  \end{align*}
  According to \Cref{A}, one then gets:
  \begin{align*}
    \phi_{h,(1,1)}(u,v)&=\sum_{\sigma\in\∑_{m+n}/\∑_{1}\wr\∑_{\u}\times\∑_{1}\wr\∑_{\v}}\sigma\cdot\mu(h^P\otimes u^{\otimes 1}\otimes v^{\otimes 1})\otimes \sigma\cdot x^{\otimes m+n}\\
    &=\frac{\val{Stab\big(\mu(h^P\otimes \u\otimes\v)\big)}}{\val{\∑_{\u}}\val{\∑_{\v}}}\O_{\mu(h^P\otimes \u\otimes \v)}\big(\mu(h^P\otimes \u\otimes \v)\big)\otimes x^{m+n}\\
    &=\prod_{j\in\N}\binom{u(j)+v(j)}{u(j)}(u\cdot v)\qedhere
  \end{align*}
\end{proof}
The following lemma shows that $\F[\BHS]$ is indeed generated, as a $\Gamma(Lev)$-algebra, by one element:
\begin{lemm}
  For all $u\in \BHS(n)$, one has:
  $$u=\phi_{\u^f,u}(\underset{o(u)+1}{\underbrace{1_\L,\dots,1_\L}}),$$
  where $1_\L=(1,0,0,\dots)\in\BHS(1)$ is the element associated with $1_\L\otimes x^{\otimes 1}$.
\end{lemm}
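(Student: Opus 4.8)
The plan is to unwind the definition of $\phi$ and reduce the statement to a single application of the multiplication formula of \Cref{propA}. Write $\u=(u(0),\dots,u(o(u)))$ for the composition of $n$ into $o(u)+1$ parts that serves as the second index (so $r_i=u(i)$), and recall that $\u^f\in\L'(n)$ is the map sending the $i$-th block to $i$, whose associated partition $(\u^f)^P$ is exactly the canonical partition $\u\in\L(n)$. By \Cref{eqphi} and \Cref{eqgamma} one has $\phi_{\u^f,\u}=\gamma_{[\u]_\u,\u}=\beta_{\O_{\∑_\u}(\u),\u}$. Since the Young subgroup $\∑_\u$ fixes the partition $\u$, we have $Stab_{\∑_\u}(\u)=\∑_\u$, hence $\O_{\∑_\u}(\u)=\u$. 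Recalling that the free $\Gamma(Lev)$-algebra $\Gamma(Lev,\F x)$ carries the monad multiplication $\tilde\mu$ as structure map $f$, and that the generator $1_\L$ corresponds to $1_\L\otimes x^{\otimes1}$, the right-hand side becomes
$$\tilde\mu\left(\sum_{\sigma\in\∑_n/\∑_\u}\sigma\cdot\u\otimes\sigma\cdot\Big(\bigotimes_{i}(1_\L\otimes x)^{\otimes u(i)}\Big)\right).$$

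The second step is to evaluate this using \Cref{propA}. Every inner operation here is the operadic unit $x_i=1_\L\in\L(1)$, so $m_i=1$, $\q_i=(1)$, $\_{v_i}=x$, and the wreath product degenerates: $\prod_i\∑_{r_i}\wr\∑_{\q_i}=\prod_i\∑_{u(i)}\wr\∑_1=\∑_\u$, with $M=\sum_i u(i)=n$. The formula of \Cref{propA} then yields
$$\sum_{\tau\in\∑_n/\∑_\u}\mu\big(\u\otimes[\tau\otimes 1_\L^{\otimes n}]\big)\otimes\tau\cdot x^{\otimes n}.$$
Because all factors of $x^{\otimes n}$ are the same generator, $\tau\cdot x^{\otimes n}=x^{\otimes n}$. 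The key point is that $\mu(\u\otimes[\tau\otimes 1_\L^{\otimes n}])$ returns a permuted copy of $\u$: composing $\u$ with $n$ copies of the unit gives back $\u$ by the unit axiom, and the permutation carried by the induced-representation class acts by equivariance, so that as $\tau$ runs over coset representatives of $\∑_n/\∑_\u$ these terms run bijectively over the orbit $\Omega_{\∑_n}(\u)$.

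Combining the two steps, the expression equals $\big(\sum_{\tau\in\∑_n/\∑_\u}\tau\cdot\u\big)\otimes x^{\otimes n}$. Since $Stab_{\∑_n}(\u)=\∑_\u$, the sum $\sum_{\tau\in\∑_n/\∑_\u}\tau\cdot\u$ is precisely $\sum_{y\in\Omega_{\∑_n}(\u)}y=\O_{\∑_n}(\u)$ in the sense of \Cref{B}. Thus the right-hand side is $\O_{\∑_n}(\u)\otimes x^{\otimes n}$, which is exactly the element identified with $u\in\BHS(n)$ under the isomorphism $\F[\BHS]\cong\Gamma(Lev,\F x)$ recalled before \Cref{lemlev}. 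This establishes $u=\phi_{\u^f,u}(1_\L,\dots,1_\L)$.

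The main obstacle is the bookkeeping in the second step: one must check that the wreath products and induced-representation indices of \Cref{propA} genuinely collapse to the Young subgroup $\∑_\u$ once every inner factor is the unit, and justify the identity $\mu(\u\otimes[\tau\otimes 1_\L^{\otimes n}])=\tau\cdot\u$ from the unit and equivariance axioms of the operad. It is also worth noting that parts with $u(i)=0$ cause no difficulty: such a factor contributes the empty tensor $1_\L^{\otimes 0}$, in accordance with \ref{relL0}, so that the argument count $o(u)+1$ is respected while the computation above is unaffected.
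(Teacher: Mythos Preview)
Your proof is correct and follows essentially the same approach as the paper: unwind $\phi_{\u^f,u}$ to a $\tilde\mu$-expression, apply \Cref{propA} with all inner operations equal to the operadic unit so that the wreath products collapse to $\∑_\u$, and then recognise the resulting orbit sum as $\O_{\∑_n}(\u)\otimes x^{\otimes n}$. Your write-up is somewhat more explicit than the paper's terse version---in particular your justification of $\O_{\∑_\u}(\u)=\u$ and of $\mu(\u\otimes[\tau\otimes 1_\L^{\otimes n}])=\tau\cdot\u$ via the unit and equivariance axioms---but the underlying argument is identical.
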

\begin{proof}
  Following the computations of the proof of \Cref{lemlev}, one has:
  \begin{align*}
    \phi_{\u^f,u}(\underset{o(u)+1}{\underbrace{1_\L,\dots,1_\L}})&=\tilde\mu\bigg(\sum_{\sigma\in\∑_n/\∑_{\u}}\sigma\cdot\O_{\∑_{\u}}(\u)\otimes \sigma\cdot\Big(\bigotimes_{i=0}^{o(u)}(1_{\L}\otimes x)^{\otimes u(i)}\Big)\bigg)\\
    &=\sum_{\sigma\in\∑_n/\prod_{i=1}^p\∑_{u(i)}\wr\∑_1}\sigma\cdot\mu(\u\otimes 1_\L^{\otimes n})\otimes x^{\otimes n}\\
    &=\O_{\∑_n}(\u)\otimes x^{\otimes n}=u\qedhere
  \end{align*}
\end{proof}
Finally, we get:
\begin{prop}\label{propstepBHS}
  The free divided power level algebra is the vector space $\F[\BHS]$ equipped with the polynomial operations:
$$\phi_{h,\r}(u_1,\dots,u_p)=\frac{\prod_{l\in\N}\Big(\sum_{j\in[p]}r_ju_j(l-k_j)\Big)!}{\prod_{i=1}^pr_i!\prod_{k\in\N}(u_i(k))!^{r_i}}u,$$
with $\{k_i\}=h(\r_i)$, $u=\Big(\sum_{j\in[p]}r_ju_j(l-k_j)\Big)_{l\in\N}$, and where, for $x<0$, $u_i(x)=0$.
\end{prop}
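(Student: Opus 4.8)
The plan is to compute the left-hand side directly, following the method of \Cref{lemlev} but for a general operation $\phi_{h,\r}$, and then to match the resulting scalar and Huffman sequence with the claimed formula. Throughout I identify each $u_i\in\BHS(m_i)$ (where $m_i=\sum_k u_i(k)$) with $\O_{\∑_{m_i}}(\underline{u_i})\otimes x^{\otimes m_i}\in\Gamma(Lev,\F x)$, as in \Cref{propfreeLEV} and \Cref{propfreestep}, where $\underline{u_i}\in\L(m_i)$ is the canonical partition with $\pi(\underline{u_i})=u_i$ (notation of \Cref{prophuf}), so that $\∑_{\underline{u_i}}=Stab(\underline{u_i})$ has order $\prod_k u_i(k)!$.

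First I would unwind the definitions. Since $h\in\C_\r$, the function $h$ is constant on each block $\r_i$, hence $\∑_\r\subseteq Stab(h^P)$ and so $\O_{\∑_\r}(h^P)=h^P$. Using \eqref{eqphi} and \eqref{eqgamma} together with the description of $\beta_{*,*}$ on the free algebra recalled in \Cref{secsum}, and writing $n=\sum_i r_i$,
\[
  \phi_{h,\r}(u_1,\dots,u_p)=\gamma_{[h^P]_\r,\r}(u_1,\dots,u_p)=\tilde\mu\bigg(\sum_{\sigma\in\∑_n/\∑_\r}\sigma\cdot h^P\otimes\sigma\cdot\Big(\bigotimes_{i=1}^p u_i^{\otimes r_i}\Big)\bigg).
\]
I would then apply the explicit formula for $\tilde\mu$ from \Cref{A}, exactly as in the proof of \Cref{lemlev}. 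Setting $M=\sum_i r_i m_i$ and $w:=\mu\big(h^P\otimes\bigotimes_{i=1}^p\underline{u_i}^{\otimes r_i}\big)\in\L(M)$, this yields
\[
  \phi_{h,\r}(u_1,\dots,u_p)=\sum_{\tau\in\∑_M/\prod_{i=1}^p\∑_{r_i}\wr\∑_{\underline{u_i}}}\tau\cdot w\otimes\tau\cdot x^{\otimes M}.
\]
As $x^{\otimes M}$ is $\∑_M$-invariant, $\tau\cdot x^{\otimes M}=x^{\otimes M}$ and the factor $x^{\otimes M}$ pulls out of the sum.

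The core of the argument is to identify $w$ and evaluate the coset sum. I would analyse the level-operad composition defining $w$ using \Cref{remlev}: plugging $\underline{u_i}$ into a slot of $h^P$ of height $k_i$, where $\{k_i\}=h(\r_i)$, shifts the heights of $\underline{u_i}$ up by $k_i$, so each of the $r_i$ copies of $\underline{u_i}$ contributes $u_i(l-k_i)$ elements at height $l$. Summing over the blocks gives $\val{w^{-1}(l)}=\sum_{j\in[p]}r_j u_j(l-k_j)$ (with the convention $u_j(x)=0$ for $x<0$), whence $\pi(w)=u=\big(\sum_{j}r_j u_j(l-k_j)\big)_{l\in\N}$. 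Since $\prod_{i=1}^p\∑_{r_i}\wr\∑_{\underline{u_i}}\subseteq Stab(w)$ (the stability condition making Relation \ref{rel'comp} well defined, cf.\ \Cref{remLcomp}), the sum over cosets collapses to an orbit sum,
\[
  \sum_{\tau\in\∑_M/\prod_i\∑_{r_i}\wr\∑_{\underline{u_i}}}\tau\cdot w=\frac{\val{Stab_{\∑_M}(w)}}{\val{\prod_{i=1}^p\∑_{r_i}\wr\∑_{\underline{u_i}}}}\,\O_{\∑_M}([w]),
\]
and, as $w$ lies in the same $\∑_M$-orbit as $\underline{u}$, the element $\O_{\∑_M}([w])\otimes x^{\otimes M}$ is precisely the image of $u\in\BHS(M)$ under the identification $\F[\BHS]\cong\Gamma(Lev,\F x)$.

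Finally I would compute the scalar. The stabiliser of the ordered partition $w$ is $\prod_{l\in\N}\∑_{w^{-1}(l)}$, of order $\prod_{l}\big(\sum_j r_j u_j(l-k_j)\big)!$, while the wreath product has order $\prod_{i=1}^p r_i!\,\big(\prod_k u_i(k)!\big)^{r_i}=\prod_i r_i!\prod_k u_i(k)!^{r_i}$; dividing gives exactly the claimed coefficient, and multiplying by $u$ completes the proof. The main obstacle is the identification of $w$: one must carry out the height-shift bookkeeping of the level-operad composition carefully, including the boundary convention $u_i(x)=0$ for $x<0$, and verify the containment $\prod_i\∑_{r_i}\wr\∑_{\underline{u_i}}\subseteq Stab(w)$ that licenses passing from the sum over cosets to a scalar multiple of $\O_{\∑_M}([w])$. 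The remaining steps are routine once \Cref{A} and \Cref{lemlev} are in hand.
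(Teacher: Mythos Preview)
Your proposal is correct and follows essentially the same approach as the paper's own proof: both unwind $\phi_{h,\r}$ to $\tilde\mu$ of an element in $\Gamma(Lev,\Gamma(Lev,\F x))$, apply the explicit formula for $\tilde\mu$ from \Cref{A}, and then identify the resulting coset sum as $\dfrac{\val{Stab(w)}}{\val{\prod_i\∑_{r_i}\wr\∑_{\underline{u_i}}}}\,\O_{\∑_M}(w)\otimes x^{\otimes M}$, computing the stabiliser and the composition $w=\mu(h^P\otimes\bigotimes_i\underline{u_i}^{\otimes r_i})$ via the height-shift description. Your write-up is in fact slightly more explicit than the paper's in justifying $\O_{\∑_\r}(h^P)=h^P$ and the containment $\prod_i\∑_{r_i}\wr\∑_{\underline{u_i}}\subseteq Stab(w)$.
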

\begin{proof}

 Let $\r\in \Comp_p(n)$, $h\in\C_\r$ and for all $i\in[p]$, let $u_i\in \BHS(m_i)$. Following the computations of the proof of \Cref{lemlev}, one has:
  \begin{align*}
    \phi_{h,\r}(u_1,\dots,u_p)=\tilde\mu\bigg(\sum_{\sigma\in\∑_n/\∑_\r}\sigma\cdot\O_\r(h^P)\otimes\sigma\cdot\Big(\bigotimes_{i=1}^p(\O_{\∑_{m_i}}(\u_i)\otimes x^{\otimes m_i})^{\otimes r_i}\Big)\bigg)
  \end{align*}
  Denote by $M=\sum_{i=1}^pr_im_i$. According to \Cref{A}, one then gets:
  \begin{multline*}
    \phi_{h,\r}(u_1,\dots,u_p)=\sum_{\sigma\in\∑_M/\prod_{i=1}^p\∑_{r_i}\wr\∑_{\u_i}}\sigma\cdot\mu(h^P\otimes \u_1^{\otimes r_1}\otimes\dots\otimes \u_p^{\otimes r_p})\otimes \sigma\cdot x^{\otimes M}\\
    =\sum_{\sigma\in Stab(\mu(h^P\otimes \u_1^{\otimes r_1}\otimes\dots\otimes \u_p^{\otimes r_p}))/\prod_{i=1}^p\∑_{r_i}\wr\∑_{\u_i}}\sigma\cdot\O_{\∑_M}(\mu(h^P\otimes \u_1^{\otimes r_1}\otimes\dots\otimes \u_p^{\otimes r_p})\otimes \sigma\cdot x^{\otimes M})\\
    =\frac{\val{Stab(\mu(h^P\otimes \u_1^{\otimes r_1}\otimes\dots\otimes \u_p^{\otimes r_p}))}}{\prod_{i=1}^pr_i!\prod_{k\in\N}(u_i(k)))!^{r_i}}\O_{\∑_M}(\mu(h^P\otimes \u_1)^{\otimes r_1}\otimes\dots\otimes \u_p)^{\otimes r_p})\otimes x^{\otimes M}
  \end{multline*}
  And, since $\mu\bigg(h^P\otimes\Big(\bigotimes_{i\in[p]}\u_i^{\otimes r_i}\Big)\bigg)=\Big(\sum_{j\in[p]}r_ju_j(l-k_j)\Big)_{l\in\N}$, one gets the expected result.
  \end{proof}

  Let us give an example. Let $h:[2]\to\N$ be constant and equal to $1$. Then, as explained in \Cref{propsteplev} and \Cref{lemlev}, $\phi_{h,(1,1)}$ plays the role of the level algebra multiplication in $\F[\BHS]$ seen as the free $\Gamma(Lev)$-algebra on one generator, and for all $u\in \BHS(n)$, one has
  $$u*u=\phi_{h,(1,1)}(u,u)=\prod_{j\in\N}\binom{2u(j)}{u(j)}(u\cdot u)$$
  Note that the coefficient $\prod_{j\in\N}\binom{2u(j)}{u(j)}$ is even. Using \Cref{propstepBHS}, one also has:
  
    $$\phi_{h,(2)}(u)=\frac{\prod_{l\in\N}(2u(l-1))}{2!\prod_{k\in\N}(u(k))!^2}(u\cdot u)=\frac{1}{2}\prod_{j\in\N}\binom{2u(j)}{u(j)}(u\cdot u)$$

  In this way, $\phi_{h,(2)}$ plays the role of a divided square in any $\Gamma(Lev)$-algebra.


\appendix
\section{Proof of \texorpdfstring{\Cref{propA}}{}}\label{A}
This section aims to clarify the product of the monad $\Gamma(\P)$, when $\P$ is a reduced operad. This is the result of \Cref{propA}.

Let us write the proposition again. Let $\P$ be a reduced operad and $V$ be an $\F$-vector space. Denote by $\mu:\P\circ\P\to\P$ the multiplication of $\P$ and by $\tilde\mu:\Gamma(\P)\circ\Gamma(\P)\to\Gamma(\P)$ the multiplication of the monad $\Gamma(\P)$. Consider an element $\t\in\Gamma(\P,\Gamma(\P,V))$ of the form
\begin{equation*}
  \t=\sum_{\sigma\in \∑_n/\∑_\r}\sigma\cdot x\otimes \sigma\cdot\bigg(\bigotimes_{i=1}^p\Big(\sum_{\sigma_i\in\∑_{m_i}/\∑_{\q_i}}\sigma_i\cdot x_i\otimes \sigma_i\cdot\_{v_i}\Big)^{\otimes r_i}\bigg),
\end{equation*}
with $\r\in\Comp_p(n)$, $x\in \P(n)^{\∑_\r}$, and for all $i\in[p]$, $\q_i\in\Comp_{k_i}(m_i)$, $x_i\in\P(m_i)^{\∑_{\q_i}}$ and $\_{v_i}\in (V^{\otimes m_i})^{\∑_{\q_i}}$.
\begin{prop}
  One has
\begin{equation*}
 \tilde\mu_V(\t)=\sum_{\tau\in\∑_{M}/\prod_{i=1}^p\∑_{r_i}\wr\∑_{\q_i}}\mu(x\otimes[\tau\otimes x_1^{\otimes r_1}\otimes\dots\otimes x_p^{\otimes r_p}]_{\prod_{i}\∑_{m_i}^{\times r_i}})\otimes \tau\cdot(\_{v_1}^{\otimes r_1}\otimes\dots\otimes\_{v_p}^{\otimes r_p}),
\end{equation*}
where $M=r_1m_1+\dots+r_pm_p$, and 
$$[\tau\otimes x_1^{\otimes r_1}\otimes\dots\otimes x_p^{\otimes r_p}]_{\prod_{i}\∑_{m_i}^{\times r_i}}\in \Ind_{\prod_i\∑_{m_i}^{\times r_i}}^{\∑_M}\bigotimes_{i}\P(m_i)^{\otimes r_i}.$$
\end{prop}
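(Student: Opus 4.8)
The plan is to deduce the formula for $\tilde\mu$ from the much simpler monad multiplication of the Schur functor $S(\P)$, transported along the norm map. Recall that for a reduced operad $\P$ the trace map $Tr_\P\colon S(\P)\to\Gamma(\P)$ is an isomorphism of monads (cited above from \cite{BF}), and that each component is exactly the orbit--sum isomorphism $\O_{\∑_k}$ of \Cref{B}. Writing $m_V\colon S(\P,S(\P,V))\to S(\P,V)$ for the multiplication of the monad $S(\P)$, the monad--morphism property of $Tr_\P$ yields the commuting square
\begin{equation*}
\diag{
S(\P,S(\P,V))\ar[r]^{m_V}\ar[d]_{(Tr\ast Tr)_V}&S(\P,V)\ar[d]^{Tr_{\P,V}}\\
\Gamma(\P,\Gamma(\P,V))\ar[r]_{\tilde\mu_V}&\Gamma(\P,V)
}
\end{equation*}
where $(Tr\ast Tr)_V=Tr_{\P,\Gamma(\P,V)}\circ S(\P,Tr_{\P,V})$. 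Since $Tr_\P$ is invertible, $\tilde\mu_V$ is completely determined by $m_V$ and the norm maps. On the Schur side $m_V$ is given by operadic composition: on a monomial class it reads
\[
m_V\big([\,y\otimes(z_1\otimes\_{w_1})\otimes\dots\otimes(z_k\otimes\_{w_k})\,]\big)=\big[\,\mu(y\otimes z_1\otimes\dots\otimes z_k)\otimes\_{w_1}\otimes\dots\otimes\_{w_k}\,\big],
\]
the composite $\mu(y\otimes z_1\otimes\dots\otimes z_k)$ being the operad structure map read off the induced representation $\Ind_{\prod_j\∑_{m_j}}^{\∑_M}\bigotimes_j\P(m_j)$.

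First I would exhibit $\t$ as coming from $(Tr\ast Tr)_V$. The inner factor $\sum_{\sigma_i\in\∑_{m_i}/\∑_{\q_i}}\sigma_i\cdot x_i\otimes\sigma_i\cdot\_{v_i}$ is an orbit sum over $\∑_{m_i}$ of the class of $x_i\otimes\_{v_i}\in S(\P,V)$, so that applying $S(\P,Tr_{\P,V})$ to the monomial class of $x\otimes\bigotimes_i(x_i\otimes\_{v_i})^{\otimes r_i}$ reproduces exactly these inner factors, and the outer application of $Tr_{\P,\Gamma(\P,V)}$ produces the outer sum over $\∑_n/\∑_\r$. Thus $\t=(Tr\ast Tr)_V(\mathbf s)$ for $\mathbf s$ the class of $x\otimes(x_1\otimes\_{v_1})^{\otimes r_1}\otimes\dots\otimes(x_p\otimes\_{v_p})^{\otimes r_p}$, and by the square $\tilde\mu_V(\t)=Tr_{\P,V}(m_V(\mathbf s))$.

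Next I would compute $m_V(\mathbf s)$ by the displayed rule, with $y=x$ and the $n=r_1+\dots+r_p$ inner operations equal to $x_1$ (with multiplicity $r_1$), \dots, $x_p$ (with multiplicity $r_p$). This gives the class, at arity $M=\sum_i r_im_i$, of $\mu\big(x\otimes\bigotimes_i x_i^{\otimes r_i}\big)\otimes\big(\bigotimes_i\_{v_i}^{\otimes r_i}\big)$, the operadic composite being read off $\Ind_{\prod_i\∑_{m_i}^{\times r_i}}^{\∑_M}\bigotimes_i\P(m_i)^{\otimes r_i}$ --- which is precisely where the bracket $[\tau\otimes x_1^{\otimes r_1}\otimes\dots\otimes x_p^{\otimes r_p}]_{\prod_i\∑_{m_i}^{\times r_i}}$ in the target originates. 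Applying $Tr_{\P,V}=\O_{\∑_M}$ turns this into a sum of $\∑_M$--translates, and by equivariance of $\mu$ each translate distributes as $\mu(x\otimes[\tau\otimes\bigotimes_i x_i^{\otimes r_i}])\otimes\tau\cdot(\bigotimes_i\_{v_i}^{\otimes r_i})$, matching the summand of the target.

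The main obstacle is the coset bookkeeping pinning down the index set $\∑_M/\prod_{i=1}^p\∑_{r_i}\wr\∑_{\q_i}$. Two subgroups must be kept apart and reconciled: the full block symmetry $\prod_i\∑_{m_i}^{\times r_i}$ used to induce the operad element, and the smaller group $\prod_i\∑_{r_i}\wr\∑_{\q_i}$ indexing the global sum, in which the wreath factor $\∑_{r_i}$ records the outer permutations of the $r_i$ equal blocks (the cosets $\∑_n/\∑_\r$) while $\∑_{\q_i}$ records the chosen inner symmetrisations $\∑_{m_i}/\∑_{\q_i}$. Moreover $\∑_\r$ and $\∑_{\q_i}$ need not be full stabilisers, so a norm map taken over full stabilisers would miscount; the point is that, performed consistently with the chosen subgroups throughout, the stepwise norm (inner over $\∑_{m_i}/\∑_{\q_i}$, then outer over $\∑_n/\∑_\r$) flattens into a single norm over $\∑_M/\prod_i\∑_{r_i}\wr\∑_{\q_i}$. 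Verifying that the multiplicities produced by the two levels of symmetrisation agree with this single quotient is the combinatorial core, and rests on the naturality and multiplicativity of the maps $\O$ established in \Cref{B} and \Cref{C}.
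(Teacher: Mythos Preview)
Your argument rests on the claim that, for a reduced operad $\P$, the natural transformation $Tr_\P\colon S(\P)\to\Gamma(\P)$ is an isomorphism of monads. This is false outside characteristic~$0$. The paper records precisely this: for $\P$ reduced, $Tr_\P$ is a \emph{morphism} of monads, but it is an \emph{isomorphism} only when $\operatorname{char}\F=0$. You have conflated two distinct trace maps. The one that \emph{is} invertible for reduced $\P$ is the $\∑$-module-level map $Tr_{\P,\P}\colon\P\circ\P\to\P\tilde\circ\P$; the one you invoke, $Tr_{\P,V}\colon S(\P,V)\to\Gamma(\P,V)$ for an arbitrary vector space $V$, is not. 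Concretely, your proposed preimage $\mathbf{s}=[x\otimes\bigotimes_i(x_i\otimes\_{v_i})^{\otimes r_i}]$ satisfies $Tr_{\P,V}([x_i\otimes\_{v_i}])=\lvert\∑_{\q_i}\rvert\cdot\t_i$ rather than $\t_i$ itself, so in positive characteristic $\t$ is typically not in the image of $(Tr\ast Tr)_V$ and your computation never gets off the ground. Your further identification of $Tr$ with the orbit--sum map $\O_{\∑_k}$ of \Cref{B} is also wrong: $\O_G$ is defined only for permutation modules and differs from the norm map by stabiliser orders.

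The paper avoids this by working directly with the definition of $\tilde\mu_V$, which factors through the \emph{invertible} map $Tr_{\P,\P}^{-1}$ (applied after having separated the inner $\P$-variables from the $V$-variables via the isomorphism $\varphi\colon(\P(n)\otimes\Gamma(\P,V)^{\otimes n})^{\∑_n}\to(\P(n)\otimes\Gamma(\P^{\otimes n},V))^{\∑_n}$). One then tracks $\t$ through each arrow of that composite explicitly. The combinatorial heart, which you correctly isolate at the end, is handled by decomposing each $\tau\in\∑_M/\prod_i\∑_{\q_i}^{\times r_i}$ as $\tau_1\tau_2^*$ with $\tau_2\in\∑_\r$ acting by blocks of sizes $m_i$ and $\tau_1\in\∑_M/\prod_i\∑_{r_i}\wr\∑_{\q_i}$; the $\tau_2$ is absorbed into the outer $\∑_n$-sum because $x$, $\bigotimes_i x_i^{\otimes r_i}$ and $\bigotimes_i\_{v_i}^{\otimes r_i}$ are all $\tau_2$-invariant. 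That is what produces the wreath-product indexing, and it requires no invertibility of $Tr_{\P,V}$.
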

\begin{proof}
  The multiplication $\tilde{\mu}_V:\Gamma(\P,\Gamma(\P,V))\to\Gamma(\P,V)$ is the composition:
\begin{equation*}
  \hspace{-20pt}\diag{
  \bigoplus_{n\ge 0,M\ge 0}(\P(n)\otimes\P^{\otimes n}(M)\otimes V^{\otimes M})^{\∑_n,\∑_M}\ar[r]&\bigoplus_{M\ge0}((\bigoplus_{n\ge 0}\P(n)\otimes\P^{\otimes n}(M))^{\∑_n}\otimes V^{\otimes M})^{\∑_M}\ar[d]^{Tr^{-1}}\\
  \bigoplus_{n\ge 0}(\P(n)\otimes\Gamma(\P^{\otimes n},V))^{\∑_n}\ar[u]&\bigoplus_{M\ge0}((\bigoplus_{n\ge 0}\P(n)\otimes\P^{\otimes n}(M))_{\∑_n}\otimes V^{\otimes M})^{\∑_M}\ar[d]^{\mu}\\
   \bigoplus_{n\ge 0}(\P(n)\otimes \Gamma(\P,V)^{\otimes n})^{\∑_n}\ar[u]_{\varphi}&(\bigoplus_{M\ge 0}\P(M)\otimes V^{\otimes M})^{\∑_M}
   },
\end{equation*}
 The isomorphism $\varphi:(\P(n)\otimes \Gamma(\P,V)^{\otimes n})^{\∑_n}\to (\P(n)\otimes \Gamma(\P^{\otimes n},V))^{\∑_n}$ (see \cite{NS}, Proposition 4.2) is given by
\begin{multline*}
  \varphi(\t)=\sum_{\sigma\in \∑_n/\∑_\r}\sigma\cdot x\otimes \sigma\cdot\sum_{\tau\in\∑_M/\prod_{i=1}^p\∑_{m_i}^{\times r_i}}\\
  \sum_{(\sigma_{i,j})_{i\in[p],j\in[r_i]}\in \prod_{i=1}^p(\∑_{m_i}/\∑_{\q_i})^{\times r_i}}[\tau\otimes\Big(\bigotimes_{i=1}^p\bigotimes_{j=1}^{r_i}\sigma_{i,j}\cdot x_i\Big)]_{\prod_{i}\∑_{m_i}^{\times r_i}}\otimes \tau\cdot\Big(\bigotimes_{i=1}^p\bigotimes_{j=1}^{r_i}\sigma_{i,j}\cdot \_{v_i}\Big),
\end{multline*}
which is equal to
\begin{equation*}
  \sum_{\sigma\in \∑_n/\∑_{\r}}\sigma\cdot x\otimes \sigma\cdot\bigg(\sum_{\tau\in\∑_{M}/\prod_{i=1}^p\∑_{\q_i}^{\times r_i}}[\tau\otimes x_1^{\otimes r_1}\otimes\dots\otimes x_p^{\otimes r_p}]_{\prod_{i}\∑_{m_i}^{\times r_i}}\otimes \tau\cdot(\_{v_1}^{\otimes r_1}\otimes\dots\otimes\_{v_p}^{\otimes r_p})\bigg)
\end{equation*}
\begin{equation*}
  =\sum_{\sigma\in \∑_n/\∑_{\r}}\sum_{\tau\in\∑_{M}/\prod_{i=1}^p\∑_{\q_i}^{\times r_i}}(\sigma\cdot x\otimes \sigma\cdot[\tau\otimes x_1^{\otimes r_1}\otimes\dots\otimes x_p^{\otimes r_p}]_{\prod_{i}\∑_{m_i}^{\times r_i}})\otimes \tau\cdot(\_{v_1}^{\otimes r_1}\otimes\dots\otimes\_{v_p}^{\otimes r_p}).
\end{equation*}
Every $\tau\in\∑_{M}/\prod_{i=1}^p\∑_{\q_i}^{\times r_i}$ can be decomposed as a product $\tau_1\tau_2^*$ with $\tau_1\in\∑_{M}/\prod_{i=1}^p\∑_{r_i}\wr\∑_{\q_i}$ and $\tau_2\in \∑_\r$ acting by block of size $m_i$. Yet, $x$ is invariant under the action of $\tau_2$ and, $x_1^{\otimes r_1}\otimes\dots\otimes x_p^{\otimes r_p}$ and $\_{v_1}^{\otimes r_1}\otimes\dots\otimes\_{v_p}^{\otimes r_p}$ are both invariant under the action of $\tau_2^*$, so, one gets
\begin{multline*}
	\sum_{\sigma\in \∑_n/\∑_\r}\sum_{\tau_2\in \∑_\r}\sum_{\tau_1\in\∑_{M}/\prod_{i=1}^p\∑_{r_i}\wr\∑_{\q_i}}((\sigma\tau_2)\cdot x\otimes\sigma\cdot[\tau_1\tau_2^*\otimes x_1^{\otimes r_1}\otimes\dots\otimes x_p^{\otimes r_p}]_{\prod_{i}\∑_{m_i}^{\times r_i}})\\\otimes \tau_1\tau_2^*\cdot(\_{v_1}^{\otimes r_1}\otimes\dots\otimes\_{v_p}^{\otimes r_p})
\end{multline*}
is equal to
\begin{multline*}
  \sum_{\sigma\in \∑_n/\∑_\r}\sum_{\tau_2\in \∑_\r}\sum_{\tau_1\in\∑_{M}/\prod_{i=1}^p\∑_{r_i}\wr\∑_{\q_i}}((\sigma\tau_2)\cdot x\otimes(\sigma\tau_2)\cdot[\tau_1\otimes x_1^{\otimes r_1}\otimes\dots\otimes x_p^{\otimes r_p}]_{\prod_{i}\∑_{m_i}^{\times r_i}})\\\otimes \tau_1\cdot(\_{v_1}^{\otimes r_1}\otimes\dots\otimes\_{v_p}^{\otimes r_p})
\end{multline*}
$$=\sum_{\sigma\in \∑_n}\sum_{\tau\in\∑_{M}/\prod_{i=1}^p\∑_{r_i}\wr\∑_{\q_i}}(\sigma\cdot x\otimes\sigma\cdot[\tau\otimes x_1^{\otimes r_1}\otimes\dots\otimes x_p^{\otimes r_p}]_{\prod_{i}\∑_{m_i}^{\times r_i}})\otimes \tau\cdot(\_{v_1}^{\otimes r_1}\otimes\dots\otimes\_{v_p}^{\otimes r_p}).$$
Then, applying the inverse of the norm map $Tr_{\P,\P}:\P\circ \P\to \P\ \tilde{\circ}\ \P$, one gets
\begin{equation*}
  \sum_{\tau\in\∑_{M}/\prod_{i=1}^p\∑_{r_i}\wr\∑_{\q_i}}[x\otimes[\tau\otimes x_1^{\otimes r_1}\otimes\dots\otimes x_p^{\otimes r_p}]_{\prod_{i}\∑_{m_i}^{\times r_i}}]_{\∑_n}\otimes \tau\cdot(\_{v_1}^{\otimes r_1}\otimes\dots\otimes\_{v_p}^{\otimes r_p}).
\end{equation*}
Finally, applying the mutliplication of the operad $\P$ gives the expected result.
\end{proof}
\bibliographystyle{plain}

\begin{thebibliography}{10}

\bibitem{HC}
{\em S\'eminaire {H}enri {C}artan de l'{E}cole {N}ormale {S}up\'erieure,
  1954/1955. {A}lg\`ebres d'{E}ilenberg-{M}ac{L}ane et homotopie}.
\newblock Secr\'etariat math\'ematique, 11 rue Pierre Curie, Paris, 1956.
\newblock 2\`eme \'ed.

\bibitem{BB}
M.~A. Batanin and C.~Berger.
\newblock The lattice path operad and {H}ochschild cochains.
\newblock In {\em Alpine perspectives on algebraic topology}, volume 504 of
  {\em Contemp. Math.}, pages 23--52. Amer. Math. Soc., Providence, RI, 2009.

\bibitem{GC}
Gunnar Carlsson.
\newblock G. b. segal's burnside ring conjecture for (z2)k.
\newblock {\em Topology}, 22(1):83 -- 103, 1983.

\bibitem{AC}
Andrea Cesaro.
\newblock {\em Pre-Lie algebras and operads in positive characteristic}.
\newblock PhD thesis, 2016.
\newblock Thèse de doctorat dirigée par Fresse, Benoît et Vespa, Christine
  Mathématiques pures Lille 1 2016.

\bibitem{CL}
David Chataur and Muriel Livernet.
\newblock Adem-cartan operads.
\newblock {\em Communications in Algebra}, 33(11):4337--4360, 2005.

\bibitem{DD}
Donald~M. Davis.
\newblock A family of unstable steenrod-modules which includes those of g.
  carlsson.
\newblock {\em Journal of Pure and Applied Algebra}, 35:253 -- 267, 1985.

\bibitem{ID}
I.~Dokas.
\newblock Pre-{L}ie algebras in positive characteristic.
\newblock {\em J. Lie Theory}, 23(4):937--952, 2013.

\bibitem{EHP}
Christian Elsholtz, Clemens Heuberger, and Helmut Prodinger.
\newblock The number of {H}uffman codes, compact trees, and sums of unit
  fractions.
\newblock {\em IEEE Trans. Inform. Theory}, 59(2):1065--1075, 2013.

\bibitem{BF}
Benoit Fresse.
\newblock On the homotopy of simplicial algebras over an operad.
\newblock {\em Trans. Amer. Math. Soc.}, 352(9):4113--4141, 2000.

\bibitem{LV}
Jean-Louis Loday and Bruno Vallette.
\newblock {\em Algebraic operads}, volume 346 of {\em Grundlehren der
  Mathematischen Wissenschaften [Fundamental Principles of Mathematical
  Sciences]}.
\newblock Springer, Heidelberg, 2012.

\bibitem{NS}
Rohit Nagpal and Andrew Snowden.
\newblock Periodicity in the cohomology of symmetric groups via divided powers.
\newblock {\em Proceedings of the London Mathematical Society},
  116(5):1244--1268.

\bibitem{BR}
Birgit Richter.
\newblock Divided power structures and chain complexes.
\newblock In {\em Alpine perspectives on algebraic topology}, volume 504 of
  {\em Contemp. Math.}, pages 237--254. Amer. Math. Soc., Providence, RI, 2009.

\bibitem{NR}
Norbert Roby.
\newblock Les alg\`ebres \`a puissances divis\'ees.
\newblock {\em Bull. Sci. Math. (2)}, 89:75--91, 1965.

\bibitem{LS}
Lionel Schwartz.
\newblock {\em Unstable modules over the {S}teenrod algebra and {S}ullivan's
  fixed point set conjecture}.
\newblock Chicago Lectures in Mathematics. University of Chicago Press,
  Chicago, IL, 1994.

\end{thebibliography}

{\scshape Sacha Ikonicoff,} Univ Paris Diderot, Institut de Mathématiques de Jussieu-Paris Rive
Gauche, CNRS, Sorbonne Université, 8 place Aurélie Nemours, F-75013,
Paris, France\newline
E-mail address: sacha.ikonicoff@imj-prg.fr
\end{document}